\documentclass{amsart}
\usepackage{amssymb,amsmath,amsthm}
\usepackage[shortalphabetic]{amsrefs}
\usepackage{mathrsfs}
\usepackage{array}
\usepackage{caption}
\usepackage{pdflscape}
\usepackage[dvipsnames]{xcolor}
\usepackage{tikz-cd}
\usetikzlibrary{patterns}
\usepackage{todonotes}
\usepackage{longtable,booktabs}
\usepackage{spectralsequences}
\usepackage[colorlinks=true,linkcolor=blue,citecolor=mblue]{hyperref}
\usepackage[nameinlink,capitalise,noabbrev]{cleveref}

\definecolor{mblue}{RGB}{65,105,225}

\crefname{equation}{}{}

\newtheorem{thm}{Theorem}[section]
\newtheorem{prop}[thm]{Proposition}
\newtheorem{lem}[thm]{Lemma}
\newtheorem{lemma}[thm]{Lemma}
\newtheorem{conj}[thm]{Conjecture}

\theoremstyle{definition}
\newtheorem{defn}[thm]{Definition}
\newtheorem{eg}[thm]{Example}
\newtheorem{notn}[thm]{Notation}
\newtheorem{rmk}[thm]{Remark}

\makeatletter
\let\c@equation\c@thm
\makeatother

\newenvironment{pf}{\begin{proof}}{\end{proof}}

\newcolumntype{L}{>{$}l<{$}}

\numberwithin{equation}{section}
\numberwithin{figure}{section}
\numberwithin{table}{section}

\newcommand{\cA}{\ensuremath{\mathcal{A}}}
\newcommand{\C}{\ensuremath{\mathbb{C}}}
\newcommand{\F}{\ensuremath{\mathbb{F}}}
\newcommand{\rH}{\ensuremath{\mathrm{H}}}
\newcommand{\bM}{\ensuremath{\mathbb{M}_2}}
\newcommand{\Q}{\ensuremath{\mathbb{Q}}}
\newcommand{\R}{\ensuremath{\mathbb{R}}}
\newcommand{\Z}{\ensuremath{\mathbb{Z}}}

\newcommand{\smsh}{\wedge}
\newcommand{\into}{\hookrightarrow}
\newcommand{\onto}{\twoheadrightarrow}
\newcommand{\rtarr}{\longrightarrow}
\newcommand{\xrtarr}[1]{\xrightarrow{#1}}
\newcommand{\xltarr}[1]{\xleftarrow{#1}}
\newcommand{\iso}{\cong}
\newcommand{\ga}{\gamma}
\newcommand{\bracket}[1]{ \left\langle #1 \right\rangle}
\newcommand{\hsf}{\mathsf{h}}
\newcommand{\odelta}{\overline{\delta}}
\newcommand{\ds}{\displaystyle}
\newcommand{\cl}{\mathrm{cl}}

\DeclareMathOperator{\Ext}{Ext}
\DeclareMathOperator*{\colim}{colim}
\DeclareMathOperator{\coker}{coker}

\newcommand{\bF}{\F_2}
\newcommand{\bMCC}{\bM^{\C}}
\newcommand{\bMR}{\bM^\R}
\newcommand{\bMC}{\bM^{C_2}}
\newcommand{\cAcl}{\cA^\cl}
\newcommand{\cACC}{\cA^\C}
\newcommand{\cAR}{\cA^\R}
\newcommand{\cAC}{\cA^{C_2}}
\newcommand{\ExtCT}{\Ext_{C_2}}
\newcommand{\piC}{\pi^{C_2}}
\newcommand{\piR}{\pi^{\R}}
\newcommand{\picl}{\pi^{\cl}}

\newcommand{\rhocolor}{gray}

\newcommand{\rhodiv}{
\class(\lastx+1,\lasty)
\structline[\rhocolor]
}

\newcommand{\rhomult}{
  \class(\lastx-1,\lasty)
  \structline[\rhocolor]
}

\newcommand{\rhotower}[1]{
    \foreach \i in {2,...,#1} {
        \rhomult
    }
}

\newcommand{\rhocotower}[1]{
    \foreach \i in {2,...,#1} {
        \rhodiv
    }
}

\NewSseqGroup{\vertrholoc}{}{
  \draw[\rhocolor,->,>=stealth,semithick](0,0)--(0.0,0.7);
}

\NewSseqGroup{\vertrhocoloc}{}{
  \draw[\rhocolor,->,>=stealth,semithick](0,0)--(0.0,-0.7);
}

\sseqset{
    classes= {fill, inner sep = 0pt, minimum size = 0.25em},
    class labels={below=0.3pt,font=\scriptsize}, 
    differentials={>=stealth,semithick},
    struct lines=semithick,
    class pattern=linear, 
    class placement transform = { rotate = 45, scale = 1 },
    grid color = gray!40!white,
}

\bibliographystyle{plain}

\begin{document}

\title{$C_2$-equivariant stable stems}
\author{Bertrand J. Guillou}
\address{Department of Mathematics\\ University of Kentucky\\
Lexington, KY 40506, USA}
\email{bertguillou@uky.edu}

\author{Daniel C. Isaksen}
\address{Department of Mathematics\\ Wayne State University\\
Detroit, MI 48202, USA}
\email{isaksen@wayne.edu}
\thanks{The first author was supported by NSF grants DMS-1710379 and DMS-2003204.
The second author was supported by NSF grants DMS-1904241
and DMS-2202267.}

\subjclass{55Q91, 55T15, 55Q45}

\keywords{equivariant stable homotopy theory,
stable homotopy group,
Adams spectral sequence}

\begin{abstract}
We compute the 
$2$-primary
$C_2$-equivariant stable homotopy groups
$\piC_{s,c}$ 
for stems between $0$ and $25$ (i.e., $0 \leq s \leq 25$)
and for coweights between $-1$ and $7$
(i.e., $-1 \leq c \leq 7)$.
Our results, combined with periodicity
isomorphisms and sufficiently extensive $\R$-motivic computations,
would determine all 
of the $C_2$-equivariant stable homotopy
groups for 
all stems up to $20$.
We also compute the forgetful map $\piC_{s,c} \rightarrow \picl_s$
to the classical stable homotopy groups in the same range.
\end{abstract}

\date{\today}

\maketitle

\setcounter{tocdepth}{1}
\tableofcontents

\section{Introduction}

In any stable homotopy theory, the graded endomorphisms of the unit object
play a central computational role.  These endomorphisms control the
construction of finite complexes because they serve as attaching maps for cells.
From another perspective, they are the universal operations for generalized
cohomology theories.

The goal of this manuscript is to study 
$C_2$-equivariant stable homotopy groups.
We analyze the Adams spectral sequence in a range
and compute some $2$-primary $C_2$-equivariant stable homotopy groups.
To ensure that the Adams spectral sequence converges, we assume
without further discussion that everything is appropriately
completed.  In other words, we are computing the homotopy groups
of the $2$-completed $C_2$-equivariant sphere spectrum.

There are two essentially different ways to study homotopy groups in the
$G$-equivariant context: 
$\Z$-graded homotopy Mackey functors
and $RO(G)$-graded homotopy groups.
In the Mackey functor perspective, we consider the fixed-points 
of $G$-spectra
with respect to various subgroups of $G$.  The homotopy groups
of these (non-equivariant) fixed-point spectra assemble into a Mackey functor.
On the other hand, the $RO(G)$-graded homotopy groups are a family
of abelian groups indexed by the virtual representations of $G$.

The Mackey functors
are a full set of
$G$-equivariant homotopical invariants, in the sense that they
detect equivalences.  In general, the $RO(G)$-graded homotopy groups
are not as powerful, in the sense that they do not detect all equivalences.
See \cref{ex:Clover} for an explicit example of a $C_3$-equivariant map
that induces an isomorphism on $RO(C_3)$-graded homotopy groups but is not
an isomorphism on Mackey functors.

However, the $C_2$-equivariant case is somewhat special.
It turns out that the $RO(C_2)$-graded homotopy groups do detect
$C_2$-equivariant weak equivalences.
See \cref{subsctn:Mackey-ROG} for further discussion
of this phenomenon.
In theory, it is therefore possible to translate between
Mackey functor computations and $RO(C_2)$-graded information.

In this manuscript, we focus exclusively on the $RO(C_2)$-graded
homotopy groups.  The most important reason for this choice is that our preferred tool, the Adams spectral sequence, converges
to the $RO(C_2)$-graded homotopy groups.  In other words, we have chosen
to compute what we know how to compute!

Another reason to study $RO(C_2)$-graded homotopy groups is that they 
are compatible with $\R$-motivic homotopy groups.
Betti realization is a functor from
$\R$-motivic homotopy theory to $C_2$-equivariant homotopy theory.
The $\R$-motivic homotopy groups are a bigraded family of abelian groups,
and Betti realization maps these $\R$-motivic homotopy groups to their
corresponding $RO(C_2)$-graded homotopy groups.

Much is known about the comparison between
$\R$-motivic and $C_2$-equivariant homotopy groups
\cite{BGI} \cite{DI-comparison}.
In practice, the $\R$-motivic Adams spectral sequence
\cite{BI} is much more manageable than the more complicated
$C_2$-equivariant Adams spectral sequence.
Therefore, we prefer to compute $\R$-motivically whenever possible.
The Betti realization map $\piR_{s,c} \rtarr \piC_{s,c}$ 
is an isomorphism if $c \geq \frac12s - 2$ \cite{BGI},
where $s$ is the stem and $c$ is the coweight
(see \cref{sec:notn} for more details on grading conventions).
Therefore, we concentrate our $C_2$-equivariant efforts
in the complementary range $c < \frac{1}{2} s - 2$.

On the other hand, the $C_2$-equivariant homotopy groups 
exhibit a periodicity
\cite{B}, 
\cite{L}*{Proposition 6.1},
\cite{AI}*{Section 3},
to be described in more detail later in \cref{subsctn:tau-periodicity}.
For now, we observe that 
for $c\leq -2$, the group
$\piC_{s,c}$ is isomorphic to the 
$\rho$-power torsion subgroup of $\piR_{s,c+k}$,
where $k$ is a certain value depending only on $s$.

\newcommand{\leftboundary}{-6}
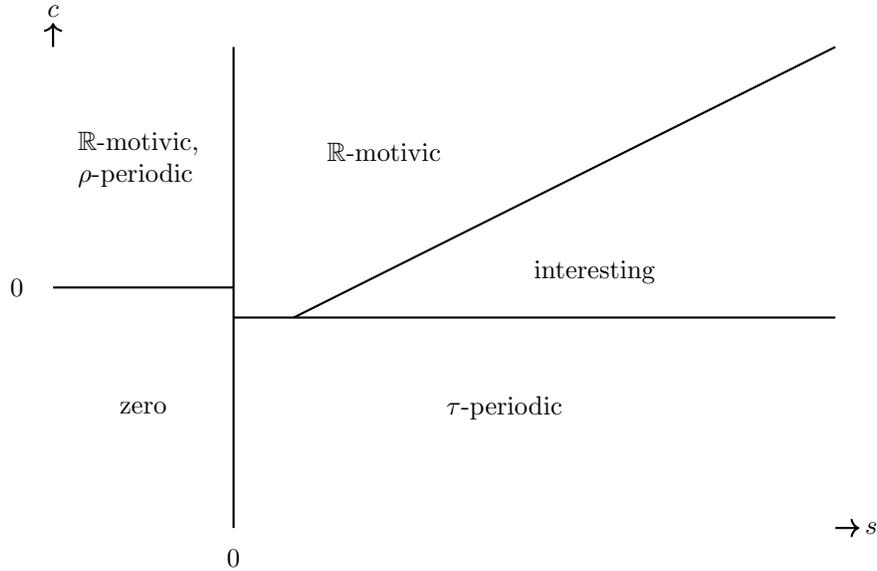
\begin{figure}[h!]
\caption{The $C_2$-equivariant stable stems $\piC_{s,c}$}
\label{fig:InterestingRegion}
\begin{tikzpicture}[scale=0.4]
\draw[thick] (\leftboundary,0) -- (0,0);
\draw[thick] (0,-8) -- (0,8);
\draw[thick] (0,-1) -- (20,-1);
\draw[thick] (2,-1) -- (20,8);
\draw[->,thick] (\leftboundary,8) -- (\leftboundary,8+0.8);
\draw[->,thick,yshift={0ex}] (20,-8) -- (20+0.8,-8);
\node at (-3,-4) {zero};
\node at (-2,4.25) {\parbox{1in}{$\R$-motivic, \\ $\rho$-periodic}};
\node at (9,-4) {$\tau$-periodic};
\node at (5,4.5) {$\R$-motivic};
\node at (12,0.5) { interesting};
\node at (0,-9) { 0};
\node at (\leftboundary-1.2,0) {0};
\node at (21.2,-8) {$s$};
\node at (\leftboundary,9.2) {$c$};
\end{tikzpicture}
\end{figure}

\cref{fig:InterestingRegion} breaks the $(s,c)$-plane into regions
depending on the relationship between $\R$-motivic and $C_2$-equivariant
stable homotopy groups.
This is similar to Figure 1 of \cite{BS} and Figure 5 of \cite{BGI}, using a diffferent choice of grading.
In the  regions labelled ``$\R$-motivic", the $\R$-motivic
and $C_2$-equivariant stable homotopy groups are isomorphic \cite{BGI}.
In the  region labelled ``$\tau$-periodic", 
periodicity allows one to deduce
$C_2$-equivariant stable homotopy groups from $\R$-motivic computations.
The purpose of this manuscript is to study the 
region labelled ``interesting", in which $\R$-motivic computations
do not immediately determine the $C_2$-equivariant stable homotopy groups.

The ``interesting'' region of the $C_2$-equivariant stable stems, as displayed in \cref{fig:InterestingRegion}, 
 consists of the groups $\piC_{s,c}$ with 
\[ -1 \leq c \leq \frac{s-5}2.\] 

In this article, we determine the groups $\piC_{s,c}$  in 
an initial portion of the ``interesting" region.
More precisely, we determine the
$C_2$-equivariant stable homotopy groups in 
stems $s \leq 25$ and coweights $-1\leq c \leq 7$. 
This captures the entire ``interesting'' region for $s \leq 20$.

Along the way, we also compute 
$C_2$-equivariant stable homotopy groups in stems $s \leq 7$
and coweights $-9 \leq c \leq -2$.  This computation is not the main
point of our work, but it provides some data that supports our other
more extensive computations.

Our philosophy is that one should not just compute $C_2$-equivariantly,
nor just $\R$-motivically and $C_2$-equivariantly.
Rather, one should also compute $\C$-motivically and classically
as well, with information passing amongst the four contexts in all directions.

In this general direction,
we compute many values of the underlying homomorphism
from $C_2$-equivariant stable homotopy groups to classical stable
homotopy groups.  
As explained in \cites{BG,C2MW0}, these values are crucial input 
for the calculation of Mahowald invariants.
Our computation of $C_2$-equivariant stable stems goes up 
to coweight 7, so our work recovers the Mahowald 
invariants of all stable homotopy elements up to the 7-stem.
All such Mahowald invariants are already well-known
\cite{Behrens07} \cite{Mahowald-Ravenel93}.

In this range, the Mahowald invariant is 
almost entirely calculated already by $\R$-motivic data \cite{BI}.
The sole exception is the Mahowald invariant of $8 \sigma$.
Our $C_2$-equivariant computations immediately lead to the conclusion
that the Mahowald invariant of $8\sigma$ is $\eta^2 \eta_4$ in the $18$-stem.
The main point is that the $C_2$-equivariant Adams $E_\infty$ chart in 
coweight $7$
shows that the element $h_1^3 h_4$ is $\rho$-periodic but not $\rho$-divisible.

Independently of the relevance to Mahowald invariants, we believe that
the values of the underlying map 
carry deep information about both $C_2$-equivariant and classical stable
homotopy.  At the very least, it is tightly linked to the homotopical
$\rho$-Bockstein spectral sequence that starts with classical stable
homotopy groups and converges to $C_2$-equivariant stable homotopy groups.
We leave a further exploration of these ideas to future work.

\subsection{Related work}

Bredon initiated the computation of $C_2$-equivariant
stable homotopy groups \cite{B} \cite{B68}, 
mostly focusing on coweight $0$.
Landweber \cite{L} carried forward Bredon's work.
Araki and Iriye \cite{AI} \cite{Iriye82} computed 
$C_2$-equivariant stable homotopy groups
roughly through the range $s \leq 13$
using EHP-style techniques.
Their approach determines the abelian group structure of
each $C_2$-equivariant stable homotopy group in their range.  However,
their computations can be hard to interpret because they 
are relatively unstructured.  
Our work extends the range of the Araki-Iriye computations, but it also
independently verifies their computations.  Moreover, we provide
more structure that makes it easier to understand how the various groups fit
together.
We also mention \cite{GHIR} and \cite{C2MW0}, which can be viewed as warmups
to this manuscript.

By the Segal Conjecture (which now has many proofs), the $2$-complete $C_2$-equivariant spectrum 
$S^{0,0}$ is equivalent to its Borel completion $F({EC_2}_+,S^{0,0})$, so the Borel $C_2$-equivariant 
Adams spectral sequence also computes the same $C_2$-equivariant stable 
homotopy groups. 
Recent work  of Sihao Ma \cite{Ma} compares these two approaches, although
it has no direct bearing on our work.

\subsection{Future work}

Our methods are far from exhausted.  In order to make the project manageable, we chose an essentially arbitrary range in which to compute.  We strongly suspect that the ideas set forth in this manuscript can be applied in a significantly larger range.  Further work would be aided significantly by machine computation.

Our philosophy is to separate the $\tau$-periodic phenomena from the
non-periodic phenomena, to the extent that is possible.  Computation would
be greatly aided by some method that independently computes the $\tau$-periodic
$C_2$-equivariant stable homotopy groups.  Having $\tau$-periodic
information in hand would simplify the non-periodic part of the computation.
It is known that the $\tau$-periodic $C_2$-equivariant
stable homotopy groups are isomorphic to certain stable homotopy
groups of stunted projective spaces \cite{L}.  However, that isomorphism in
itself does not provide a tool for extensive computations.

It is natural to ask whether our methods can be generalized to compute
$G$-equivariant stable homotopy groups for finite groups $G$ other than $C_2$.
It is conceivable that such a generalization can be carried out,
although there are real difficulties arising from the more
complicated structure of the $G$-equivariant homology of a point
and the $G$-equivariant dual Steenrod algebra \cites{HKSZ,SW}.

From a computational perspective, $(C_2 \times C_2)$-equivariant
stable homotopy groups are particularly intriguing because the 
appropriate analogue of the second Hopf map $\nu$ is not nilpotent.
This means that periodicity phenomena in $(C_2 \times C_2)$-equivariant 
homotopy theory are both rich and not well-understood.
Similarly, the third Hopf map $\sigma$ is not nilpotent
in $(C_2 \times C_2 \times C_2)$-equivariant homotopy theory.

We also draw attention to \cref{Type3diffs}, which speculates on a 
relationship between $\R$-motivic Bockstein differentials and
$C_2$-equivariant Bockstein differentials.

\subsection{Outline}

For readers who are interested in our
main computation and wish to skip the technical details,
the charts on pages \pageref{Einfstart}--\pageref{Einfend} display the $C_2$-equivariant Adams $E_\infty$-page, sorted by coweight.  The $C_2$-equivariant
stable homotopy groups (in a range) can be read from these charts in the usual
way.  See \cref{subsctn:Einfty-chart-key} for a specific key for the symbols and colors that we use.

We now give a slightly more detailed outline of our manuscript.
We compute equivariant stable homotopy groups via the $C_2$-equivariant Adams spectral sequence, whose input is given by $\Ext$ groups over the $C_2$-equivariant dual Steenrod algebra $\cAC_*$.
In turn, these $\Ext$ groups are computed by a $\rho$-Bockstein spectral sequence, as in \cites{GHIR,C2MW0}. We rely heavily on the computation of the $\R$-motivic Bockstein spectral sequence and Adams spectral sequence \cite{BI}.

The exposition of our computation
relies heavily on detailed notation, 
which is described in \cref{sec:notn}.  The reader is encouraged to cross-reference this section frequently.

\cref{background} contains some miscellaneous background information about
$C_2$-equi\-var\-iant stable homotopy theory.  Our discussion is not meant to
be exhaustive.  We simply touch on several points that are particularly
important for our computation.

Our main program begins in \cref{sec:BockSS}
where we discuss the
$\rho$-Bockstein spectral sequence that converges to the 
$C_2$-equivariant $\Ext$ groups.
As discussed in \cite{GHIR}*{Section 2}, the $C_2$-equivariant $\Ext$ groups split as a sum of the $\R$-motivic $\Ext$ groups as well as 
another piece arising from the ``negative cone".
This splitting arises from a splitting of 
the $C_2$-equivariant $\F_2$-homology of a point,
as displayed in \cref{fig:M2C}.
As a result, given the input of \cite{BI}, 
determination of the $C_2$-equivariant $\Ext$ groups reduces to a computation
of the $\rho$-Bockstein spectral sequence for the negative cone.

\cref{sec:NCPeriodic} and \cref{sec:E1extensions} dig deeper into the
structure of the $E_1$-page of the $\rho$-Bockstein spectral sequence
for the negative cone.
We explain how this $E_1$-page can be described completely in terms of
$\C$-motivic $\Ext$ groups, which are known in a large range
\cite{IWX23}.  We use a short exact sequence to compute the $E_1$-page,
but then we must resolve some extension problems associated to this
sequence.

Then \cref{sec:PCtoNC} describes some methods for computing
$\rho$-Bockstein differentials.
In particular, Bockstein differentials in the negative cone are intimately tied to Bockstein differentials in the positive cone, i.e. $\R$-motivic Bockstein differentials.  This connection suggests deeper structure that we have not
yet made precise.

\cref{sec:etaperiodic} is an interlude on $\eta$-periodic computations.
Experience shows that the $\eta$-periodic (or $h_1$-periodic in an algebraic
context) computations are much easier, but they also detect significant
phenomena about the unperiodicized computations.  Our later computations
are made somewhat easier by having this periodic information in advance.

The $\eta$-periodic $\R$-motivic stable homotopy groups \cite{etaR} exhibit non-trivial
yet fully described structure that is similar to the structure in the
classical image of $J$.  Interestingly, this type of phenomenon does
not occur $C_2$-equivariantly.  Rather, $C_2$-equivariant $\eta$-periodic
homotopy is trivial (except in degree $0$).
At first glance, this vanishing result suggests that
$\eta$-periodic homotopy carries no useful information for us.
However, one can deduce certain $\eta$-periodic Adams differentials from the fact
that the $\eta$-periodic homotopy must vanish.  In turn, the
$\eta$-periodic Adams differentials provide information about
unperiodicized Adams differentials.

\cref{sctn:Bockstein-diff} starts the main thrust of stemwise computation
in a range.  We begin with an exhaustive accounting of Bockstein differentials.
From these differentials, we obtain an additive description of
the $C_2$-equivariant $\Ext$ groups.  We study some of the multiplicative
structure of $\Ext$ in \cref{sctn:hidden}.

Having obtained the $C_2$-equivariant Adams $E_2$-page,
we then proceed to the Adams differentials in \cref{sec:Adams}.
This gives us the $E_\infty$-page of the $C_2$-equivariant Adams
spectral sequence.

The final step is to determine some of the multiplicative structure
in the $C_2$-equivariant stable homotopy groups.  In
\cref{sec:cofibrho} and \cref{sec:hiddenAdams}, 
we resolve hidden extensions by $\rho$, by $\hsf$, and by $\eta$.
Here $\hsf$ is the zeroth Hopf map that is detected by $h_0$.
Beware that $\hsf$ is not equal to $2$; the latter is detected by
$h_0 + \rho h_1$.

\cref{sec:cofibrho} also provides extensive data on the value of the
underlying map on $C_2$-equivariant stable homotopy groups.
We draw particular attention to the sequence
\eqref{eq:SEScofibrho}.  Although simple in appearance, the sequence
is unexpectedly powerful for deducing structure in $C_2$-equivariant
stable homotopy groups.

\cref{sec:charts} presents our main computational results in the standard
visual format of Adams charts.  We provide a detailed key for each chart.

\section{Notation}
\label{sec:notn}

We index $C_2$-equivariant stable stems in the form $(s,c)$, 
where $s$ is the stem (i.e. the underlying topological dimension) and
$c$ is the coweight (i.e. the number of trivial $1$-dimensional
representations in a virtual representation, also known as the stem minus
the weight).
Similarly, all homology groups will be indexed as $(s,c)$.

We index our $\Ext$ groups in the form $(s,f,c)$, where $s$
is the stem (i.e., the internal degree minus the homological degree), 
$f$ is the Adams filtration (i.e., the homological degree), and 
$c$ is the coweight.
 
The degrees $s$ and $f$ correspond to
the Cartesian coordinates of a standard Adams chart.
Our grading conventions are guided by practical considerations.
We find that these conventions allow us to systematically organize 
our computations into manageable pieces.
However, beware that the related articles \cites{LowMW,GHIR,BI}
instead grade $\Ext$ groups over $(s,f,w)$, where $w$ is the weight.

Equivariant stable homotopy groups are often graded over the 
real representation ring $RO(G)$.
The  representation ring $RO(C_2)$ of $C_2$ is isomorphic
to $\Z[\sigma]/(\sigma^2 - 1)$,
where $\sigma$ is the $1$-dimensional sign representation.
The main point of translation is that
the representation $p + q \sigma$ is expressed in our 
convention as $(p+q, p)$.
Thus $S^{1,1}$ is the circle with trivial action, whereas 
$S^{1,0}$ is $S^\sigma$. We warn the reader that this is the opposite 
of the convention in \cites{BI,BS,LowMW,GHIR}.

Aside from the change in grading,
we employ the same notation as in \cites{GHIR,C2MW0}. In particular, this includes
notation as follows:
\begin{enumerate}
\item
$\bMCC=\F_2[\tau]$ 
is the motivic homology 
of $\C$ with $\F_2$ coefficients, where $\tau$ has bidegree {$(0,1)$}.
\item
$\bMR=\F_2[\tau,\rho]$ 
is the motivic homology of $\R$ with $\F_2$ coefficients, where $\tau$ and $\rho$ have bidegrees {$(0,1)$ and $(-1,0)$,} respectively.
\item
$\bMC$ is the bigraded equivariant homology of a point with coefficients in the constant Mackey functor $\underline{\F}_2$. See  \cref{sec:BockSS} for a description of this algebra. 
\item \label{NCnotation}
$NC$ is the ``negative cone" part of $\bMC$.  
See  \cref{sec:BockSS} for a precise description.
We use the notation $\frac{\gamma}{\rho^j \tau^k}$ to denote
the unique non-zero element of $NC$ in {degree $(j, -1 - k)$.}
We require that $j \geq 0$ and $k \geq 1$.
\item
$NC_{\rho^n}$ is the $\rho^n$-torsion submodule of $NC$.
In concrete terms, it consists of elements of the form
$\frac{\gamma}{\rho^j \tau^k}$ such that $0 \leq j \leq n-1$
and $k \geq 1$.
\item
$\cAcl$,
$\cACC$, $\cAR$, and $\cAC$ are the 
classical, 
$\C$-motivic, $\R$-motivic, and $C_2$-equivariant mod 2 Steenrod algebras.
Their duals are $\cAcl_*$, $\cACC_*$, $\cAR_*$, and $\cAC_*$.
\item
$\Ext_\cl$ is the bigraded ring $\Ext_{\cAcl_*}(\bF,\bF)$,
i.e., the cohomology of $\cAcl$.
\item
$\Ext_\C$ is the trigraded ring $\Ext_{\cACC_*}(\bMCC,\bMCC)$,
i.e., the cohomology of $\cACC$.
\item
$\Ext_\R$ is the trigraded ring $\Ext_{\cAR_*}(\bMR,\bMR)$,
i.e., the cohomology of $\cAR$.
\item
$\Ext_{C_2}$ is the trigraded ring $\Ext_{\cAC_*}(\bMC,\bMC)$,
i.e., the cohomology of $\cAC$.
\item For any $\cAC_*$-comodule $M$, we write
$\Ext_{C_2}(M)$ for the trigraded $\Ext_{C_2}$-module $\Ext_{\cAC_*}(\bMC,M)$. 
We use similar abbreviations in the $\R$-motivic and $\C$-motivic contexts.
\item
For any $\cAC_*$-comodule $M$, 
let $\Sigma^{t,c} M$ be the shift of $M$ in which
the internal degree is increased by $t$, 
and the coweight is
increased by $c$.
\item
$\Ext_{NC}$ is the $\Ext_{\R}$-module
$\Ext_{\cAR_*}(\bMR,NC)$.
\item
$E^+$ is the $\R$-motivic $\rho$-Bockstein spectral sequence
\[
\Ext_\C[\rho] \Rightarrow \Ext_\R.
\]
See \cref{sec:BockSS}.
\item
$E^-$ is the $\rho$-Bockstein spectral sequence
that converges to $\Ext_{NC}$ (see \cref{sec:BockSS}).
Also, $E^-_{1,\rho^n}$ is the $\rho^n$-torsion submodule of $E^-_1$
(see \cref{E1minus-SES}).
\item
For any $\tau$-torsion class $y$ in $\Ext_\C$,
we define $Qy$, up to some indeterminacy, to be a particular
element of $E_1^-$.
See \cref{Notn:Qclasses} for the exact definition.
\item
$\frac{\F_2[x]}{x^\infty} \{ y \}$ is the infinitely $x$-divisible module
$\colim_n \F_2[x]/x^n$.
In practice, we use this notation only with $x$ equal to $\tau$ or 
equal to $\rho$.
The module $\frac{\F_2[\tau]}{\tau^\infty} \{ \gamma \}$ consists of elements
of the form $\frac{\gamma}{\tau^k}$, with $k \geq 1$, whereas 
$\frac{\F_2[\rho]}{\rho^\infty} \{ \gamma \}$ consists of elements
$\frac{\gamma}{\rho^j}$, with $j \geq 0$.
See \eqref{NCnotation} and \cref{rmk:NCnotation}.
\item 
$S^{0,0}$ is the $C_2$-equivariant sphere spectrum.
\item 
$\rH^{C_2}_{*,*}(X)$ is the $C_2$-equivariant homology of a 
$C_2$-equivariant spectrum $X$ with coefficients in the constant Mackey
functor $\underline{\F}_2$.
\item
$\piC_{*,*}(X)$ is the bigraded $C_2$-equivariant stable homotopy groups of a $C_2$-spectrum, 
completed at $2$
so that the equivariant Adams spectral sequence converges.
In the case of $X=S^{0,0}$, we abbreviate this to  $\piC_{*,*} = \piC_{*,*}(S^{0,0})$. 
\item
$(\ker \rho)_{*,*}$ is the subobject of $\piC_{*,*}$
consisting of elements that are annihilated by $\rho$, and
$(\coker \rho)_{*,*}$ is the quotient of $\piC_{*,*}$
by the image of $\rho$.
\item
$\piR_{*,*}$ are the bigraded 
$\R$-motivic stable homotopy groups, completed at $2$ 
so that the $\R$-motivic Adams spectral sequence converges.
\item
$\picl_*$ are the classical stable homotopy groups, completed at $2$ 
so that the  Adams spectral sequence converges.
\end{enumerate}

\begin{rmk}
\label{rmk:NCnotation}
Beware that our notation for elements of $NC$ is slightly different
from the notation used in other manuscripts.
The relations $\tau^b \cdot \frac{\gamma}{\tau^b} = 0$ 
are the fundamental structure in $NC$.  
The symbol $\gamma$, which does not correspond to an actual element,
has {degree $(0,-1)$}
 and ``represents" these relations.
In practice, our convention makes it easier to state
formulas that describe the behavior of the various
elements $\frac{\gamma}{\tau^b}$.

Our element $\frac{\gamma}{\rho^a \tau^b}$ is often denoted by
$\frac{\theta}{\rho^a \tau^{b-1}}$ elsewhere,
such as in \cite{DI-comparison} \cite{CMay}.
\end{rmk}

The Betti realization map from $\R$-motivic to $C_2$-equvariant homotopy theory
induces a homomorphism from $\bMR$ to $\bMC$.
In our notation, the elements $\tau$ and $\rho$ of $\bMR$ map to the
elements of $\bMC$ of the same name.
The symbols $u$ and $a$ are more traditionally used for
$\tau$ and $\rho$ in the equivariant context
\cite{HHR}*{Definitions 3.11 and 3.12}.

We use
the symbol $\rho$ in four distinct but related contexts: for an element of $\bMR$; for an element
of $\bMC$; for an element of {$\piR_{-1,0}$} that is detected by $\rho$;
and for an element of {$\piC_{-1,0}$} that is detected by $\rho$.

We use the symbol $\hsf$ for 
an element of $\piC_{0,0}$ that is detected
by $h_0$.  This notation reflects that $\hsf$ is the zeroth Hopf map.
The group $\piC_{0,0}$ is the (completion of) the Burnside ring $A(C_2)$, which is a free 
$\Z$-module whose generators are the two $C_2$-orbits, namely the trivial orbit and the free orbit. 
The element $\hsf$ corresponds to the free orbit $C_2/e$.
From the motivic perspective, $\piR_{0,0}$
is the Grothendieck-Witt ring of quadratic forms over $\R$, and $\hsf$
corresponds to the hyperbolic plane.
Beware that $\hsf$ does not equal $2$; the 
latter homotopy element is detected by $h_0 + \rho h_1$.


\section{Background on $C_2$-equivariant stable homotopy theory}
\label{background}

\subsection{Mackey functor homotopy groups versus $RO(G)$-graded 
homotopy groups}
\label{subsctn:Mackey-ROG}

We begin with a comparison of two perspectives on equivariant
stable homotopy groups: 
$\Z$-graded homotopy Mackey functors
and $RO(G)$-graded
homotopy groups.  This issue arose in the introduction, and we provide
additional detail here.  Strictly speaking, the rest of the manuscript
does not depend on this discussion, but it adds important conceptual
background to the study of equivariant stable homotopy groups.

First we provide an explicit example that demonstrates that
$RO(G)$-graded homotopy groups are weaker than
$\Z$-graded homotopy Mackey functors.

\begin{eg}
\label{ex:Clover}
Consider the
$C_3$-equivariant Eilenberg--Mac~Lane spectrum $H_{C_3}\underline{\Q}$,
where $\underline{\Q}$ is the constant $C_3$-Mackey functor with value $\Q$. 
The underlying spectrum of $H_{C_3}\underline{\Q}$ is $H\Q$, so there is a corresponding map ${C_3} _+ \smsh H\Q \to H_{C_3} \underline{\Q}$. This map induces an isomorphism on $RO(C_3)$-graded homotopy groups, yet cannot be 
a $C_3$-equivariant
equivalence because the source and target do not agree as underlying spectra.
We are indebted to Clover May for this example.
\end{eg}

However, the $C_2$-equivariant case is somewhat special
because the $RO(C_2)$-graded homotopy groups do detect $C_2$-equivariant weak equivalences. 

\begin{prop}
Let $f: X \xrtarr{} Y$ be a map of $C_2$-equivariant spectra.
The map $f$ induces an isomorphism on 
$\Z$-graded homotopy Mackey functors
if and only if it induces an isomorphism on $RO(C_2)$-graded
homotopy groups.
\end{prop}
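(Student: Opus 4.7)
The forward implication is standard: the $\Z$-graded homotopy Mackey functors detect $C_2$-equivariant weak equivalences, so an isomorphism of Mackey functors already forces $f$ to be a $C_2$-equivalence, and hence to induce an isomorphism on every form of homotopy, including the $RO(C_2)$-graded one. For the reverse direction, assume that $f$ induces an isomorphism on each $\piC_{s,c}$, and verify that $\pi_n(f^{C_2})$ and $\pi_n(f^e)$ are isomorphisms for every $n \in \Z$. The $C_2$-fixed case is immediate: in the grading convention of \cref{sec:notn}, the trivial $n$-sphere (with trivial action) is $S^{n,n}$, so
\[
\pi_n(X^{C_2}) = \piC_{n,n}(X)
\]
already lies inside the $RO(C_2)$-graded family.

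The content of the proposition lies in the underlying case. The plan is to invoke the isotropy cofiber sequence
\[
(C_2)_+ \longrightarrow S^{0,0} \longrightarrow S^{1,0}
\]
coming from $S(\sigma)_+ \to D(\sigma)_+ \to S^\sigma$ for the sign representation. Smashing with $X$ and applying $[S^{n,c}, -]^{C_2}$ yields a long exact sequence in which each $\pi_n(X^e)$ is flanked on both sides by $RO(C_2)$-graded homotopy groups of $X$. The key identification
\[
[S^{n,c}, (C_2)_+ \wedge X]^{C_2} \;\cong\; \pi_n(X^e)
\]
follows from the Wirthm\"uller isomorphism applied to the Spanier--Whitehead self-dual spectrum $(C_2)_+$, together with the observation that the underlying spectrum of $S^{n,c}$ is $S^n$ for every coweight $c$. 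Since this long exact sequence is natural in $X$, the hypothesis and the five-lemma force $\pi_n(f^e)$ to be an isomorphism, which completes the verification on Mackey functors.

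The only substantive ingredients are the isotropy cofiber sequence displayed above and the Wirthm\"uller-type identification of its first smash term; both are classical, so I do not expect a real obstacle, only a careful diagram chase. Conceptually, the argument succeeds precisely because every proper subgroup of $C_2$ is trivial and the single sign representation $\sigma$ records all of the non-trivial isotropy data. No analogous single cofiber sequence of representation spheres is available for larger groups $G$, which is the underlying reason that the $RO(G)$-graded groups genuinely miss information for $G \neq C_2$, as illustrated by \cref{ex:Clover}.
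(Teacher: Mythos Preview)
Your proof is correct and follows essentially the same strategy as the paper: both arguments use the cofiber sequence $(C_2)_+ \to S^{0,0} \to S^{1,0}$ together with the five lemma to reduce the Mackey-functor statement to the $RO(C_2)$-graded hypothesis. The only cosmetic difference is that the paper maps (a suspension of) the cofiber sequence contravariantly into $X$ and $Y$ and then invokes the free--forgetful adjunction to identify $[{C_2}_+,X]_{n,n}$ with $\picl_n(UX)$, whereas you smash the cofiber sequence with $X$ and invoke Wirthm\"uller self-duality of $(C_2)_+$ to make the analogous identification; these are dual formulations of the same computation.
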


\begin{proof}
The definition of $C_2$-equivariant weak equivalences \cite{MM}*{Definition~III.3.2, Theorem~III.6.1} says that if  $f$ induces an isomorphism on
$\Z$-graded Mackey functors,  then $f$ is a
$C_2$-equivariant equivalence. It follows that $f$ induces an isomorphism on $RO(C_2)$-graded homotopy groups.

Now assume that $f$ induces an isomorphism on $RO(C_2)$-graded
homotopy groups.
There is a 
$C_2$-equivariant cofiber sequence
${C_2}_+ \to S^{0,0} \to S^{1,0}$ which describes a $C_2$-CW structure on 
$S^{1,0}$.
(Beware of our grading conventions discussed in \cref{sec:notn};
the sphere associated to the trivial representation is $S^{1,1}$,
and the sphere associated to the sign representation is $S^{1,0}$.)

By mapping 
the $\Sigma^{n,n}$-suspension of
this cofiber sequence into 
$X$ and $Y$,
we obtain a diagram
\[
\begin{tikzcd}
\cdots \ar[r] & \piC_{n+1, n} X \ar[r] \ar[d,"f_*"] & \piC_{n,n} X \ar[r] \ar[d,"f_*"] & {} [{C_2}_+, X]_{n,n} \ar[r] \ar[d,"f_*"] & \cdots \\
\cdots \ar[r] & \piC_{n+1, n} Y \ar[r] & \piC_{n,n} Y \ar[r] & {} [{C_2}_+, Y]_{n,n} \ar[r] & \cdots
\end{tikzcd}
\]
in which the rows are long exact sequences.
The left and middle vertical maps are isomorphisms since they are part
of the data of $RO(C_2)$-graded homotopy groups.
By the five lemma, the right vertical map is also an isomorphism.

The free-forgetful adjunction implies that the right vertical map
is an isomorphism $\picl_n UX \to \picl_n UY$, where $U$ is the forgetful functor.
On the other hand, the trivial-action-fixed-point adjunction implies
that the middle vertical map
is an isomorphism $\picl_n (X^{C_2}) \to \picl_n(Y^{C_2})$, 
where $X^{C_2}$ and $Y^{C_2}$ are the fixed-point spectra of $X$ and $Y$ respectively.
These maps are precisely 
the components of the natural transformation 
$f_*\colon\underline{\pi}_n(X) \to \underline{\pi}_n(Y)$
of Mackey functors,
so $f$ induces an isomorphism of Mackey functor homotopy groups.
\end{proof}

\subsection{The underlying homomorphism and the cofiber of $\rho$}
\label{subsctn:underlying}

We write $U$ for the forgetful functor from $C_2$-equivariant
stable homotopy theory to classical stable homotopy theory.
Note that $U S^{s,c}$ equals $S^s$ for all $s$ and $c$, so
$U$ induces a homomorphism $\piC_{s,c} \xrtarr{} \picl_s$.
First we restate a well-known proposition about this homomorphism
for later use.

\begin{prop}
\label{prop:rho-forget}
There is a long exact sequence
\[ 
\cdots \xrtarr{} \piC_{s+1,c} \xrtarr{\rho} \piC_{s,c} \xrtarr{U} \picl_s  
\xrtarr{} \piC_{s,c-1} \xrtarr{\rho} \piC_{s-1,c-1} \xrtarr{} \cdots.
\]
In particular, an element
 $\alpha$ of $\piC_{s,c}$, 
is divisible by $\rho$ if and only if 
its underlying class $U \alpha $ in $\picl_s$ is zero.
\end{prop}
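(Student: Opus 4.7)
The plan is to extract the long exact sequence from a cofiber sequence that relates $\rho$ to the free orbit ${C_2}_+$. The proof of the preceding proposition already uses the $C_2$-CW cofiber sequence ${C_2}_+ \to S^{0,0} \to S^{1,0}$, whose second map is the Euler class of the sign representation, corresponding under desuspension to $\rho \in \piC_{-1,0}$. Smashing this cofiber sequence with $S^{-1,0}$ and rotating produces
\[
S^{-1,0} \xrtarr{\rho} S^{0,0} \xrtarr{q} \Sigma^{0,1} {C_2}_+ .
\]

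I would then apply $[S^{s,c}, -]_{C_2}$ to obtain a Puppe long exact sequence. Two of the three terms are immediate: $[S^{s,c}, S^{-1,0}]_{C_2} \iso \piC_{s+1,c}$ and $[S^{s,c}, S^{0,0}]_{C_2} = \piC_{s,c}$, with the induced map multiplication by $\rho$. For the third term, the Wirthm\"uller isomorphism together with the observation that smashing with ${C_2}_+$ decouples the $C_2$-action on the other factor gives
\[
[S^{s,c}, \Sigma^{0,1} {C_2}_+]_{C_2} \iso [US^{s,c}, US^{0,1}]_{cl} = \picl_s,
\]
independent of $c$. Continuing the Puppe sequence produces $\piC_{s,c-1}$ as the next term (the shift in coweight arises from $\Sigma S^{-1,0} = S^{0,1}$), with the subsequent connecting map again multiplication by $\rho$; splicing yields the displayed six-term pattern.

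Finally, I would identify $q_*: \piC_{s,c} \to \picl_s$ with the forgetful homomorphism $U$. After applying the underlying functor, the cofiber sequence becomes the split classical cofiber sequence $S^{-1} \xrtarr{0} S^{0} \xrtarr{Uq} S^0 \vee S^0$ (the zero map because $U\rho = 0$), and $Uq$ is the inclusion of the summand corresponding to $US^{0,0}$, which is exactly the copy of $\picl_s$ singled out by the Wirthm\"uller identification above. By naturality this forces $q_*(\alpha) = U\alpha$ for all $\alpha \in \piC_{s,c}$. The ``in particular'' claim is then immediate from exactness: $\alpha$ lies in the image of $\rho$ if and only if $U\alpha = 0$. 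The main obstacle is really just bookkeeping --- tracking the bigraded suspensions through the rotation and verifying that the Wirthm\"uller adjunction lines up with the underlying map as claimed; no deeper ideas are needed.
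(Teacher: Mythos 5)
Your proof is correct and uses the same central idea the paper relies on: the cofiber sequence \eqref{eq:cofiber rho} realizing ${C_2}_+$ as (a suspension of) the cofiber of $\rho$, together with the Wirthm\"uller/free--forgetful identification $[S^{s,c}, {C_2}_+]_{C_2} \cong \picl_s$. The paper states the proposition without proof as ``well-known'', but the identical argument is carried out in the proof of \cref{prop:rho-SES}: there the long exact sequence \eqref{eq:les-rho} is obtained \emph{contravariantly}, by applying $[-,S^{0,0}]$ to the cofiber sequence $S^{0,-1} \to {C_2}_+ \to S^{0,0} \xrtarr{\rho} S^{1,0}$, whereas you smash with $S^{-1,0}$, rotate, and apply $[S^{s,c},-]$ \emph{covariantly}. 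By the Wirthm\"uller self-duality of ${C_2}_+$ these are the same computation viewed from opposite ends, so the difference is purely cosmetic. The only place your writeup is slightly less direct than it could be is the identification of $q_*$ with $U$: rather than passing through the split classical cofiber sequence and naturality, it is cleaner to observe directly (as the paper implicitly does) that the map in question is, after the free--forgetful adjunction $[{C_2}_+ \wedge S^{s,c}, S^{0,0}]_{C_2} \cong [S^s, S^0]_{\cl}$, precomposition with the collapse ${C_2}_+ \wedge S^{s,c} \to S^{s,c}$, which is precisely the forgetful homomorphism.
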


Recall the $C_2$-equivariant cofiber sequence
\begin{equation}
\label{eq:cofiber rho}
S^{0,-1} \xrtarr{} {C_2}_+ \xrtarr{} S^{0,0} \xrtarr{\rho} S^{1,0}
\end{equation}
that displays ${C_2}_+$ as a model for (a suspension of) the cofiber of $\rho$.
The free-forgetful adjunction implies that the
$C_2$-equivariant cohomotopy
$[{C_2}_+, S^{0,0}]^{C_2}_{*,*}$ of ${C_2}_+$ 
is isomorphic to $\picl[\tau^{\pm 1}]$.
Here we are considering $\picl[\tau^{\pm 1}]$
as a bigraded object by adjoining a formal parameter $\tau$ in
coweight $1$. 

\begin{rmk}
The Wirthm\"{u}ller isomorphism implies that ${C_2}_+$ is its own Spanier-Whitehead
dual.  Consequently, its cohomotopy is isomoprhic to its more familiar homotopy.
However, the cohomotopy of ${C_2}_+$ is the object that is naturally
relevant here.
\end{rmk}

\begin{prop}
\label{prop:Adams-C2-cohtpy}
The $C_2$-equivariant Adams spectral sequence for
$[{C_2}_+, S^{0,0}]^{C_2}_{*,*}$ is isomorphic to the
classical Adams spectral sequence for $\picl_*$ tensored
with $\Z[\tau^{\pm 1}]$.
\end{prop}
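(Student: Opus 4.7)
The plan is to construct both spectral sequences from a common filtered spectrum. I would begin with the canonical $C_2$-equivariant Adams tower
\[
S^{0,0} = X_0 \leftarrow X_1 \leftarrow X_2 \leftarrow \cdots,
\]
whose successive cofibers $K_i$ are generalized $H\underline{\F}_2$-modules. The $C_2$-equivariant Adams spectral sequence for $[{C_2}_+, S^{0,0}]^{C_2}_{*,*}$ is obtained by applying $[{C_2}_+, -]^{C_2}_{*,*}$ to this tower. Since ${C_2}_+$ is Spanier--Whitehead self-dual by the Wirthm\"{u}ller isomorphism, this is equivalent to applying $\piC_{*,*}$ to the smashed tower $\{{C_2}_+ \smsh X_i\}$ with cofibers $\{{C_2}_+ \smsh K_i\}$.

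The key input is the projection formula: for any $C_2$-spectrum $Y$ there is a natural equivalence ${C_2}_+ \smsh Y \simeq i_*(UY)$, where $i_*$ denotes the left adjoint to the forgetful functor $U$. Applied termwise, this identifies the smashed Adams tower with $i_*$ of the classical Adams resolution of $S^0$, with each $i_*(UK_i)$ the induction of a classical generalized $H\F_2$-module. The free-forgetful adjunction then yields $\piC_{s,c}(i_* Z) = \picl_s(Z)$, independently of the coweight $c$, since $US^{s,c} = S^s$. Consequently each $E_r$-page in bidegree $(s,c)$ coincides with the classical Adams $E_r$-page in stem $s$, and the differentials match because they are induced by the same classical connecting maps $UK_i \to \Sigma UK_{i+1}$. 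The $\Z[\tau^{\pm 1}]$-structure arises from the canonical equivalence ${C_2}_+ \smsh S^{0,c} \simeq i_*(S^0) \simeq {C_2}_+$, valid for every $c$, which exhibits the coweight direction as free via multiplication by the generator $\tau$.

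Because the identification is made at the level of filtered spectra before passing to homotopy, compatibility with differentials, convergence filtrations, and multiplicative structure over the classical Adams spectral sequence is automatic. There is no single hard step: the only ingredient requiring care is the projection formula ${C_2}_+ \smsh Y \simeq i_*(UY)$, a standard feature of equivariant stable homotopy theory, together with the verification that the periodicity parameter coincides with the class $\tau \in \bMC$ of bidegree $(0,1)$ that acts on the $E_2$-page; the latter is traced by applying $\rH^{C_2}_{*,*}(-;\underline{\F}_2)$ to the spectrum-level equivalences above and observing that $\rH^{C_2}_{*,*}({C_2}_+) \cong \F_2[\tau^{\pm 1}]$ with $\tau$ acting invertibly.
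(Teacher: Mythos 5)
Your proof is correct, and at its core it is the same as the paper's: both arguments identify the $C_2$-equivariant Adams tower with the classical one under the forgetful functor $U$ and then reduce $[{C_2}_+,-]^{C_2}_{*,*}$ to classical homotopy groups. The difference is in which adjunction does the heavy lifting. The paper applies the free-forgetful adjunction directly to the cohomotopy exact couple, giving $[{C_2}_+, R_k]^{C_2}_{*,*} \cong [S^0, UR_k]_*$ in one step. You instead first invoke Wirthm\"{u}ller self-duality of ${C_2}_+$ to trade cohomotopy for homotopy of ${C_2}_+ \smsh X_i$, then use the projection formula ${C_2}_+ \smsh Y \simeq i_*(UY)$, then the free-forgetful adjunction. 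This is a legitimate detour and arguably makes the $\Z[\tau^{\pm 1}]$-module structure more transparent, since the coweight-independence is visible already at the level of ${C_2}_+ \smsh S^{0,c} \simeq {C_2}_+$. Two small caveats worth noting: the step $\piC_{s,c}(i_*Z) \cong \picl_s(Z)$ is not a direct application of the \emph{left} adjointness of $i_*$ (which relates maps \emph{out} of $i_*$, not into it); you implicitly use Wirthm\"{u}ller a second time here, via $\mathrm{ind} \simeq \mathrm{coind}$, or equivalently the self-duality of ${C_2}_+$. And you assert, but do not justify, that $UX_i$ is the classical Adams tower; this needs the observation (stated explicitly in the paper) that $U$ preserves cofiber sequences, smash products, and takes equivariant Eilenberg--Mac Lane objects to classical ones.
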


\begin{proof}
Let $\overline{H}$ be the fiber of the $C_2$-equivariant
map $S^{0,0} \xrtarr{} H$, where $H$ is the
$C_2$-equivariant Eilenberg-Mac Lane object that represents cohomology
with coefficients in the constant Mackey functor $\underline{\mathbb{F}}_2$.
Then we can build a $C_2$-equivariant Adams resolution $R_* \xrtarr{} S^{0,0}$
in which $R_k$ is $\overline{H}^{\wedge k}$.

The functor $U$ preserves cofiber sequences, respects smash products,
and takes $C_2$-equivariant
Eilenberg-Mac Lane objects to classical Eilenberg-Mac Lane objects.
Consequently,
$UR_* \xrtarr{} S^0$ is the classical Adams resolution in which
$UR_k$ is $\overline{H_{\cl}}^{\wedge k}$.  Here
$\overline{H_{\cl}}$ is the fiber of the classical map
$S^0 \xrtarr{} H_{\cl}$, and
$H_{\cl}$ is the classical Eilenberg-Mac Lane object that
represents ordinary $\mathbb{F}_2$-cohomology.

The $C_2$-equivariant Adams spectral sequence for
$[{C_2}_+, S^{0,0}]^{C_2}_{*,*}$ derives from an exact couple
$\oplus [{C_2}_+, R_*]_{*,*}$.  By the free-forgetful adjunction,
this exact couple is isomorphic to the exact couple
$\oplus [S^0, UR_*]_{*,*}$, from which the classical Adams
spectral sequence is derived.
\end{proof}

Let $(\ker \rho)_{*,*}$ be the subobject of $\piC_{*,*}$
consisting of elements that are annihilated by $\rho$.
Let $(\coker \rho)_{*,*}$ be the quotient of $\piC_{*,*}$
by the image of $\rho$.

\begin{prop}
\label{prop:rho-SES}
There is a short exact sequence
\[
0 \rightarrow (\coker \rho)_{*,*} \xrightarrow{U} \picl_*[\tau^{\pm 1}] 
\rightarrow (\ker \rho)_{*,*-1} \rightarrow 0
\]
of $\piC_{*,*}$-modules, in which both maps preserve the Adams filtration.
\end{prop}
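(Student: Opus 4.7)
The plan is to extract the short exact sequence directly from the long exact sequence of \cref{prop:rho-forget}, which itself comes from applying $\piC_{*,*}(-)$ to the cofiber sequence $(C_2)_+ \to S^{0,0} \xrtarr{\rho} S^{1,0}$ in \eqref{eq:cofiber rho}. The middle term $\picl_s$ of that long exact sequence should be reinterpreted as the $(s,c)$-graded piece of $\picl_*[\tau^{\pm 1}]$ via \cref{prop:Adams-C2-cohtpy}, so that the SES arises stem-by-stem and coweight-by-coweight.

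First I would chop the long exact sequence at $\picl_s$: exactness at $\piC_{s,c}$ shows that the forgetful map $U$ has kernel equal to the image of multiplication by $\rho$, so $U$ descends to an injection
\[
(\coker\rho)_{s,c} \hookrightarrow \picl_s.
\]
Exactness at $\picl_s$ identifies the image of $U$ with the kernel of the connecting homomorphism $\picl_s \to \piC_{s,c-1}$, while exactness at $\piC_{s,c-1}$ identifies that kernel with $(\ker\rho)_{s,c-1}$. Combining these yields the claimed SES at each bidegree, once one recognizes $\picl_s$ in bidegree $(s,c)$ as $\picl_s\cdot\tau^c$ inside $\picl_*[\tau^{\pm 1}]$.

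Next I would verify the module structure. The long exact sequence is obtained by mapping $S^{*,*}$ into a cofiber sequence of $C_2$-spectra, so every map is $\piC_{*,*}$-linear; the induced maps in the short exact sequence inherit this. The $\piC_{*,*}$-module structure on $\picl_*[\tau^{\pm 1}]$ that results is the evident one in which $\rho$ acts by zero (consistent with the fact that $\rho\circ U = 0$ on the cofiber $(C_2)_+$) and all other elements act through the forgetful ring homomorphism $U\colon \piC_{*,*}\to\picl_*[\tau^{\pm 1}]$. For the Adams filtration claim, the maps are induced by honest maps of $C_2$-spectra, so they respect the $C_2$-equivariant Adams filtration; under the identification of \cref{prop:Adams-C2-cohtpy}, the Adams filtration on $[{C_2}_+,S^{0,0}]^{C_2}_{*,*}$ matches the classical Adams filtration tensored with $\Z[\tau^{\pm 1}]$, so the filtrations on all three terms line up correctly.

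The potential sticking point is not any single step but rather keeping the indexing conventions and the identifications consistent: one must carefully track where the $\tau^c$ factor comes from in the middle term and confirm that the Adams filtration coming from \cref{prop:Adams-C2-cohtpy} agrees, via the connecting map, with the filtration on $(\ker\rho)_{*,*-1}\subset \piC_{*,*-1}$. Once that bookkeeping is done, the statement is essentially a tautological repackaging of the long exact sequence.
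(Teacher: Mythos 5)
Your proposal is correct and follows essentially the same route as the paper: obtain the long exact sequence from the cofiber sequence $(C_2)_+ \to S^{0,0} \xrightarrow{\rho} S^{1,0}$, chop it into short exact sequences after identifying $[{C_2}_+, S^{0,0}]_{*,*}$ with $\picl_*[\tau^{\pm1}]$ via \cref{prop:Adams-C2-cohtpy}, and deduce the Adams filtration claim from the fact that all maps are induced by honest maps of $C_2$-spectra and hence respect the $C_2$-equivariant Adams resolution. The only cosmetic difference is that you cite the covariant LES from \cref{prop:rho-forget} while the paper works with its contravariant form; these are the same sequence for the self-dual object ${C_2}_+$.
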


By ``preserve'', we mean that the maps in the short exact sequence may increase, but cannot decrease, the Adams filtration.

\begin{proof}
The sequence \cref{eq:cofiber rho} contravariantly induces a long exact
sequence
\[
\label{eq:les-rho}
\cdots \xrtarr{}
\piC_{*-1,*-1} \xltarr{\rho} 
\piC_{*,*-1} \xltarr{} 
[{C_2}_+, S^{0,0}]_{*,*} \xltarr{} 
\piC_{*,*} \xltarr{\rho}
\piC_{*+1,*} \xltarr{}
\cdots.
\]
Combined with our description of 
$[{C_2}_+, S^{0,0}]$, we obtain short exact sequences
\[
(\coker \rho)_{*,*} \xrtarr{} 
\picl_{*} \xrtarr{} 
(\ker \rho)_{*,*-1}.
\]

The claim about Adams filtrations follows from contravariant
functoriality of the
$C_2$-equivariant Adams spectral sequence applied to the maps
$S^{0,0} \xrtarr{} {C_2}_+$ and
${C_2}_+ \xrtarr{} S^{0,1}$ in 
sequence \cref{eq:cofiber rho}.
\end{proof}

We will rely on \cref{prop:rho-SES} many times in \cref{sec:Adams} 
to establish Adams differentials in the $C_2$-equivariant Adams
spectral sequence and also in \cref{sec:cofibrho,sec:hiddenAdams} to establish
multiplications in $\piC_{*,*}$ that are hidden in the
$C_2$-equivariant Adams spectral sequence.

\subsection{$\tau$-periodicity}
\label{subsctn:tau-periodicity}
We next turn to $\tau$-periodicity, as displayed in \cref{fig:InterestingRegion}.
Bredon showed \cite{B} 
that the $C_2$-equivariant stable stems exhibit a periodicity in the coweight direction (see also \cite{L}*{Proposition 6.1}).
We introduce the following notation in order to state the periodicity results.

\begin{notn}
\label{notn:ps}
For any $s\geq 0$, let $p_s=2^{\varphi(s+1)}$, 
 where $\varphi(s+1)$ is the number of positive integers $j \leq s+1$ such that $j\equiv 0,1,2, \text{or } 4\pmod8$.
\end{notn}

\begin{center}
\begin{tabular}{c|ccccccccccccccccc}
$s$ & 0 & 1 & 2 & 3 & 4 & 5 & 6 & 7 & 8 & 9 & 10 & 11 & 12 & 13 & 14 & 15 & 16 
\\ \hline
$p_s$ & 2 & 4 & 4 & 8 & 8 & 8 & 8 & 16 & 32 & 64 & 64 & 128 & 128 & 128 & 128 & 256 & 512 
\end{tabular}
\end{center}

Landweber
demonstrated a periodicity in the elements that are annihilated
by a power of $\rho$.
In negative coweight, all classes are $\rho$-power torsion, and
the periodicity result takes the form:

\begin{thm}\cite{B}\label{Landw1} Let $s\geq 0$ and $c\leq -2$. 
There is a periodicity isomorphism
\[ T^{-p_s}\colon \piC_{s,c} \iso \piC_{s,c-p_s},\]
where $p_s$ is as in \cref{notn:ps}.
\end{thm}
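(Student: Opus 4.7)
The plan is to reduce the periodicity statement to classical James periodicity for stunted real projective spectra. The starting point, a standard consequence of the Segal conjecture (and visible in \cref{fig:InterestingRegion}), is that for $c\leq -2$ every element of $\piC_{s,c}$ is $\rho$-power torsion. Hence for fixed $(s,c)$ a sufficiently large power $\rho^k$ annihilates $\piC_{s,c}$, and every stable map $S^{s,c}\to S^{0,0}$ factors through the cofiber $S^{0,0}/\rho^k$.

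The next step is to identify this cofiber. Iterating the cofiber sequence \eqref{eq:cofiber rho} realizes $S^{0,0}/\rho^k$ as (a suitable suspension of) the Thom spectrum of $k$ copies of the tautological sign representation over a point; via the free-forgetful adjunction recalled in \cref{prop:Adams-C2-cohtpy}, the underlying spectrum is a stunted real projective space $\mathbb{R}P^{k-1}$. Combining this with \cref{prop:rho-SES} lets one translate $\piC_{s,c}$, in the $\rho$-power-torsion range, into classical stable homotopy groups of stunted projective spectra; passing to the limit in $k$ identifies $\piC_{s,c}$ with $\pi^{\mathrm{st}}_{s}(\mathbb{R}P^{\infty}_{-c-1})$, with degree shifts dictated by the grading conventions of \cref{sec:notn}.

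With this identification in hand, I would invoke James periodicity: for each $s$ there is a stable equivalence $\mathbb{R}P^{N+p_s}_{n+p_s}\simeq \Sigma^{p_s}\mathbb{R}P^{N}_{n}$ valid through the $s$-stem, with period exactly $p_s=2^{\varphi(s+1)}$ as in \cref{notn:ps}. This equivalence passes through the colimit defining $\mathbb{R}P^{\infty}_{-c-1}$ and induces the desired isomorphism $T^{-p_s}\colon \piC_{s,c}\iso \piC_{s,c-p_s}$, where $T^{-p_s}$ is the shift corresponding to the James equivalence.

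The principal obstacle is bookkeeping: matching the $(s,c)$-convention of \cref{sec:notn} to the convention in which James periodicity is usually phrased, and verifying that the exponent predicted by $\varphi$ matches the exponent in James's theorem on the nose (rather than, say, twice it, which would weaken the statement). One must also confirm that a uniform bound on $\rho$-power torsion works throughout the periodicity range, so that the finite stunted projective spectra assemble into the stable model without loss of information.
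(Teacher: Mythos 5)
Your overall strategy—reduce periodicity for $c\leq -2$ to periodicity of stable stems of stunted projective spectra—is the same one the paper (and Bredon/Landweber) uses, but a key intermediate step is wrong. The paper's own proof uses the isotropy‑separation cofiber sequence $(EC_2)_+\to S^{0,0}\to\widetilde{EC_2}$, whose long exact sequence identifies $\piC_{s,c}\iso \piC_{s,c}((EC_2)_+)$ for $c\leq -2$ because the two flanking terms are negative‑degree classical stable stems and hence vanish (no Segal conjecture needed—your attribution of the $\rho$‑power‑torsion fact to the Segal conjecture is a red herring); it then simply cites Bredon for the periodicity of $\piC_{s,c}((EC_2)_+)$. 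Your proposal tries to unpack that cited periodicity, which is fine in spirit, but the unpacking is incorrect.

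The error is in the sentence asserting that, ``via the free‑forgetful adjunction recalled in \cref{prop:Adams-C2-cohtpy}, the underlying spectrum'' of $S^{0,0}/\rho^k$ ``is a stunted real projective space $\mathbb{R}P^{k-1}$.'' The underlying (i.e.\ image under the forgetful functor $U$) spectrum of $S^{0,0}/\rho^k$ is not a stunted projective space: $U(\rho^k)$ lies in $\picl_{-k}=0$, so $U(S^{0,0}/\rho^k)\simeq S^0\vee S^{1-k}$, a wedge. The spectrum that is a stunted projective space is the $C_2$‑\emph{orbit} spectrum $(S^{0,0}/\rho^k)/C_2$ (equivalently the genuine fixed points, since $S^{0,0}/\rho^k$ is a free $C_2$‑spectrum), and the identification of $\piC_{s,c}(S^{0,0}/\rho^k)$ with stable stems of that stunted projective spectrum is the Adams isomorphism—not the free‑forgetful adjunction, and not \cref{prop:Adams-C2-cohtpy}, which computes cohomotopy of $C_{2+}$ and says nothing about $S^{0,0}/\rho^k$. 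Relatedly, the statement that ``every stable map $S^{s,c}\to S^{0,0}$ factors through the cofiber $S^{0,0}/\rho^k$'' is not meaningful as written, since there is no map $S^{0,0}/\rho^k\to S^{0,0}$; what one actually has is injectivity of $\piC_{s,c}\to\piC_{s,c}(S^{0,0}/\rho^k)$ from the cofiber LES, which still needs to be upgraded to an isomorphism with the stunted‑projective‑space stem. Once these identifications are corrected (Adams isomorphism, orbit spectrum, degree bookkeeping in $RO(C_2)$), the James‑periodicity endgame goes through and matches the period $p_s$, but as written the middle of your argument would not compile.
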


For example, for $c \leq -2$ we have periodicity operators
\[ 
	T^{-2}\colon \piC_{0,c} \iso \piC_{0,c-2},
	\qquad
	T^{-4}\colon \piC_{1,c} \iso \piC_{1,c-4},
\] 
\[ 
	T^{-4}\colon \piC_{2,c} \iso \piC_{2,c-4},
	\qquad
	T^{-8}\colon \piC_{3,c} \iso \piC_{3,c-8}.
\]

In positive coweights, the periodicity result takes the form:

\begin{thm}\cite{B}\label{Landw2} Let $s\geq 0$ and 
$c \geq \frac{s-1}2$.
There is a periodicity isomorphism
$T^{p_s}$ from the $\rho$-power torsion subgroup of $\piC_{s,c}$ to
the $\rho$-power torsion subgroup of $\piC_{s,c+p_s}$,
where $p_s$ is as in \cref{notn:ps}.
\footnote{In \cite{BS}*{Theorem~7.4}, this is stated in the smaller region $s>2w$, which corresponds to $c > \frac{s}2$.}
\end{thm}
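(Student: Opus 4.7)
The plan is to reduce the statement to a James periodicity equivalence by identifying the $\rho$-power torsion subgroup of $\piC_{s,c}$ with a classical stable homotopy group of an appropriate stunted real projective spectrum, and then invoking the classical James periodicity theorem with $p_s$ as the relevant vector field number.

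First, I would iterate the cofiber sequence \eqref{eq:cofiber rho} to produce, for each $n \geq 1$, a finite $C_2$-equivariant spectrum $C(\rho^n)$ modelling the cofiber of $\rho^n \colon S^{-n, 0} \to S^{0,0}$, fitting into a tower $C(\rho) \to C(\rho^2) \to C(\rho^3) \to \cdots$. An element $\alpha \in \piC_{s,c}$ is $\rho^n$-torsion exactly when the composite $\rho^n \cdot \alpha$ vanishes, which by a standard cofiber sequence argument is detected by a lift of $\alpha$ through the natural map $C(\rho^n) \to S^{0,0}$ (up to indeterminacy handled inductively). This identifies the $\rho$-power torsion subgroup of $\piC_{s,c}$ with a colimit, over $n$, of subquotients of $[S^{s,c}, C(\rho^n)]^{C_2}$.

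Second, I would use the fact that $C(\rho^n)$ is a finite $C_2$-CW spectrum built entirely from ${C_2}_+$-cells, so by the free-forgetful adjunction, $[S^{s,c}, C(\rho^n)]^{C_2}$ is a classical stable homotopy group of a stunted real projective spectrum $P^{m+n-1}_{m}$ for $m$ determined by $c$. The range hypothesis $c \geq \frac{s-1}{2}$ is exactly what places the degree $s$ in the stable range for these spaces, uniformly as $n$ varies. Under this translation, the periodicity operator $T^{p_s}$ becomes the operator induced by the James periodicity equivalence
\[
    \Sigma^{p_s} P^{m+n-1}_m \simeq P^{m+n-1+p_s}_{m+p_s},
\]
which holds stably at the prime $2$ for $p_s$ the vector field number of \cref{notn:ps}.

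Third, the content of James periodicity gives that the above suspension isomorphism exists precisely when the relevant cells have dimensions in the James range, which in our bigraded language is the stated condition $c \geq \frac{s-1}2$. Composing with the classical Spanier-Whitehead / Atiyah duality between $\rho^n$-cofibers and these stunted projective spaces then furnishes the isomorphism $T^{p_s}$ on $\rho$-power torsion subgroups.

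The principal obstacle is the bookkeeping in the translation: one must keep careful track of which stunted projective space $P^{m+n-1}_m$ corresponds to which $(s,c)$, and verify that the stable range of the James equivalence matches the hypothesis $c \geq \frac{s-1}2$ uniformly across all $\rho^n$-torsion subgroups. Once that dictionary is pinned down via Atiyah duality for $S(n\sigma)$ and a dimension count, the isomorphism is a formal consequence of the James periodicity equivalence.
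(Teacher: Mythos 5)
Your plan (translate to stunted real projective spectra via the cofibers of $\rho^n$ and invoke James periodicity) is essentially the proof that underlies the Bredon and Landweber results that the paper simply cites; it is not a new route. The paper's own argument is much more compressed: it runs the isotropy separation sequence $(EC_2)_+ \to S^{0,0} \to \widetilde{EC_2}$, cites Bredon for the $p_s$-periodicity of $\piC_{s,c}((EC_2)_+)$ (which is exactly where the stunted projective spectra and James periodicity that you sketch actually live), and then uses the surjectivity of the geometric fixed points homomorphism $\Phi$ in the stated range to identify $\piC_{s,c}((EC_2)_+)$ with the $\rho$-power torsion of $\piC_{s,c}$.

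There is a genuine gap in your argument concerning the role of the hypothesis $c \geq \frac{s-1}{2}$. You attribute it to placing $s$ in the ``James stable range.'' But stable James periodicity for the stunted projective spectra is unconditional, and consequently so is the $p_s$-periodicity of the groups $\piC_{s,c}((EC_2)_+)$; that is precisely why \cref{Landw1} holds for all $c \leq -2$ with no constraint relating $c$ to $s$. The hypothesis $c \geq \frac{s-1}{2}$ is doing something different: it is the condition under which the geometric fixed points map $\Phi\colon \piC_{s+1,c+1} \to \picl_{c+1}$ is surjective, which (via the long exact sequence of the isotropy separation cofiber sequence) makes the map $\piC_{s,c}((EC_2)_+) \to \piC_{s,c}$ \emph{injective}, so that its image --- the $\rho$-power torsion --- is genuinely isomorphic to the periodic group $\piC_{s,c}((EC_2)_+)$. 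Your version hedges this as ``a colimit of subquotients,'' but a periodicity of the ambient groups $\piC_{s,c}(S(n\sigma)_+)$ does not by itself descend to a periodicity of their images inside the varying targets $\piC_{s,c}$ and $\piC_{s,c+p_s}$; one needs the injectivity to transport the isomorphism, and that is exactly where the range hypothesis enters. (A minor additional slip: there is no natural map $C(\rho^n) \to S^{0,0}$ in the cofiber sequence for $\rho^n$; depending on which way you run $\rho^n$, an element $\alpha$ with $\rho^n\alpha = 0$ either lifts through the fiber $S(n\sigma)_+ \to S^{0,0}$ or extends over a suspended cofiber, but it does not lift through $C(\rho^n)$. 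This is fixable, but the misattribution of the hypothesis is the substantive issue.)
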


Moreover, the periodicity identifies periodic stems in negative coweight with the periodic stems in positive coweight:

\begin{prop}
\label{prop:gapbridging}
Let $s \geq 0$ and $c \leq -2$. If the integer $n$ satisfies $c + n p_s \geq \frac{s-1}2$, then there is a periodicity isomorphism
from 
$\piC_{s,c}$ to the $\rho$-power torsion subgroup of $\piC_{s,c+n p_s}$.
\end{prop}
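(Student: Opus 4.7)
The plan is to realize the desired map as multiplication by the $n$-fold power of the periodicity class $T^{p_s} \in \piC_{0, p_s}$ that appears in both \cref{Landw1} and \cref{Landw2}. First I would observe that since $c \leq -2$, every element of $\piC_{s,c}$ is $\rho$-power torsion (as noted in the paragraph preceding \cref{Landw1}), so the image of multiplication by $T^{np_s}$ automatically lies in the $\rho$-power torsion subgroup of $\piC_{s, c+np_s}$. This settles the containment part of the conclusion, so the task reduces to verifying bijectivity.

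Next I would prove bijectivity by splicing together iterated applications of the two periodicity theorems. Let $k$ be the largest integer with $c + kp_s \leq -2$; iterating \cref{Landw1} in its inverse form yields
\[ \piC_{s,c} \iso \piC_{s,c+p_s} \iso \cdots \iso \piC_{s,c+kp_s}. \]
Let $j$ be the smallest integer with $c + jp_s \geq \frac{s-1}{2}$; iterating \cref{Landw2} in its inverse form on $\rho$-power torsion subgroups yields a corresponding chain linking the $\rho$-power torsion of $\piC_{s,c+jp_s}$ to that of $\piC_{s,c+np_s}$. If $j = k+1$, a single additional application of the periodicity operator joins the two chains and completes the argument.

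The main obstacle will be bridging any intermediate coweights lying in the gap region $(-2, \tfrac{s-1}{2})$, where neither \cref{Landw1} nor \cref{Landw2} applies directly; this gap is already nontrivial in simple cases such as $s=0$, $c=-3$, $n=2$, where one must cross the coweight $-1$. To handle this I would appeal to Landweber's identification \cite{L} of $\rho$-power torsion $C_2$-equivariant stable homotopy groups with the classical stable homotopy of stunted real projective spaces, under which multiplication by $T^{p_s}$ becomes James periodicity. Since James periodicity is an isomorphism throughout the relevant range, this extends the chain of isomorphisms across the gap on $\rho$-power torsion subgroups and finishes the proof.
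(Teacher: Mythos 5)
Your proposal correctly identifies the central obstacle---the gap region $\left(-2, \tfrac{s-1}{2}\right)$ where neither \cref{Landw1} nor \cref{Landw2} applies---and the correct tool for bridging it, namely Bredon/Landweber periodicity on stunted projective space homotopy, which is exactly what the paper uses in the guise of the periodicity of $\piC_{s,c}((EC_2)_+)$. In that sense you have the same argument as the paper. However, your presentation is more convoluted than necessary: the chain-splicing of \cref{Landw1} and \cref{Landw2} is superfluous, since the paper applies the periodicity of $\piC_{s,c}((EC_2)_+)$ in a single step, using the long exact sequence arising from $(EC_2)_+ \to S^{0,0} \to \widetilde{EC_2}$ to identify $\piC_{s,c}((EC_2)_+)$ with $\piC_{s,c}$ for $c \leq -2$ and with the $\rho$-power torsion of $\piC_{s,c}$ for $c \geq \frac{s-1}{2}$.

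One imprecision worth flagging: you posit a ``periodicity class $T^{p_s} \in \piC_{0,p_s}$'' whose multiplication realizes the isomorphism. Neither the paper nor Bredon formulates the periodicity this way; $T^{p_s}$ is an isomorphism of groups induced by a stable equivalence of stunted projective spectra (James periodicity), not necessarily multiplication by an element of $\piC_{0,*}$. Since the rest of your argument only uses $T^{p_s}$ as an abstract isomorphism, this is a presentational slip rather than a fatal gap, but the phrase ``multiplication by $T^{p_s}$'' should be revised. Relatedly, the observation in your first paragraph that ``the image lies in the $\rho$-power torsion'' is unnecessary once $T^{p_s}$ is treated correctly: the codomain identification as $\rho$-power torsion comes from the long exact sequence, not from a containment argument.
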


\begin{pf}[Proof of \cref{Landw1}, \cref{Landw2}, and \cref{prop:gapbridging}]
The cofiber sequence 
\[(EC_2)_+ \rtarr S^{0,0} \rtarr \widetilde{EC_2}\]
gives rise to a long exact sequence
\[\cdots \rtarr \picl_{c+1} \rtarr \piC_{s,c}((EC_2)_+) \rtarr \piC_{s,c} \xrtarr{\Phi} \picl_{c} \rtarr \dots\]
Bredon established (\cite{B}, but see also \cite{L}*{Proposition~6.1}) the periodicity of the groups $\piC_{s,c}((EC_2)_+)$.
If $c\leq -2$, then both outside groups vanish, giving an isomorphism $\piC_{s,c}((EC_2)_+) \iso \piC_{s,c}$. 
On the other hand, if 
$c \geq \frac{s-1}2$,
then the geometric fixed points homomorphism 
$\Phi\colon \piC_{s+1,c+1} \rtarr \picl_{c+1}$ is surjective by \cite{B}.
In computational terms, $\Phi$ has the effect of
inverting $\rho$, 
so $\piC_{s,c}((EC_2)_+) \rtarr \piC_{s,c}$ is an identification with the $\rho$-power torsion.
\end{pf}

We use \cref{prop:gapbridging} later in the proof of \cref{prop:GArhotauPermCycles} to establish that certain classes are permanent cycles in the $C_2$-equivariant Adams spectral sequence.

\section{The equivariant Bockstein spectral sequence}
\label{sec:BockSS}

Recall that $\bMR \iso \bF[\tau,\rho]$ and that 
$\bMC \iso \bMR \oplus NC$, where
$NC$ is the negative cone. 
\cref{fig:M2C} displays $\bMC$.
Elements of $NC$ are of the form $\frac{\ga}{\rho^j\tau^k}$, where $j\geq 0$ and $k\geq 1$.

The equivariant dual Steenrod algebra $\cAC_*$ is a Hopf algebroid over $\bMC$.
As an $\bMC$-algebra, it has generators $\tau_0, \tau_1, \ldots$
and $\xi_1, \xi_2, \ldots$, subject to the relations
\[
\tau_i^2 = \tau \xi_{i+1} + \rho \tau_{i+1} + \rho \tau_0 \xi_{i+1}.
\]
The right unit is given by
\[ \eta_R(\rho) = \rho, \qquad \eta_R(\tau) = \tau + \rho \tau_0,\]
and
\begin{equation}
\label{eq:etaR}
\eta_R \left( \frac{\ga}{\rho^j \tau^k} \right) = \frac{\ga}{\rho^j \tau^k}  \cdot \left[  \sum_{i\geq 0} \left( \frac{\rho}\tau \tau_0 \right)^i \right]^k
\end{equation}
\cite{GHIR}*{(2-3)}, \cite{HK}.

\begin{figure}
\caption{{The $C_2$-equivariant homology of a point}}
\label{fig:M2C}
\begin{center}
\begin{tikzpicture}[scale=0.5]
\draw[->,thick,xshift={-1.85ex},yshift={-1.85ex}] (-4,-4) -- (-4,4+0.8);
\draw[->,thick,xshift={-1.85ex},yshift={-1.85ex}] (-4,-4) -- (3+0.8,-4);
\filldraw[color=red!40, thick, rounded corners, xshift=1.7ex] (0, -0.2) -- (0, 4.2) -- (-4.45,4.2) -- (-4.45,-0.2) -- cycle;
\filldraw[color=red!40, thick, xshift=1.7ex] (0, 1) -- (0, 4.2) -- (-4.45,4.2) -- (-4.45,-0.2) -- (-2,-0.2) -- cycle;
\draw[color=red!60, thick,  xshift=1.7ex] (0, -0) -- (0, 4.2) -- (-4.45,4.2) -- (-4.45,-0.2) -- (-0.25,-0.2) to [bend right=30] cycle;
\filldraw[color=green!40, thick, rounded corners, xshift=-1.5ex] (0, -1.8) -- (3.45, -1.8) -- (3.45,-4.2) -- (0,-4.2) -- cycle;
\filldraw[color=green!40, thick,  xshift=-1.5ex] (1, -1.8) -- (3.45, -1.8) -- (3.45,-4.2) -- (0,-4.2) -- (0,-2) -- cycle;
\draw[color=olive!100,thick,  xshift=-1.5ex] (0,-2) -- (0,-4.2) -- (3.45,-4.2) -- (3.45,-1.8) -- (0.2,-1.8) to [bend right=30] (0,-2);
\draw[gray] (-4,-4) grid (3,4);

\foreach \y in {-4,...,4} {
    \node at (-4.8,\y) {\y};
}
\foreach \x in {-4,...,3} {
    \node at (\x,-4.7) {\x};
}
\node at (3.75,-4.3) {$s$};
\node at (-4.3,4.75) {$c$};

\foreach \x in {0,...,-4} {
  \foreach \y in {0,...,4} {
    \filldraw (\x,\y) circle (2.5pt);
  }
}
\node at (1.5,2.5) {$\bMR$};
\node at (-2,-3) {$NC$};
\node[xshift={7pt}] at (0,0) {$1$};
\node[xshift={7pt}] at (0,1) {$\tau$};
\node[xshift={-2.5pt}, yshift={-7.5pt}] at (-1,0) {$\rho$};
\node[xshift={-7pt}] at (0,-2) {$\frac{\ga}{\tau}$};
\node[xshift={-3pt},yshift={10pt}] at (1,-2) {$\frac{\ga}{\rho\tau}$};
\node[xshift={-9pt}] at (0,-3) {$\frac{\ga}{\tau^2}$};

\foreach \x in {0,...,3} {
  \foreach \y in {-2,...,-4} {
    \filldraw (\x,\y) circle (2.5pt);
  }
}

\end{tikzpicture}
\end{center}
\end{figure}
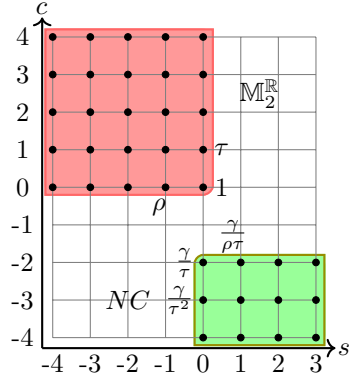

As discussed in \cite{GHIR}*{Proposition~3.1}, there is a $\rho$-Bockstein spectral sequence converging to the groups $\Ext_{C_2}$.
The splitting of $\bMC$ extends to a splitting as $\cAR_*$-comodules and 
gives rise to a splitting $\Ext_{C_2} \iso \Ext_\R \oplus \Ext_{NC}$ \cite{GHIR}*{Proposition~2.2}, as well as a corresponding splitting of the Bockstein $E_1$-page 
\[ E_1 \iso E_1^+ \oplus E_1^- \Rightarrow \Ext_\R \oplus \Ext_{NC}.\]
The $E_1^+$-page is the $E_1$-page of the $\R$-motivic Bockstein spectral sequence, as discussed in \cite{LowMW} and \cite{BI}.

The $E_1^-$-page consists entirely of families of
infinitely $\rho$-divisible elements.  Every element of
$E_1^-$ is $\rho^k$-torsion for some $k$.
In less formal terms, $\rho$-towers extend infinitely in one direction,
but they always terminate in the other direction.

\begin{defn}
For $k \leq -1$, the $k$th stage of the $\rho$-filtration on $E_1^-$
consists of all elements $a$ such that $\rho^{-k} a$ equals zero.
\end{defn}

We frequently refer to the $\rho$-filtration of a specific element
$a$ in $E_1^-$.  In this context, we mean the 
least value of
$k$ such that $a$ belongs to the $k$th stage of the filtration.

\begin{rmk}
The filtration grading of $E_1^-$ is concentrated in negative degrees.
We will use the phrases ``higher filtration" and ``lower filtration"
in the strict numerical sense, e.g. filtration $-1$ is higher than
filtration $-2$.
The $\rho$-filtration is decreasing in the sense that
the $(k+1)$st stage is a subset of the $k$th stage.
This is compatible with the $\rho$-filtration on $E_1^+$, 
where the $k$th stage consists of all multiples of $\rho^k$.
As $k$ tends to $-\infty$, the stages of the filtration on $E_1^-$ 
get larger, and their union is equal to $E_1^-$.
\end{rmk}

\begin{prop}
\label{prop:E-minus-structure}
\mbox{}
\begin{enumerate}
\item
If $a$ is nonzero and annihilated by $\rho$ in $E_r^-$,
then the $\rho$-filtration of $a$ is $-1$.
\item
If an element of $E_r^-$ 
has $\rho$-filtration less than or equal to $-r$,
then it is $\rho$-divisible.
\end{enumerate}
\end{prop}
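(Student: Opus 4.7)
The plan is to prove both parts by a joint induction on $r$, using two structural ingredients. First, every element of $E_1^-$ is infinitely $\rho$-divisible, because in $NC$ every element $\frac{\gamma}{\rho^j\tau^k}$ equals $\rho\cdot\frac{\gamma}{\rho^{j+1}\tau^k}$; this structure is inherited by $E_1^-$ since $\rho$ is a polynomial variable acting on the coefficient side. Second, the differentials in a $\rho$-Bockstein spectral sequence have the Bockstein form $d_s(x)=\rho^s y$, by the general structure of a spectral sequence associated to a $\rho$-filtration.

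The base case $r=1$ is essentially definitional. For part (1), elements of $E_1^-$ annihilated by $\rho$ are, by definition, exactly those of $\rho$-filtration $-1$. For part (2), every nonzero element of $E_1^-$ is $\rho$-divisible, so in particular anything of filtration at most $-1$ is $\rho$-divisible.

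For the inductive step on part (1), suppose $a \in E_r^-$ is nonzero with $\rho a = 0$, and choose a representative $\tilde a$ in $E_1^-$. If $\rho\tilde a = 0$ in $E_1^-$ we are done; otherwise $\rho\tilde a$ must be a boundary on some earlier page $E_s^-$ with $s \leq r-1$, say $\rho\tilde a = d_s(\tilde b)$. Invoking the Bockstein form to write $d_s(\tilde b)=\rho^s\tilde c$, we obtain $\rho(\tilde a - \rho^{s-1}\tilde c)=0$ in $E_1^-$, which exhibits a candidate representative of $a$ at $\rho$-filtration $-1$. The subtlety is to verify that this candidate really represents the same class in $E_r^-$: here one uses the infinite $\rho$-divisibility of $E_1^-$ to write $\tilde b = \rho \tilde b'$ and then iterates, identifying $\rho^{s-1}\tilde c$ up to a filtration $-1$ error with a boundary, and hence with zero in $E_r^-$.

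For the inductive step on part (2), take a representative $\tilde a$ of filtration at most $-r$ and use infinite $\rho$-divisibility to write $\tilde a = \rho\tilde a_1$ with $\tilde a_1$ of filtration at most $-(r+1)$. The goal is to modify $\tilde a_1$ by adding filtration $-1$ (hence $\rho$-torsion) corrections, which leave $\rho\tilde a_1=\tilde a$ unchanged, so that the corrected class becomes a $d_s$-cycle for every $s \leq r-1$ and therefore survives to $E_r^-$. The obstruction at each stage is $d_s(\tilde a_1)$; but $\rho d_s(\tilde a_1) = d_s(\rho\tilde a_1) = d_s(\tilde a)=0$ in $E_s^-$, so by part (1) applied at page $s$ (which is part of our inductive hypothesis) the class $d_s(\tilde a_1)$ has filtration $-1$, and this is precisely the flexibility needed to absorb it into a $\rho$-torsion correction of $\tilde a_1$. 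The main obstacle is the bookkeeping of $\rho$-filtration across pages: one has to verify that each correction lives at the correct filtration and that the modifications are compatible with both $\rho$-multiplication and with the Bockstein form of the differentials.
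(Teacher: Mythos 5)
Your overall strategy --- a joint induction on $r$ establishing both parts simultaneously --- is the same as the paper's, but the realization differs and leaves a genuine gap in part~(1). You lift all the way back to $E_1^-$ and invoke as a structural input the ``Bockstein form'' $d_s(x)=\rho^s y$. In the negative cone, where the $\rho$-filtration measures $\rho$-torsion rather than $\rho$-divisibility, this is not a neutral formal fact: for a class $x$ living on the page $E_s^-$ with $s>1$, the existence of a quotient $y$ in $E_s^-$ presupposes that $d_s(x)$ is $\rho^s$-divisible there, and such divisibility is precisely the content of part~(2) applied at earlier pages. Asserting the Bockstein form as input, as you do when writing $d_s(\tilde b)=\rho^s\tilde c$, is therefore circular unless at that exact point you quote the inductive hypothesis to justify each division by $\rho$. (It does hold for free on $E_1^-$, where everything is infinitely $\rho$-divisible, but your $\tilde b$ is on page $s$, not page $1$.)

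The paper sidesteps the entire issue by never going further back than $E_{r-1}^-$. For part~(1) it argues by contradiction: if the nonzero $\rho$-torsion class $a$ had filtration degree $\leq -2$, then (1:$r-1$) applied contrapositively gives $\rho a\neq 0$ in $E_{r-1}^-$, so $\rho a=d_{r-1}(b)$; the filtration count then puts $b$ in filtration $\leq -r$, and (2:$r-1$) --- not an abstract Bockstein form --- supplies $b=\rho b'$. Then $d_{r-1}(b')-a$ is $\rho$-torsion of filtration $\leq -2$, which (1:$r-1$) forces to vanish, exhibiting $a$ as a $d_{r-1}$-boundary. Each division by $\rho$ is licensed by an explicitly cited inductive hypothesis, and no bookkeeping across multiple pages is required, so the ``main obstacle'' you flag at the end of your sketch simply does not arise. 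Your sketch of part~(2) is closer to complete: if you run the degree count honestly, $d_s(\tilde a_1)$ for $s\leq r-1$ would have to be simultaneously $\rho$-torsion (hence filtration $-1$ by (1:$s$)) and of filtration at most $-(r+1)+s\leq -2$, a contradiction forcing $d_s(\tilde a_1)=0$ outright; the machinery of ``absorbing it into a $\rho$-torsion correction'' is not needed and muddies what is really a clean contradiction.
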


We combine the distinct statements of \cref{prop:E-minus-structure}
into one result because we prove them simultaneously by induction.

\begin{proof}
Write (1:$r$) and (2:$r$) for
each of the two statements applied to the $E_r^-$ page.

Statement (2:$r$) follows from (1:$r-1$) and (2:$r-1$).
Suppose that $a$ has $\rho$-filtration less than or equal to $-r$
in $E_r^-$.  By (2:$r-1$), $a = \rho a'$ in $E_{r-1}^-$ for some $a'$.
Then $a$ is also $\rho$-divisible in $E_r^-$, unless there is a 
non-zero differential $d_{r-1}(a') = b$ such that $\rho b = 0$.
In that case, (1:$r-1$) would imply that
the $\rho$-filtration of $b$ must be $-1$, so
the $\rho$-filtration of $a'$ would be $-r$, and the $\rho$-filtration
of $a$ would be $-r+1$.  By contradiction, $d_{r-1}(a')$ must be zero, and
$a$ is $\rho$-divisible in $E_r^-$.

Statement (1:$r$) follows from (1:$r-1$) and (2:$r-1$).
Suppose that $\rho a = 0$ in $E_r^-$.
If the $\rho$-filtration of $a$ is less than $-1$, then
$\rho a$ is non-zero in $E_{r-1}^-$ by (1:$r-1$).
Therefore, there exists a non-zero differential
$d_{r-1}(b) = \rho a$ in $E_{r-1}^-$.
The $\rho$-filtration of $b$ is at most $-r$, so $b = \rho b'$
by (2:$r-1$).  Then $d_{r-1}(b') - a$ is annihilated by $\rho$
in $E_{r-1}^-$.  Again by (1:$r-1$), we conclude that
$d_{r-1}(b') = a$, so $a = 0$ in $E_r^-$.
\end{proof}

\begin{prop}
\label{prop:E-minus-structure3}
If a nonzero element of $E_r^-$ is $\rho$-divisible, then it is uniquely
$\rho$-divisible.
\end{prop}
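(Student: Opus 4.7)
The plan is a contradiction argument paralleling the proof of \cref{prop:E-minus-structure}. Suppose $a \neq 0$ in $E_r^-$ admits two $\rho$-divisors $b_1$ and $b_2$, and set $c = b_1 - b_2$, so that $\rho c = 0$. If $c \neq 0$, part (1) of \cref{prop:E-minus-structure} forces $c$ to have $\rho$-filtration exactly $-1$. At the same time, the hypothesis $a = \rho b_i \neq 0$ prevents either $b_i$ from lying in filtration $-1$: if $a$ has $\rho$-filtration $-m$ (with $m \geq 1$), then from $\rho^m b_i = \rho^{m-1} a \neq 0$ and $\rho^{m+1} b_i = \rho^m a = 0$ one reads off that each $b_i$ has $\rho$-filtration exactly $-(m+1)$. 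The aim is to convert this filtration mismatch into a contradiction.

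I would implement this by augmenting the simultaneous induction on $r$ used for \cref{prop:E-minus-structure} with the uniqueness assertion, proving all three statements together. In the inductive step, once $a$ has filtration $-m$ with $m+1 \geq r$, part (2) of \cref{prop:E-minus-structure} gives further $\rho$-divisors $b_i = \rho b_i'$ for each $i$. The inductive uniqueness hypothesis on $E_{r-1}^-$ applied to the nonzero element $b_1$ (lifted to $E_{r-1}^-$) then identifies $b_1'$ and $b_2'$ modulo a $d_{r-1}$-boundary; multiplying by $\rho$ propagates the identification back to $b_1$ and $b_2$, forcing $c = 0$ on $E_r^-$. The low-$m$ cases, where $b_i$ is not yet deep enough in filtration to apply part (2), need to be handled by a direct argument lifting to $E_{r-1}^-$ and using the uniqueness hypothesis on the lifts.

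The main obstacle will be the base case $r = 1$. This cannot be settled by the abstract $\bF_2[\rho]$-module structure alone, since a single Prüfer-style $\rho$-tower admits non-unique divisors in general; the uniqueness must use the more refined description of $E_1^-$ developed in \cref{sec:NCPeriodic}. The plan is to express any putative divisor in terms of the canonical $Q$-class normal form (see \cref{Notn:Qclasses}) and then use the explicit relations among $Q$-classes under multiplication by $\rho$ to match the two expressions. I would want to verify this base step carefully against the explicit module structure of $E_1^-$, since this is where the delicate point of the proposition really lives.
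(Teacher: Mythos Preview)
Your first paragraph already contains the paper's argument; the contradiction you are hunting for is immediate once you use that the $\rho$-filtration on $E_r^-$ is the Bockstein spectral sequence filtration and hence part of the \emph{grading} on each page. Concretely, $E_r^-$ splits as $\bigoplus_p E_r^{-,p}$, with $\rho$ carrying $E_r^{-,p}$ to $E_r^{-,p+1}$ and $d_r$ carrying $E_r^{-,p}$ to $E_r^{-,p+r}$ (the latter is used explicitly in the proof of \cref{prop:E-minus-structure4}). So if $a$ lies in filtration $f$ and $a = \rho b_1 = \rho b_2$, then $b_1$, $b_2$, and therefore $c = b_1 - b_2$, all lie in the graded piece of filtration $f-1 \leq -2$. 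Since \cref{prop:E-minus-structure}(1) forces any nonzero $\rho$-annihilated class to sit in filtration exactly $-1$, we get $c = 0$. That is the entire two-line proof in the paper.

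The induction you sketch is therefore unnecessary, and as written it does not go through: invoking uniqueness on $E_{r-1}^-$ cannot identify $b_1'$ with $b_2'$, because $\rho b_1' = b_1$ and $\rho b_2' = b_2$ are only known to agree in $E_r^-$, not in $E_{r-1}^-$. Your worry about the base case $r=1$ stems from the same misreading of the filtration as a mere property rather than a grading. A Pr\"ufer-type module admits non-unique $\rho$-divisors only if one allows elements that are inhomogeneous for the $\rho$-filtration; in the graded picture, $\rho\colon E_1^{-,p} \to E_1^{-,p+1}$ is visibly an isomorphism for every $p \leq -2$, and \cref{prop:E-minus-structure}(1) is precisely the statement that this injectivity persists to each later page.
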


\begin{proof}
Suppose that $a$ has filtration $f$, so $f \leq -1$.
If $a = \rho b$
and $a = \rho b'$, then $b - b'$ is annihilated by $\rho$
and has filtration $f-1 \leq -2$.
According to \cref{prop:E-minus-structure}(1), $b - b'$ must be zero.
\end{proof}

\begin{prop}
\label{prop:E-minus-structure4}
If $d_r(a)$ is non-zero in $E_r^-$, then
$a$ and $d_r(a)$ are both $\rho$-divisible.
\end{prop}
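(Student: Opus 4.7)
The plan is to establish the two divisibility claims in order: first that $a$ is $\rho$-divisible, then that $b = d_r(a)$ is $\rho$-divisible. The ingredients are the structural results already proved, together with the fact (implicit in the preceding proofs) that $d_r$ raises $\rho$-filtration by exactly $r$ on $E_r^-$.

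For the first claim, I would argue via \cref{prop:E-minus-structure}(2). The filtration-shift principle is already used in the proofs of \cref{prop:E-minus-structure}: for instance, in concluding that the $\rho$-filtration of $a'$ equals $-r$ from the fact that $d_{r-1}(a')$ has $\rho$-filtration $-1$, and in the claim that the $\rho$-filtration of $b$ there is at most $-r$. The underlying statement is that the $\rho$-filtration of $d_r(a)$ equals the $\rho$-filtration of $a$ plus $r$ whenever both sides make sense. Since $b = d_r(a)$ lies in $E_r^-$, its $\rho$-filtration is at most $-1$, and hence $a$ has $\rho$-filtration at most $-(r+1)$, in particular at most $-r$. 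Then \cref{prop:E-minus-structure}(2) gives that $a$ is $\rho$-divisible.

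For the second claim, use the first step to write $a = \rho a'$ for some $a' \in E_r^-$. Since $\rho$ is a permanent cycle in the $\rho$-Bockstein spectral sequence, the differential $d_r$ is $\rho$-linear, so
\[
b \;=\; d_r(a) \;=\; d_r(\rho a') \;=\; \rho \cdot d_r(a').
\]
Because $b$ is non-zero, $d_r(a')$ is also non-zero in $E_r^-$, and the displayed equation exhibits $b$ as $\rho$-divisible.

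The main obstacle is making the filtration-shift formula for $d_r$ fully rigorous. Two clean options are available: either fold the current proposition into the same induction on $r$ used to prove \cref{prop:E-minus-structure}, so that the shift formula is justified stepwise alongside the other statements; or derive it directly from the exact couple defining the $\rho$-Bockstein spectral sequence, where a class on $E_r^-$ is represented by an element that lifts through $r$ stages of the $\rho$-filtration and whose boundary, by construction, sits in filtration shifted by $r$.
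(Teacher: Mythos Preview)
Your proof is correct and follows essentially the same approach as the paper: use the filtration shift of $d_r$ together with \cref{prop:E-minus-structure}(2) to see that $a$ has $\rho$-filtration at most $-r-1$ and hence is $\rho$-divisible, then write $a = \rho a'$ and use $\rho$-linearity of $d_r$ to conclude that $d_r(a) = \rho\,d_r(a')$ is $\rho$-divisible. Your concern about the filtration-shift formula is unnecessary caution: that $d_r$ increases $\rho$-filtration by $r$ is part of the definition of the $\rho$-Bockstein spectral sequence, and the paper simply invokes it without further comment.
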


\begin{proof}
If $d_r(a)$ is non-zero, then the $\rho$-filtration of $a$ is
at most $-r-1$ since $d_r$ increases $\rho$-filtration by $r$.
Then \cref{prop:E-minus-structure}(2) implies that $a$ is $\rho$-divisible.
Let $a = \rho a'$.  Then $d_r(a) = \rho d_r(a')$, so $d_r(a)$
is $\rho$-divisible as well.
\end{proof}

\begin{rmk}
\cref{prop:E-minus-structure}(1) and \cref{prop:E-minus-structure4} are dual to analogous results
about $E_r^+$, as stated in \cite{LowMW}*{Lemma~3.4} and
\cite{BI}*{Proposition~5.1}.  
Namely,
\begin{enumerate}
\item
if $x$ is not divisible by $\rho$ in $E_r^+$, then the $\rho$-filtration
of $x$ is $0$.
\item
if $d_r(x)$ is non-zero in $E_r^+$, then both $x$ and $d_r(x)$
are $\rho$-free.
\end{enumerate}
\end{rmk}

\begin{rmk}
\cref{prop:E-minus-structure3} 
shows that 
unique $\rho$-divisibility is common in $E_r^-$.
If $a$ is $\rho^k$-divisible, then we may unambiguously
use the notation $\frac{1}{\rho^k} a$ for an element
such that $\rho^k \cdot \frac{1}{\rho^k} a = a$.
\end{rmk}

\begin{rmk}
The technical statements of \cref{prop:E-minus-structure}, \cref{prop:E-minus-structure3} and \cref{prop:E-minus-structure4} obscure
the practical significance of the results.  In less formal terms,
the propositions describe the structure of $E_r^-$ as an
$\F_2[\rho]$-module.  There are two types of summands:
\begin{enumerate}
\item
Infinitely divisible summands consisting of elements
$\left\{ a, \frac{1}{\rho} a, \frac{1}{\rho^2} a, \ldots \right\}$
such that the $\rho$-filtration of $a$ is $-1$.
\item
Finite sequences of elements
$\left\{ a, \frac{1}{\rho} a, \ldots, \frac{1}{\rho^s} a \right\}$,
where the $\rho$-filtration of $a$ is $-1$, 
the element $\frac{1}{\rho^s} a$ is not $\rho$-divisible,
and $s < r$.
\end{enumerate}
The $\F_2[\rho]$-modules of the second type are in one-to-one
correspondence with infinitely divisible families of $d_s$ differentials.
The elements $\frac{1}{\rho^k} a$ for $k \geq s$ support non-zero $d_s$
differentials.
\end{rmk}

The following is the analogue of \cite{LowMW}*{Proposition~3.2} for the negative cone.

\begin{prop} 
\label{DiffsGamma}
There are Bockstein differentials
\[ 
d_1 \left( \frac{\ga}{\rho\tau}  \right) = \frac{\ga}{\tau^{2}} \cdot h_0
\]
and
\[ 
d_{2^n} \left( \frac{\ga}{\rho^{2^n}\tau^{2^n}}  \right) = \frac{\ga}{\tau^{2^n+2^{n-1}}} \cdot h_n
\]
for $n\geq 1$.
\end{prop}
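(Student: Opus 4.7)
The plan is to compute the cobar coboundary $(\eta_R - 1)\!\left(\frac{\gamma}{\rho^{2^n}\tau^{2^n}}\right)$ explicitly using formula \eqref{eq:etaR}, reduce modulo $\rho$ to identify the target, and then read off the Bockstein page from the $\rho$-filtration jump.

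First, substituting $j=k=2^n$ into \eqref{eq:etaR} and applying the characteristic-$2$ Frobenius identity $[\sum x_i]^{2^n}=\sum x_i^{2^n}$ collapses the $2^n$-th power, so that
\[
\eta_R\!\left(\frac{\gamma}{\rho^{2^n}\tau^{2^n}}\right) = \frac{\gamma}{\rho^{2^n}\tau^{2^n}}\sum_{i\geq 0}\left(\tfrac{\rho}{\tau}\tau_0\right)^{i\cdot 2^n}.
\]
The $i=1$ term simplifies to $\frac{\gamma\tau_0^{2^n}}{\tau^{2^{n+1}}}$ after the factor $\rho^{2^n}$ cancels the denominator, and for $i\geq 2$ the surplus factor $\rho^{(i-1)2^n}$ annihilates its contribution because $\rho\cdot\frac{\gamma}{\tau^k}=0$ in $NC$ (the $j=0$ layer is $\rho$-torsion). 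Hence the entire coboundary is the single cochain $\frac{\gamma\tau_0^{2^n}}{\tau^{2^{n+1}}}$; the $n=0$ case is handled identically, yielding $(\eta_R-1)(\frac{\gamma}{\rho\tau})=\frac{\gamma\tau_0}{\tau^2}$.

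Next, I reduce $\tau_0^{2^n}$ modulo $\rho$ using the defining relation $\tau_0^2 = \tau\xi_1+\rho\tau_1+\rho\tau_0\xi_1$, which gives $\tau_0^2\equiv\tau\xi_1\pmod\rho$ and, by iterated Frobenius, $\tau_0^{2^n}\equiv\tau^{2^{n-1}}\xi_1^{2^{n-1}}\pmod\rho$ for $n\geq 1$. Substituting and cancelling powers of $\tau$, the mod-$\rho$ reduction of the coboundary is $\frac{\gamma}{\tau^{2^n+2^{n-1}}}\xi_1^{2^{n-1}}$, which represents the class $\frac{\gamma}{\tau^{2^n+2^{n-1}}}h_n$ in $E_1^-$ (taking $h_n=[\xi_1^{2^{n-1}}]$ for $n\geq 1$, and $h_0=[\tau_0]$ for the $n=0$ case). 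Finally, tracking $\rho$-filtrations pins down the Bockstein page: by \cref{prop:E-minus-structure} the source $\frac{\gamma}{\rho^{2^n}\tau^{2^n}}$ has filtration $-(2^n+1)$, while the $\rho$-torsion target $\frac{\gamma}{\tau^{2^n+2^{n-1}}}h_n$ has filtration $-1$, yielding exactly the filtration jump of $2^n$ required for a $d_{2^n}$. Since the explicit coboundary consists of a single nontrivial term, no earlier $d_r$ can leave this class, so it survives to the $E_{2^n}$-page where the computed differential is realized.

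The main obstacle is ensuring that the leading term pins the differential to the $d_{2^n}$-page rather than to some earlier $d_r$ with $r<2^n$. This is resolved by the combination of the characteristic-$2$ Frobenius identity (which eliminates all terms between $\rho^0$ and $\rho^{2^n}$ in the expansion of $\eta_R$) and the structural vanishing $\rho\cdot\frac{\gamma}{\tau^k}=0$ in $NC$ (which kills the would-be higher-order corrections). Together, these two facts force the entire coboundary to land in a single filtration, making the exponential spacing $2^n$ of the differentials in the proposition emerge as a direct consequence of the Frobenius structure of the right-unit formula \eqref{eq:etaR}.
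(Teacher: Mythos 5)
Your proof is correct and follows essentially the same route as the paper's: both start from the right-unit formula \eqref{eq:etaR}, collapse the $2^n$-th power using the Frobenius identity in characteristic $2$, use the relation $\tau_0^2 = \tau\xi_1 + \rho(\cdots)$ to rewrite $\tau_0^{2^n}$, observe that the $\rho$-correction terms vanish because $\rho\cdot\frac{\gamma}{\tau^k}=0$ in $NC$, and identify $\xi_1^{2^{n-1}}$ as a cobar representative of $h_n$. Your write-up is a bit more explicit than the paper's about why the $i\geq 2$ terms drop out and about the filtration jump pinning the differential to the $E_{2^n}$-page, and one small infelicity is the remark that "the coboundary consists of a single nontrivial term" is what rules out earlier $d_r$ — the relevant fact is the filtration jump of exactly $2^n$, which you also state correctly, so this doesn't introduce a gap.
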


\begin{pf}
This follows from the formula for the right unit given in \cref{eq:etaR}. Indeed, 
\[
\eta_R \left( \frac{\ga}{\rho \tau} \right) = \frac{\ga}{\rho\tau} + \frac{\ga}{\tau^2} \cdot \tau_0.
\]
It follows that, in the cobar complex, we have $d \left( \frac{\ga}{\rho\tau}  \right) = \frac{\ga}{\tau^{2}} \otimes  \tau_0$. As $h_0$ is represented by $\tau_0$ in the cobar complex, this establishes the formula for $d_1$. Similarly, for appropriate values of $x$ and $y$ we have
\[\begin{split} \eta_R \left( \frac{\ga}{\rho^{2^n} \tau^{2^n}} \right) 
&= \frac{\ga}{\rho^{2^n} \tau^{2^n}}  \left( 1 + \frac{\rho^{2^n}}{\tau^{2^n}} \tau_0^{2^n} + \rho^{2^{n+1}} \cdot x  \right) \\
&= \frac{\ga}{\rho^{2^n} \tau^{2^n}}  \left( 1 + \frac{\rho^{2^n}}{\tau^{2^{n-1}}} \xi_1^{2^{n-1}} + \rho^{2^n+2^{n-1}} \cdot y  \right) \\
&= \frac{\ga}{\rho^{2^n} \tau^{2^n}}  + \frac{\ga}{\tau^{2^n+2^{n-1}}} \xi_1^{2^{n-1}}.
\end{split}\]
This gives rise to the cobar complex differential $d \left( \frac{\ga}{\rho^{2^n} \tau^{2^n} }  \right) = \frac{\ga}{\tau^{2^n + 2^{n-1}}} \otimes  \xi_1^{2^{n-1}}$. 
Finally, use that $h_n$ is represented by
$\xi_1^{2^{n-1}}$ for $n \geq 1$.
\end{pf}

\begin{rmk}
\label{rmk:DiffsGamma}
The differential
\[ 
d_1 \left( \frac{\ga}{\rho\tau}  \right) = \frac{\ga}{\tau^{2}} \cdot h_0
\]
from \cref{DiffsGamma} implies that
\[ 
d_1 \left( \frac{\ga}{\rho\tau^{2k+1}}  \right) = \frac{\ga}{\tau^{2k+2}} \cdot h_0
\]
for all $k \geq 1$.
This follows from the Leibniz rule applied to the relation
\[
\frac{\gamma}{\rho \tau} = \frac{\gamma}{\rho \tau^{2k+1}} \cdot \tau^{2k}.
\]
Similarly, we have
\[ 
d_{2^n} \left( \frac{\ga}{\rho^{2^n}\tau^{2^{n+1}k + 2^n}}  \right) = \frac{\ga}{\tau^{2^{n+1} k + 2^n+2^{n-1}}} \cdot h_n
\]
for $n\geq 1$.
\end{rmk}

We next show that 
every nonzero element of $\Ext_{C_2}$ is not infinitely divisible by $\rho$.

\begin{prop}
\label{colocalVanishes} 
The inverse limit $\lim\limits_\rho \Ext_{{C_2}}$ vanishes.
\end{prop}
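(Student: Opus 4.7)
Plan: I would first invoke the splitting $\Ext_{C_2}\iso \Ext_\R\oplus\Ext_{NC}$ recalled at the beginning of \cref{sec:BockSS}. Since inverse limits respect direct sums in each tridegree and multiplication by $\rho$ preserves the summands, the claim reduces to checking that $\lim_\rho\Ext_\R$ and $\lim_\rho\Ext_{NC}$ vanish separately. In each case, I fix a tridegree $(s,f,c)$ and study the inverse system $\Ext^{s,f,c}\xleftarrow{\rho}\Ext^{s+1,f,c}\xleftarrow{\rho}\cdots$ of finite-dimensional $\F_2$-vector spaces.

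For $\Ext_\R$, the plan is a coweight-counting argument on the $E_1^+$-page $\Ext_\C[\rho]$. Each cobar generator $\tau_i$ or $\xi_i$ has positive coweight (specifically $2^i$ or $2^i-1$), so any monomial of Adams filtration $f$ has coweight at least on the order of $f$. Consequently, for fixed $(f,c)$ only finitely many cobar monomials contribute, and $\Ext_\C^{*,f,c}$ is supported in only finitely many stems. Since multiplication by $\rho$ has tridegree $(-1,0,0)$, the $E_1^+$-page in fixed $(f,c)$ is supported in stems bounded above, and this bound passes to the subquotient $\Ext_\R$. Therefore $\Ext_\R^{s+i,f,c}=0$ for $i$ large, and any compatible $\rho$-sequence is eventually zero, hence (working backwards via $x_i=\rho x_{i+1}$) entirely zero.

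For $\Ext_{NC}$ the stem-boundedness fails, so the argument has to exploit the Bockstein structure instead. My plan is to combine the $\rho$-power torsion of $NC$ with the structural results \cref{prop:E-minus-structure}, \cref{prop:E-minus-structure3}, and \cref{prop:E-minus-structure4}. A nonzero compatible sequence $(x_i)$ in $\lim_\rho\Ext_{NC}$ would yield an element $x_0$ which is $\rho^n$-divisible for every $n$. Lifting $x_0$ to $E_\infty^-$ and using that the $\rho$-filtration of $x_0/\rho^n$ decreases without bound, such an element would give rise to an infinitely $\rho$-divisible summand of $E_\infty^-$ in the tridegree of $x_0$. By the $E_r^-$-module structure recalled above (with the explicit $d_{2^n}$ differentials from \cref{DiffsGamma} truncating the generating cotowers $\{\gamma/(\rho^j\tau^k)\}_j$ and the Leibniz rule/$\Ext_\R$-action propagating these truncations), combined with the fact that each tridegree of $E_1^-$ is finite-dimensional (by the same coweight-counting argument applied to $\Ext_\C(\tfrac{\F_2[\tau]}{\tau^\infty}\{\gamma\})$), no infinitely divisible summands can survive to $E_\infty^-$. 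This contradicts the existence of a nonzero $x_0$.

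The main obstacle will be the negative cone case: one has to verify that, in every tridegree, the explicit differentials of \cref{DiffsGamma}, propagated by the $\Ext_\R$-module and Leibniz structure, are sufficient to terminate all infinite $\rho$-cotowers of $E_1^-$. Equivalently, one must show that no nontrivial infinitely $\rho$-divisible summand persists to $E_\infty^-$. The cleanest way I can see to do this is to reduce to the observation that the cotower-generating class $\gamma/\tau^k$ in each coweight $-1-k$ carries a nonzero $h_n$-multiple for appropriate $n$, and then track how this $h_n$-multiplication is detected by a differential on some high power of $\rho^{-1}\cdot\gamma/\tau^k$, killing the infinitely divisible portion of the cotower.
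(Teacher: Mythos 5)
Your proposal takes a genuinely different route from the paper. The paper does not split into $\Ext_\R$ and $\Ext_{NC}$ and then argue separately; instead it passes directly to a $\rho$-colocal cobar complex $\mathrm{coB}^*_{\F_2[\tau]}\left(\frac{\F_2[\tau]}{\tau^\infty}\{\gamma\}, \F_2[\tau,x]\right)$, filters by powers of $x$, and exhibits explicit differentials $d_{2^n}\left(\frac{\gamma}{\tau^{2^{n+1}k-2^n}}\right)=\frac{\gamma}{\tau^{2^{n+1}k}}v_n$ that leave nothing on the $E_\infty$-page. That reduction collapses the whole verification to a single, small, completely explicit spectral sequence, which is what buys the brevity of the published argument.

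Your split is reasonable, and the coweight-counting step for $\Ext_\R$ is sound: since every cobar generator has strictly positive coweight, the $E_1^+$-page in a fixed $(f,c)$ is supported in stems bounded above, and this bound passes to the convergent $\Ext_\R$; dividing by $\rho$ pushes stems up, so any compatible $\rho$-sequence is eventually, hence entirely, zero.

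However, there is a genuine gap in your treatment of $\Ext_{NC}$. After correctly observing that a nonzero element of $\lim_\rho\Ext_{NC}$ would force an infinitely $\rho$-divisible family in $E_\infty^-$, you then assert that ``no infinitely divisible summands can survive to $E_\infty^-$'' because the differentials from \cref{DiffsGamma}, propagated by the Leibniz rule, ``terminate all infinite $\rho$-cotowers.'' But this is exactly the content of the proposition restated, not a proof of it. The structural results \cref{prop:E-minus-structure}--\cref{prop:E-minus-structure4} describe the shape of $E_r^-$ for finite $r$; they do not by themselves rule out a cotower surviving every finite page. Your closing suggestion — that each $\gamma/\tau^k$ admits a nonzero $h_n$-multiple detected by a differential on a high $\rho$-division — is a plausible heuristic but is not established, and it is not clear that it handles the $Q$-classes, whose Bockstein differentials do not come from \cref{DiffsGamma} via the Leibniz rule alone (compare \cref{Type2diffs} and \cref{tbl:Bockstein-differentials-Qgamma}). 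To close the gap you would need an argument, uniform over all tridegrees, that every infinite $\rho$-cotower of $E_1^-$ supports or receives a Bockstein differential; the paper sidesteps this entirely by moving the inverse limit inside and computing a single tractable cobar complex over $\F_2[\tau,x]$.
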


\begin{pf}
The argument is the same as \cite{GHIR}*{Proposition~5.2}. 
It suffices to show that the cobar complex $\mathrm{coB}^*_{\F_2[\tau]}\left(\frac{\F_2[\tau]}{\tau^\infty}\{\gamma\}, \F_2[\tau,x]\right)$ is acyclic, where
\[ 
\eta_R\left(\frac{\gamma}{\tau^k}\right) = 
\frac{\gamma}{\tau^k} \left[ \sum_{i\geq 0} \left( \frac{x}{\tau} \right)^i \right]^k.
\]
Filtering by powers of $x$, we have
\[ E_1 \iso \frac{\F_2[\tau]}{\tau^\infty} \{\gamma\} \otimes_{\F_2} \F_2[v_0,v_1,\dots],\]
where $v_n=[x^{2^n}]$. We have differentials 
\[d_{2^n}\left( \frac{\gamma}{\tau^{2^{n+1}k-2^n} }\right) = \frac{\gamma}{\tau^{2^{n+1}k}} v_n\]
for all $n\geq 0$,
and nothing survives the spectral sequence.
\end{pf}

\cref{colocalVanishes} is a powerful tool for deducing differentials
in $E_1^-$.  It has the following immediate consequence.
Let $x$ be any class in $E_1^-$.
Then either:
\begin{itemize}
\item
$x$ is the target of some Bockstein differential; or
\item
$x$ is of the form $\rho^k \cdot y$, where $y$ supports a Bockstein differential.
\end{itemize}
In informal terms, $x$ supports a Bockstein differential ``up to $\rho$-divisibility" in the latter case.

In practice, we often find elements $x$ in $E_1^-$ that cannot
be hit by Bockstein differentials.  Then we know that $x$ supports a differential
up to $\rho$-divisibility, and it is often possible to determine the exact 
value of this differential by a process of elimination.

Similarly, we often find elements $x$ in $E_1^-$ that cannot
support differentials, even up to $\rho$-divisibility.  Then we know
that $x$ must be hit by a differential, and it is often possible
to determine the exact source of this differential by a process of elimination.

This situation contrasts to the case of $E_1^+$, where many infinite $\rho$-towers survive the Bockstein spectral sequence.  However,
the surviving $\rho$-towers are well-controlled by \cite{LowMW}*{Theorem 4.1}.

\section{The negative cone and $\tau$-periodicity}
\label{sec:NCPeriodic}

Periodicity with respect to the element $\tau$ plays an important role
in our computations.
For any $n > 0$, the {homology} of the cofiber $S^{0,0}/\rho^n$ is $\tau$-periodic \cite{CMay}.
Indeed, 
$\rH^{C_2}_{*,*}\left( S^{0,0}/\rho^n \right)$ is isomorphic to
\[ 
\frac{\bF[\rho, \tau^{\pm 1}]}{\rho^n}
 = \frac{\bMR[\tau^{-1}]}{\rho^n}
\]

\begin{lem}
\label{lem:ExtAgreesCofiberRho}
There is an isomorphism
\[
\Ext_{C_2} \left( \frac{\bMC[\tau^{-1}]}{\rho^n} \right) \iso 
\Ext_\R \left( \frac{\bMR[\tau^{-1}]}{\rho^n} \right). \]
\end{lem}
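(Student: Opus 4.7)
The plan is to compare the cobar complexes computing $\Ext_{C_2}(\bMC[\tau^{-1}]/\rho^n)$ and $\Ext_\R(\bMR[\tau^{-1}]/\rho^n)$ term by term. The key observation is that every element of $NC$ is $\tau$-power torsion, so $NC[\tau^{-1}] = 0$; combined with the $\bMR$-module splitting $\bMC \iso \bMR \oplus NC$, this yields an isomorphism of $\bMR$-modules
\[ \bMC[\tau^{-1}]/\rho^n \iso \bMR[\tau^{-1}]/\rho^n. \]
Call this common module $M$. In particular, $NC$ acts as zero on $M$, so the $\bMC$-action factors through the ring quotient $\bMC \onto \bMR$ whose kernel is the ideal $NC$.

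Next, I use the identification $\cAC_* \iso \bMC \otimes_{\bMR} \cAR_*$ as left $\bMC$-modules, which holds because both Steenrod algebras are generated by $\tau_i, \xi_j$ subject to the same relations $\tau_i^2 = \tau\xi_{i+1} + \rho\tau_{i+1} + \rho\tau_0\xi_{i+1}$. Combined with the previous step, I obtain
\[ M \otimes_{\bMC} \cAC_* = M \otimes_{\bMC} (\bMC \otimes_{\bMR} \cAR_*) = M \otimes_{\bMR} \cAR_*, \]
and iterating shows that the $\cAC_*$-cobar complex for $M$ agrees termwise with the $\cAR_*$-cobar complex for $M$. The differentials match because they are all governed by $\eta_R(\rho) = \rho$ and $\eta_R(\tau) = \tau + \rho\tau_0$, the common right-unit formula, together with the comultiplication on the $\tau_i, \xi_j$, which is the same in $\cAC_*$ and $\cAR_*$.

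The main obstacle will be the bookkeeping in the second step, in particular tracking the right $\bMC$-structure on $\cAC_*$ (via $\eta_R$) when iterating tensor products past the first factor. If this turns out to be technically delicate, an alternative is to compare the $\rho$-Bockstein spectral sequences directly: both spectral sequences truncate after $\rho^n$, both $E_1$-pages reduce to $\Ext_\C(\bMCC[\tau^{-1}]) \otimes \bF[\rho]/\rho^n$ (the negative cone contributes nothing on the $C_2$-equivariant side because $NC[\tau^{-1}] = 0$), and both sets of differentials are determined by the same $\eta_R$ formula. The two spectral sequences therefore agree at every stage, proving the isomorphism.
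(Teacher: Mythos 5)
Your starting observation --- $NC[\tau^{-1}] = 0$, so $\bMC[\tau^{-1}]/\rho^n$ and $\bMR[\tau^{-1}]/\rho^n$ agree --- matches the paper's, but the proofs then diverge. You carry the left-module identification $\cAC_* \iso \bMC \otimes_{\bMR} \cAR_*$ through the iterated tensors of the cobar complex, and, as you yourself flag, the delicate point is the right $\bMC$-structure via $\eta_R$ in the inner tensor slots: one must verify at each stage that $\eta_R(NC) \subseteq NC \cdot \cAC_*$ and that $NC \cdot \cAC_*$ is annihilated once tensored against the $\tau$-inverted module. This can be done, but it is genuine bookkeeping with bimodule structures, not a formality. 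The paper avoids it with a single cleaner observation: $\rho$ is central in $\cAC_*$ (it has the same image under both units) and the module is killed by $\rho^n$, so the coaction and every cobar term factor through the quotient Hopf algebroid $\left(\bMC/\rho^n, \cAC_*/\rho^n\right)$; and because $NC$ is infinitely $\rho$-divisible, $\rho^n NC = NC$, hence $\bMC/\rho^n \iso \bMR/\rho^n$ and $\left(\bMC/\rho^n, \cAC_*/\rho^n\right) \iso \left(\bMR/\rho^n, \cAR_*/\rho^n\right)$ as Hopf algebroids. In short, you lean on the $\tau$-power torsion of $NC$ where the paper leans on its $\rho$-divisibility; both facts are true, but the latter collapses the argument to two lines precisely because $\rho$ is central, whereas yours forces you to chase the two distinct unit maps through each cobar degree. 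Your alternative (comparing $\rho$-Bockstein spectral sequences) is plausible but also heavier than the paper's direct Hopf-algebroid isomorphism, since one must separately establish the $E_1$-page identification, the differential formulas, and convergence.
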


\begin{proof}
The coaction map for the $\cAC_*$-comodule $\rH^{C_2}_{*,*}(S^{0,0}/\rho^n) $ has target
\[ 
\cAC_* \otimes_{\bMC}  \frac{\bMR[\tau^{-1}]}{\rho^n} \iso  
\frac{\cAC_*}{\rho^n} \otimes_{\bMC/\rho^n}  \frac{\bMR[\tau^{-1}]}{\rho^n}.
\]
We have an isomorphism of Hopf algebroids
\[ 
\left( \frac{\bMC}{\rho^n}, \frac{\cAC_*}{\rho^n} \right) \iso 
\left( \frac{\bMR}{\rho^n}, \frac{\cAR_*}{\rho^n} \right)
\]
and therefore an isomorphism of the cobar complexes that compute
the $\Ext$ groups.
\end{proof}

Let us write $ NC_{\rho^n}$ for the $\rho^n$-torsion submodule of $NC$. 
The submodules $NC_{\rho^r}$ are sometimes easier to work with than $NC$ itself.
The following \cref{prop:colim-NCrho} shows that in any given computational
situation, there is no harm in 
restricting to $NC_{\rho^r}$ for $r$ sufficiently large.

\begin{prop}
\label{prop:colim-NCrho}
The inclusion maps $NC_{\rho^r} \xrtarr{} NC$
induce an isomorphism
\[
\Ext_{NC} \cong \colim_r \Ext_\R(NC_{\rho^r}). 
\]
\end{prop}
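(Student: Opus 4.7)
The plan is to identify $NC$ as a filtered colimit of its $\rho$-power-torsion submodules and then argue that $\Ext_\R(-)$ commutes with this colimit.

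First, I would check that $NC = \colim_r NC_{\rho^r}$ as $\cAR_*$-comodules. By the description in \eqref{NCnotation}, each element of $NC$ has the form $\frac{\gamma}{\rho^j \tau^k}$ with $j \geq 0$, and every such element is annihilated by $\rho^{j+1}$ and therefore lies in $NC_{\rho^{j+1}}$. Since the defining condition for $NC_{\rho^r}$ is annihilation by a power of $\rho$, and $\rho$ is a primitive in $\cAR_*$, each $NC_{\rho^r}$ is indeed a subcomodule and each inclusion $NC_{\rho^r} \hookrightarrow NC_{\rho^{r+1}}$ is a map of $\cAR_*$-comodules. The filtered union of these subcomodules exhausts $NC$.

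Next, I would compute $\Ext_\R(-) = \Ext_{\cAR_*}(\bMR,-)$ via the cobar complex, whose $n$-th term applied to a comodule $M$ is $\cAR_*^{\otimes_{\bMR} n} \otimes_{\bMR} M$. As a functor of $M$, each of these terms preserves filtered colimits, because tensor product preserves colimits separately in each variable. Applying this levelwise to the filtered system $\{NC_{\rho^r}\}$, the cobar complex for $NC$ is identified with the filtered colimit of the cobar complexes for the $NC_{\rho^r}$. Since filtered colimits in the category of $\bMR$-modules are exact, they commute with cohomology, and one obtains
\[
\Ext_\R(NC) \;\cong\; H^*\!\left( \colim_r \mathrm{coB}^*(NC_{\rho^r}) \right) \;\cong\; \colim_r H^*\!\left( \mathrm{coB}^*(NC_{\rho^r}) \right) \;\cong\; \colim_r \Ext_\R(NC_{\rho^r}),
\]
which is the isomorphism claimed.

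This proof is essentially formal, and the only point that requires a moment's thought is that the cobar terms $\cAR_*^{\otimes n} \otimes_{\bMR} M$ commute with filtered colimits in $M$ despite $\cAR_*$ not being finitely generated over $\bMR$: this is true because tensor product preserves colimits in each variable regardless of any finiteness of the other factor. I do not anticipate any serious obstacles; the substance of the proposition is the combinatorial observation that $NC$ is a colimit of its $\rho$-torsion pieces, and the commutation of $\Ext$ with the colimit is just the standard exactness of filtered colimits applied to the cobar complex.
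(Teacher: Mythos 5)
Your proof is correct and takes essentially the same approach as the paper's: the paper likewise observes that the cobar complex for $NC$ is the filtered colimit of the cobar complexes for $NC_{\rho^r}$ and that homology commutes with filtered colimits. You have simply unpacked the two ingredients (why $NC = \colim_r NC_{\rho^r}$ as comodules, and why the cobar terms commute with filtered colimits) in more detail than the paper's one-sentence argument.
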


\begin{proof}
The cobar complex that computes $\Ext_{NC}$ is the colimit of the 
cobar complexes that compute $\Ext_\R(NC_{\rho^r})$,
and homology commutes with filtered colimits.
\end{proof}

We have a short exact sequence 
\begin{equation}
\label{cofiberrho^nSES}
 0 \rtarr \frac{\bMR}{\rho^n} \xrtarr{i} \frac{\bMR[\tau^{-1}]}{\rho^n} \xrtarr{q} \Sigma^{1-n,1}  NC_{\rho^n} \rtarr 0
  \end{equation}
of $\cAR_*$-comodules,
where $q(\tau^{-k}) = \frac{\ga}{\rho^{n-1}\tau^k}$.
See \cref{HomologySmodrho4} for a visualization of the case $n=4$.
An alternative description for the cokernel is
\begin{equation} \label{eq:negcone_altdesccript}
\Sigma^{1-n,1}  NC_{\rho^n} \iso \frac{\bMR}{\tau^\infty,\rho^n}. 
\end{equation}

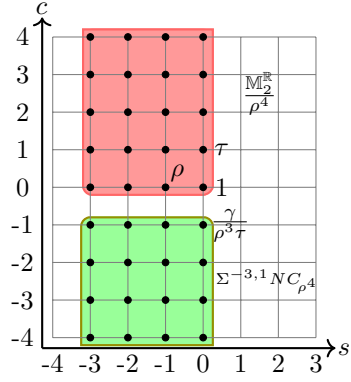
\begin{figure}
\caption{The homology ring $\rH^{C_2}_{*,*}(S^{0,0}/\rho^4) \iso 
\frac{\bF[\rho, \tau^{\pm 1}]}{\rho^4}$, displaying the short exact sequence \eqref{cofiberrho^nSES}}
\label{HomologySmodrho4}
\begin{tikzpicture}[scale=0.5]
\draw[->,thick,xshift={-1.85ex},yshift={-1.85ex}] (-4,-4) -- (-4,4+0.8);
\draw[->,thick,xshift={-1.85ex},yshift={-1.85ex}] (-4,-4) -- (3+0.8,-4);
\filldraw[color=red!40, thick, rounded corners, xshift=1.7ex] (0, -0.2) -- (0, 4.2) -- (-3.45,4.2) -- (-3.45,-0.2) -- cycle;
\filldraw[color=red!40, thick, xshift=1.7ex] (0, 1) -- (0, 4.2) -- (-3.45,4.2) -- (-3.45,1)  -- cycle;
\draw[color=red!60, thick,  xshift=1.7ex] (0, -0) -- (0, 4.2) -- (-3.45,4.2) -- (-3.45,-0) to [bend right=30] (-3.25,-0.2) -- (-0.25,-0.2) to [bend right=30] cycle;
\filldraw[color=green!40, thick, rounded corners, xshift=1.7ex] (-3.5, -0.8) -- (0, -0.8) -- (0,-4.2) -- (-3.5,-4.2) -- cycle;
\filldraw[color=green!40, thick,  xshift=1.7ex] (-3.5, -3) -- (0, -3) -- (0,-4.2) -- (-3.5,-4.2) -- cycle;
\draw[color=olive!100,thick,  xshift=1.7ex] (-3.5,-1) -- (-3.5,-4.2) -- (0,-4.2) -- (0,-1) to [bend right=30] (-0.2,-0.8) -- (-3.3,-0.8) to [bend right=30] (-3.5,-1);
\draw[gray] (-4,-4) grid (3,4);

\foreach \y in {-4,...,4} {
    \node at (-4.8,\y) {\y};
}
\foreach \x in {-4,...,3} {
    \node at (\x,-4.7) {\x};
}
\node at (3.75,-4.3) {$s$};
\node at (-4.3,4.75) {$c$};

\foreach \x in {0,...,-3} {
  \foreach \y in {0,...,4} {
    \filldraw (\x,\y) circle (2.5pt);
  }
}
\node at (1.5,2.5) {$\frac{\bMR}{\rho^4}$};
\node at (1.7,-2.5) {\tiny$\Sigma^{-3,1}NC_{\!\rho^4}$};
\node[xshift={7pt}] at (0,0) {$1$};
\node[xshift={7pt}] at (0,1) {$\tau$};
\node[xshift={4.5pt}, yshift={5.5pt}] at (-1,0) {$\rho$};
\node[xshift={-4pt}] at (1,-1) {$\frac{\ga}{\rho^3\tau}$};

\foreach \x in {-3,...,0} {
  \foreach \y in {-1,...,-4} {
    \filldraw (\x,\y) circle (2.5pt);
  }
}

\end{tikzpicture}
\end{figure}

The short exact sequence {\eqref{cofiberrho^nSES}} of 
$\cAR_*$-comodules
yields a long exact sequence
\begin{equation}
\label{cofiberrho^nLESExt}
\rtarr \Ext_\R^{s,f,c} \left( \frac{\bMR}{\rho^n} \right) \xrtarr{i_*} 
\Ext_\R^{s,f,c} \left( \frac{\bMR[\tau^{-1}]}{\rho^n} \right) \xrtarr {q_*}
\Ext_\R^{s+n-1,f,c-1} \left(  NC_{\rho^n} \right) \xrtarr{\delta}.
 \end{equation}
If we forget to $\bMCC$-modules, we obtain
\begin{equation}
\label{cofiberrho^nLESBockE1}
\to \frac{\Ext_\C^{s,f,c}(\bMCC)[\rho]}{\rho^n} \xrtarr{i_*} 
\frac{\Ext_\C^{s,f,c} ( \bMCC[\tau^{-1}] )[\rho]}{\rho^n} \xrtarr {q_*}
\frac{\Ext_\C^{s,f,c}(  \bMCC/\tau^\infty)[\rho]}{\rho^n} \xrtarr{\delta},
 \end{equation}
in which the objects are Bockstein $E_1$-pages for
the $\Ext$ groups in \cref{cofiberrho^nLESExt}.

Let $E_{1,\rho^n}^{-}$ denote the $\rho^n$-torsion in $E_1^{-}$.

\begin{lem}
\label{E1minus-SES}
There is a short exact sequence
\begin{equation} \label{E1negsplitting}
\coker(i_*) \xrtarr{q_*}
E_{1,\rho^n}^{-}
\xrtarr{\delta}
\ker(i_*).
\end{equation}
\end{lem}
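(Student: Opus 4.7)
The plan is to deduce the short exact sequence purely formally from the long exact sequence \eqref{cofiberrho^nLESBockE1}, together with a mild identification of its middle term. First, I will invoke the general principle that any four-term exact segment $A \xrtarr{i_*} B \xrtarr{q_*} C \xrtarr{\delta} A'$ inside a long exact sequence yields a short exact sequence
$$0 \to \coker\bigl(i_*\colon A \to B\bigr) \xrtarr{q_*} C \xrtarr{\delta} \ker\bigl(i_*\colon A' \to B'\bigr) \to 0.$$
Applying this directly to \eqref{cofiberrho^nLESBockE1} gives a short exact sequence of exactly the claimed shape, modulo identifying the middle term $\Ext_\C^{s,f,c}(\bMCC/\tau^\infty)[\rho]/\rho^n$ with $E^-_{1,\rho^n}$ in the appropriate tridegree.

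Next I would carry out that identification. Using \eqref{eq:negcone_altdesccript} together with the observation that the $\rho$-adic filtration on $NC_{\rho^n}$ has associated graded isomorphic to $n$ shifted copies of $\bMCC/\tau^\infty$, the $E_1$-page of the $\rho$-Bockstein spectral sequence for $\Ext_\R(\Sigma^{1-n,1} NC_{\rho^n})$ is precisely $\Ext_\C(\bMCC/\tau^\infty)[\rho]/\rho^n$, matching the middle term of \eqref{cofiberrho^nLESBockE1}. On the other hand, the inclusion of $\cAR_*$-comodules $NC_{\rho^n} \hookrightarrow NC$ is compatible with the $\rho$-filtration and, on Bockstein $E_1$-pages, identifies the $E_1$ of the source with the $\rho^n$-torsion subobject $E^-_{1,\rho^n}$ of $E^-_1$. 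Reindexing to absorb the suspension $\Sigma^{1-n,1}$ then completes the identification.

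The substantive content is just this compatibility of the $\rho$-Bockstein $E_1$-construction with the short exact sequence \eqref{cofiberrho^nSES} of $\cAR_*$-comodules, which is formal once one unpacks the definitions. The only real hazard is bookkeeping: carefully tracking how the tridegree $(s,f,c)$ in \eqref{cofiberrho^nLESBockE1} translates to the native tridegree on $E^-_{1,\rho^n}$ under the suspension shift, and checking that the connecting homomorphism $\delta$ in \eqref{cofiberrho^nLESBockE1} genuinely agrees with the map to $\ker(i_*)$ appearing in the claimed sequence.
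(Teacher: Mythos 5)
Your proposal follows the paper's proof exactly: the paper likewise extracts the short exact sequence formally from the long exact sequence \eqref{cofiberrho^nLESBockE1} and then identifies the middle term $\Ext_\C(\bMCC/\tau^\infty)[\rho]/\rho^n$ with $E^-_{1,\rho^n}$ via the isomorphism \eqref{eq:negcone_altdesccript}. Your additional discussion of the degree bookkeeping and the compatibility of the $\rho$-Bockstein $E_1$-page construction with the inclusion $NC_{\rho^n} \hookrightarrow NC$ is sound but not strictly needed beyond what the paper records.
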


\begin{proof}
The  long exact sequence \cref{cofiberrho^nLESBockE1} gives a short exact sequence
\[
\coker(i_*) \rightarrow
\frac{\Ext_\C(  \bMCC/\tau^\infty)[\rho]}{\rho^n} \rightarrow
\ker(i_*).
\]
The isomorphism \eqref{eq:negcone_altdesccript} gives an identification of $\Ext_\C(  \bMCC/\tau^\infty)[\rho]/\rho^n$ with the $\rho^n$-torsion in $E_1^{-}$.
\end{proof}

\begin{rmk}
The sequence of \cref{E1minus-SES} is 
the restriction to $\rho^n$-torsion of
the short exact sequence for
$E_1^-$ given in \cite{GHIR}*{Proposition 3.1} 
\end{rmk}

The sequence in \cref{E1minus-SES} gives rise to two types of elements in $E_1^-$.
The first type, corresponding to the cokernel of $i_*$,  consists of elements of the form $ \frac{\ga}{\rho^r \tau^{s}}x $, where $x$ belongs to
$\Ext_\C$.
Such classes arise from the $E_1^{+}$-module structure on $E_1^{-}$
via formulas of the form 
$ \frac{\ga}{\rho^r \tau^{s}}x  = x \cdot   \frac{\ga}{\rho^r \tau^{s}}$. 
We will typically write such elements in simplified form, meaning that $x$ is not $\tau$-divisible in $\Ext_\C$. Note that if $x$ is $\tau$-torsion, then the product is zero, as $ \frac{\ga}{\rho^r \tau^{s}}$ is infinitely $\tau$-divisible. Thus, we need only consider the classes $ \frac{\ga}{\rho^r \tau^{s}}x $, where $x$ is not $\tau$-divisible and also not $\tau$-torsion {in $\Ext_\C$}.

The second type of elements in $E_1^-$ corresponds to
the kernel of $i_*$.
As in \cite{GHIR}*{Definition~7.1}, we introduce notation for these classes.

\begin{notn} \label{Notn:Qclasses}
Given a class $y$ in $\Ext_\C$ 
that is annihilated by some power of $\tau$, 
let $Qy$ be the subset of
the Bockstein $E_1$-page
$E^-_{1,\rho^n}$ 
such that $\delta(Qy) = \rho^{n-1}y$ in 
$\Ext_\C(\bMCC)[\rho]/\rho^n$.
By the unique $\rho$-divisibility of \cref{prop:E-minus-structure3},
we define $\frac{Q}{\rho^k} y$ to be the set of elements such that
$\rho^k \cdot \frac{Q}{\rho^k} y = Q y$.
Alternatively, we define $\frac{Q}{\rho^k}y$ so that
$\delta \left(\frac{Q}{\rho^k}y \right) = \rho^{n-1-k} y$.
In particular, $\delta\left(\frac{Q}{\rho^{n-1}} y\right) = y$.

Beware that a class $\frac{Q}{\rho^k}y$ is only well-defined up to the image of the homomorphism $q_*$,
i.e., up to elements of the form $\frac{\ga}{\rho^r \tau^{s}}x$.
\end{notn}

\begin{rmk}
We sometimes use the symbol $Qy$ to refer to the entire set of possible choices,
and we sometimes use the same symbol to refer to a single choice
of element in the set.
See, for example, \cref{rmk:DescriptionQy} for a description of the set $Qy$.
\end{rmk}

\begin{rmk}
It follows from the definition that $\frac{Q}{\rho^k} y$ lies in Bockstein filtration $-k-1$.
\end{rmk}

We have defined $Qy$ to be a coset in $E_{1,\rho^n}^-$.  
Since $E_{1,\rho^n}^-$ is contained in $E_1^-$, 
we can consider $Qy$ to be a coset in $E_1^-$.
This coset in $E_1^-$ is independent of the choice of $n$ because
of the commutative diagram
\[
\begin{tikzcd}
\displaystyle
\frac{\Ext_\C^{s,f,c}(  \bMCC/\tau^\infty)[\rho]}{\rho^n} \ar[r,"\delta"] \ar[d,"\rho"] &
\displaystyle
\frac{\Ext_\C^{s+1,f-1,c+1}(\bMCC)[\rho]}{\rho^n} \ar[d,"\rho"] \\
\displaystyle
\frac{\Ext_\C^{s-1,f,c}(  \bMCC/\tau^\infty)[\rho]}{\rho^{n+1}} \ar[r,"\delta"] &
\displaystyle
\frac{\Ext_\C^{s,f-1,c}(\bMCC)[\rho]}{\rho^{n+1}}. \\
\end{tikzcd}
\]

\begin{rmk}
If $y$ in $\Ext_\C$ is 
annihilated by some power of $\tau$
and also $\tau$-divisible, so that $y = \tau \cdot x$, then the product $\tau \cdot Qx$ 
is contained in $Q(\tau x)$, but the latter set may be larger.
\end{rmk}

\begin{rmk}
\label{rmk:DiffsOnQClasses}
In \cref{sctn:Bockstein-diff}, we study Bockstein differentials, including
differentials on classes of the form $\frac{Q}{\rho^k} y$
(see, for example, \cref{tbl:Bockstein-differentials-Qgamma}). The statement  $d_r \left( \frac{Q}{\rho^r} y \right ) = z$ means that the set $\frac{Q}{\rho^r} y$ contains a class that survives to $E_r^-$ and supports the claimed differential.

For example, \cref{tbl:Bockstein-differentials-Qgamma} implies that $d_4 \left( \frac{Q}{\rho^4} h_1^5 e_0 \right) = \frac{\gamma}{\tau^4} P h_1 e_0$. In $E_1^-$, the set $\frac{Q}{\rho^4} h_1^5 e_0$ in degree (27,8,7) consists of two classes, with indeterminacy given by the element $\frac{\gamma}{\rho^4 \tau^3} h_0 i$. The class $\frac{\gamma}{\rho^4 \tau^3} h_0 i$ supports a Bockstein $d_1$ differential, so only one of the two classes in the set $\frac{Q}{\rho^4} h_1^5 e_0$ survives to $E_4^-$ to support the $d_4$ differential.
\end{rmk}


\section{Extensions in $E^-_1$}
\label{sec:E1extensions}

As described at the end of \cref{sec:NCPeriodic},
there are two types of elements in $E_1^-$.
There are elements of the form
$\frac{\gamma}{\rho^i \tau^j} x$, where $x$ is an element of
$\Ext_\C$ that is not divisible by $\tau$ and is $\tau$-free.
Also, there are elements of the form
$\frac{Q}{\rho^i} y$, where $y$ is an element of $\Ext_\C$ that is annihilated
by some power of $\tau$.

\begin{rmk}
We draw particular attention to the element named $\gamma g$
in degree $(20,4,7)$.  At first glance, this element does not appear
to follow our naming conventions.  However, recall that
$\tau g$ is an indecomposable element of $\Ext_\C$.  Consequently,
we have $\gamma g = \frac{\gamma}{\tau} \tau g$ in
$E_1^-$.  We use this notation for its practical convenience.
\end{rmk}

\cref{E1minus-SES} implies that these two types of elements assemble
into a short exact sequence
\begin{equation}
\label{eq:gamma-Q-SES}
\gamma E^-_1 \rtarr E^-_1 \rtarr QE^-_1,
\end{equation}
where 
$\gamma E^-_1$ is the subobject of $E^-_1$ consisting of the
first type of elements, and 
$QE^-_1$ is the quotient of $E^-_1$ that is detected by 
the second type of elements \cite{GHIR}*{Proposition 3.1}.

Because $E^-_1$ is equal to 
$\ds\frac{\Ext_\C \left(  \bMCC/\tau^\infty \right)[\rho]}{\rho^\infty}$, 
it is a module over $E_1^+ = \Ext_\C[\rho]$.
The sequence \cref{eq:gamma-Q-SES} is compatible with $E_1^+$-module
structures.
For the most part, our notation is well-equipped to describe this
module structure.  For example, we have
$\ds z \cdot \frac{\gamma}{\rho^i \tau^j} x = \frac{\gamma}{\rho^i \tau^j} (zx)$.
Also, we have
$z \cdot Qy$ is contained in $Q(zy)$ when $zy$ is non-zero.  Beware that the 
indeterminacy of $Q(zy)$ can be larger than the indeterminacy
of $z \cdot Qy$, so these two expressions are not always equal.

However, when $zy$ does equal zero, it is still possible that the
product $z \cdot Qy$ is nonzero in $E^-_1$.
Such products are hidden by the sequence \cref{eq:gamma-Q-SES}.
In other words, it is possible to have relations
of the form $z \cdot Q y = \frac{\gamma}{\rho^i \tau^j} x$,
where $x$, $y$, and $z$ all belong to $\Ext_\C$ and $zy = 0$.

We make no attempt to exhaustively describe all such products, not even
in a range.  Instead, we will discuss a few specific examples that
are relevant for our later computations.  In all of these examples,
the element $z$ is either $h_0$ or $h_2$.

In order to obtain these relations, we will, as usual, use higher structure
in the form of Massey products.
In fact, $E^-_1$ is not merely a module over $E^+_1$.  It is also a 
``Massey module".  More concretely, there are Massey products of the
form $\langle x, a, b \rangle$, where $a$ and $b$ belong to $E^+_1$;
$x$ belongs to $E^-_1$; and $ab = 0$ and $x a = 0$.
These Massey products satisfy the standard shuffling properties
\[
\langle x, a, b \rangle c = x \langle a, b, c \rangle
\]
and
\[
\langle x, a, b c\rangle \subseteq \langle x, a b, c \rangle.
\]
In these formulas, the first bracket on the right
is a standard Massey product in $E^+_1$, i.e. $\Ext_\C$,
while the other three brackets refer to the Massey module structure of $E^-_1$. 

\begin{prop}
\label{prop:Q-bracket}
Suppose that $y$ is an element of $\Ext_\C$ such that
$\tau^i y$ equals zero.  Then
$Qy$ contains $\left\langle \frac{\gamma}{\tau^i}, \tau^i, y\right\rangle$.
For sufficiently large $N$, $Qy$ is equal to
$\left\langle \frac{\gamma}{\tau^N}, \tau^N, y\right\rangle$.
\end{prop}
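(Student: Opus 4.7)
The plan is to construct an explicit cobar cocycle representing (a choice of) the Massey product, then verify via the connecting map $\delta$ that this representative lies in $Qy$. Let $Y$ denote a cocycle in $\mathrm{coB}_{\cACC_*}(\bMCC)$ representing $y$. Since $\tau^i y = 0$ in $\Ext_\C$, there is a cochain $V$ with $dV = \tau^i \cdot Y$. On the other side, the relation $\tau^i \cdot \frac{\gamma}{\tau^i} = 0$ holds literally in $NC$, so the corresponding null-cochain may be taken to be zero. This defining system produces the cochain $\frac{\gamma}{\tau^i} \cdot V$ in the cobar complex computing $E_1^-$; the correction terms in $\eta_R\!\left(\frac{\gamma}{\tau^i}\right)$ from \cref{eq:etaR} all involve positive powers of $\rho$ and hence vanish on the $E_1$-page, so this is a cocycle representing a choice of $\langle \frac{\gamma}{\tau^i}, \tau^i, y\rangle$.

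Next, verify that this class lies in $Qy$ by applying the connecting map $\delta$ associated to the short exact sequence \eqref{cofiberrho^nSES} for a fixed $n$. Since $q(\tau^{-i}) = \frac{\gamma}{\rho^{n-1}\tau^i}$, the cochain $\rho^{n-1}\tau^{-i} \cdot V$ in $\mathrm{coB}_{\cAR_*}(\bMR[\tau^{-1}]/\rho^n)$ lifts $\frac{\gamma}{\tau^i} \cdot V$ along $q$. Its boundary modulo $\rho^n$ equals $\rho^{n-1}\tau^{-i} \cdot \tau^i Y = \rho^{n-1} Y$, representing $\rho^{n-1} y$, which is precisely the defining condition of $Qy$ from \cref{Notn:Qclasses}. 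To upgrade to the full containment $\langle \frac{\gamma}{\tau^i}, \tau^i, y\rangle \subseteq Qy$, observe that both summands of the Massey indeterminacy, namely $\frac{\gamma}{\tau^i} \cdot \Ext_\C$ and $E_1^- \cdot y$, lie inside $\mathrm{im}(q_*)$, which is the indeterminacy of $Qy$ by \cref{E1minus-SES}.

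For the second statement, compare indeterminacies directly. The shuffling choice $V' = \tau^{N-i} V$ satisfies $dV' = \tau^N Y$ and yields $\frac{\gamma}{\tau^N} \cdot V' = \frac{\gamma}{\tau^i} V$, showing $\langle \frac{\gamma}{\tau^i}, \tau^i, y\rangle \subseteq \langle \frac{\gamma}{\tau^N}, \tau^N, y\rangle$ whenever $N \geq i$. Hence the Massey indeterminacy grows with $N$: the piece $\frac{\gamma}{\tau^N} \cdot \Ext_\C$ eventually produces every class $\frac{\gamma}{\tau^s} \cdot x$ via $\frac{\gamma}{\tau^N}\!\cdot \tau^{N-s} x = \frac{\gamma}{\tau^s} x$, and the cross term $E_1^- \cdot y$ accounts for classes $\frac{\gamma}{\rho^r\tau^s} \cdot x$ with $r \geq 1$. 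Because any fixed tridegree supports only finitely many such classes, a sufficiently large $N$ suffices to exhaust $\mathrm{im}(q_*)$ in that tridegree, giving the claimed equality.

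The main obstacle will be verifying cleanly that the cross-term indeterminacy $E_1^- \cdot y$ really does cover the $\rho$-divisible part of $\mathrm{im}(q_*)$. This depends on the precise multiplicative interaction between $y$ and the $\tau$-torsion structure of $E_1^-$, and may require invoking the hidden $E_1$-extensions discussed in \cref{sec:E1extensions} (for instance, products of $Q$-classes with $h_0$ or $h_2$) to realize the required $\rho$-divisible representatives.
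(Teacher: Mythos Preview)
Your cobar construction for the first statement is correct and is exactly the argument from \cite{GHIR}*{Lemma~7.3} that the paper invokes.  The verification via $\delta$ that the representative lands in $Qy$ is clean.

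The gap is in your treatment of the indeterminacy term $E_1^- \cdot y$, and this is where your proposal diverges from the paper in a way that matters.  You assert that this term lies in $\mathrm{im}(q_*)$ and then plan to use it to cover the $\rho$-divisible part of the $Qy$-indeterminacy.  But the paper's key observation is that $E_1^- \cdot y$ is \emph{zero}: an element $z \in E_1^-$ contributing to this term would have to lie in tridegree $(1,-1,*)$, i.e., Adams filtration $-1$, and $E_1^-$ is concentrated in non-negative filtration.  So the cross term vanishes for degree reasons, not because of any subtle multiplicative structure.

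This destroys your strategy for the second statement.  You propose that $E_1^- \cdot y$ ``accounts for classes $\frac{\gamma}{\rho^r\tau^s} x$ with $r \geq 1$,'' and you flag this as the main obstacle.  But since $E_1^- \cdot y = 0$, it cannot account for anything; no amount of hidden-extension analysis from \cref{sec:E1extensions} will produce elements in filtration $-1$.  The paper instead argues directly: once $E_1^- \cdot y = 0$, the bracket indeterminacy is exactly $\frac{\gamma}{\tau^N} \cdot \Ext_\C$, and the paper asserts that the indeterminacy of $Qy$ consists of multiples of $\frac{\gamma}{\tau^k}$ for varying $k$.  Finiteness of $E_1^-$ in each degree then gives a bound $N$ beyond which $\frac{\gamma}{\tau^N} \cdot \Ext_\C$ already contains every such multiple, yielding equality.

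In short: compute the degree of $z$ in $E_1^- \cdot y$ first.  The vanishing you find there both closes the small gap in your containment argument and shows that your proposed mechanism for the equality statement cannot work.
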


\cref{prop:Q-bracket} is a generalization and slight improvement
on \cite{GHIR}*{Lemma 7.3}, which is not entirely precise about indeterminacy.
Its proof only shows that $Q y$ and 
$\left\langle \frac{\gamma}{\tau}, \tau, y \right\rangle$
intersect non-trivially.

\begin{proof}
The argument in \cite{GHIR}*{Lemma 7.3} relies on an inspection
of the cobar complex.  It demonstrates that $Q y$ and
$\left\langle \frac{\gamma}{\tau^i}, \tau^i, y\right\rangle$
intersect.  It remains to analyze the indeterminacies.

The indeterminacy of the bracket equals
$E^-_1 \cdot y + \frac{\gamma}{\tau^i} \cdot E^+_1$ in the appropriate
degree.
The first term is zero because $E^-_1$ is zero in the relevant degree
$(1, -1, 1)$.

The second term is closely related to the indeterminacy 
in the definition of $Qy$ given in \cref{Notn:Qclasses}.
The indeterminacy of $Q y$ consists of all multiples of
$\frac{\gamma}{\tau^k}$ for all $k \geq 1$, but the indeterminacy
of the bracket consists only of multiples of
$\frac{\gamma}{\tau^i}$ for a fixed value of $i$.  Therefore,
the indeterminacy of the bracket is possibly smaller, and the bracket is contained
in $Q y$.

Because $E_1^-$ is finite in each degree, 
there exists a sufficiently large $N$ such that 
the indeterminacy of $Q y$ consists of multiples of $\frac{\gamma}{\tau^N}$.
Then $Q y$ and
$\left\langle \frac{\gamma}{\tau^N}, \tau^N, y\right\rangle$
have the same indeterminacy, and they are equal.
\end{proof}

\begin{rmk}
\label{rmk:DescriptionQy}
Note the inclusion
\[
\left\langle \frac{\gamma}{\tau^i}, \tau^i, y\right\rangle \subseteq
\left\langle \frac{\gamma}{\tau^{i+1}}, \tau^{i+1}, y\right\rangle.
\]
This is a standard shuffling formula for Massey products, using
that $\frac{\gamma}{\tau^i} = \frac{\gamma}{\tau^{i+1}} \cdot \tau$.
Rather than considering brackets of the form
$\left\langle \frac{\gamma}{\tau^i}, \tau^i, y\right\rangle$ for one
value of $i$ at a time, one could study the union
\[
\bigcup_i \left\langle \frac{\gamma}{\tau^i}, \tau^i, y\right\rangle.
\]
According to \cref{prop:Q-bracket}, this union equals $Q y$.
We are not aware of previous work involving this type of structure 
with Massey products combined with families of infinitely
divisible elements.
\end{rmk}

\begin{prop}
\label{prop:E1-minus-hidden}
\cref{tab:E1-minus-extn}
lists some extensions in the $\rho$-Bockstein $E^-_1$-page
that computes $\Ext_{NC}$.
\end{prop}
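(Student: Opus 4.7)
The plan is to verify each extension in \cref{tab:E1-minus-extn} individually, using the Massey-module structure on $E^-_1$ together with the characterization of $Qy$ from \cref{prop:Q-bracket}. Since the hidden extensions all involve multiplication by $h_0$ or $h_2$ on a class of the form $\frac{Q}{\rho^k} y$, the key computational input will come from rewriting such products as $\C$-motivic Massey products multiplied by $\frac{\gamma}{\tau^i}$.

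First I will set up a general template. Given a hypothesized extension $z \cdot \frac{Q}{\rho^k} y = \frac{\gamma}{\rho^i \tau^j} x$ with $z$ equal to $h_0$ or $h_2$, I choose $i$ large enough that the indeterminacy of $Qy$ is controlled. By \cref{prop:Q-bracket}, a representative of $Qy$ is $\langle \frac{\gamma}{\tau^N}, \tau^N, y\rangle$ for $N \gg 0$. Applying the shuffling formula
\[
\langle \tfrac{\gamma}{\tau^N}, \tau^N, y\rangle \cdot z \;=\; \tfrac{\gamma}{\tau^N} \cdot \langle \tau^N, y, z\rangle,
\]
the product $z \cdot Qy$ is determined by the $\C$-motivic Massey product $\langle \tau^N, y, z\rangle$, which lies in $\Ext_\C$ and which we can look up or compute from known structure of $\Ext_\C$. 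Dividing by the appropriate power of $\rho$ then yields the claimed extension on $\frac{Q}{\rho^k} y$, thanks to the unique $\rho$-divisibility property from \cref{prop:E-minus-structure3}.

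The main verification step for each row of the table is therefore a Massey product identification in $\Ext_\C$: one must show that $\langle \tau^N, y, z\rangle$ contains the expected element $\tau^{N-j} x$ modulo the appropriate indeterminacy (so that after multiplying by $\frac{\gamma}{\tau^N}$ one obtains $\frac{\gamma}{\tau^j} x$). This is where I expect the real work to sit, since one must both produce the bracket (usually via a known $\C$-motivic relation of the form $\tau^N \cdot y = 0$ coupled with a factorization witnessing the second null-homotopy) and control its indeterminacy, which is $\tau^N \cdot \Ext_\C + z \cdot \Ext_\C$ in the relevant tridegree.

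The final step is indeterminacy bookkeeping for the extension statement itself. Because $\frac{Q}{\rho^k} y$ is only defined up to classes of the form $\frac{\gamma}{\rho^r \tau^s} x'$, one must check that $z$ annihilates this ambiguity, i.e. that $z \cdot \frac{\gamma}{\rho^r \tau^s} x'$ vanishes in the relevant degree for every $x'$ in the indeterminacy. In practice this is immediate from degree considerations combined with the additive description of $\gamma E^-_1$ as a module over $\Ext_\C$, which we already understand via the first summand of \eqref{eq:gamma-Q-SES}. The hardest individual cases will be those in which $y$ is itself already a Massey product in $\Ext_\C$ (so that an application of the second shuffling inclusion $\langle x, a, bc\rangle \subseteq \langle x, ab, c\rangle$ is needed to reassociate) and those in which $\tau^N y$ requires more than one step to trivialize; the remaining rows should reduce to standard $\C$-motivic bracket identities already recorded in \cite{IWX23}.
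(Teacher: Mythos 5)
Your proposal is correct and follows essentially the same route as the paper: identify $Qy$ as the Massey product $\langle \frac{\gamma}{\tau^N}, \tau^N, y\rangle$ via \cref{prop:Q-bracket}, shuffle $z$ across to reduce to the $\C$-motivic bracket $\langle \tau^N, y, z\rangle$, and then evaluate that bracket. The one thing the paper makes explicit that you leave vague is the computational mechanism for the $\C$-motivic brackets: it uses the May convergence theorem \cite{MMP}*{Theorem 4.1} together with specific May differentials (e.g.\ $d_4(b_{20}h_0(1)) = \tau h_1^2 c_0$ to get $\langle \tau, h_1^2 c_0, h_0\rangle = Ph_2$), rather than "looking them up" in \cite{IWX23}, which tabulates $\Ext_\C$ but not these particular brackets. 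Also note that the sources in the table are all $Qy$ (no $\rho$-division), so the "dividing by the appropriate power of $\rho$" step you mention does not arise here.
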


\begin{proof}
The proofs for every extension are essentially the same.
We give the details for the first one.
\cref{prop:Q-bracket} (and inspection of indeterminacies)
shows that $Q h_1^2 c_0$ equals
$\left\langle \frac{\gamma}{\tau}, \tau, h_1^2 c_0 \right\rangle$.
Consider the shuffle
\[
\left\langle \frac{\gamma}{\tau}, \tau, h_1^2 c_0 \right\rangle h_0 =
\frac{\gamma}{\tau} \left\langle \tau, h_1^2 c_0, h_0 \right\rangle.
\]
The latter $\C$-motivic bracket is equal to $P h_2$,
which follows from the May convergence theorem \cite{MMP}*{Theorem 4.1}
applied to the May differential $d_4(b_{20} h_0(1)) = \tau h_1^2 c_0$.

The other extensions require different $\C$-motivic Massey products,
such as $h_0 d_0 = \langle \tau, h_1^2 c_0, h_2 \rangle$ and
$i = \langle \tau, c_0 d_0, h_0 \rangle$.
All of these brackets can be computed with the May convergence
theorem and appropriate May differentials.
\end{proof}

\renewcommand*{\arraystretch}{1.6}
\begin{longtable}{LLLL}
\caption{Some extensions in $E^-_1$ \label{tab:E1-minus-extn}} \\ 
\toprule
{(s,f,c)} & \text{source} & \text{type} & \text{target} \\
\midrule \endfirsthead
\caption[]{Some extensions in $E^-_1$} \\
\toprule
{(s,f,c)} & \text{source} & \text{type} & \text{target} \\
\midrule \endhead
\bottomrule \endfoot
(11,4,3) & Q h_1^2 c_0 & h_0 & \frac{\gamma}{\tau} P h_2 \\
(11,4,3) & Q h_1^2 c_0 & h_2 & \frac{\gamma}{\tau} h_0 d_0 \\
(19,8,7) & Q P h_1^2 c_0 & h_0 & \frac{\gamma}{\tau} P^2 h_2 \\
(19,8,7) & Q P h_1^2 c_0 & h_2 & \frac{\gamma}{\tau} P h_0 d_0 \\
(23,6,9) & Q c_0 d_0 & h_0 & \frac{\gamma}{\tau} i \\
(23,6,9) & Q c_0 d_0 & h_2 & \frac{\gamma}{\tau} j \\
(26,6,10) & Q c_0 e_0 & h_0 & \frac{\gamma}{\tau} j \\
(26,6,10) & Q c_0 e_0 & h_2 & \frac{\gamma}{\tau} k \\
(27,12,11) & Q P^2 h_1^2 c_0 & h_0 & \frac{\gamma}{\tau} P^3 h_2\\
(27,12,11) & Q P^2 h_1^2 c_0 & h_2 & \frac{\gamma}{\tau} P^2 h_0 d_0 \\
\end{longtable}

\cref{fig:E1-chart} is a chart of the Bockstein $E^-_1$-page.
The extensions in \cref{tab:E1-minus-extn}
appear as dashed lines in that chart.


\section{$\R$-motivic and negative cone Bockstein differentials}
\label{sec:PCtoNC}

We now discuss how $\R$-motivic Bockstein differentials 
relate to
Bockstein differentials in the negative cone. 
As discussed in \cref{sec:NCPeriodic}, there are two types of classes in $E_1^-$: the $\gamma$ classes and the $Q$ classes.
We can then sort Bockstein differentials in $E_r^-$ into three types:
\begin{enumerate}
\item 
differentials from $\gamma$ classes to $\gamma$ classes
\item 
differentials from $Q$ classes to $Q$ classes
\item 
differentials from $Q$ classes to $\gamma$ classes.
\end{enumerate}

\begin{rmk}
\label{rmk:gammatoQ}
In principle, there could a fourth type of differential, from $\gamma$ classes to $Q$ classes, although we have not (yet) encountered any examples. Since the $\gamma$ classes are infinitely $\tau$-divisible, while the $Q$ classes are not, a differential $d_r \left(\frac{\ga}{\rho^r \tau^k} x\right) = Qy$ could only occur if $\frac{\ga}{\rho^r \tau^{k+2^n}} x$ supports a shorter Bockstein differential for some $n$.
\end{rmk}

As we will discuss in this section, the three  types of known Bockstein differentials in $E_r^-$ correspond to three types of differentials in $E_r^+$, i.e. $\R$-motivic Bockstein differentials. Suppose that $d_r(x)= \rho^r y$ in the $\R$-motivic Bockstein spectral sequence. The three types are
\begin{enumerate}
\item 
{Free}: $x$ and $y$ are both $\tau$-free classes in $\Ext_\C$.
\item 
{Torsion}: $x$ and $y$ are both $\tau$-power torsion classes in $\Ext_\C$.
\item 
{Mixed}: $x$ is $\tau$-free, while $y$ is a $\tau$-power torsion class in $\Ext_\C$.
\end{enumerate}

\begin{rmk}
\label{rmk:periodic-type}
There is a possible complication that may occur with
free
differentials.
Suppose that $x$, $y$, and $z$ are $\tau$-free in the $E_1$-page.
Also suppose that $d_r(x) = \rho^r \tau^n y$ for some $n > 0$, and
$d_s(z) = \rho^s y$ for some $s > r$.
Then the differential on $z$ is 
free in the sense discussed above,
even though $d_s(\tau^n z)$ is zero.
We do not know if this phenomenon occurs in practical $\Ext$ computations.
It has not (yet) been observed
in the $\R$-motivic Bockstein spectral sequence.
\end{rmk}

\begin{rmk} A priori, there is a fourth possible type of Bockstein differential, as in \cref{rmk:gammatoQ}. Suppose that $x$ and $y$ are both $\tau$-free and that $d_r(x) = \rho^r \tau^n y$ for some $n > 0$. 
It is then possible for there to be
a later differential $d_s(z) = \rho^s y$, where $z$ is a $\tau$-torsion class in $\Ext_\C$. 
This possibility is similar to the possibility discussed in
Remark \ref{rmk:periodic-type}.  The difference is that here $z$ is
$\tau$-torsion, while it is $\tau$-free in the previous remark.

As in Remark \ref{rmk:periodic-type}, we do not know if this phenomenon occurs in practical $\Ext$ computations because
it has not (yet) been observed
in the $\R$-motivic Bockstein spectral sequence.
\end{rmk}

\subsection{Free differentials}
\label{subsctn:periodic-diff}

We first deal with free differentials.
We say that a non-zero differential $d_r(x) = \rho^r y$ in the
($\R$-motivic or $C_2$-equivariant) $\rho$-Bockstein spectral sequence is
{\bf $\tau^{2^n}$-periodic} if there is a non-zero differential
$d_r(\tau^{2^n k} x) = \rho^r \tau^{2^n k} y$ for all $k \geq 0$.
All free differentials that we have encountered are periodic in this sense, although see \cref{rmk:periodic-type}.

\begin{rmk}
\label{rmk:periodic-differential}
Typically, we have that $r < 2^n$.  Then $d_r(\tau^{2^n}) = 0$.
In this case, $d_r(x) = \rho^r y$ is automatically a 
$\tau^{2^n}$-periodic differential as long as $y$
is {$\tau^{2^n}$}-free in the Bockstein $E_r$-page (i.e., {$\tau^{2^n k} y$}
is non-zero for all $k$).
\end{rmk}

We say that a non-zero differential 
$d_r(a) = \rho^r b$ 
in the $C_2$-equivariant Bockstein spectral sequence is
{\bf $\tau^{2^n}$-coperiodic} if there 
exists a sequence of non-zero differentials
$d_r(a_k) = \rho^r b_k$ for $k \geq 0$ such that:
\begin{enumerate}
\item
$a = a_0$ and $b = b_0$, and
\item
$\tau^{2^n} a_{k+1} = a_k$ and
$\tau^{2^n} b_{k+1} = b_k$.
\end{enumerate}
The idea is that the $a_k$'s and the $b_k$'s form infinitely
divisible families of elements in the spectral sequence.
Typically, a $\tau^{2^n}$-coperiodic family of differentials
appears in the form
\[
d_r \left( \frac{\gamma}{\rho^r \tau^{2^n k}} a \right) = 
\frac{\gamma}{\tau^{2^n k}} b.
\]
\cref{lem:coperiodic-differential} gives an easy criterion
for detecting coperiodic differentials.  The point is that
when $r < 2^n$,
a differential is automatically coperiodic if its source is
infinitely divisible by $\tau^{2^n}$.

\begin{lem}
\label{lem:coperiodic-differential}
Let $r < 2^n$.  Suppose that 
$d_r (  a ) = \rho^r b$ is a non-zero differential
and that 
$a$ is infinitely divisible by $\tau^{2^n}$.
Then this differential is $\tau^{2^n}$-coperiodic.
\end{lem}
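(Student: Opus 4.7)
The plan is to inductively construct sequences $\{a_k\}$ and $\{b_k\}$ verifying the coperiodic property. Infinite $\tau^{2^n}$-divisibility of $a$ allows one to choose $a_0 = a, a_1, a_2, \ldots$ satisfying $\tau^{2^n} a_{k+1} = a_k$. Since $r < 2^n$, \cref{rmk:periodic-differential} ensures $d_r(\tau^{2^n}) = 0$, so the Leibniz rule yields
\[
\tau^{2^n} d_r(a_{k+1}) = d_r(\tau^{2^n} a_{k+1}) = d_r(a_k).
\]
Starting from $d_r(a_0) = \rho^r b \neq 0$, induction on $k$ then gives $d_r(a_k) \neq 0$ for every $k \geq 0$.

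The heart of the argument is to write each $d_r(a_{k+1})$ as $\rho^r b_{k+1}$ in a way that is compatible with the required relation $\tau^{2^n} b_{k+1} = b_k$. Since $d_r$ increases $\rho$-filtration by $r$ and $d_r(a_{k+1})$ is a non-zero class in $E_r^-$ (hence of $\rho$-filtration at most $-1$), the element $a_{k+1}$ must have $\rho$-filtration at most $-r-1$. By the structural description of $E_r^-$ as an $\F_2[\rho]$-module recorded in the remark following \cref{prop:E-minus-structure4}, every finite (type (2)) summand has filtration bounded below by $-r$, so $a_{k+1}$ must lie in an infinitely $\rho$-divisible (type (1)) summand. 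Writing $a_{k+1} = \rho^r a_{k+1}'$ and applying Leibniz yields
\[
d_r(a_{k+1}) = \rho^r \, d_r(a_{k+1}').
\]
Define $b_{k+1} := d_r(a_{k+1}')$; by unique $\rho$-divisibility (\cref{prop:E-minus-structure3}), $b_{k+1}$ is the unique element of $E_r^-$ satisfying $\rho^r b_{k+1} = d_r(a_{k+1})$.

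The coperiodic condition then follows from a second application of unique $\rho$-divisibility. From the identity
\[
\rho^r (\tau^{2^n} b_{k+1}) = \tau^{2^n} d_r(a_{k+1}) = d_r(a_k) = \rho^r b_k,
\]
and the fact that $\rho^r b_k \neq 0$, one concludes $\tau^{2^n} b_{k+1} = b_k$, as desired. The main obstacle in this plan is justifying the passage from $d_r(a_{k+1})$ to a genuine $\rho^r$-multiple; I would address this by carefully invoking the two-type classification of summands of $E_r^-$, which guarantees that classes of sufficiently low filtration are automatically infinitely $\rho$-divisible. Once this is in place, the Leibniz rule and unique $\rho$-division combine to give the result with no further difficulty.
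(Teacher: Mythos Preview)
Your proof is correct and follows essentially the same approach as the paper's: choose $\tau^{2^n}$-divisions $a_k$ of $a$, use $d_r(\tau^{2^n})=0$ and the Leibniz rule to propagate nonvanishing of $d_r(a_k)$, write $d_r(a_k)=\rho^r b_k$, and then use the absence of low-filtration $\rho$-torsion to force $\tau^{2^n}b_{k+1}=b_k$. The only notable difference is in how you produce $b_{k+1}$: you first argue that $a_{k+1}$ itself is $\rho^r$-divisible (via the two-type classification of $E_r^-$ following \cref{prop:E-minus-structure4}) and set $b_{k+1}=d_r(a_{k+1}')$, whereas the paper defines $b_k$ directly by $d_r(a_k)=\rho^r b_k$ and then uses \cref{prop:E-minus-structure}(1) to show there is no $\rho^r$-torsion in the relevant degree, so $b_k$ is unambiguous. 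These are two phrasings of the same filtration argument, and your invocation of unique $\rho$-divisibility (\cref{prop:E-minus-structure3}) in the final step is exactly equivalent to the paper's ``no $\rho^r$-torsion'' observation.
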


\begin{proof}
Let $a_0 = a$.  For $k > 0$,
let $a_k$ and be an element such that
$\tau^{2^n} a_k = a_{k-1}$.
Define $b_k$ by the formula $d_r(a_k) = \rho^r b_k$.
Note that $b_k$ is defined only up to $\rho^r$-torsion.
Also note that $b_k$ is non-zero since
\[
\rho^r \tau^{2^n k} b_k = d_r(\tau^{2^n k} a_k) = d_r(a) = \rho^r b.
\]
Since $\rho^r b_k$ is non-zero, we can conclude that the $\rho$-filtration
of $b_k$ is at most $-r-1$.  Therefore, \cref{prop:E-minus-structure} (1)
implies that there is no $\rho^r$-torsion in the same degree
(including the $\rho$-Bockstein filtration degree)
as $b_k$. 
This means that $b_k$ is in fact well-defined.

Since $r < 2^n$, we have that $d_r(\tau^{2^n}) = 0$.
Apply $d_r$ to the relation
$\tau^{2^n} a_k = a_{k-1}$
to obtain that
$\rho^r \tau^{2^n} b_k = \rho^r b_{k-1}$.
This shows that $\tau^{2^n} b_k$ equals $b_{k-1}$
modulo $\rho^r$-torsion.  As above, there is no possible
$\rho^r$-torsion in this degree, so
$\tau^{2^n} b_k$ must equal $b_{k-1}$.
\end{proof}

\cref{lem:coperiodic-differential} applies, for example, to the differentials of \cref{DiffsGamma}, 
as written out in \cref{rmk:DiffsGamma}.

\begin{prop}
\label{prop:tau-periodic-coperiodic-diff}
Let $1 \leq r < 2^n$,
and suppose that $x$ and $y$ in $\Ext_\C$ are both $\tau$-free classes.
There exists a differential
$d_r \left( \frac{\gamma}{\rho^r \tau^{2^n}} x \right) =
\frac{\gamma}{\tau^{2^n}} y$ in $E_r^-$ if and only if there
exists a differential $d_r(x) = \rho^r(y + z)$ in $E_r^+$, where
$\frac{\gamma}{\tau^{2^n}} z$ is zero in $E_r^-$.
\end{prop}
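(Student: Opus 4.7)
The plan is to extract both directions of the ``iff'' from the Leibniz rule for the $\rho$-Bockstein spectral sequence. Since the Bockstein is multiplicative and $E_r^-$ is a module over $E_r^+$, the differential $d_r$ is a derivation, giving
\[
d_r\!\left(\tfrac{\ga}{\rho^r\tau^{2^n}} \cdot x\right) = d_r\!\left(\tfrac{\ga}{\rho^r\tau^{2^n}}\right) \cdot x + \tfrac{\ga}{\rho^r\tau^{2^n}} \cdot d_r(x).
\]
The first step is to establish that the first summand on the right vanishes under the hypothesis $r < 2^n$, so that only the second summand contributes.

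The key lemma to verify is that $\frac{\ga}{\rho^r\tau^{2^n}}$ is a $d_s$-cycle for every $s \leq r$. From \cref{DiffsGamma} and \cref{rmk:DiffsGamma}, the only Bockstein differentials on generators of the form $\frac{\ga}{\rho^a\tau^b}$ have $a = 2^m$ and $b = 2^{m+1}k + 2^m$. Setting $b = 2^n$ would force $2^{n-m} - 1 = 2k$, which is impossible for $n > m$, so $\frac{\ga}{\rho^{2^m}\tau^{2^n}}$ is a $d_{2^m}$-cycle whenever $m < n$. For non-power-of-two values of $r$, the plan is to argue directly from \eqref{eq:etaR} that the leading non-trivial $\rho$-power in $\eta_R(\tfrac{\ga}{\rho^r\tau^{2^n}}) - \tfrac{\ga}{\rho^r\tau^{2^n}}$ occurs at exponent at least $2^n > r$, ruling out any primary Bockstein differential of length at most $r$ on this class.

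Granting the lemma, the Leibniz identity reduces to $d_r(\tfrac{\ga}{\rho^r\tau^{2^n}} \cdot x) = \tfrac{\ga}{\rho^r\tau^{2^n}} \cdot d_r(x)$, and both implications follow. For the forward direction, given $d_r(x) = \rho^r(y + z)$ with $\tfrac{\ga}{\tau^{2^n}} z = 0$, the right side collapses to $\tfrac{\ga}{\tau^{2^n}}(y + z) = \tfrac{\ga}{\tau^{2^n}} y$. For the converse, since $x$ is $\tau$-free and $d_r$ raises $\rho$-filtration by exactly $r$, one may write $d_r(x) = \rho^r w$ for some $w \in \Ext_\C$; applying the forward direction with $w$ in place of $y + z$ identifies $d_r(\tfrac{\ga}{\rho^r\tau^{2^n}} x)$ with $\tfrac{\ga}{\tau^{2^n}} w$, and comparing with the assumed value $\tfrac{\ga}{\tau^{2^n}} y$ gives $z := w - y$ with the required property.

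The main obstacle I expect is the rigorous justification of the key lemma for non-power-of-two values of $r$, since \cref{DiffsGamma} only handles $r = 2^m$ directly. The argument via \eqref{eq:etaR} is conceptually clear, but it requires ruling out secondary contributions arising from Leibniz interactions with other classes in $NC$ that could in principle shorten the differential length below $2^n$; this is closely related to the speculative phenomena flagged in \cref{rmk:periodic-type}.
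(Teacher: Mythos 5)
Your overall strategy matches the paper's: both directions follow from the Leibniz rule once one knows $d_r\!\left(\frac{\gamma}{\rho^r\tau^{2^n}}\right) = 0$, and your forward and backward implications are the same as the paper's. Where you overcomplicate things is in verifying this vanishing. You split the argument according to whether $r$ is a power of two, and for general $r$ you propose expanding $\eta_R$ directly from \eqref{eq:etaR} and then ruling out ``secondary contributions'' from Leibniz interactions --- a concern you yourself flag as the weak point of the argument. The concern is phantom, and the case split is unnecessary. For any $1 \leq r < 2^n$, the element $\frac{\gamma}{\rho^r\tau^{2^n}}$ equals $\rho^{2^n - r} \cdot \frac{\gamma}{\rho^{2^n}\tau^{2^n}}$ in $NC$ (this is just the $\rho$-module structure of the negative cone). \cref{DiffsGamma} shows that $\frac{\gamma}{\rho^{2^n}\tau^{2^n}}$ supports a nonzero $d_{2^n}$, so it is a $d_s$-cycle for all $s < 2^n$; multiplying by the permanent cycle $\rho^{2^n - r}$ preserves this, so $\frac{\gamma}{\rho^r\tau^{2^n}}$ is a $d_s$-cycle for all $s < 2^n$, in particular for $s = r$. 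This is what the paper's terse ``by \cref{DiffsGamma}'' is invoking, and once one sees the $\rho$-divisibility, no further inspection of $\eta_R$ and no worries about non-power-of-two $r$ remain --- the Leibniz argument closes without gaps.
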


In practice, it is typically the case that the
differentials in Proposition \ref{prop:tau-periodic-coperiodic-diff}
are $\tau^{2^n}$-coperiodic and $\tau^{2^n}$-periodic respectively.
Usually, Remark \ref{rmk:periodic-differential} and 
Lemma \ref{lem:coperiodic-differential} apply.

\begin{proof}
First assume that $d_r(x) = \rho^r (y + z)$ 
and that $\frac{\gamma}{\tau^{2^n}} z$ is zero.
Since $r < 2^n$, we have that
$d_r \left( \frac{\gamma}{\rho^r \tau^{2^n}} \right) = 0$ 
by \cref{DiffsGamma}.
Then the Leibniz rule gives that
\[
d_r \left( \frac{\gamma}{\rho^r \tau^{2^n}} x \right ) =
\frac{\gamma}{\rho^r \tau^{2^n}} d_r(x) =
\frac{\gamma}{\rho^r \tau^{2^n}} \rho^r (y + z ) =
\frac{\gamma}{\tau^{2^n}} y.
\]
 
Now suppose that 
$d_r \left( \frac{\gamma}{\rho^r \tau^{2^n}} x \right) =
\frac{\gamma}{\tau^{2^n}} y$.
The Leibniz rule implies the differential
$\frac{\gamma}{\rho^r \tau^{2^n}} d_r(x) = \frac{\gamma}{\tau^{2^n}} y$.
This implies that $d_r(x)$ 
equals $\rho^r y$ plus a possible error
term of the form $\rho^r z$ that is annihilated by $\frac{\gamma}{\rho^r \tau^{2^n}}$.  In other words, $z$ is annihilated by 
$\frac{\gamma}{\tau^{2^n}}$.
\end{proof}

\begin{rmk}
The possible error terms $z$ in 
\cref{prop:tau-periodic-coperiodic-diff} 
may be difficult to determine.
The elements of $E_1^+$ that annihilate 
$\frac{\gamma}{\tau^{2^n}}$ are of the form
$z + \tau^{2^n} w$, where $z$ is annihilated by some power of $\tau$.
For example, the differential $d_1( \tau h_0^5 h_5) = \rho h_0^6 h_5$ gives $d_1( \frac{\ga}{\rho \tau}  h_0^5 h_5 ) = \frac{\ga}{\tau^2} h_0^6 h_5$. On the other hand, the latter differential only implies that $d_1( \tau h_0^5 h_5 )$ is either $\rho h_0^6 h_5$ or $\rho ( h_0^6 h_5 + \tau^2 d_0 e_0)$, as $\tau^2 d_0 e_0$ annihilates $\frac{\ga}{\tau^2}$.
It can be hard to describe the elements
of $E_r^+$ that annihilate $\frac{\gamma}{\tau^{2^n}}$.
The problem is that earlier differentials can hit
elements of the form $\frac{\gamma}{\tau^{2^n}} z$.
\end{rmk}

\subsection{Torsion differentials}

Next, we consider torsion differentials.
We employ the diagram
\begin{equation}
\label{DoubleExactDiagram}
\begin{tikzcd}
\ar[r]
&
\displaystyle
\Ext_\R \left( \frac{\bMR[\tau^{-1}]}{\rho} \right) \ar[r,"q_*"] \ar[d,"\rho^r"]
&
\Ext_\R(NC_\rho) \ar[d] \ar[r,"\delta"]
&
\displaystyle
\Ext_\R \left( \frac{\bMR}{\rho} \right) \ar[d,"\rho^r"]
\ar[r,"i_*"]
& \ 
\\
\ar[r]
&
\displaystyle
\Ext_\R \left( \frac{\bMR[\tau^{-1}]}{\rho^{r+1}} \right) \ar[r,"q_*"] \ar[d]
&
\Ext_\R(NC_{\rho^{r+1}}) \ar[d,"\rho"] \ar[r,"\delta"]
&
\displaystyle
\Ext_\R \left( \frac{\bMR}{\rho^{r+1}} \right) \ar[d]
\ar[r,"i_*"]
& \ 
\\
\ar[r]
&
\displaystyle
\Ext_\R \left( \frac{\bMR[\tau^{-1}]}{\rho^r} \right) \ar[r,"q_*"]  \ar[d,"\odelta_{r,1}"]
&
\Ext_\R(NC_{\rho^r}) \ar[r,"\delta"] \ar[d,"\odelta_{r,1}"]
&
\displaystyle
\Ext_\R \left( \frac{\bMR}{\rho^r} \right)  \ar[d,"\odelta_{r,1}"]
\ar[r,"i_*"]
& \ 
\\
\ar[r]
&
\displaystyle
\Ext_\R \left( \frac{\bMR[\tau^{-1}]}{\rho} \right)  \ar[r,"q_*"] 
&
\Ext_\R(NC_\rho)  \ar[r,"\delta"] 
&
\displaystyle
\Ext_\R \left( \frac{\bMR}{\rho} \right) 
\ar[r,"i_*"]
& \ 
\end{tikzcd}
\end{equation}
in our analysis.
Each row is exact, as in \eqref{cofiberrho^nLESExt}. Each column is also exact
because of the short exact sequences
\[
\frac{\bM^\R[\tau^{-1}]}{\rho} \xrightarrow{\rho^r}
\frac{\bM^\R[\tau^{-1}]}{\rho^{r+1}} \rightarrow
\frac{\bM^\R[\tau^{-1}]}{\rho^r},
\]
\[
NC_\rho \rightarrow NC_{\rho^{r+1}} \xrightarrow{\rho} NC_{\rho^r},
\]
and
\[
\frac{\bM^\R}{\rho} \xrightarrow{\rho^r}
\frac{\bM^\R}{\rho^{r+1}} \rightarrow
\frac{\bM^\R}{\rho^r}
\]
of $\cAR_*$-comodules.
Typically, a diagram such as \cref{DoubleExactDiagram} commutes only up to a sign. However, since we are working in characteristic 2, the diagram here commutes.

The following lemma is key to our analysis.

\begin{lem} 
\label{TranslateRhoBssODelta}
Let $M$ be an $\cAR_*$-comodule and fix $r \geq 1$
{such that the sequence
\[
\frac{M}{\rho} \xrtarr{\rho^r} \frac{M}{\rho^{r+1}} \rtarr \frac{M}{\rho^r}
\]
is short exact.}
Suppose that $x$ is a $d_{r-1}$-cycle in the Bockstein spectral sequence with coefficients in $M$, and let $[x]$ {be an element of} $\Ext_\R(M/\rho^{r})$ that is detected by $x$. Then there is a Bockstein differential 
$d_{r}(x) = \rho^r y$ if and only if $\odelta_{r,1}([ x ]) = y$,
where $\odelta_{r,1}$ is the connecting homomorphism in the long exact sequence
\[ 
\rtarr \Ext_\R \left( \frac{M}{\rho} \right) \xrtarr{\rho^r} 
\Ext_\R \left( \frac{M}{\rho^{r+1}} \right) \rtarr 
\Ext_\R \left( \frac{M}{\rho^r} \right) \xrtarr{\odelta_{r,1}} 
\Ext_\R \left( \frac{M}{\rho} \right) \rtarr.
\]
\end{lem}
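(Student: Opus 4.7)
The plan is to show that the Bockstein differential $d_r(x)$ and the connecting homomorphism $\odelta_{r,1}([x])$ are computed by the \emph{same} cochain-level procedure, so the two conditions $d_r(x)=\rho^r y$ and $\odelta_{r,1}([x])=y$ are literally identical once one unwinds definitions.

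First, I would set up the cobar complexes $C^*(M/\rho^k)$ computing the groups $\Ext_\R(M/\rho^k)$, and equip $C^*(M)$ with the $\rho$-adic filtration from which the $\rho$-Bockstein spectral sequence is built. By standard unwinding of the spectral sequence definitions, the statement that $x$ is a $d_{r-1}$-cycle means that $x$ is represented by a cochain $\tilde{x}\in C^*(M)$ whose coboundary lies in $\rho^r C^{*+1}(M)$; equivalently, the reduction $\tilde{x}\bmod\rho^r\in C^*(M/\rho^r)$ is a cocycle, and its cohomology class is exactly the lift $[x]\in\Ext_\R(M/\rho^r)$.

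Second, I would compare the two recipes in parallel. The Bockstein recipe writes $d\tilde{x}=\rho^r \tilde{y}$ for some cochain $\tilde{y}\in C^{*+1}(M)$; the differential $d_r$ is defined by $d_r(x)=[\tilde{y}\bmod\rho]$. The snake-lemma recipe for $\odelta_{r,1}([x])$ proceeds as follows: start from the cocycle representative $\tilde{x}\bmod\rho^r$ of $[x]$, lift it through the surjection $M/\rho^{r+1}\twoheadrightarrow M/\rho^r$ (take $\tilde{x}\bmod\rho^{r+1}$), apply the differential to obtain $\rho^r\tilde{y}\bmod\rho^{r+1}$, and invert the map $\rho^r\colon M/\rho\to M/\rho^{r+1}$ to extract $\tilde{y}\bmod\rho$. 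The hypothesis that the sequence
\[
\frac{M}{\rho}\xrtarr{\rho^r}\frac{M}{\rho^{r+1}}\rtarr\frac{M}{\rho^r}
\]
is short exact is precisely what makes the final step (inversion of multiplication by $\rho^r$) well-defined. Both recipes therefore produce the same class $y=[\tilde{y}\bmod\rho]\in\Ext_\R(M/\rho)$.

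The main obstacle is the purely notational bookkeeping to confirm that the $\tilde{x}$ appearing in the Bockstein recipe and the $\tilde{x}\bmod\rho^{r+1}$ appearing in the snake-lemma recipe can be chosen compatibly. Once compatible lifts are fixed, the equivalence of the two conditions is immediate because they reduce to the very same equation $d\tilde{x}=\rho^r\tilde{y}$ in $C^{*+1}(M/\rho^{r+1})$.
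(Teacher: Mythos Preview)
Your proposal is correct and follows essentially the same approach as the paper's proof: both unwind the definitions at the cobar-complex level, using that a $d_{r-1}$-cycle lifts to a cochain whose coboundary is divisible by $\rho^r$, and then observe that the Bockstein differential and the snake-lemma connecting homomorphism are computed by the same procedure. The paper's proof is simply terser, leaving the explicit comparison of recipes to the reader.
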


\begin{pf}
The short exact sequence of coefficients yields a short exact sequence of cobar complexes. The assumption that $x$ is a $d_{r-1}$-cycle in the Bockstein spectral sequence means that there exists a lift $\alpha$ of $x$ to the cobar complex such that $d(\alpha)$ is a $\rho^r$-multiple.
The result then follows  from the construction of the connecting
homomorphism. 
\end{pf}

\cref{TranslateRhoBssNegCone} below is an example of the more general
\cref{TranslateRhoBssODelta}.

\begin{lemma}
\label{TranslateRhoBssNegCone}
Suppose that $\frac{\ga}{\rho^r \tau^{2^n}}x $ is a $d_{r-1}$-cycle
in the Bockstein spectral sequence for $\Ext_\R(NC)$, or equivalently for $\Ext_\R(NC_{\rho^{r+1}})$.
Then
\[d_r\left( \frac{\ga}{\rho^r \tau^{2^n}}x \right) = \rho^r b  \quad \text{if and only if} \quad \odelta_{r,1} \left( \left[{\frac{\ga}{\rho^{r-1} \tau^{2^n}}x}\right] \right) = b.\]
Similarly, if $\frac{Q}{\rho^r} x$ is a  $d_{r-1}$-cycle, then 
\[
d_r\left( \frac{Q}{\rho^r} x\right) = \rho^r b \quad \text{if and only if} \quad \odelta_{r,1} \left( \left[ \frac{Q}{\rho^{r-1}} x \right]  \right) =b.
\]
\end{lemma}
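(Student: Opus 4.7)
My plan is to deduce both equivalences from Lemma \ref{TranslateRhoBssODelta}, or rather, to argue directly in parallel with its proof, taking the role of the short exact sequence $M/\rho \rightarrow M/\rho^{r+1} \rightarrow M/\rho^r$ to be played by the middle column of diagram \eqref{DoubleExactDiagram}:
\[
0 \rightarrow NC_\rho \rightarrow NC_{\rho^{r+1}} \xrightarrow{\rho} NC_{\rho^r} \rightarrow 0.
\]
By \cref{prop:colim-NCrho}, the Bockstein spectral sequence for $\Ext_\R(NC)$ and that for $\Ext_\R(NC_{\rho^{r+1}})$ agree on $\rho^{r+1}$-torsion classes, which include both $\frac{\ga}{\rho^r \tau^{2^n}} x$ and $\frac{Q}{\rho^r} x$; so it is harmless to carry out the argument in $NC_{\rho^{r+1}}$.

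For the $\ga$-version, I would fix a cobar cochain $\alpha$ in the cobar complex of $NC_{\rho^{r+1}}$ representing $\frac{\ga}{\rho^r \tau^{2^n}} x$. Its image under the quotient map $\rho \colon NC_{\rho^{r+1}} \to NC_{\rho^r}$ is a cobar representative for $\frac{\ga}{\rho^{r-1} \tau^{2^n}} x$. The hypothesis that $\frac{\ga}{\rho^r \tau^{2^n}} x$ is a $d_{r-1}$-cycle means precisely that $d(\alpha)$ is $\rho^r$-divisible at the cobar level, say $d(\alpha) = \rho^r \beta$. Since $\rho \cdot d(\alpha) = \rho^{r+1} \beta = 0$ in $NC_{\rho^{r+1}}$, the cochain $d(\alpha)$ lies in the subcomplex $NC_\rho$. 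Its homology class is, on the one hand, equal to $b$ by the definition of the Bockstein differential $d_r$, and on the other hand, equal to $\overline{\delta}_{r,1}\!\left(\left[\frac{\ga}{\rho^{r-1} \tau^{2^n}} x\right]\right)$ by the standard snake-lemma construction of the connecting homomorphism. This gives the first equivalence.

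The argument for $Q$-classes is identical, with $\frac{Q}{\rho^r} x$ in place of $\frac{\ga}{\rho^r \tau^{2^n}} x$, using that multiplication by $\rho$ sends $\frac{Q}{\rho^r} x$ to $\frac{Q}{\rho^{r-1}} x$ (compare \cref{Notn:Qclasses}). The main technical point to track is the identification of $\rho^r \beta$ under the inclusion $NC_\rho \hookrightarrow NC_{\rho^{r+1}}$ coming from the short exact sequence: one must check that this identification matches the representative of $b$ implicit in the definition of the Bockstein differential $d_r$. This is a routine but slightly fiddly bookkeeping step, amounting to the identity $\rho^r \cdot \frac{\ga}{\rho^r \tau^k} = \frac{\ga}{\tau^k}$ in the ambient negative cone.
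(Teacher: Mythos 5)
Your proof is correct and uses the same underlying mechanism as the paper's: a cobar-level lift plus the snake-lemma description of the connecting homomorphism, i.e., the core of \cref{TranslateRhoBssODelta}. The only real difference is one of packaging. The paper literally cites \cref{TranslateRhoBssODelta} with $M = NC_{\rho^N}$ for $N > r$, which forces it to pass through the shift isomorphisms $NC_{\rho^k} \cong \Sigma^{k-N,0}(NC_{\rho^N}/\rho^k)$ before landing in the middle column of \eqref{DoubleExactDiagram}; you instead replay the one-paragraph cobar argument directly on the short exact sequence $NC_\rho \to NC_{\rho^{r+1}} \xrightarrow{\rho} NC_{\rho^r}$, trading the shift bookkeeping for re-running the snake lemma. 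Both routes are valid; yours is a little more self-contained, while the paper's is shorter in print because it factors through the already-stated lemma. One small remark: your appeal to \cref{prop:colim-NCrho} for the ``$\Ext_\R(NC)$ versus $\Ext_\R(NC_{\rho^{r+1}})$'' equivalence is somewhat tangential — that equivalence is already asserted in the lemma statement, and \cref{prop:colim-NCrho} is about the $\Ext$ groups rather than the spectral sequences themselves — but this does not affect the argument.
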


\begin{proof}
In the context of \eqref{DoubleExactDiagram}, we apply \cref{TranslateRhoBssODelta} to the $\cAR_*$-module
 $NC_{\rho^N}$, for $N > r$. 
Beware that 
we employ the shift isomorphisms
\[ 
NC_{\rho^k} \iso \Sigma^{k-N,0} \left( \frac{NC_{\rho^N}}{\rho^k} \right),
\]
for $0 < k < N$, to compare \cref{TranslateRhoBssODelta} to the middle column of \eqref{DoubleExactDiagram}.
\end{proof}

With these preliminary results established, we now discuss {torsion} differentials in the Bockstein spectral sequence for the negative cone.
Recall from \cref{rmk:DiffsOnQClasses} that in order to consider a Bockstein differential on $\frac{Q}{\rho^r} x$, we should suppose that this set {\it contains} a $d_{r-1}$-cycle.

\begin{prop} 
\label{Type2diffs}
Suppose that $x$ and $y$ in $\Ext_\C$ are both $\tau$-power torsion classes.
Also assume that $x$ is a $d_{r-1}$-cycle and that the set $\frac{Q}{\rho^r}x$ contains a $d_{r-1}$-cycle in the Bockstein spectral sequence. 
Then 
$d_r(x) = \rho^r y$ in the $\R$-motivic $\rho$-Bockstein spectral sequence if and only if 
$d_r \left(\frac{Q}{\rho^r }x \right) \subseteq Qy$ in the $\rho$-Bockstein spectral sequence for the negative cone.
\end{prop}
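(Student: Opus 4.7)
The plan is to translate both Bockstein differentials into statements about the $\R$-motivic connecting homomorphism $\odelta_{r,1}$ via \cref{TranslateRhoBssODelta} and \cref{TranslateRhoBssNegCone}, and then compare them using the commutativity of diagram \eqref{DoubleExactDiagram}.

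First I apply \cref{TranslateRhoBssODelta} with $M = \bMR$ to rephrase the $\R$-motivic differential $d_r(x) = \rho^r y$ as the equation $\odelta_{r,1}([x]) = y$, where $[x] \in \Ext_\R(\bMR/\rho^r)$ is any lift of $x$. The hypothesis that $x$ is a $d_{r-1}$-cycle guarantees that such a lift exists. Similarly, I apply \cref{TranslateRhoBssNegCone} to rephrase the negative-cone differential $d_r(\frac{Q}{\rho^r}x) = \rho^r b$ as $\odelta_{r,1}([\frac{Q}{\rho^{r-1}}x]) = b$ for an appropriate lift in $\Ext_\R(NC_{\rho^r})$; the hypothesis that $\frac{Q}{\rho^r}x$ contains a $d_{r-1}$-cycle is exactly what supplies such a lift.

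Next I exploit the commutativity of the right-hand square of diagram \eqref{DoubleExactDiagram}, namely that the horizontal $\delta$ between $\Ext_\R(NC_{\rho^k})$ and $\Ext_\R(\bMR/\rho^k)$ commutes with the vertical $\odelta_{r,1}$. By construction at the cobar level, the middle-row horizontal $\delta$ sends $[\frac{Q}{\rho^{r-1}}x]$ to $[x]$, so commutativity gives $\delta(b) = \delta(\odelta_{r,1}([\frac{Q}{\rho^{r-1}}x])) = \odelta_{r,1}([x])$. Because $NC_\rho$ and $\bMR/\rho$ are $\cACC_*$-comodules, $\Ext_\R$ on these objects agrees with $\Ext_\C$, and the bottom horizontal $\delta$ in the square is precisely the $\C$-motivic connecting homomorphism used to define $Q$-classes in \cref{Notn:Qclasses}. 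Hence $\delta(b) = y$ is equivalent to $b \in Qy$.

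Chaining these equivalences yields $d_r(x) = \rho^r y$ iff $\odelta_{r,1}([x]) = y$ iff $\delta(b) = y$ iff $b \in Qy$ iff $d_r(\frac{Q}{\rho^r}x) \subseteq Qy$. The main obstacle will be careful bookkeeping: one must thread the shift isomorphism $NC_{\rho^k} \cong \Sigma^{k-N,0}(NC_{\rho^N}/\rho^k)$ through \cref{TranslateRhoBssNegCone} and verify that the horizontal $\delta$ in the diagram descends to the $\C$-motivic connecting map used to define the $Q$-classes. The indeterminacies in the choice of $d_{r-1}$-cycle representative of $\frac{Q}{\rho^r}x$ and in the lift $[\frac{Q}{\rho^{r-1}}x]$ must also be tracked; these are precisely what force the containment $\subseteq$ rather than equality in the conclusion.
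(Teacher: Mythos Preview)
Your proposal is correct and takes essentially the same approach as the paper: both translate the two Bockstein differentials into statements about $\odelta_{r,1}$ via \cref{TranslateRhoBssODelta} and \cref{TranslateRhoBssNegCone}, then compare using commutativity of the right-hand square of diagram~\eqref{DoubleExactDiagram}. The only differences are cosmetic: the paper first treats the case $r=1$ as a warm-up before the general case, and it is slightly more explicit about why one can choose lifts with $\delta\bigl([\tfrac{Q}{\rho^{r-1}}x]\bigr) = [x]$ (arguing that the relation already holds on the Bockstein $E_1^-$-page, hence on $E_r^- = E_\infty^-$ for $NC_{\rho^r}$, so compatible lifts exist), whereas you gesture at this with ``by construction at the cobar level'' and flag it as bookkeeping to be done.
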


In practice, the indeterminacy in the target $Qy$
requires attention.
For example, the differential $d_3(P h_1^3 c_0) = \rho^3 h_1^7 d_0$
implies that 
$d_3 \left( \frac{Q}{\rho^3} P h_1^3 c_0 \right)$ is contained in $Q h_1^7 d_0$.
However, $Q h_1^7 d_0$ has indeterminacy because of the presence of 
$\frac{\gamma}{\tau^3} P h_0^2 d_0$, so \cref{Type2diffs} does not completely determine the value of 
$d_3 \left( \frac{Q}{\rho^3} P h_1^3 c_0 \right)$.
See \cref{lem:d-QPh1^kc0} below for further discussion of this example.

\begin{pf}
We begin with the case $r=1$. 
Consider diagram \cref{DoubleExactDiagram}.
According to \cref{TranslateRhoBssODelta}, we {need} to show that $\odelta_{1,1}(x) = y$ if and only if $\odelta_{1,1}(Qx) = Qy$.

Consider the commuting square
\[
\begin{tikzcd}
Qx \in \Ext_\R(NC_\rho) \ar[r,"\delta"] \ar[d,"\odelta_{1,1}"]
&
\Ext_\R(\bMR/\rho)  \ar[d,"\odelta_{1,1}"] \ni x
\\
Qy \in   \Ext_\R(NC_\rho)  \ar[r,"\delta"] 
  &
   \Ext_\R(\bMR/\rho) \ni y
\end{tikzcd}
\]
{from the lower right corner of diagram \cref{DoubleExactDiagram}.}
Recall that the classes $Qx$ and $Qy$ are defined to satisfy the equations 
$\delta(Qx) = x$ and $\delta(Qy) = y$. Thus the commutativity of the square shows that $\odelta_{1,1}(Qx) = Qy$ if and only if $\odelta_{1,1}(x) = y$.  Here we are using that $Qy$ is only well-defined up to the image of $q_*$.

A similar argument works to establish the general case.
We use the diagram
\begin{equation}
\label{Level_r}
\begin{tikzcd}
\left[ \frac{Q}{\rho^{r-1}} x \right] \in \Ext_\R(NC_{\rho^r}) \ar[r,"\delta"] \ar[d,"\odelta_{r,1}"]
&
\Ext_\R(\bMR/\rho^r)  \ar[d,"\odelta_{r,1}"] \ni [x]
\\
Qy \in   \Ext_\R(NC_\rho)  \ar[r,"\delta"] 
  &
   \Ext_\R(\bMR/\rho)  \ni y.
\end{tikzcd}
\end{equation}
The argument is the same as in the case of $r=1$, relying on \cref{TranslateRhoBssODelta} 
or rather its specialization \cref{TranslateRhoBssNegCone},
provided that we can establish that $\delta\left( \left[ \frac{Q}{\rho^{r-1}}x\right] \right) = [x]$. In $E_r^- = E_\infty^-$, we do have that $\delta\left(  \frac{Q}{\rho^{r-1}}x \right) = x$
because this formula holds already in $E_1^-$.
Therefore,
we may choose lifts
$\left[ \frac{Q}{\rho^{r-1}}x\right]$ and $[x]$ in $\Ext$ groups 
that are represented by
$\frac{Q}{\rho^{r-1}}x$ and $x$ respectively such that
$\delta\left( \left[ \frac{Q}{\rho^{r-1}}x\right] \right) = [x]$.
\end{pf}

\begin{rmk}
In \cref{Type2diffs}, the hypothesis that
$\frac{Q}{\rho^r} x$ survives to the $E_r$-page is necessary.
This is demonstrated by the
$\R$-motivic $\rho$-Bockstein differential
$d_7(P^2 h_1^4) = h_1^9 e_0$.
The proposition suggests that
$d_7 \left( \frac{Q}{\rho^7} P^2 h_1^4 \right)$
ought to equal $Q h_1^9 e_0$.  However, this does not occur.
Rather, there is a differential
$d_3 \left( \frac{Q}{\rho^3} P^2 h_1^4 \right) = \frac{\gamma}{\tau^3} h_0^5 i$,
as shown in \cref{tbl:Bockstein-differentials-Qgamma}.
\end{rmk}

\subsection{Mixed differentials}

Finally, we consider mixed differentials. Here, we state a conjecture relating mixed differentials in the $\R$-motivic Bockstein spectral sequence to differentials in the Bockstein spectral sequence for the negative cone, and we provide some evidence to support the conjecture. 

\begin{conj}
\label{Type3diffs}
Suppose that $x$ and $y$ are classes in $\Ext_\C$ such that $x$ is $\tau$-free
and $y$ is $\tau$-power torsion.
Suppose furthermore that $d_r(x)= \rho^r y$ in the $\R$-motivic $\rho$-Bockstein spectral sequence and that
$r < t < 2^n$.
There exists a $\tau^{2^n}$-periodic differential $d_t(\tau^{2^n}x) = \rho^t z$  if and only if the (nonperiodic) differential
\[ d_{t-r} \left( \frac{Q}{\rho^{t-r}} y \right) = \frac{\ga}{\tau^{2^n}} z
\]
occurs in the $\rho$-Bockstein spectral sequence for the negative cone.

If these differentials occur, then there is a
$\tau^{2^n}$-extension from $y$ to $\rho^{t-r} z$
in $\Ext_\R \left( \frac{\bMR}{\rho^{t+1}} \right)$ that
is hidden by the $\rho$-Bockstein spectral sequence.
\end{conj}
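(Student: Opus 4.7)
The plan is to mirror the strategy of \cref{Type2diffs}, using \cref{TranslateRhoBssODelta,TranslateRhoBssNegCone} to translate each Bockstein differential into a statement about connecting homomorphisms in the $\Ext$ groups of quotient comodules $\bMR/\rho^k$ and $NC_{\rho^k}$. The key intermediate object is the hidden $\tau^{2^n}$-extension in $\Ext_\R(\bMR/\rho^{t+1})$: first I would establish this extension from the given $\R$-motivic data, and then transport it across the short exact sequence \eqref{cofiberrho^nSES} to the negative cone. Running the same chain of implications in reverse yields both directions of the ``if and only if.''

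To deduce the hidden extension, I would work in the cobar complex for $\cAR_*$. Choose a cobar lift $\tilde{x}$ of $x$ with $d\tilde{x} = \rho^r \tilde{y} + \rho^{r+1} \tilde{\alpha}$, where $\tilde{y}$ represents $y$ and $\tilde{\alpha}$ is some cochain. The hypotheses that $t < 2^n$ and $y$ is $\tau^{2^n}$-torsion ensure that $\tau^{2^n} x$ survives to page $t$; the differential $d_t(\tau^{2^n} x) = \rho^t z$ then translates into the cobar identity
\[
\tau^{2^n} \rho^r \tilde{y} \equiv \rho^t \tilde{z} \pmod{\rho^{t+1} \text{ and cobar boundaries}}.
\]
Interpreting this in $\Ext_\R(\bMR/\rho^{t+1})$ and dividing by $\rho^r$ yields the claimed hidden extension $\tau^{2^n} y = \rho^{t-r} z$.

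To convert the hidden extension into the negative-cone differential, I would extend the commutative diagram \eqref{DoubleExactDiagram} to compare the $\rho$-filtrations on $\bMR/\rho^?$ and on $NC_{\rho^?}$. The class $\frac{Q}{\rho^{t-r}} y$ in $\Ext_\R(NC_{\rho^{t-r+1}})$ is defined via \cref{Notn:Qclasses} as a preimage under the connecting homomorphism $\delta$ associated with \eqref{cofiberrho^nSES}. Chasing the hidden extension through this diagram produces a formula for $\odelta_{t-r,1}\bigl(\bigl[\frac{Q}{\rho^{t-r-1}} y\bigr]\bigr)$, which by \cref{TranslateRhoBssNegCone} is equivalent to $d_{t-r}\bigl(\frac{Q}{\rho^{t-r}} y\bigr) = \frac{\gamma}{\tau^{2^n}} z$.

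The main obstacles will be the careful bookkeeping of indeterminacies, both intrinsic to $Q$-classes (defined only modulo the image of $q_*$) and arising from the multi-level $\rho$-filtration; in particular, one must verify that $\frac{Q}{\rho^{t-r}} y$ genuinely survives to page $t-r$ without supporting a shorter differential. A further subtlety is reconciling the \emph{periodic} formulation on the $\R$-motivic side with the \emph{coperiodic} formulation on the negative-cone side; this should follow from \cref{lem:coperiodic-differential} combined with the hypothesis $t < 2^n$, which ensures that both $\tau^{2^n}$ and $\frac{\gamma}{\tau^{2^n}}$ remain Bockstein cycles through page $t$. The potential error terms noted in the remark following \cref{prop:tau-periodic-coperiodic-diff} should not obstruct the argument here, because the mixed character of the differential (with $\tau$-torsion target $y$) rigidifies the comparison.
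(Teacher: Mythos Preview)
The statement you are attempting to prove is labelled in the paper as a \emph{Conjecture}, not a theorem or proposition. The paper does not supply a proof: immediately before the statement the authors write that they ``state a conjecture\ldots\ and provide some evidence to support the conjecture,'' and what follows is only an illustrative figure and \cref{tab:Type3diffs} of examples in which one side of the biconditional is known independently and the other side is consistent with it. There is therefore no paper proof against which to compare your proposal.

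Your outline is a reasonable strategy and tracks the architecture of the proof of \cref{Type2diffs}, but the obstacles you yourself flag are exactly the reasons the authors left the statement open. In particular: (i) the cobar identity you write down, $\tau^{2^n}\rho^r\tilde{y}\equiv \rho^t\tilde{z}$ modulo $\rho^{t+1}$ and boundaries, does not follow cleanly from $d_t(\tau^{2^n}x)=\rho^t z$, because $\tau^{2^n}\tilde{y}$ is merely a boundary in the cobar complex (not zero), and controlling the $\rho$-adic valuation of that boundary term is precisely the content of the conjecture; (ii) the ``dividing by $\rho^r$'' step in $\Ext_\R(\bMR/\rho^{t+1})$ is not automatic, since $\rho$ acts with torsion there; and (iii) nothing in the hypotheses guarantees that $\frac{Q}{\rho^{t-r}}y$ survives to page $t-r$ in $E^-$ rather than supporting an earlier differential---indeed the remark following \cref{Type2diffs} gives an example ($\frac{Q}{\rho^7}P^2h_1^4$) where exactly this kind of early collapse occurs. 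Your final assertion that the mixed character of the differential ``rigidifies the comparison'' and eliminates the error terms of the remark after \cref{prop:tau-periodic-coperiodic-diff} is not justified; those error terms arise from classes annihilated by $\frac{\gamma}{\tau^{2^n}}$, and there is no evident reason the $\tau$-torsion of $y$ rules them out. A complete proof would need to confront each of these points directly.
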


Figure \ref{fig:Type3Conjecture} illustrates
the situation described in Conjecture \ref{Type3diffs}.
The left side of the figure depicts
$\R$-motivic differentials, while the right side depicts
negative cone differentials.
The {solid} differential on the left is the mixed differential
in the hypothesis of the conjecture.
The conjecture states that the 
left-side dashed differential occurs if and only if the right-side dashed
differential occurs.

\begin{figure}
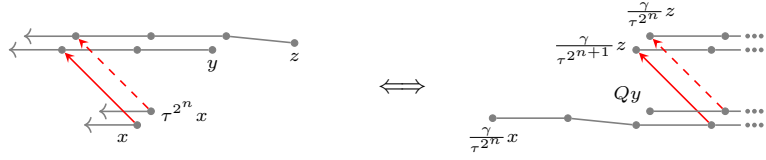

\caption{{An illustration of Conjecture \ref{Type3diffs}}}
\label{fig:Type3Conjecture}
\begin{sseqpage}[no axes, no ticks, x range={0}{5},y range={0}{3},Adams grading,classes=gray,labels=black,differentials=red]
\draw[->,xshift={0.9},yshift={1.91},semithick,color=\rhocolor] (0,0) -- (-0.7,0);
\draw[->,xshift={1.1},yshift={2.09},semithick,color=\rhocolor] (0,0) -- (-0.7,0);
\draw[->,xshift={2.1},yshift={1.09},semithick,color=\rhocolor] (0,0) -- (-0.7,0);
\draw[->,xshift={1.9},yshift={0.91},semithick,color=\rhocolor] (0,0) -- (-0.7,0);
\class["x"{below left=0}] (2,1)
\class["\tau^{2^n} x"{right=0}](2,1)
\class["y"] (3,2)
\rhotower{3}
\d1(2,1)
\class["z"] (4,2)
\rhotower{4}
\begin{scope}[dashed]
\d1(2,1,-1,-1)
\end{scope}
\end{sseqpage}
\hspace{-2em}
\raisebox{5.5em}{$\iff$}
\hspace{-3em}
\begin{sseqpage}[no axes, no ticks, x range={1}{5},y range={0}{3},Adams grading,classes=gray,labels=black,differentials=red]
\class["\frac{\gamma}{\tau^{2^n}} x"{below=0}](2,1)
\rhocotower{6}
\class["\frac{\gamma}{\tau^{2^{n+1}}}z" {left=0}] (4,2)
\rhocotower{1}
\d1(5,1,-1,-1)
\class["Qy" {above left=0}] (4,1)
\rhocotower{4}
\class["\frac{\gamma}{\tau^{2^n}}z" {above=0}] (4,2)
\rhocotower{1}
\begin{scope}[dashed]
\d1(5,1,-1,-1)
\end{scope}
\end{sseqpage}
\end{figure}

\cref{tab:Type3diffs} gives some concrete 
{instances in which \cref{Type3diffs} could potentially be used to
determine differentials.}
In each row, the mixed differential in the left column is known;
it is the hypothesis of the conjecture in each case.  The differentials
in the center column are previously known for reasons that
vary from case to case. 
\cref{Type3diffs} would then imply the differentials in the third column.
For legibility, no powers of $\rho$ appear in the formulas
because they can be inferred from the length of the differential.
For example, the first entry in the left column is more completely stated
as $d_4(\tau h_0^3 h_3) = \rho^4 h_1^2 c_0$.

Note that the conjecture can be used in both directions;  the horizontal
line separates the examples into two families, depending on the direction
of the implications.
In the first family, the four implied differentials are actually already
known for other reasons.
However, in the second family, the three implied $\R$-motivic differentials
are not currently known; they lie outside the range considered in
\cite{BI}.

\renewcommand*{\arraystretch}{1.5}
\begin{longtable}{lll}
\caption{Some applications of \cref{Type3diffs} \label{tab:Type3diffs}} \\
\toprule
mixed differential & known differential & implied differential \\
\midrule \endfirsthead
\caption[]{Some applications of \cref{Type3diffs}} \\
\toprule
mixed differential & known differential & implied differential \\
\midrule \endhead
\bottomrule \endfoot
$d_4(\tau h_0^3 h_3) = h_1^2 c_0$ & $d_5(\tau^9 h_0^3 h_3) = \tau^6 P h_2$ &  $d_1(Q h_1^2 c_0) = \frac{\ga}{\tau^2} P h_2$ \\
$d_3(P h_1) = h_1^3 c_0$ & $d_6(\tau^8 P h_1) = \tau^5 h_0^2 d_0$ & $d_3(Q h_1^3 c_0) = \frac{\ga}{\tau^3} h_0^2 d_0$ \\
$d_3(P h_1^2) = h_1^4 c_0$ & $d_7(\tau^8 P h_1^2) = \tau^4 P c_0$ & $d_4(Q h_1^4 c_0) = \frac{\ga}{\tau^4} P c_0$ \\
$d_3(P c_0) = h_1^4 d_0$ & $d_7(\tau^8 P c_0) = \tau^4 P d_0$ & $d_4( Q h_1^4 d_0) = \frac{\ga}{\tau^4} P d_0$ \\
\hline
$d_{11}(\tau^4 P h_1^2) = h_1^3 e_0$ & $d_1( Q h_1^2 e_0) = \frac{\ga}{\tau^3} h_0 h_2 e_0$ & $d_{12} (\tau^{20} P h_1) = \tau^{13} h_0 h_2 e_0$ \\
$d_{11}(\tau^4 P h_1^2) = h_1^3 e_0$ & $d_3( Q h_1^3 e_0) = \frac{\ga}{\tau^4} i$ & $d_{14}(\tau^{20} P h_1^2) = \tau^{12} i$ \\
$d_{8}(\tau h_0^7 h_4) = h_1^5 e_0$ & $d_4 (Q h_1^5 e_0) = \frac{\ga}{\tau^4} P h_1 e_0$ & $d_{12} (\tau^{17} h_0^7 h_4) = \tau^{12} P h_1 e_0$ \\
\end{longtable}
\renewcommand*{\arraystretch}{1.0}

\section{The $\eta$-periodic homotopy groups}
\label{sec:etaperiodic}

In \cite{etaR}, we completely computed the $h_1$-periodic  $\R$-motivic $\rho$-Bockstein spectral sequence and the $h_1$-periodic $\R$-motivic Adams spectral sequence 
that determine
the $\eta$-periodic $\R$-motivic stable homotopy groups
$\piR_{*,*}[\eta^{-1}]$, 
where $\eta\in \piR_{1,0}$ is the motivic Hopf map detected by $h_1$.

We now discuss the analogous 
computation of $\eta$-periodic $C_2$-equivariant stable homotopy groups
$\piC_{*,*}[\eta^{-1}]$.
We begin by showing that these latter groups are not very
complicated.  This is a qualitative difference between
$\R$-motivic and $C_2$-equivariant stable homotopy theory.

\begin{prop}
\label{prop:eta-periodic}
Geometric fixed points induces an isomorphism from the
$\eta$-periodic $C_2$-equivariant stable homotopy groups
to $\Z[\frac12,\eta^{\pm 1}]$. This
isomorphism sends $\rho$ to $-2\eta^{-1}$.
\end{prop}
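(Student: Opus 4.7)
The plan is to reduce the $\eta$-periodic computation to classical stable homotopy via the geometric fixed points functor $\Phi^{C_2}$. First, I would apply $\eta$-inversion to the long exact sequence of \cref{prop:rho-forget}:
\[
\cdots \to \piC_{s+1,c}[\eta^{-1}] \xrightarrow{\rho} \piC_{s,c}[\eta^{-1}] \xrightarrow{U} \picl_s[\eta^{-1}] \to \piC_{s,c-1}[\eta^{-1}] \to \cdots.
\]
Since classical $\eta$ is nilpotent (Nishida, or just $\eta^{4}=0$), we have $\picl_*[\eta^{-1}]=0$, and exactness forces $\rho$ to act invertibly on $\piC_{*,*}[\eta^{-1}]$. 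In particular $\piC_{*,*}[\eta^{-1}] = \piC_{*,*}[\rho^{-1},\eta^{-1}]$.

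Second, I would invoke the geometric fixed points interpretation of $\rho$-inversion. Because $\Phi^{C_2}(S^{s,c}) = S^c$ and $\Phi^{C_2}(S^{0,0}) \simeq S^0$, the functor $\Phi^{C_2}$ induces maps $\piC_{s,c}\to\picl_c$ that assemble into an isomorphism $\piC_{s,c}[\rho^{-1}]\cong\picl_c$, with $\Phi^{C_2}(\rho)=1$. (This is essentially the content used in the proof of \cref{Landw2}.) Composing with the first step yields
\[
\piC_{s,c}[\eta^{-1}] \cong \picl_c\bigl[\Phi^{C_2}(\eta)^{-1}\bigr].
\]
To identify $\Phi^{C_2}(\eta)$, I would use the following geometric observation: $\hsf\in\piC_{0,0}$ is represented by the free orbit $C_2/e$ and factors through the cofiber $(C_2)_+$ of $\rho$, whence $\rho\hsf=0$ and in particular $\Phi^{C_2}(\hsf)=0$. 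Combined with the Burnside-ring relation $\rho\eta=\hsf-2$ in $\piC_{0,0}$, this forces $\Phi^{C_2}(\eta)=-2$. After $\rho$-inversion the relation $\hsf=0$ then makes $\rho\eta=-2$, i.e. $\rho=-2\eta^{-1}$, exactly as claimed.

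Plugging back in, $\piC_{s,c}[\eta^{-1}]\cong\picl_c[\tfrac12]$: for $c>0$ the group $\picl_c$ is finite $2$-primary and vanishes after inverting $2$; for $c<0$ it is already zero; and for $c=0$ it contributes $\Z[\tfrac12]$. Reinstating the bigraded structure using the invertibility of $\eta$ (and $\rho=-2\eta^{-1}$) shows that $\piC_{*,*}[\eta^{-1}]$ is concentrated in coweight $0$ and equals $\Z[\tfrac12,\eta^{\pm 1}]$ as a ring. The main obstacle is the verification $\Phi^{C_2}(\eta)=-2$, equivalently the Burnside-ring relation $\rho\eta=\hsf-2$; this is classical but not formal, and is the only input outside of the long exact sequence and the $\Phi^{C_2}/\rho$-inversion dictionary. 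Everything else reduces to bookkeeping once $\rho$-invertibility on the $\eta$-periodic groups is in hand.
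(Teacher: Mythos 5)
Your proof is correct and follows essentially the same strategy as the paper: invert $\eta$, deduce that $\rho$ becomes invertible, pass through geometric fixed points to classical homotopy with $\rho$ inverted, and finish with Serre finiteness. The one place where you genuinely diverge is the final identification of $\rho$. The paper cites Morel's torsion statement that $(2+\rho\eta)\eta = 0$ in $\piC_{*,*}$, which upon inverting $\eta$ gives $\rho\eta = -2$ directly. You instead use the exact Burnside-ring identity $\rho\eta = \hsf - 2$ in $\piC_{0,0}$ together with $\rho\hsf = 0$ (read off the cofiber sequence \cref{eq:cofiber rho}, so that $\Phi^{C_2}(\hsf) = 0$). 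Both routes are valid; yours has the small advantage of making the vanishing of $\hsf$ geometrically transparent via $\Phi^{C_2}({C_2}_+) \simeq *$, while the paper's is slightly more economical in that it only needs the $\eta$-torsion statement rather than the full identity in the Burnside ring. One small remark: the step ``the Burnside-ring relation $\rho\eta = \hsf - 2$'' is classical but should carry a citation (it is again Morel's relation, written exactly rather than modulo $\eta$-torsion), since it is the only nonformal input outside of the $\Phi^{C_2}$ dictionary.
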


Consistently with the rest of this manuscript, the 
computation of \cref{prop:eta-periodic} should be interpreted
as 2-complete, so $\Z$ really means the $2$-adic integers.
However, our proof does not use the Adams spectral sequence.
Consequently, our proof also establishes the uncompleted integral computation,
although we do not need it.

\begin{proof}
The forgetful map $U$ takes $\eta^4$ to zero, so  \cref{prop:rho-forget}
implies that $\eta^4$ is divisible by $\rho$.
This means that $\rho$ is invertible in 
$\piC_{*,*}[\eta^{-1}]$, so we may just as well compute
$\piC_{*,*}[\rho^{-1}][\eta^{-1}]$.

The geometric fixed points functor induces an isomorphism
from the $\rho$-periodic homotopy groups
$\piC_{*,*}[\rho^{-1}]$ to 
$\picl_* \otimes \Z[\rho^{\pm 1}]$
\cite{AI}*{Proposition 7.0}.
These latter groups are just the classical stable homotopy groups
with a unit adjoined to make them bigraded.

Geometric fixed points takes $\eta$ to $2$, so
$\piC_{*,*}[\rho^{-1}][\eta^{-1}]$ is isomorphic to
$\picl_*\left[ \frac{1}{2} \right] \otimes \Z[\rho^{\pm 1}]$.
Finally, Serre finiteness \cite{Serre} implies that
$\picl_* \left[ \frac{1}{2} \right]$
is concentrated in degree $0$.

To determine the image of $\rho$ under the isomorphism, recall \cite{Morel} that the element $2+\rho \eta$ is annihilated by $\eta$ in $\piC_{*,*}$. It follows that $2+\rho\eta$ vanishes in the localization $\piC_{*,*}[\eta^{-1}]$.
\end{proof}

The rest of this section gives a detailed 
analysis 
of the $h_1$-periodic $\rho$-Bockstein and Adams spectral sequences.
\cref{prop:eta-periodic} says that 
our analysis computes a simple, already known, answer.
Nevertheless, by carrying out the relatively easy $h_1$-periodic computation, 
we obtain information that can be used to study the
non-periodic parts of the
$\rho$-Bockstein
spectral sequence and the Adams spectral sequence.

\subsection{The $h_1$-periodic Bockstein spectral sequence}
\label{sec:h1invBock}

The $h_1$-periodic $\R$-motivic Bockstein spectral sequence is 
described in \cite{etaR}*{Sections 3--4}.
It takes the form
\[
E^+_1 [h_1^{-1}] = \F_2[\rho, h_1^{\pm 1}] [ v_1^4, v_2, v_3, \ldots ].
\]
All of the differentials are known; however,
in the range under consideration in this article, 
we only need that $d_3(v_1^4) = \rho^3  v_2$ and
$d_7(v_1^8) = \rho^7 v_3$.
These differentials are consequences of the Bockstein
differentials
$d_3( P h_1 ) = \rho^3 h_1^3 c_0$ and
$d_7 ( P^2 h_1 ) = \rho^7 h_1^6 e_0$.

Using the description of $E_1^-$ in \cref{sec:NCPeriodic,sec:E1extensions},
the structure of $E_1^-[h_1^{-1}]$ is
\[
Q \cdot \frac{\F_2[\rho]}{\rho^\infty}[h_1^{\pm 1}] [v_1^4, v_2, v_3, \ldots ].
\]
In more naive terms, $E_1^-[h_1^{-1}]$ is obtained from $E_1^+[h_1^{-1}]$ by adjoining the
symbol $Q$ to each $\F_2[\rho]$-module generator, 
and then replacing each copy of
$\F_2[\rho]$ with the infinitely divisible $\frac{\F_2[\rho]}{\rho^\infty}$.
The Bockstein differentials in $E^-[h_1^{-1}]$ are easily determined
from the multiplicative structure and the differentials in
$E^+[h_1^{-1}]$.
\cref{tab:h1-periodic-Bockstein-differentials} shows
the differentials in both $E^+[h_1^{-1}]$ and $E^-[h_1^{-1}]$
that are relevant for us.
The grading in the tables and figures is chosen such that $h_1$ has degree 
$(0,0)$, so that we can ignore multiples of $h_1$ when considering
degrees.

\renewcommand*{\arraystretch}{1.3}
\begin{longtable}{LLLL} 
\caption{$h_1$-periodic Bockstein differentials } \\
\toprule
{(c,2c+f-s)} & \textrm{source} & d_r & \textrm{target} \\
\midrule \endhead
\bottomrule \endfoot
\label{tab:h1-periodic-Bockstein-differentials}
(4,4) & v_1^4 & d_3 & \rho^3 v_2 \\
(7,5) & v_1^4 v_2 & d_3 & \rho^3 v_2^2 \\
(8,8) & v_1^8 & d_7 & \rho^7 v_3 \\
\hline
(4,2) & \frac{Q}{\rho^3} v_1^4 & d_3 & Q v_2 \\
(7,3) & \frac{Q}{\rho^3} v_1^4 v_2 & d_3 & Q v_2^2 \\
(8,6) & \frac{Q}{\rho^7} v_1^8 & d_7 & Q v_3 \\
\end{longtable}
\renewcommand*{\arraystretch}{1.0}

From the explicit description of the $h_1$-periodic Bockstein
differentials in $E^+[h_1^{-1}]$ and $E^-[h_1^{-1}]$,
it is straightforward to describe
$\Ext_\R[h_1^{-1}] \oplus \Ext_{NC}[h_1^{-1}]$,
i.e., the $h_1$-periodic $C_2$-equivariant Adams $E_2$-page.
The relevant classes
are listed in \cref{tbl:Exth1inv}.
Elements in $E_1^-[h_1^{-1}]$ start off as infinitely
$\rho$-divisible, so we record their $\rho$-divisibility, rather than
their $\rho$-multiples, in the fourth
column of the table.  

The fifth column of \cref{tbl:Exth1inv} describes some values of the
localization map $\Ext_{C_2} \rightarrow \Ext_{C_2} [h_1^{-1}]$.
This information is essential for lifting $h_1$-periodic computations
to the non-periodic setting.

\begin{longtable}{LLLLL}
\caption{$\Ext_{C_2}[h_1^{-1}]$ in  coweights $c \leq 8$} \\
\toprule
{(c,2c+f-s)} & \text{element} & \text{$\rho$-power} & \text{$\rho$-divisibility} & \text{lift to $\Ext_{C_2}$}\\
& & \text{torsion} & \\
\midrule \endhead
\bottomrule \endfoot
\label{tbl:Exth1inv}
(0,0) & 1 & \infty & & 1\\
(3,1) & v_2 & 3 & & c_0 \\
(6,2) & v_2^2 & 3 & & d_0 \\
(7,1) & v_3 & 7 & & e_0 \\
\hline
(0,-2) & Q & & \infty & Q h_1^4 \\
(4,2) & Q v_1^4 & & 3 & Q P h_1^4 \\
(7,3) & Q v_1^4 v_2 & & 3 & Q P h_1^2 c_0 \\
(8,6) & Q v_1^8 & & 7 & Q P^2 h_1^4 \\
\end{longtable}

\newcommand{\Qcolor}{blue!65!white}

\begin{figure}
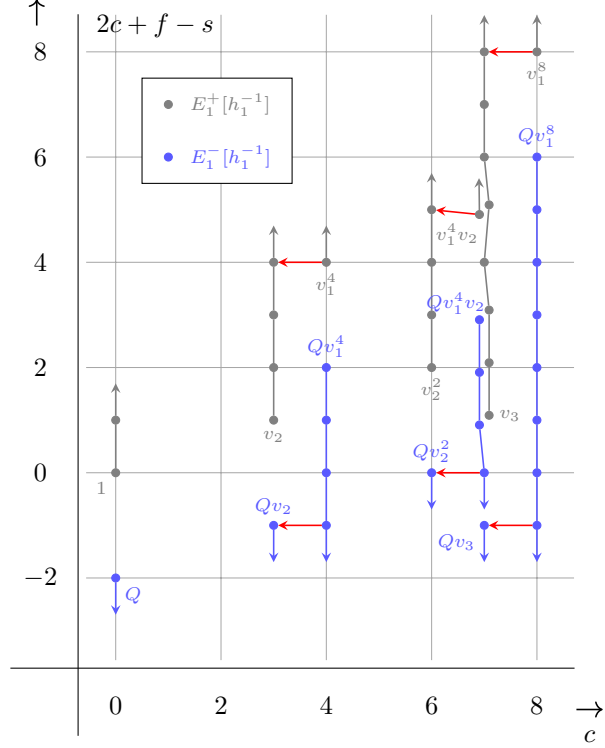

\caption{The $h_1$-periodic $C_2$-equivariant $\rho$-Bockstein spectral sequence}
\label{fig:h1local-C2Bockstein}
\begin{sseqpage}[
classes= {fill, inner sep = 0pt, minimum size = 0.3em, gray,},
differentials=red,
x tick step = 2,
y tick step = 2,
grid = go, 
grid step=2,
grid color = gray!70,
scale=0.7,x range={0}{8},y range={-3}{8}]
\class["1"{below left=0}](0,0)
\class(\lastx,\lasty+1)
\structline[\rhocolor](\lastx,\lasty-1)
\vertrholoc(\lastx,\lasty)
\class["v_2"](3,1)
\class(\lastx,\lasty+1)
\structline[\rhocolor](\lastx,\lasty-1)
\class(\lastx,\lasty+1)
\structline[\rhocolor](\lastx,\lasty-1)
\class(\lastx,\lasty+1)
\structline[\rhocolor](\lastx,\lasty-1)
\vertrholoc(\lastx,\lasty)
\class["v_1^4"](4,4)
\vertrholoc(\lastx,\lasty)
\class["v_2^2"](6,2)
\class(\lastx,\lasty+1)
\structline[\rhocolor](\lastx,\lasty-1)
\class(\lastx,\lasty+1)
\structline[\rhocolor](\lastx,\lasty-1)
\class(\lastx,\lasty+1)
\structline[\rhocolor](\lastx,\lasty-1)
\vertrholoc(\lastx,\lasty)
\class["v_1^4 v_2"{below left=0,xshift=3}](7,5)
\draw[\rhocolor,->,>=stealth,semithick](7,5)--(6.9,5.6);
\begin{scope}[\Qcolor,class labels=\Qcolor]
\renewcommand{\rhocolor}{\Qcolor}
\class[\Qcolor,"Q v_1^4 v_2"{above left=-0.4,yshift=4.5}](7,3)
\class[\Qcolor](\lastx,\lasty-1)
\structline[\rhocolor]
\class[\Qcolor](\lastx,\lasty-1)
\structline[\rhocolor]
\class[\Qcolor](\lastx,\lasty-1)
\structline[\rhocolor]
\vertrhocoloc(\lastx,\lasty)
\end{scope}
\class["v_3"{right=0}](7,1)
\class(\lastx,\lasty+1)
\structline[\rhocolor]
\class(\lastx,\lasty+1)
\structline[\rhocolor]
\class(\lastx,\lasty+1)
\structline[\rhocolor]
\class(\lastx,\lasty+1)
\structline[\rhocolor]
\class(\lastx,\lasty+1)
\structline[\rhocolor]
\class(\lastx,\lasty+1)
\structline[\rhocolor]
\class(\lastx,\lasty+1)
\structline[\rhocolor]
\vertrholoc(\lastx,\lasty)
\class["v_1^8"](8,8)
\vertrholoc(\lastx,\lasty)
\d3(4,4)(3,4)
\d3(7,5)(6,5)
\d7(8,8)(7,8)
\begin{scope}[\Qcolor,class labels=\Qcolor]
\renewcommand{\rhocolor}{\Qcolor}
\class[\Qcolor,"Q" {below right=0,\Qcolor}](0,-2)
\vertrhocoloc(\lastx,\lasty)
\class[,"Q v_2"{above=0}](3,-1)
\vertrhocoloc(\lastx,\lasty)
\class[\Qcolor,"Q v_1^4"{above=0}](4,2)
\class[\Qcolor](\lastx,\lasty-1)
\structline[\rhocolor]
\class[\Qcolor](\lastx,\lasty-1)
\structline[\rhocolor]
\class[\Qcolor](\lastx,\lasty-1)
\structline[\rhocolor]
\vertrhocoloc(\lastx,\lasty)
\class[\Qcolor,"Q v_2^2"{above=0}](6,0)
\vertrhocoloc(\lastx,\lasty)
\class[\Qcolor,"Q v_3"{below left=0}](7,-1)
\vertrhocoloc(\lastx,\lasty)
\class[\Qcolor,"Q v_1^8"{above=0}](8,6)
\class[\Qcolor](\lastx,\lasty-1)
\structline[\rhocolor]
\class[\Qcolor](\lastx,\lasty-1)
\structline[\rhocolor]
\class[\Qcolor](\lastx,\lasty-1)
\structline[\rhocolor]
\class[\Qcolor](\lastx,\lasty-1)
\structline[\rhocolor]
\class[\Qcolor](\lastx,\lasty-1)
\structline[\rhocolor]
\class[\Qcolor](\lastx,\lasty-1)
\structline[\rhocolor]
\class[\Qcolor](\lastx,\lasty-1)
\structline[\rhocolor]
\vertrhocoloc(\lastx,\lasty)
\end{scope}
\d3(4,-1)(3,-1)
\d3(7,0)(6,0)
\d3(8,-1)(7,-1)
\filldraw[black,fill=white] (0.5,5.5) rectangle (3.35,7.5);
\class["E_1^+[h_1^{-1}]"{right}](1,7)
\class["E_1^-[h_1^{-1}]"{right},\Qcolor](1,6)
\draw[background,->,semithick] (8.75,-4.5) -- (9.25,-4.5);
\node[background] at (9,-5) {c};
\node[background] at (0.75,8.5) {2c+f-s};
\draw[background,->,semithick] (-1.5,8.5) -- (-1.5,9);
\end{sseqpage}
\end{figure}

\cref{fig:h1local-C2Bockstein} displays the 
$h_1$-periodic $C_2$-equivariant $\rho$-Bockstein spectral sequence.
Black dots depict 
copies of $\F_2[h_1^{\pm 1}]$ from
$E_1^+[h_1^{-1}]$,
while blue dots depict 
copies of $\F_2[h_1^{\pm 1}]$ from
$E_1^-[h_1^{-1}]$.
Vertical lines denote multiplications by $\rho$.
Horizontal arrows depict Bockstein differentials.

\subsection{The $h_1$-periodic Adams spectral sequence}
\label{subsctn:h1-periodic-Adams}

The $C_2$-equivariant $\rho$-Bock\-stein spectral sequence splits into
an $\R$-motivic summand and a negative cone summand.  However, 
the $C_2$-equivariant Adams spectral sequence does not split in this fashion.
In fact, there are interactions between the two terms already in the $h_1$-periodic computations. 
These interactions take the form of 
hidden extensions and Adams differentials that connect elements in
different summands.

\begin{prop}
\label{rho-extn-h1inv}
 There is a hidden $\rho$-extension from $Q$ to $1$ in the $h_1$-periodic $C_2$-equivariant Adams spectral sequence. 
\end{prop}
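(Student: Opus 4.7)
The plan is to deduce this extension by combining \cref{prop:eta-periodic} with a chart inspection. By that proposition, $\rho$ acts invertibly on $\pi^{C_2}_{*,*}[\eta^{-1}]$, with $\rho^{-1}=-\eta/2\in\pi^{C_2}_{1,0}[\eta^{-1}]$; in particular $1=\rho\cdot(-\eta/2)$, so the unit is $\rho$-divisible in the $\eta$-periodic homotopy.

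On the other hand, the unit class on the $h_1$-periodic $E_\infty$-page is not $\rho$-divisible: the positive cone summand $\Ext_\R[h_1^{-1}]$ is $\rho$-free, so no $\rho$-division of $1$ can be detected there. Any such $\rho$-division must therefore be detected on the negative cone summand, and inspecting \cref{fig:h1local-C2Bockstein} together with \cref{tbl:Exth1inv} in the bidegree $(c,2c+f-s)=(0,-2)$ of $-\eta/2$ reveals exactly one candidate class, namely $Q$ at the top of its $\rho$-tower. Hence $-\eta/2$ is detected by $Q$.

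Multiplying by $\rho$ then forces the hidden extension: the homotopy product $\rho\cdot(-\eta/2)=1$ is detected by the unit class $1\in E_\infty^{0,0,0}[h_1^{-1}]$, whereas on the $E_\infty$-page itself the product $\rho\cdot Q$ vanishes by \cref{prop:E-minus-structure}(1), since $Q$ sits at the top of its negative cone $\rho$-tower. This mismatch between the vanishing spectral-sequence product and the nonzero homotopy product is exactly the claimed hidden $\rho$-extension. The only step requiring care is the uniqueness of the detecting class, which is routine from the enumeration of $h_1$-periodic generators in \cref{tbl:Exth1inv}; no other class in coweight $0$ and the appropriate stem is available to detect $\rho^{-1}$.
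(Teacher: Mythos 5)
Your proof is correct and takes essentially the same approach as the paper: both deduce from \cref{prop:eta-periodic} that the unit is $\rho$-divisible in $\eta$-periodic homotopy, observe that the unit is not $\rho$-divisible on the $h_1$-local $E_\infty$-page, and identify $Q$ as the unique possible source of the resulting hidden extension. The only small caveat is that citing \cref{prop:E-minus-structure}(1) to see $\rho\cdot Q=0$ is a slight misfire—that statement concerns $\rho$-Bockstein pages $E_r^-$ and is the converse implication—but the fact you need (that $Q$ sits at the top of its $\rho$-tower in $\Ext_{NC}[h_1^{-1}]$) is directly visible in \cref{tbl:Exth1inv} and \cref{fig:h1local-C2Bockstein}, so the argument stands.
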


\begin{pf}
The proof of \cref{prop:eta-periodic} shows that every
element of the $\eta$-periodic $C_2$-equivariant stable homotopy groups
is divisible by $\rho$.  In particular,
the class $1$ detects a $\rho$-divisible class,
so it must be the target of a hidden $\rho$-extension.  There is only
one possible source for this extension.
\end{pf}

\begin{prop} 
\cref{tbl:h1invAdamsDiffs} shows 
all differentials in the $h_1$-periodic $C_2$-equivariant Adams spectral sequence
in coweights at most 8.
The spectral sequence collapses in this range at the $E_3$-page.
\end{prop}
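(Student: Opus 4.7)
The plan is to pin down the differentials by working backwards from the abutment. By \cref{prop:eta-periodic}, the spectral sequence converges to $\Z[\tfrac12,\eta^{\pm 1}]$, which is concentrated in coweight $0$, while the $E_2$-page in \cref{tbl:Exth1inv} contains many nontrivial classes in other coweights that must therefore die. Meanwhile \cref{rho-extn-h1inv} already tells us how the surviving coweight-$0$ classes glue together.

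First I will handle coweight $0$: the $\rho$-tower on $1$ and the $\rho$-cotower on $Q$ are linked by the hidden extension $\rho Q = 1$, and under the identification $\rho = -2\eta^{-1}$ they account precisely for the $\rho$-adic filtration on $\Z[\tfrac12,\eta^{\pm 1}]$. Both families must therefore survive to $E_\infty$, so no Adams differential can originate from or land in this part of the spectral sequence.

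Next, in each of the coweights $c \in \{3,4,6,7,8\}$ in which $E_2$ is nontrivial, I will locate $d_2$ differentials pairing classes from the $\R$-motivic summand against classes from the negative-cone summand. Adams differentials preserve coweight, so every such pair lives in a single coweight, and the Leibniz rule then propagates one chosen $d_2$ down the $\rho$-cotower of its source and up the $\rho$-multiples of its target. Degree matching in the $(s,f,c)$ trigrading, together with the $\rho$-torsion and $\rho$-divisibility data recorded in \cref{tbl:Exth1inv}, should pin each representative differential down.

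The main obstacle will be the bookkeeping in coweight $7$: there $v_3 = e_0$ carries seven $\rho$-powers while $QPh_1^2c_0$ admits only three divisions, and matching the two families against one another by Leibniz while respecting the periodicity coming from $d_7(v_1^8) = \rho^7 v_3$ in the $\rho$-Bockstein spectral sequence requires careful accounting. Once the $d_2$'s are in place and are seen to annihilate every class in nonzero coweight, collapse at $E_3$ is immediate from the abutment: the $E_3$-page then already equals the associated graded of $\Z[\tfrac12,\eta^{\pm 1}]$, leaving no room for higher differentials.
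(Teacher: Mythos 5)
Your overall strategy — use the fact that the abutment $\Z[\tfrac12,\eta^{\pm1}]$ is concentrated in coweight $0$ to force differentials — is also the paper's main lever, and it does suffice in the coweight $4/3$ block, where the only possible sources lie in the $Q v_1^4$ family and the only possible targets in the $v_2$ family, so the $d_2$'s are uniquely forced. But the argument breaks down in exactly the place you flag as "the main obstacle": the coweights $8/7/6$ tangle. There the abutment constraint is \emph{not} enough to identify the differentials in \cref{tbl:h1invAdamsDiffs}, because it admits a second, genuinely different, consistent assignment. Concretely: set $d_2(v_3)=0$, $d_2\bigl(\frac{Q}{\rho^k} v_1^8\bigr)=\rho^{6-k} v_3$ for $0\le k\le 6$, and $d_2\bigl(\frac{Q}{\rho^k} v_1^4 v_2\bigr)=\rho^{2-k} v_2^2$ for $0\le k\le 2$. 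This is degree-compatible and Leibniz-compatible, every class in coweights $6$, $7$, $8$ dies by $E_3$, and the coweight-$0$ column survives intact — so the abutment is matched, yet $d_2(v_3)=0$ and the targets in coweight $7$ are $v_3$ rather than $v_3+\frac{Q}{\rho^2}v_1^4 v_2$. Your description of the $d_2$'s as "pairing $\R$-motivic classes against negative-cone classes" is also not quite right, since $d_2(v_3)=v_2^2$ is purely $\R$-motivic.

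The missing ingredient is precisely the one external input the paper feeds in: the differential $d_2(v_3)=v_2^2$ is known a priori, coming from the classical Adams differential $d_2(e_0)=h_1^2 d_0$ via the $h_1$-periodic $\R$-motivic Adams spectral sequence (\cite{etaR}*{Lemma~5.2}) and Betti realization. Once this is pinned down, $v_3$ alone is \emph{not} a $d_2$-cycle, so the class hit from coweight $8$ at $(c,2c+f-s)=(7,1)$ must be the cycle $v_3+\frac{Q}{\rho^2}v_1^4 v_2$, and the rest of the table follows by Leibniz and the abutment as you propose. In short: you need to invoke naturality from the $\R$-motivic side to break the tie in coweight $7$ before the "work backwards from the abutment" bookkeeping can close.
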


\begin{pf}
The differential $d_2(v_3) = v_2^2$ is given in \cite{etaR}*{Lemma~5.2} and follows from the classical Adams differential $d_2(e_0) = h_1^2 d_0$.
The proof of \cref{prop:eta-periodic} shows that every
element of the $\eta$-periodic $C_2$-equivariant stable homotopy groups
is divisible by $\rho$.
This requirement forces all remaining differentials.
\end{pf}

\renewcommand*{\arraystretch}{1.4}
\begin{table}[ht]
\captionof{table}{$h_1$-periodic Adams differentials up to coweight $8$}
\label{tbl:h1invAdamsDiffs}
\begin{center}
\begin{tabular}{LLL} 
\hline
(c,2c+f-s) & x & d_2(x)\\
\hline
(4,0) & \frac{Q}{\rho^2} v_1^4 & v_2 \\
(7,1) & v_3 & v_2^2 \\
(7,1) & \frac{Q}{\rho^2} v_1^4 v_2 & v_2^2 \\
(8,0) & \frac{Q}{\rho^6} v_1^8 & v_3 + \frac{Q}{\rho^2} v_1^4 v_2 \\
 \hline
\end{tabular}
\end{center}
\end{table}
\renewcommand*{\arraystretch}{1.0}

\begin{rmk}
There is an obvious similarity between the differentials in
\cref{tab:h1-periodic-Bockstein-differentials,tbl:h1invAdamsDiffs}.
This similarity breaks down for the last differential.
Note that $d_2 \left( \frac{Q}{\rho^6} v_1^8 \right)$ cannot equal $v_3$
because $v_3$ is not a cycle.  On the other hand, the formula
$d_2(Q v_1^8) = \rho^6 v_3$ does hold.
However, the latter form does not carry as much information as the
form that we give because of the presence of $\rho$-torsion in coweight $7$.
In other words, $\rho^3 v_3$ is not uniquely divisible by $\rho$.
\end{rmk}

\cref{fig:h1local-C2Adams} displays the $h_1$-periodic $C_2$-equivariant Adams spectral sequence in coweights at most 8.
Black dots depict 
copies of $\F_2[h_1^{\pm 1}]$ from
$\Ext_\R[h_1^{-1}]$,
while blue dots depict 
copies of $\F_2[h_1^{\pm 1}]$ from
$\Ext_{NC}[h_1^{-1}]$.
Vertical lines denote multiplications by $\rho$.
Arrows of slope $-1$ depict Adams $d_2$ differentials.
The dashed line in coweight $0$ represents the hidden $\rho$-extension of \cref{rho-extn-h1inv}.

\begin{figure}
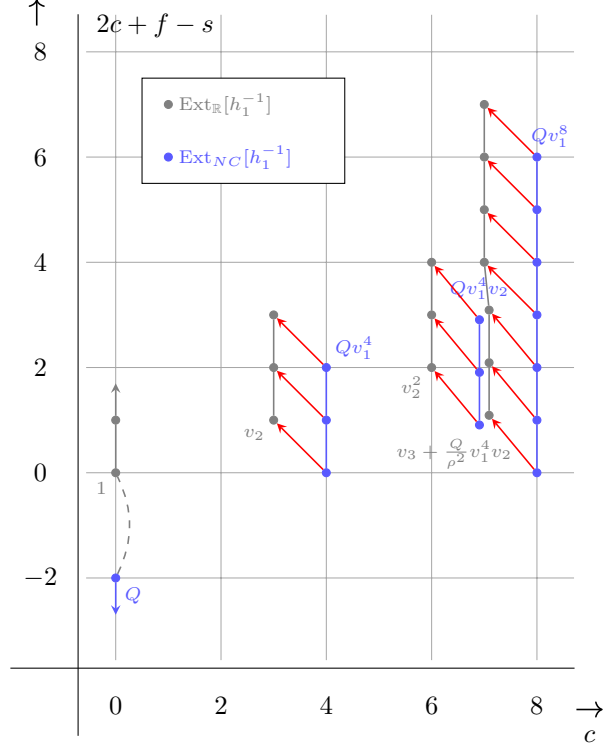

\caption{The $h_1$-periodic $C_2$-equivariant Adams spectral sequence}
\label{fig:h1local-C2Adams}
\begin{sseqpage}[
classes= {fill, inner sep = 0pt, minimum size = 0.3em, gray},
differentials=red,
x tick step = 2,
y tick step = 2,
grid = go, 
grid step=2,
grid color = gray!70,
scale=0.7,x range={0}{8},y range={-3}{8}]
\class["1"{below left=0}](0,0)
\class(\lastx,\lasty+1)
\structline[\rhocolor](\lastx,\lasty-1)
\vertrholoc(\lastx,\lasty)
\class["v_2"{below left=0}](3,1)
\class(\lastx,\lasty+1)
\structline[\rhocolor](\lastx,\lasty-1)
\class(\lastx,\lasty+1)
\structline[\rhocolor](\lastx,\lasty-1)
\class["v_2^2"{below left=0}](6,2)
\class(\lastx,\lasty+1)
\structline[\rhocolor](\lastx,\lasty-1)
\class(\lastx,\lasty+1)
\structline[\rhocolor](\lastx,\lasty-1)
\begin{scope}
\renewcommand{\rhocolor}{\Qcolor}
\class[\Qcolor,"Q v_1^4 v_2"{above=0.2}](7,3)
\class[\Qcolor](\lastx,\lasty-1)
\structline[\rhocolor]
\class[\Qcolor](\lastx,\lasty-1)
\structline[\rhocolor]
\end{scope}
\class["v_3+\frac{Q}{\rho^2} v_1^4 v_2"{below left=0.4,xshift=17.7}](7,1)
\class(\lastx,\lasty+1)
\structline[\rhocolor]
\class(\lastx,\lasty+1)
\structline[\rhocolor]
\class(\lastx,\lasty+1)
\structline[\rhocolor]
\class(\lastx,\lasty+1)
\structline[\rhocolor]
\class(\lastx,\lasty+1)
\structline[\rhocolor]
\class(\lastx,\lasty+1)
\structline[\rhocolor]
\begin{scope}
\renewcommand{\rhocolor}{\Qcolor}
\class[\Qcolor,"Q"{below right=0}](0,-2)
\vertrhocoloc(\lastx,\lasty)
\class[\Qcolor](4,0)
\class[\Qcolor](\lastx,\lasty+1)
\structline[\rhocolor]
\class[\Qcolor,"Q v_1^4"{above right=0}](\lastx,\lasty+1)
\structline[\rhocolor]
\class[\Qcolor,"Q v_1^8"{above=0,xshift=5}](8,6)
\class[\Qcolor](\lastx,\lasty-1)
\structline[\rhocolor]
\class[\Qcolor](\lastx,\lasty-1)
\structline[\rhocolor]
\class[\Qcolor](\lastx,\lasty-1)
\structline[\rhocolor]
\class[\Qcolor](\lastx,\lasty-1)
\structline[\rhocolor]
\class[\Qcolor](\lastx,\lasty-1)
\structline[\rhocolor]
\class[\Qcolor](\lastx,\lasty-1)
\structline[\rhocolor]
\end{scope}
\structline[\rhocolor,dashed,bend right=25](0,-2)(0,0)
\d2(4,0)(3,1)
\d2(4,1)(3,2)
\d2(4,2)(3,3)
\d2(7,1)(6,2)
\d2(7,2)(6,3)
\d2(7,3)(6,4)
\d2(8,0)(7,1,2)
\d2(8,1)(7,2,2)
\d2(8,2)(7,3,2)
\d2(8,3)(7,4)
\d2(8,4)(7,5)
\d2(8,5)(7,6)
\d2(8,6)(7,7)
\filldraw[black,fill=white] (0.5,5.5) rectangle (4.35,7.5);
\class["\Ext_\R[h_1^{-1}]"{right=0}](1,7)
\class["\Ext_{NC}[h_1^{-1}]"{right=0},\Qcolor](1,6)
\draw[background,->,semithick] (8.75,-4.5) -- (9.25,-4.5);
\node[background] at (9,-5) {c};
\node[background] at (0.75,8.5) {2c+f-s};
\draw[background,->,semithick] (-1.5,8.5) -- (-1.5,9);
\end{sseqpage}
\end{figure}

\section{Bockstein differentials}
\label{sctn:Bockstein-diff}

In the range under consideration in this article,
the vast majority of Bockstein
differentials in $E^-$ are consequences of 
the Leibniz rule applied to
the differentials on $\frac{\gamma}{\rho^{2^n} \tau^{2^n}}$
given in
\cref{DiffsGamma} together with the $\R$-motivic
differentials in $E^+$.
In other words, most of the Bockstein differentials in $E^-$
are periodic in the sense of 
\cref{subsctn:periodic-diff} and
arise from \cref{prop:tau-periodic-coperiodic-diff}.
The $\R$-motivic differentials are studied extensively
in \cite{LowMW} and \cite{BI}.

This means that the vast majority of Bockstein
differentials in $E^-$ are easy to obtain.  We take all of this
information for granted and will not discuss it in any detail.
Rather, we choose to focus on the handful of
differentials in $E^-$ that are more difficult to obtain.
These more difficult differentials are indicated in \cref{tbl:Bockstein-differentials-Qgamma}.

In the range under consideration in this manuscript
(coweights from $-2$ to $8$, stems up to $30$), there is one
possible Bockstein differential that we have not established.
It is possible that the element $\frac{\gamma}{\tau^7} h_0^3 h_4^2$
in coweight $6$ and stem $30$ is hit by a differential.
More precisely, either there is a Bockstein $d_9$ differential from $\frac{\ga}{\rho^9 \tau} h_2 c_1$ in coweight 7 and stem 31 to $\frac{\gamma}{\tau^7} h_0^3 h_4^2$ or else $\frac{\ga}{\rho^k \tau} h_2 c_1$ supports a longer Bockstein differential. 
The uncertain status of the element $\frac{\gamma}{\tau^7} h_0^3 h_4^2$ is indicated in \cref{fig:Ext} by
an open square.
See \cref{UnknownBockstein} for a discussion of other unknown differentials.

\begin{thm}
\cref{tbl:Bockstein-differentials-Qgamma}
lists some differentials in the $\rho$-Bockstein spectral sequence
that converges to $\Ext_{NC}$.
\end{thm}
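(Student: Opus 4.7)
The plan is to verify the entries of \cref{tbl:Bockstein-differentials-Qgamma} by classifying each listed differential according to the trichotomy of \cref{sec:PCtoNC} (free, torsion, mixed) and then deploying the appropriate machinery. Since the table's name suggests it concerns differentials whose source is a $Q$-class and whose target is a $\gamma$-class, the bulk of the work will be in the torsion and mixed cases; the truly free differentials are already subsumed by \cref{prop:tau-periodic-coperiodic-diff} and the Leibniz rule applied to \cref{DiffsGamma}, and I would invoke those wholesale for any entry of that shape.

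For the torsion entries, where both source and target are $Q$-classes built on $\tau$-power torsion elements of $\Ext_\C$, I would apply \cref{Type2diffs} to reduce the claim to an $\R$-motivic Bockstein differential of the form $d_r(x) = \rho^r y$ in $E^+$. These $\R$-motivic differentials are all recorded in \cite{BI}, so the content of each verification is: (i) checking that $\frac{Q}{\rho^r} x$ genuinely survives to $E_r^-$, which requires ruling out shorter Bockstein differentials in accordance with \cref{rmk:DiffsOnQClasses}; and (ii) pinning down the element of the coset $Qy$ that is hit, using the indeterminacy calculation provided by the short exact sequence of \cref{E1minus-SES}.

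The mixed entries are the main obstacle, because the natural tool, \cref{Type3diffs}, is only a conjecture and cannot be cited as a theorem. My approach is to argue each such differential directly, using \cref{TranslateRhoBssNegCone} to translate the claim into the statement that $\bar{\delta}_{r,1}$ sends a lift of $\frac{Q}{\rho^{r-1}} y$ to the specified $\gamma$-class. I then chase the relevant element through the diagram \eqref{DoubleExactDiagram}, cross-referencing the $\R$-motivic long exact sequence \eqref{cofiberrho^nLESExt} and the $\R$-motivic Adams/Bockstein data of \cite{BI}. Where a direct diagram chase is too delicate, I fall back on process of elimination via \cref{colocalVanishes}: any element of $E_1^-$ not hit by a Bockstein differential must, up to $\rho$-divisibility, support one, and the trigrading together with the $\rho$-filtration bounds of \cref{prop:E-minus-structure} usually leave only one candidate.

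The hardest step in practice will be controlling indeterminacy. Because each $\frac{Q}{\rho^k} y$ is only defined modulo the image of $q_\ast$, and because the target of a mixed differential is a $\gamma$-class whose very existence in $E_r^-$ depends on prior Bockstein history, I will need to interleave the verifications with the $\R$-motivic computations from \cite{LowMW,BI} and with the hidden extensions catalogued in \cref{prop:E1-minus-hidden}. I expect one or two entries, such as the $d_4\bigl(\tfrac{Q}{\rho^4} h_1^5 e_0\bigr)$ computation discussed in \cref{rmk:DiffsOnQClasses}, to require a separate argument to isolate a single representative out of a two-element coset; the remainder should fall out systematically once the Leibniz, \cref{Type2diffs}, and colocalization inputs are assembled.
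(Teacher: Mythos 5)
Your proposal is essentially the same approach as the paper's: avoid the conjectural \cref{Type3diffs}, lean on \cref{Type2diffs} where both source and target are genuinely $Q$-classes, and settle the remaining cases by process of elimination keyed to \cref{colocalVanishes} and \cref{prop:E-minus-structure}. A few points where the paper's execution diverges slightly from your plan are worth noting. First, the entries in coweight $0$ (the $\frac{Q}{\rho^k}h_1^{k+1}$ family) are simply cited from \cite{C2MW0} rather than re-derived. Second, the "free" arm of your trichotomy is vacuous here, since every source in \cref{tbl:Bockstein-differentials-Qgamma} is a $Q$-class. Third, and most substantively, the paper's primary engine for the mixed differentials is not a diagram chase through \cref{TranslateRhoBssNegCone} but rather the following pattern: apply the coperiodic Leibniz rule to a known $\R$-motivic free differential such as $d_7(\tau^8 P h_1^2) = \rho^7\tau^4 P c_0$ to produce $d_7\bigl(\frac{\gamma}{\rho^7\tau^8}P h_1^2\bigr) = \frac{\gamma}{\tau^{12}}P c_0$; since the source is $\tau^8$-torsion, the class $\frac{\gamma}{\tau^4}P c_0$ cannot survive to $E_7^-$, which forces the unique shorter differential $d_4\bigl(\frac{Q}{\rho^4}h_1^4 c_0\bigr) = \frac{\gamma}{\tau^4}P c_0$; the still-shorter ones on $\frac{Q}{\rho}h_1^2 c_0$ and $\frac{Q}{\rho^3}h_1^3 c_0$ then follow by elimination. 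The paper also multiplies known differentials by permanent-cycle elements such as $e_0$ to propagate to new generators (\cref{lem:d-QPh1^ke0}), a step you would want to add explicitly to "the Leibniz inputs." Your instinct about the one awkward indeterminacy cases is correct; the paper handles those by combining \cref{Type2diffs} with a subsequent ruling-out argument (e.g.\ showing $h_1^3\cdot\frac{Q}{\rho^4}h_1^4 d_0$ cannot be a $d_3$ target because it already supports a $d_4$).
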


See also \cref{DiffsGamma} and \cref{rmk:DiffsGamma} for some
additional Bockstein differentials.

\begin{proof}
Some of the differentials are proved in \cite{C2MW0}.
The remainder are proved in the following lemmas.  The last
column of the table refers to the specific result where each
differential is proved.
\end{proof}

\renewcommand*{\arraystretch}{1.3}
\begin{longtable}{LLLLl} 
\caption{Some Bockstein differentials in $E^-$} \\
\toprule
\mbox{}(s,f,c) & \textrm{source} & d_r & \textrm{target} & proof \\
\midrule \endhead
\bottomrule \endfoot
\label{tbl:Bockstein-differentials-Qgamma}
(8,3,0) & \frac{Q}{\rho^3} h_1^4 & d_3 & \frac{\ga}{\tau^3} h_0^3 h_3 & \cite{C2MW0}*{Lemma 4.4} \\
(10,4,0) & \frac{Q}{\rho^4} h_1^5 & d_4 & \frac{\ga}{\tau^4} P h_1 & \cite{C2MW0}*{Lemma 4.4} \\
(16,7,0) & \frac{Q}{\rho^7} h_1^8 & d_7 & \frac{\ga}{\tau^7} h_0^7 h_4 & \cite{C2MW0}*{Lemma 4.4} \\
(18,8,0) & \frac{Q}{\rho^8} h_1^9 & d_8 & \frac{\ga}{\tau^8} P^2 h_1 & \cite{C2MW0}*{Lemma 4.4} \\
(24,11,0) & \frac{Q}{\rho^{11}} h_1^{12} & d_{11} & \frac{\ga}{\tau^{11}} h_0^5 i & \cite{C2MW0}*{Lemma 4.4} \\
(26,12,0) & \frac{Q}{\rho^{12}} h_1^{13} & d_{12} & \frac{\ga}{\tau^{12}} P^3 h_1 & \cite{C2MW0}*{Lemma 4.4} \\
(32,15,0) & \frac{Q}{\rho^{15}} h_1^{16} & d_{15} & \frac{\ga}{\tau^{15}} h_0^{15} h_5 & \cite{C2MW0}*{Lemma 4.4} \\
(34,16,0) & \frac{Q}{\rho^{16}} h_1^{17} & d_{16} & \frac{\ga}{\tau^{16}} P^4 h_1 & \cite{C2MW0}*{Lemma 4.4} \\
(12,4,3) & \frac{Q}{\rho} h_1^2 c_0 & d_1 & \frac{\ga}{\tau^2} P h_2 & \cref{lem:d-Qh1^kc0}(1) \\
(15,5,3) & \frac{Q}{\rho^3} h_1^3 c_0 & d_3 & \frac{\ga}{\tau^3} h_0^2 d_0 & \cref{lem:d-Qh1^kc0}(2) \\
(17,6,3) & \frac{Q}{\rho^4} h_1^4 c_0 & d_4 & \frac{\ga}{\tau^4} P c_0 & \cref{lem:d-Qh1^kc0}(3) \\
(16,7,4) & \frac{Q}{\rho^3} P h_1^4 & d_3 & h_1^3 \cdot Q h_1^3 c_0 + \frac{\ga}{\tau^3} h_0^7 h_4 & \cref{lem:d3-QPh1^4} \\
(23,7,6) & \frac{Q}{\rho^4} h_1^4 d_0 & d_4 & \frac{\ga}{\tau^4} P d_0 & \cref{lem:d4-Qh1^4d0} \\
(20,8,7) & \frac{Q}{\rho} P h_1^2 c_0 & d_1 & \frac{\ga}{\tau^2} P^2 h_2 & \cref{lem:d-QPh1^kc0}(1) \\
(23,9,7) & \frac{Q}{\rho^3} P h_1^3 c_0 & d_3 & h_1^3 \cdot Q h_1^4 d_0 + \frac{\ga}{\tau^3} P h_0^2 d_0 & \cref{lem:d-QPh1^kc0}(2) \\ 
(21,5,7) & \frac{Q}{\rho} h_1^2 e_0 & d_1 & \frac{\ga}{\tau} h_0 h_2 e_0 & \cref{lem:d-QPh1^ke0}(1) \\
(24,6,7) & \frac{Q}{\rho^3} h_1^3 e_0 & d_3 & \frac{\ga}{\tau^4} i & \cref{lem:d-QPh1^ke0}(2) \\
(26,7,7) & \frac{Q}{\rho^4} h_1^4 e_0 & d_4 & \frac{\ga}{\tau^4} P e_0 & \cref{lem:d-QPh1^ke0}(3) \\
(29,9,7) & \frac{Q}{\rho^5} h_1^6 e_0 & d_5 & \frac{\ga}{\tau^5} h_0^2 d_0^2 & \cref{lem:d-QPh1^ke0}(4) \\
(31,10,7) & \frac{Q}{\rho^6} h_1^7 e_0 & d_6 &\frac{\ga}{\tau^6}P c_0 d_0 & \cref{lem:d-QPh1^ke0}(5) \\
(34,11,7) & \frac{Q}{\rho^8} h_1^8 e_0 & d_8 & \frac{\ga}{\tau^8} P^2 e_0 & \cref{lem:d-QPh1^ke0}(6) \\
(24,11,8) & \frac{Q}{\rho^3} P^2 h_1^4 & d_3 & \frac{\ga}{\tau^3} h_0^5 i & \cref{lem:d-QP^2h1^k}(1) \\
(26,12,8) & \frac{Q}{\rho^4} P^2 h_1^5 & d_4 & \frac{\ga}{\tau^4} P^3 h_1 & \cref{lem:d-QP^2h1^k}(2) \\
(24,6,9) & \frac{Q}{\rho} c_0 d_0 & d_1 & \frac{\ga}{\tau^2} i & \cref{lem:d-Qh1^kc0d0}(1) \\
(26,7,9) & \frac{Q}{\rho^2} h_1 c_0 d_0 & d_2 & \frac{\ga}{\tau^2} P e_0 & \cref{lem:d-Qh1^kc0d0}(2) \\
(29,9,9) & \frac{Q}{\rho^3} h_1^3c_0 d_0 & d_3 & \frac{\ga}{\tau^3}h_0^2 d_0^2 & \cref{lem:d-Qh1^kc0d0}(3) \\
(27,6,10) & \frac{Q}{\rho} c_0 e_0 & d_1 & \frac{\ga}{\tau^2}  j & \cref{lem:d-Qh1^kc0d0}(3) \\
(29,7,10) & \frac{Q}{\rho^2} h_1 c_0 e_0 & d_2 & \frac{\ga}{\tau^2}  d_0^2 & \cref{lem:d-Qh1^kc0d0}(4) \\
\end{longtable}
\renewcommand*{\arraystretch}{1.0}

The differentials in \cref{tbl:Bockstein-differentials-Qgamma} determine additional
differentials (which we do not list) via the Leibniz rule.
 For example, 
the table states that 
$d_4\left(\frac{Q}{\rho^4} h_1^4 e_0\right) = \frac{\ga}{\tau^4}P e_0$. 
Multiplying by $h_1$ gives that $d_4\left(\frac{Q}{\rho^4} h_1^5 e_0\right) = \frac{\ga}{\tau^4}P h_1 e_0$.

Some of the differentials in \cref{tbl:Bockstein-differentials-Qgamma} are predicted by \cref{Type2diffs}.
For example, as
indicated in \cref{sec:h1invBock}, the $h_1$-periodic Bockstein differential $d_3( P h_1 ) = \rho^3 h_1^3 c_0$ gives rise 
 to Bockstein differentials $d_3 \left( \frac{Q}{\rho^3} P h_1^k \right) = Q h_1^{k+2} c_0$,
for $k \geq 4$.
However, we remind the reader that $Q h_1^{k+2} c_0$ is not in general a well-defined class in $E_r^{-}$. For example, while $Q h_1^2 c_0$ is uniquely defined, the symbol $Q h_1^6 c_0$ really denotes the pair of elements
\[ Q h_1^6 c_0 = \{ h_1^4 \cdot Q h_1^2 c_0, \ h_1^4 \cdot Q h_1^2 c_0 + \frac{\ga}{\tau^3} h_0^7 h_4\}. \]
\cref{Type2diffs} indicates that $d_3\left( \frac{Q}{\rho^3} P h_1^k \right)$ is one of these two elements, and a further argument is needed to determine 
the actual value of the differential.

\begin{rmk}
\label{UnknownBockstein}
In the range under consideration in this article,
there are a number of classes that support Bockstein differentials 
in stems beyond our range. For example, the class 
$\frac{\ga}{\tau^8} h_2 h_4$ in 
degree $(18,2,-1)$. This class is not the target of a Bockstein 
differential and therefore must support some differential after 
sufficiently dividing by $\rho$, by \cref{colocalVanishes}. We have 
verified directly that this Bockstein differential must occur after 
the $E_{12}^-$-page. 
Therefore, the classes $\frac{\ga}{\rho^k \tau^8} h_2 h_4$, 
with $k$ up to 12, all survive to the Bockstein $E_\infty$-page. 
We do not bother to list all such similar classes, though they are 
indicated in \cref{fig:Ext} by $\rho$-cotowers that extend beyond 
the 30-stem.
\end{rmk}

The Bockstein differentials of \cref{tbl:Bockstein-differentials-Qgamma},
\cref{DiffsGamma}, and \cref{rmk:DiffsGamma} can be used to determine
the Bockstein $E_\infty$-page in a range.
\cref{fig:Ext} displays the Bockstein $E_\infty$-page in stems less than
31 and coweights between $-2$ and $8$,
while \cref{fig:EinfNegCoweight} displays the
Bockstein $E_\infty$-page 
(which agrees with the Adams $E_\infty$-page in this range)
in stems less than $8$ and coweights
$-9$ through $-2$.

Beware that the dashed lines in the charts are not part of the structure
of the Bockstein $E_\infty$-page; they indicate extensions in the
$C_2$-equivariant Adams $E_2$-page that are hidden by the Bockstein
spectral sequence.  Such hidden extensions are discussed in
\cref{sctn:hidden}.

\begin{lem}
\label{lem:d-Qh1^kc0}
\mbox{}
\begin{enumerate}
\item
$d_1 \left( \frac{Q}{\rho} h_1^2 c_0 \right) = \frac{\gamma}{\tau^2} P h_2$.
\item
$d_3 \left( \frac{Q}{\rho^3} h_1^3 c_0 \right) = \frac{\gamma}{\tau^3} h_0^2 d_0$.
\item
$d_4 \left( \frac{Q}{\rho^4} h_1^4 c_0 \right) = \frac{\gamma}{\tau^4} P c_0$.
\end{enumerate}
\end{lem}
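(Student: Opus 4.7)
All three differentials are special cases of \cref{Type3diffs} appearing in the first family of \cref{tab:Type3diffs}. My plan is to prove each case by a direct cobar complex computation, using the Massey product description of $Q$-classes from \cref{prop:Q-bracket} together with known $\R$-motivic Bockstein differentials from \cite{BI}.

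The input data for each case consist of two $\R$-motivic Bockstein differentials. For part (1): $d_4(\tau h_0^3 h_3) = \rho^4 h_1^2 c_0$ and $d_5(\tau^9 h_0^3 h_3) = \rho^5 \tau^6 P h_2$. For part (2): $d_3(P h_1) = \rho^3 h_1^3 c_0$ and $d_6(\tau^8 P h_1) = \rho^6 \tau^5 h_0^2 d_0$. For part (3): $d_3(P h_1^2) = \rho^3 h_1^4 c_0$ and $d_7(\tau^8 P h_1^2) = \rho^7 \tau^4 P c_0$. In all three cases $2^n = 8$; let $r$ be the length of the first differential, $t$ the length of the second, $x$ the source of the first differential, $y = h_1^k c_0$ its target, and $z$ the class $P h_2$, $h_0^2 d_0$, or $P c_0$ respectively. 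The first differential supplies a cobar chain $\alpha_0$ lifting $x$ with $d\alpha_0 = \rho^r \beta + O(\rho^{r+1})$, where $\beta$ is a cobar representative of $y$. The second differential refines $\tau^{2^n}\alpha_0$ to a chain $\widetilde\alpha$, modulo a correction supported in higher $\rho$-filtration, satisfying $d\widetilde\alpha = \rho^t \tau^{2^n-(t-r)} \gamma_z + O(\rho^{t+1})$ for a cobar representative $\gamma_z$ of $z$.

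Via \cref{prop:Q-bracket}, a cobar representative of $\frac{Q}{\rho^{t-r}} y$ can be constructed from a Massey-product defining system for $\left\langle \frac{\ga}{\tau^{2^n}}, \tau^{2^n}, y \right\rangle$, using $\alpha_0$ and a cobar chain representing $\frac{\ga}{\rho^{t-r} \tau^{2^n}}$. Applying the cobar differential and expanding $\eta_R$ on the latter via \eqref{eq:etaR}, the leading term in $\rho$-filtration is forced by $d\widetilde\alpha$ to simplify to $\frac{\ga}{\tau^{2^n}} \cdot \tau^{2^n-(t-r)} z = \frac{\ga}{\tau^{t-r}} z$, yielding the claimed $d_{t-r}$ Bockstein differential in $E^-$.

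The main obstacle is the careful bookkeeping of $\rho$-filtrations in the cobar complex and the verification that neither the Massey product indeterminacy from \cref{prop:Q-bracket} nor the choice of defining chain $\widetilde\alpha$ introduces extra terms that could alter the identification of the target among $\ga$-classes in the same tridegree. Part (3) requires the most care, since $t = 7$ is the maximum length allowed by the hypothesis $t < 2^n$ of \cref{Type3diffs}, so $\widetilde\alpha$ must be built via a longer chain of cobar-complex refinements than in (1) or (2).
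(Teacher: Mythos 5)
Your proposal identifies the right context: these three differentials are exactly the instances of Conjecture~\ref{Type3diffs} listed in the first block of \cref{tab:Type3diffs}, and you have correctly matched up the $\R$-motivic input data $(r,t,2^n,x,y,z)$ for each case. However, what you have written is a plan for a cobar-level computation, not a proof, and the plan has a gap that is hard to ignore: the authors state \cref{Type3diffs} explicitly as a \emph{conjecture}, precisely because the direct cobar computation you sketch is not straightforward. The difficulties you flag at the end --- the $\rho$-filtration bookkeeping, the interaction between the indeterminacy of \cref{prop:Q-bracket} and the choice of the refined chain $\widetilde\alpha$, and the longer chain of refinements needed for part (3) --- are not peripheral housekeeping; they are the obstructions that make the mixed case a conjecture rather than a proposition. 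Moreover, the connection between $\widetilde\alpha$ (which lifts $\tau^{2^n}x$ and controls the $\rho$-Bockstein differential in $E^+$) and the defining system for $\bigl\langle \frac{\gamma}{\tau^{2^n}},\tau^{2^n},y\bigr\rangle$ (which requires a null-homotopy of $\tau^{2^n}\cdot y$ in the $\tau$-torsion direction) is asserted but not constructed: the chain $\alpha_0$ witnesses a $\rho$-divisibility of $d\alpha_0$, not a $\tau$-torsion relation, so the bridge between the two is exactly what would need to be supplied.

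The paper's actual argument is much more elementary and avoids the cobar complex entirely. It proves part (3) first: the coperiodic family $d_7\bigl(\frac{\gamma}{\rho^7\tau^{8k}}Ph_1^2\bigr)=\frac{\gamma}{\tau^{8k+4}}Pc_0$ (obtained from the free $E^+$-differential $d_7(\tau^8 Ph_1^2)=\rho^7\tau^4Pc_0$ via \cref{prop:tau-periodic-coperiodic-diff}) has a $\tau^8$-torsion source, so the bottom class $\frac{\gamma}{\tau^4}Pc_0$ is not in the image of any $d_7$ and must already die before $E_7^-$; the only available source (by inspection of the $E_1^-$-page) is $d_4\bigl(\frac{Q}{\rho^4}h_1^4c_0\bigr)$. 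Once (3) is known, parts (1) and (2) follow because $\frac{Q}{\rho^4}h_1^2c_0$ and $\frac{Q}{\rho^4}h_1^3c_0$ cannot then survive to $E_4^-$, and for each there is a unique possible shorter differential. If you want a proof in the spirit of the paper, replace the cobar-level construction with this elimination argument; if you want to pursue your approach, you would essentially be proving a nontrivial case of \cref{Type3diffs}, and you would need to actually carry out the chain constructions rather than describe them.
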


\begin{proof}
We will show that $\frac{Q}{\rho^4} h_1^4 c_0$ supports a $d_4$ differential. 
It follows that
the classes  $\frac{Q}{\rho^4} h_1^2 c_0$ and $\frac{Q}{\rho^4} h_1^3 c_0$ 
must support shorter differentials, and there is only one possible value
(and length) for each of these shorter differentials.

The $E^+$ differential $d_7( \tau^8 P h_1^2) = \rho^7 \tau^4 P c_0$ \cite{BI}*{Table~5} gives
\[ d_7\left( \frac{\ga}{\rho^7 \tau^8} P h_1^2\right) = \frac{\ga}{\tau^{12}} P c_0.\]
The source of this differential is $\tau^8$-torsion, so we conclude that $\frac{\ga}{\tau^4} P c_0$ cannot survive to $E_7^-$.
There is only one possible differential that could take a value of
$\frac{\ga}{\tau^4} P c_0$.
\end{proof}

\begin{lem}
\label{lem:d3-QPh1^4}
$d_3 \left( \frac{Q}{\rho^3} P h_1^4 \right) = h_1^3 \cdot Q h_1^3 c_0 +
\frac{\gamma}{\tau^3} h_0^7 h_4$.
\end{lem}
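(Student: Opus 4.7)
The plan is to reduce everything to the $\R$-motivic Bockstein differential $d_3(Ph_1) = \rho^3 h_1^3 c_0$, which was recorded in our $h_1$-periodic analysis in \cref{sec:h1invBock}. Multiplying by $h_1^3$ and applying the Leibniz rule (noting $d_3(h_1^3) = 0$) gives $d_3(Ph_1^4) = \rho^3 h_1^6 c_0$ in $E^+$. Since $Ph_1^4$ and $h_1^6 c_0$ are both $\tau$-power torsion in $\Ext_\C$, and $\frac{Q}{\rho^3} Ph_1^4$ survives to the $E_3^-$-page, \cref{Type2diffs} then places the desired differential inside the coset $Q h_1^6 c_0$.

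The indeterminacy of this coset is the crux of the proof. As already noted in the discussion immediately after \cref{tbl:Bockstein-differentials-Qgamma}, the set $Q h_1^6 c_0$ consists of exactly the two elements
\[
h_1^3 \cdot Q h_1^3 c_0
\quad \text{and} \quad
h_1^3 \cdot Q h_1^3 c_0 + \tfrac{\gamma}{\tau^3} h_0^7 h_4,
\]
so all that remains is to decide whether the correction term $\tfrac{\gamma}{\tau^3} h_0^7 h_4$ is present.

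To settle this, I plan to exploit \cref{colocalVanishes}. The class $\tfrac{\gamma}{\tau^3} h_0^7 h_4$ sits at the top of the infinite $\rho$-cotower $\bigl\{\tfrac{\gamma}{\rho^k \tau^3} h_0^7 h_4\bigr\}_{k \ge 0}$, which cannot persist to $\Ext_{C_2}$ because the inverse limit $\lim_\rho \Ext_{C_2}$ vanishes. A direct inspection of the adjacent trigradings, using the already-established periodic and torsion-type Bockstein differentials catalogued earlier in this section, shows that no $\frac{\gamma}{\rho^k\tau^3} h_0^7 h_4$ supports a Bockstein differential, so the cotower must die by being hit. An enumeration of the potential sources in trigrading $(16, 7, 4)$ reveals that $\frac{Q}{\rho^3} Ph_1^4$ is the only candidate whose $d_r$ can reach $\tfrac{\gamma}{\tau^3} h_0^7 h_4$, forcing the correction term to appear.

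The main obstacle is this final enumeration step: one must list every class in $E_r^-$ near trigrading $(15, 8, 3)$ together with the adjacent sources that could feed it, and account for the fate of each under the Bockstein differentials already in hand. This bookkeeping is the most error-prone part, but once completed the indeterminacy is genuinely forced and no independent input is required.
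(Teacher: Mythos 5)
The first step of your argument is sound and closely parallels the setup underlying the paper's proof: the $\R$-motivic differential $d_3(P h_1^4) = \rho^3 h_1^6 c_0$ together with \cref{Type2diffs} places $d_3\left(\frac{Q}{\rho^3} P h_1^4\right)$ in the two-element coset $Q h_1^6 c_0 = \left\{ h_1^3 \cdot Q h_1^3 c_0, \ h_1^3 \cdot Q h_1^3 c_0 + \frac{\gamma}{\tau^3} h_0^7 h_4 \right\}$. The gap lies in your resolution of the indeterminacy.

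Your key claim is that the infinite $\rho$-cotower anchored at $\frac{\gamma}{\tau^3} h_0^7 h_4$ ``cannot persist to $\Ext_{C_2}$ because the inverse limit $\lim_\rho \Ext_{C_2}$ vanishes,'' and hence this class must be hit. But that misreads \cref{colocalVanishes}: the proposition forbids infinite $\rho$-divisibility in $\Ext_{C_2}$, not survival of the class. In fact $\frac{\gamma}{\tau^3} h_0^7 h_4$ \emph{does} persist — the very content of the lemma is that the $d_3$ identifies it (on $E_4^-$) with $h_1^3 \cdot Q h_1^3 c_0$, and the paper's argument goes on to show that $h_1^3 \cdot \frac{Q}{\rho} h_1^3 c_0 = \frac{\gamma}{\rho\tau^3} h_0^7 h_4$ is a nonzero element of $\ExtCT$. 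The merged cotower terminates eventually, but by a Bockstein differential supported higher up the cotower after the identification — this is precisely the second branch of the dichotomy following \cref{colocalVanishes}, the branch you tried to rule out. So your ``direct inspection'' that no $\frac{\gamma}{\rho^k\tau^3} h_0^7 h_4$ supports a differential is not merely left unverified; it is expected to be false, and the bookkeeping you defer is entangled with the very differential you are trying to establish (what the cotower supports on later pages depends on whether the $d_3$ in question carries the correction term). The published proof takes a genuinely different route that sidesteps these circularities: it imports the hidden $h_1$-extension $\tau P h_0^2 d_0 \xrightarrow{h_1} \rho P^2 c_0$ from $\Ext_\R$, multiplies by $\frac{\gamma}{\rho\tau^4}$ to land in $\ExtCT$, and uses \cref{lem:d-QPh1^kc0} to deduce $h_1^3 \cdot \frac{Q}{\rho} h_1^3 c_0 \neq 0$; the only nonzero candidate in that degree is $\frac{\gamma}{\rho\tau^3} h_0^7 h_4$, so the sum vanishes in $\ExtCT$ and must therefore be a Bockstein boundary, with $\frac{Q}{\rho^4} P h_1^4$ as the unique possible source. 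That extra multiplicative input from $\Ext_\R$ is what actually pins down the correction term, and it cannot be recovered from $\rho$-divisibility considerations alone.
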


\begin{pf}
In $\Ext_\R$, there is an $h_1$-extension from $\tau P h_0^2 d_0$ to $ \rho P^2 c_0$
that is hidden in the $\rho$-Bockstein spectral sequence. Multiplying by $\frac{\ga}{\rho \tau^4}$
gives an $h_1$-extension in $\ExtCT$ from $\frac{\ga}{\rho \tau^3} P h_0^2 d_0$ to $ \frac{\ga}{\tau^4} P^2 c_0$.
\cref{lem:d-QPh1^kc0} shows that
$\frac{\ga}{ \tau^3} P h_0^2 d_0$ equals $h_1^3 \cdot Q h_1^4 d_0$ already in the Bockstein $E_4^{-}$-page.
We find that $h_1^4 \cdot \frac{Q}{\rho} h_1^4 d_0$ is 
equal to the nonzero element $\frac{\gamma}{\tau^4} P^2 c_0$
in $\ExtCT$. 
This nonzero element also equals
$h_1^3 c_0 \cdot \frac{Q}{\rho} h_1^3 c_0$.
Therefore, $h_1^3 \cdot \frac{Q}{\rho} h_1^3 c_0$ is also nonzero in $\ExtCT$. The only possibility is that it equals $\frac{\ga}{\rho \tau^3} h_0^7 h_4$.
It follows that
$h_1^3 \cdot \frac{Q}{\rho} h_1^3 c_0 + \frac{\ga}{\rho \tau^3} h_0^7 h_4$
is hit by some Bockstein differential.  
Inspection reveals that there is only one possible source.
\end{pf}

\begin{lem}
\label{lem:d4-Qh1^4d0}
$d_4 \left( \frac{Q}{\rho^4} h_1^4 d_0 \right) = \frac{\gamma}{\tau^4} P d_0$.
\end{lem}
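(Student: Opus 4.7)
The plan is to adapt the template of \cref{lem:d-Qh1^kc0}(3) to the $d_0$-analogue: use an $\R$-motivic Bockstein differential whose $\gamma$-translated form forces $\frac{\gamma}{\tau^4}\, P d_0$ to be a boundary in the Bockstein spectral sequence for the negative cone, and then identify the unique possible source by inspection.

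The starting point is the $\R$-motivic Bockstein differential $d_7(\tau^8 P c_0) = \rho^7 \tau^4 P d_0$ recorded in \cite{BI} (and listed as a ``known differential'' in \cref{tab:Type3diffs}, row 4). I would multiply this relation by the $\gamma$-class $\frac{\gamma}{\rho^7 \tau^{16}}$. This class is a Bockstein $d_r$-cycle for $r \leq 7$: by \cref{DiffsGamma}, the first non-trivial Bockstein differential on a class of the form $\frac{\gamma}{\rho^{2^n} \tau^{2^n}}$ is $d_{2^n}$, so any such differential hitting into our $\rho,\tau$-range would require $2^n \geq 8$. Using the $NC$-identity $\frac{\gamma}{\rho^7 \tau^{16}} \cdot \tau^8 = \frac{\gamma}{\rho^7 \tau^8}$, the Leibniz rule then yields the negative-cone Bockstein differential
\[
d_7\!\left(\frac{\gamma}{\rho^7 \tau^8}\, P c_0\right) \;=\; \frac{\gamma}{\tau^{12}}\, P d_0.
\]

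Consequently $\frac{\gamma}{\tau^{12}}\, P d_0$ is zero in the $E^-_8$-page. Since $\tau$-multiplication commutes with Bockstein differentials and $\tau^8 \cdot \frac{\gamma}{\tau^{12}} = \frac{\gamma}{\tau^4}$ in $NC$, it follows that $\frac{\gamma}{\tau^4}\, P d_0$ is also a boundary on the $E^-_8$-page. Since this class is non-zero in $E^-_1$, it must be hit by a Bockstein differential of length at most $7$.

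The final step is an inspection argument. The target $\frac{\gamma}{\tau^4}\, P d_0$ sits in tri-degree $(s,f,c) = (22, 8, 5)$, and the only plausible source of a Bockstein differential of length $\leq 7$ hitting it is $\frac{Q}{\rho^4}\, h_1^4 d_0$ at $(23, 7, 6)$, which must therefore support the asserted $d_4$-differential. The main obstacle is this inspection step: one needs to rule out every other candidate source in the relevant stripe of tri-degrees and to confirm that $\frac{Q}{\rho^4}\, h_1^4 d_0$ genuinely survives to $E^-_4$. The detailed structure of $E^-_r$ assembled in the preceding sections, combined with the previously-established differentials listed in \cref{tbl:Bockstein-differentials-Qgamma}, provides the bookkeeping needed to complete this inspection.
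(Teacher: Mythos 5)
Your proposal follows essentially the same route as the paper's proof: use the $\R$-motivic Bockstein differential $d_7(\tau^8 P c_0) = \rho^7 \tau^4 P d_0$, transfer it into the negative cone to get $d_7\left(\frac{\ga}{\rho^7\tau^8} P c_0\right) = \frac{\ga}{\tau^{12}} P d_0$, use the $\tau^8$-torsion of the source to force $\frac{\ga}{\tau^4} P d_0$ to die, and then identify the unique possible source. Two small comments. First, the side justification that $\frac{\gamma}{\rho^7\tau^{16}}$ is a cycle through $d_7$ is not quite the relevant point: what matters is that the \emph{product} $\frac{\ga}{\rho^7\tau^8} P c_0$ survives to $E_7^-$ and supports the claimed $d_7$, which is what \cref{prop:tau-periodic-coperiodic-diff} and \cref{lem:coperiodic-differential} supply. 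Second, the cleanest version of the torsion step applies the Leibniz rule to $\tau^8 \cdot \frac{\ga}{\rho^7\tau^8} P c_0 = 0$ and concludes that $\frac{\ga}{\tau^4}P d_0 = \tau^8 \cdot \frac{\ga}{\tau^{12}} P d_0$ already vanishes on $E_7^-$, rather than only on $E_8^-$; but since the lone candidate source is a $d_4$, the distinction does not affect the conclusion.
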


\begin{proof}
The $E^+$ differential $d_7( \tau^8 P c_0) = \rho^7 \tau^4 P d_0$ 
gives
\[ d_7\left( \frac{\ga}{\rho^7 \tau^8} P c_0 \right) = \frac{\ga}{\tau^{12}} P d_0.\]
The source of this differential is $\tau^8$-torsion, so we conclude that $\frac{\ga}{\tau^4} P d_0$ cannot survive to $E_7^-$. 
There is only one possible differential that could take a value of
$\frac{\gamma}{\tau^4} P d_0$.
\end{proof}

\begin{lem}
\label{lem:d-QPh1^kc0}
\mbox{}
\begin{enumerate}
\item
$d_1 \left( \frac{Q}{\rho} P h_1^2 c_0 \right) = \frac{\gamma}{\tau^2} P^2 h_2$.
\item
$d_3 \left( \frac{Q}{\rho^3} P h_1^3 c_0 \right) = h_1^3 \cdot Q h_1^4 d_0 +
\frac{\gamma}{\tau^3} P h_0^2 d_0$.
\end{enumerate}
\end{lem}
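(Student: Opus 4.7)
The plan is to deduce both parts from the differentials in \cref{lem:d-Qh1^kc0} by multiplying by the permanent cycle $P$, with careful attention to the indeterminacy in the $Q$-construction. Since $P$ lies in $\Ext_\C$ and corresponds to a cycle in the cobar complex, $d_r(P) = 0$ for all $r$ in the $\rho$-Bockstein spectral sequence, so the Leibniz rule will give a preliminary value of each differential; the remaining work is comparing the naive product with the intended source $\frac{Q}{\rho^k} Ph_1^k c_0$.

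For part (1), applying the Leibniz rule to $d_1\bigl(\frac{Q}{\rho}h_1^2 c_0\bigr) = \frac{\gamma}{\tau^2}Ph_2$ yields
\[
d_1\Bigl(P \cdot \tfrac{Q}{\rho}h_1^2 c_0\Bigr) \;=\; P \cdot \tfrac{\gamma}{\tau^2}Ph_2 \;=\; \tfrac{\gamma}{\tau^2}P^2 h_2.
\]
The product $P \cdot \frac{Q}{\rho}h_1^2 c_0$ belongs to the coset $\frac{Q}{\rho}(Ph_1^2 c_0)$, and I would inspect the $\gamma E^-_1$ summand in tridegree $(20,8,7)$ to verify that the indeterminacy separating this product from the named representative $\frac{Q}{\rho}Ph_1^2 c_0$ consists of classes that are $d_1$-cycles (so in particular do not contribute to the differential), giving the stated value.

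For part (2), the same Leibniz argument applied to \cref{lem:d-Qh1^kc0}(2) gives $d_3\bigl(P \cdot \tfrac{Q}{\rho^3}h_1^3 c_0\bigr) = \tfrac{\gamma}{\tau^3}Ph_0^2 d_0$, which differs from the claimed target by the extra summand $h_1^3 \cdot Qh_1^4 d_0$. The hard part is justifying this extra term: I expect that in tridegree $(23,9,7)$ the indeterminacy between $P \cdot \frac{Q}{\rho^3}h_1^3 c_0$ and a chosen representative $\frac{Q}{\rho^3}Ph_1^3 c_0$ contains a $\gamma$-class that supports a nontrivial $d_3$ whose target is exactly $h_1^3 \cdot Qh_1^4 d_0$. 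To identify the correct $\gamma$-class, I would use the $\rho$-multiplicative relation $\rho \cdot \frac{Q}{\rho^4}Ph_1^4 c_0 \cdot h_1^{-1} \leftrightarrow \frac{Q}{\rho^3}Ph_1^3 c_0$ and pull back information from the longer $d_4$-differential on $\frac{Q}{\rho^4}Ph_1^4 c_0$, whose value is forced by the periodic $E^+$-torsion differential $d_7(\tau^8 P^2 h_1^2) = \rho^7 \tau^4 P^2 c_0$ exactly as in the proof of \cref{lem:d-Qh1^kc0}(3).

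The main obstacle is bookkeeping the indeterminacy in part (2) so that the relation between the two possible targets is made explicit. I anticipate that either a direct Massey-product argument using \cref{prop:Q-bracket} (writing $Q(Ph_1^3 c_0)$ as a bracket $\langle \frac{\gamma}{\tau^i}, \tau^i, Ph_1^3 c_0\rangle$ and shuffling through the relation $P \cdot h_1^3 c_0 = h_1^3 \cdot Pc_0$) or a hidden $\R$-motivic $h_1$-extension argument pulled back via $\frac{\gamma}{\rho\tau^3}$ will pin down the discrepancy, since these are the tools used in the parallel \cref{lem:d3-QPh1^4}.
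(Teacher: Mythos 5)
The proposal rests on a premise that is not correct: there is no element $P$ in $\Ext_\C$ (nor in $\Ext_\R$, $\Ext_{C_2}$, or the cobar complex). The symbol $P$ in names like $P h_1^2 c_0$, $P c_0$, $P d_0$ denotes the Adams periodicity operation $\langle -, h_3, h_0^4 \rangle$, which is a Massey product defined only on classes annihilated by $h_3$; it is not multiplication by a fixed ring element. (For instance, $P\cdot 1$ does not exist, since $h_3 \cdot 1 \neq 0$.) Consequently the Leibniz-rule step ``$d_r(P)=0$, so $d_r(P\cdot \frac{Q}{\rho^k}h_1^k c_0) = P\cdot d_r(\frac{Q}{\rho^k}h_1^k c_0)$'' has no meaning, and both parts of your argument collapse at the first move. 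The authors do multiply Bockstein differentials by genuine permanent cycles (e.g.\ by $e_0$ or by $h_1^3$) elsewhere in the paper, but never by $P$, precisely because $P$ is not an element.

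The paper's actual proof goes in the opposite order and uses different tools. Part (2) is done first: Proposition \ref{Type2diffs} (equivalently the $h_1$-periodic computation of \cref{sec:h1invBock}) shows that $d_3\bigl(\frac{Q}{\rho^3}Ph_1^3 c_0\bigr)$ lies in the two-element set $Qh_1^7 d_0 = \{h_1^3 Qh_1^4 d_0,\ h_1^3 Qh_1^4 d_0 + \frac{\gamma}{\tau^3} Ph_0^2 d_0\}$, and then \cref{lem:d4-Qh1^4d0} is used, after multiplying by the honest element $h_1^3$, to show that $h_1^3 \cdot \frac{Q}{\rho^4}h_1^4 d_0$ supports a $d_4$ and hence $h_1^3 Qh_1^4 d_0$ cannot be the $d_3$-target; this selects the second option. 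Part (1) then follows because $\frac{Q}{\rho^3}Ph_1^2 c_0$ must support a shorter differential, for which there is only one possibility. If you want to salvage a shuffle-style argument, you would need to express $\frac{Q}{\rho^k}Ph_1^k c_0$ via Massey products using \cref{prop:Q-bracket} (as you suggest near the end), but that is essentially a different proof than the one you outline, and the Leibniz-with-$P$ framing must be abandoned entirely.
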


\begin{pf}
We start with the second formula.
As discussed in  \cref{sec:h1invBock},
$h_1$-periodic computations show that
$d_3 \left( \frac{Q}{\rho^3} P h_1^k c_0 \right)$ equals
$Q h_1^{k+4} d_0$ for large values of $k$.
Therefore, $d_3 \left( \frac{Q}{\rho^3}P h_1^3 c_0 \right)$ belongs to
\[  
Q h_1^7 d_0 = \{ h_1^3 \cdot Q h_1^4 d_0, \ h_1^3 \cdot Q h_1^4 d_0  + \frac{\ga}{\tau^3} h_0^2 P d_0 \}. 
\]
Alternatively, one can apply \cref{Type2diffs} to obtain the same
formula.

We showed in \cref{lem:d4-Qh1^4d0} that 
$d_4 \left( \frac{Q}{\rho^4} h_1^4 d_0 \right) = \frac{\ga}{\tau^4} P d_0$, 
which implies that the differential
$d_4 \left( h_1^3 \cdot \frac{Q}{\rho^4}  h_1^4 d_0 \right)$ is 
$\frac{\ga}{\tau^4} P h_1^3 d_0$. 
In particular, $h_1^3 \cdot \frac{Q}{\rho^4} h_1^4 d_0$ cannot be the target of a $d_3$ differential.

We have now computed the differential on 
$\frac{Q}{\rho^3} P h_1^3 c_0$.
It follows that
$\frac{Q}{\rho^3} P h_1^2 c_0$ 
must support a shorter differential, and there is only one possible value
(and length) for this shorter differential.
\end{pf}

\begin{lem}
\label{lem:d-QPh1^ke0}
\mbox{}
\begin{enumerate}
\item
$d_1 \left( \frac{Q}{\rho} h_1^2 e_0 \right) = \frac{\gamma}{\tau} h_0 h_2 e_0$.
\item
$d_3 \left( \frac{Q}{\rho^3} h_1^3 e_0 \right) = \frac{\gamma}{\tau^4} i$.
\item
$d_4 \left( \frac{Q}{\rho^4} h_1^4 e_0 \right) = \frac{\gamma}{\tau^4} P e_0$.
\item
$d_5 \left( \frac{Q}{\rho^5} h_1^6 e_0 \right) = \frac{\gamma}{\tau^5} h_0^2 d_0^2$.
\item
$d_6 \left( \frac{Q}{\rho^6} h_1^7 e_0 \right) = \frac{\gamma}{\tau^6} P c_0 d_0$.
\item
$d_8\left(\frac{Q}{\rho^8} h_1^8 e_0\right) = \frac{\ga}{\tau^8} P^2 e_0$.
\end{enumerate}
\end{lem}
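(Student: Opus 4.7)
The plan is to establish the six differentials in roughly descending order of length, beginning with part (6) and deducing the shorter differentials by a combination of $h_1$-multiplication, elimination arguments, and invocations of \cref{Type2diffs}. The pattern follows that of \cref{lem:d-Qh1^kc0} and \cref{lem:d4-Qh1^4d0}: in each case a target $\frac{\gamma}{\tau^k}(\cdots)$ in $E^-_r$ is forced to die by a carefully chosen $\R$-motivic Bockstein differential, and degree considerations pin down the unique possible source.

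For part (6), I would locate an $\R$-motivic differential of the form $d_r(\tau^k P^2 h_1^m) = \rho^r \tau^j P^2 e_0$, obtained by $P$-multiplication of the mixed differential $d_{11}(P h_1^2) = \rho^{11} h_1^3 e_0$ from \cite{BI} combined with its appropriate $\tau^{2^n}$-periodic extension. Multiplying by $\frac{\gamma}{\rho^r \tau^k}$, and using that the source is $\tau^k$-torsion so that the resulting $E^-$-differential targets a class in $\frac{\gamma}{\tau^{?}}(\cdots)$, I would conclude that $\frac{\gamma}{\tau^8} P^2 e_0$ cannot survive to the relevant stage of $E^-$. Inspection of \cref{fig:Ext} in the appropriate tridegree then identifies $\frac{Q}{\rho^8} h_1^8 e_0$ as the unique source, and the $\rho$-filtration shift forces the differential length to be exactly $8$.

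For parts (3), (4), and (5), I would run analogous arguments. Part (3) uses an $\R$-motivic differential killing $\tau^{?} P e_0$, structurally parallel to the $d_7(\tau^8 Pc_0) = \rho^7\tau^4 Pd_0$ used in \cref{lem:d4-Qh1^4d0}. Parts (4) and (5) use $d_0$- and $c_0 d_0$-variants that force $\frac{\gamma}{\tau^5} h_0^2 d_0^2$ and $\frac{\gamma}{\tau^6} P c_0 d_0$ to die; in each case the source is uniquely determined by tridegree and by the $\rho$-filtration. Part (2) can be obtained by applying \cref{Type2diffs} to an $\R$-motivic torsion differential landing in $i$, or by deducing it from (3) by $h_1$-multiplication and elimination; the target has an indeterminacy in $Q i$ that must be resolved in the manner of \cref{lem:d-QPh1^kc0}(2). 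Part (1) follows most directly: the target $\frac{\gamma}{\tau} h_0 h_2 e_0$ is $\rho$-torsion and hence must be the target of some differential, and $\frac{Q}{\rho} h_1^2 e_0$ is the unique source of appropriate tridegree; alternatively, a direct cobar computation as in \cref{DiffsGamma} using the right unit formula \eqref{eq:etaR} gives the differential explicitly.

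The main obstacle is locating and verifying the correct $\R$-motivic periodic Bockstein differentials that force each target to die. Some of these, especially for part (6), involve $\tau^{2^n}$-periodic extensions of mixed differentials at or beyond the range tabulated in \cite{BI}, and may require independent cobar-level computation. A secondary difficulty, visible already in \cref{lem:d-QPh1^kc0}(2), is controlling indeterminacy in the $Q$-class targets for the $d_3$-differential in part (2), where the target $Qi$ has nontrivial indeterminacy from $\gamma$-cone elements; an auxiliary $h_1$-extension argument of the type used in \cref{lem:d3-QPh1^4} will likely be needed to isolate the correct representative.
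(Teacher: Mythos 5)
Your strategy is genuinely different from the paper's, and in several places it is considerably more speculative. The paper does not hunt for $\R$-motivic Bockstein differentials to kill the proposed targets. Instead it exploits the two differentials already recorded in \cref{tbl:Bockstein-differentials-Qgamma} (from \cite{C2MW0}),
\[
d_4 \left( \tfrac{Q}{\rho^4} h_1^5 \right) = \tfrac{\gamma}{\tau^4} P h_1,
\qquad
d_8 \left( \tfrac{Q}{\rho^8} h_1^9 \right) = \tfrac{\gamma}{\tau^8} P^2 h_1,
\]
and multiplies each by the permanent cycle $e_0$. This produces $d_4(\frac{Q}{\rho^4} h_1^5 e_0) = \frac{\gamma}{\tau^4} P h_1 e_0$ and $d_8(\frac{Q}{\rho^8} h_1^9 e_0) = \frac{\gamma}{\tau^8} P^2 h_1 e_0$; checking that the $d_8$ target is not already hit, and then dividing by $h_1$, gives parts (3) and (6) immediately. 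Parts (1), (2), (4), (5) then follow by elimination: the classes $\frac{Q}{\rho^4} h_1^j e_0$ for $j=2,3$ and $\frac{Q}{\rho^8} h_1^j e_0$ for $j=6,7$ are $\rho^4$- (resp.\ $\rho^8$-) torsion, cannot themselves support the $d_4$ (resp.\ $d_8$) since their targets would not $h_1$-multiply correctly onto what you already have, and so must support strictly shorter differentials, for which there is only one candidate in each case. The $e_0$-multiplication trick is the key insight you are missing, and it replaces every instance of ``locate an $\R$-motivic differential forcing the target to die''.

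Beyond being less economical, your route has a concrete failure in part (6). You propose $P$-multiplying the mixed differential $d_{11}(P h_1^2) = h_1^3 e_0$; but applying $P$ produces a differential onto $P h_1^3 e_0$, not onto $P^2 e_0$, so it cannot be used to force $\frac{\gamma}{\tau^8} P^2 e_0$ to die. The $\R$-motivic differentials you would need for parts (3), (4), (5) are likewise not among those tabulated in \cite{BI}, and you correctly flag that independent cobar computation would be required — in effect re-deriving information the paper extracts for free from the table entries on $\frac{Q}{\rho^k} h_1^j$. Finally, your part (2) worries about indeterminacy in a target $Qi$, but the actual target is the $\gamma$-class $\frac{\gamma}{\tau^4} i$, not a $Q$-class, so that concern is misplaced; it is a straightforward elimination once (3) is in hand.
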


\begin{proof}
\cref{tbl:Bockstein-differentials-Qgamma}
gives the differential $d_4 \left( \frac{Q}{\rho^4} h_1^5 \right) = \frac{\gamma}{\tau^4} P h_1$.
Multiply by the permanent cycle $e_0$ to obtain that
$d_4 \left( \frac{Q}{\rho^4} h_1^5 e_0 \right) = \frac{\ga}{\tau^4} P h_1 e_0$.
It follows that 
$d_4 \left( \frac{Q}{\rho^4} h_1^4 e_0 \right) = \frac{\ga}{\tau^4} P e_0$.
This establishes formula (3).

Then $\frac{Q}{\rho^4} h_1^2 e_0$ and $\frac{Q}{\rho^4} h_1^3 e_0$ must
support shorter differentials, and there is only one possibility for each.
This establishes formulas (1) and (2).

Similarly, 
\cref{tbl:Bockstein-differentials-Qgamma}
gives the differential 
$d_8 \left( \frac{Q}{\rho^8} h_1^9 \right) = \frac{\gamma}{\tau^8} P^2 h_1$.
Multiply by $e_0$ to obtain that
$d_8\left(\frac{Q}{\rho^8} h_1^9 e_0\right) = \frac{\ga}{\tau^8} P^2 h_1 e_0$.
This is a nonzero differential, as $\frac{\ga}{\tau^8} P^2 h_1 e_0$ is 
not hit by any earlier differentials.
It follows that $d_8\left(\frac{Q}{\rho^8} h_1^8 e_0\right) = \frac{\ga}{\tau^8} P^2 e_0$.  This establishes formula (6).

Therefore, $Q h_1^6 e_0$ and $Q h_1^7 e_0$ must support earlier
differentials, and there is only one possibility for each.
This establishes formulas (4) and (5).
\end{proof}

\begin{lem}
\label{lem:d-QP^2h1^k}
\mbox{}
\begin{enumerate}
\item
$d_3 \left( \frac{Q}{\rho^3} P^2 h_1^4 \right) = \frac{\gamma}{\tau^3} h_0^5 i$.
\item
$d_4 \left( \frac{Q}{\rho^4} P^2 h_1^5 \right) = \frac{\gamma}{\tau^4} P^3 h_1$.
\end{enumerate}
\end{lem}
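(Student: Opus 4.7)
The plan is to derive both differentials from already-established Bockstein differentials on simpler sources by multiplying by the $\Ext_\R$-class $P^2$ and applying the Leibniz rule. The key preliminary observation is that $P^2$ is a $d_r$-cycle in the $\rho$-Bockstein spectral sequence for $r \le 6$, as can be read off from the $\R$-motivic Bockstein computation of \cite{BI} or, equivalently, from the $h_1$-periodic computation of \cref{sec:h1invBock}, where the first nontrivial Bockstein differential on $P^2 = v_1^8$ is $d_7(v_1^8) = \rho^7 v_3$.  Moreover, because the connecting homomorphism $\delta$ defining the $Q$-classes is $\Ext_\R$-linear, the product $P^2 \cdot \tfrac{Q}{\rho^k} z$ lies in $\tfrac{Q}{\rho^k}(P^2 z)$ modulo the standard indeterminacy of the $Q$-construction.

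For part (2), multiplying the differential $d_4\bigl(\tfrac{Q}{\rho^4} h_1^5\bigr) = \tfrac{\gamma}{\tau^4} P h_1$ from \cref{tbl:Bockstein-differentials-Qgamma} by $P^2$ immediately yields $d_4\bigl(\tfrac{Q}{\rho^4} P^2 h_1^5\bigr) = \tfrac{\gamma}{\tau^4} P^3 h_1$, and the target is nonzero in $E_4^-$.  For part (1), the same procedure applied to $d_3\bigl(\tfrac{Q}{\rho^3} h_1^4\bigr) = \tfrac{\gamma}{\tau^3} h_0^3 h_3$ from \cref{tbl:Bockstein-differentials-Qgamma} gives $d_3\bigl(\tfrac{Q}{\rho^3} P^2 h_1^4\bigr) = \tfrac{\gamma}{\tau^3} P^2 h_0^3 h_3$, and one concludes by invoking the classical $\Ext_\C$ relation $P^2 h_0^3 h_3 = h_0^5 i$, a standard multiplicative identity that can be extracted from the May spectral sequence or from published $\Ext$ charts.

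The principal subtlety is the bookkeeping of indeterminacy in the $Q$-construction: one must verify that the correction terms of the form $\tfrac{\gamma}{\rho^j \tau^\ell} x$ permitted by the identity $P^2 \cdot \tfrac{Q}{\rho^k} z = \tfrac{Q}{\rho^k}(P^2 z)$ do not alter the stated differentials in the relevant tridegrees $(24,11,8)$ and $(26,12,8)$.  An attractive alternative for part (1), which avoids the Ext identification altogether, is a colocalization argument in the spirit of \cref{lem:d-Qh1^kc0}(3): by \cref{colocalVanishes} the class $\tfrac{\gamma}{\tau^3} h_0^5 i$ must be hit by some Bockstein differential, and a direct degree inspection on the $E_1^-$-page of \cref{fig:Ext} reveals that $\tfrac{Q}{\rho^3} P^2 h_1^4$ is the only candidate source.
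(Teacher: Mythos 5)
Your main argument—multiplying the known $d_4$ on $\frac{Q}{\rho^4}h_1^5$ by $P^2$ via the Leibniz rule—takes a genuinely different route from the paper, which instead proves the lemma by elimination: it invokes \cref{Type2diffs} to show that if $\frac{Q}{\rho^7}P^2h_1^5$ survived to $E_7^-$ it would have to support a $d_7$ hitting $Qh_1^{10}e_0$, then shows $\frac{Q}{\rho^8}h_1^{10}e_0$ itself supports a $d_8$ and so cannot be such a target, and concludes that the $Q$-classes must support shorter differentials with only one possible value. Your route, however, has a gap in its central hypothesis. You assert that $P^2$ is a $d_r$-cycle in the $\rho$-Bockstein spectral sequence for $r\le 6$ and cite the $h_1$-periodic computation $d_7(v_1^8)=\rho^7 v_3$. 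That computation only shows that the $h_1$-localization of $P^2$ survives to $E_7^+$, i.e.\ that $d_r(P^2)$ is $h_1$-power torsion for $r\le 6$; it does not show $d_r(P^2)=0$. A nonzero $h_1$-torsion value of $d_4(P^2)$ would produce a cross-term $d_4(P^2)\cdot \frac{Q}{\rho^4}h_1^5$, and since the relevant products of $h_1$-torsion classes with $Q$-classes can land in the $\gamma$-summand (as discussed around \cref{eq:gamma-Q-SES}), this is not obviously negligible. It is worth noting that the paper only ever multiplies Bockstein differentials by elements explicitly known to be permanent cycles ($e_0$, $h_1$, powers of $h_1$), never by $P$ or $P^2$—strongly suggesting the authors do not regard these as safe multipliers. (For $d_3$ there is no issue since $d_3(P^2)=2Pd_3(P)=0$ in characteristic $2$.)

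There are two further unresolved points. For part (1) you invoke the $\Ext_\C$ relation $P^2 h_0^3 h_3 = h_0^5 i$, hedged as ``a standard multiplicative identity,'' without proof; this Adams-periodicity-type relation is plausible but does need to be justified, and the analogous question at the $P$ level is delicate enough that the paper's treatment of $d_3\left(\frac{Q}{\rho^3}Ph_1^4\right)$ in \cref{lem:d3-QPh1^4} required a careful separate argument involving hidden $h_1$-extensions rather than a naive Leibniz computation. You also correctly flag the indeterminacy of the $Q$-classes—$P^2\cdot\frac{Q}{\rho^k}y$ need only lie in the coset $\frac{Q}{\rho^k}(P^2y)$—but you do not resolve it, and \cref{rmk:DiffsOnQClasses} shows this can genuinely matter. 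Your alternative colocalization argument for part (1) is closer in spirit to the paper's proof and is sound in outline: $\frac{\gamma}{\tau^3}h_0^5 i$ is not $\rho$-divisible, so by \cref{prop:E-minus-structure4} it cannot support a differential, and \cref{colocalVanishes} then forces it to be hit. But the ``direct degree inspection'' identifying the source must actually be carried out, and no analogous alternative is given for part (2).
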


\begin{proof}
By \cref{Type2diffs}, the differential $d_7(P^2 h_1) = \rho^7 h_1^6 e_0$ implies that the differential $d_7 \left( \frac{Q}{\rho^7} P^2 h_1^5 \right)$ equals $Q h_1^{10} e_0$, provided that $\frac{Q}{\rho^7} P^2 h_1^5$ survives to the $E_7^-$ page.

However, multiplying the differential $d_8\left( \frac{Q}{\rho^8} h_1^9 \right) = \frac{\ga}{\tau^8} P^2 h_1$ by $h_1 e_0$ gives that
\[ d_8\left( \frac{Q}{\rho^8} h_1^{10} e_0 \right) = \frac{\ga}{\tau^8} P^2 h_1^2 e_0.\]
This target is not hit by any earlier differentials.
As $Q h_1^{10} e_0$ has no indeterminacy,  we conclude that $\frac{Q}{\rho^8} h_1^{10} e_0$ is not a permanent cycle and therefore cannot be the target of a $d_7$ differential on $\frac{Q}{\rho^{15}} P^2 h_1^5$.

It then follows that $\frac{Q}{\rho^7} P^2 h_1^5$, and also $\frac{Q}{\rho^7} P^2 h_1^4$, must support shorter Bockstein differentials. The claimed formulas are the only possibilities.
\end{proof}

\begin{lem}
\label{lem:d-Qh1^kc0d0}
\mbox{}
\begin{enumerate}
\item
$d_1 \left( \frac{Q}{\rho} c_0 d_0 \right) = \frac{\gamma}{\tau^2} i$.
\item
$d_2 \left( \frac{Q}{\rho^2} h_1 c_0 d_0 \right) = \frac{\gamma}{\tau^2} P e_0$.
\item
$d_1 \left( \frac{Q}{\rho} c_0 e_0 \right) = \frac{\gamma}{\tau^2} j$.
\item
$d_2 \left( \frac{Q}{\rho^2} h_1 c_0 e_0 \right) = \frac{\gamma}{\tau^2} d_0^2$.
\end{enumerate}
\end{lem}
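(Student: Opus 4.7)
The strategy parallels the proofs of \cref{lem:d-Qh1^kc0} and \cref{lem:d-QPh1^ke0}: I will first establish (1) and (3) by a Leibniz computation against the hidden $E^-_1$ extensions recorded in \cref{tab:E1-minus-extn}, and then deduce (2) and (4) by multiplying the $d_1$ statements by $h_1$ and forcing the resulting $d_2$ through elimination of candidate sources and targets.

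For (1), start from the extension $Q c_0 d_0 \cdot h_0 = \frac{\gamma}{\tau} i$ in \cref{tab:E1-minus-extn}. Since elements of $E^-_1$ in $\rho$-filtration below $-1$ are uniquely $\rho$-divisible (\cref{prop:E-minus-structure3}), I may divide this equation by $\rho$ to obtain
\[
\frac{Q}{\rho} c_0 d_0 \cdot h_0 \;=\; \frac{\gamma}{\rho\tau}\, i
\]
in $E^-_1$. Apply $d_1$ using the Leibniz rule. Both $h_0$ and $i$ come from $\Ext_\C$ and are $d_1$-cycles, while $d_1\!\left(\tfrac{\gamma}{\rho\tau}\right) = \tfrac{\gamma}{\tau^{2}} h_0$ by \cref{DiffsGamma}. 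This yields
\[
h_0 \cdot d_1\!\left(\tfrac{Q}{\rho} c_0 d_0\right) \;=\; \tfrac{\gamma}{\tau^{2}}\, h_0\, i,
\]
which is nonzero. An inspection of \cref{fig:Ext} at tridegree $(23,7,8)$ shows that $\tfrac{\gamma}{\tau^{2}} i$ is the unique class there whose $h_0$-multiple equals $\tfrac{\gamma}{\tau^{2}} h_0 i$, so (1) follows. Part (3) is the word-for-word analogue starting from the extension $Q c_0 e_0 \cdot h_0 = \tfrac{\gamma}{\tau} j$, using $d_1(j) = 0$ and $h_0 j \neq 0$.

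For (2), divide (1) by one further factor of $\rho$ to obtain $d_1\!\left(\tfrac{Q}{\rho^{2}} c_0 d_0\right) = \tfrac{\gamma}{\rho\tau^{2}}\, i$. Multiplying by $h_1$ and using the classical relation $h_1 i = 0$ (which persists in $\Ext_\C$, since the target tridegree supports no nonzero $\tau$-free class), the Leibniz rule gives $d_1\!\left(h_1\cdot\tfrac{Q}{\rho^{2}} c_0 d_0\right) = 0$. The product $h_1\cdot\tfrac{Q}{\rho^{2}} c_0 d_0$ represents an element of $\tfrac{Q}{\rho^{2}} h_1 c_0 d_0$ modulo $\gamma$-class indeterminacy, and this indeterminacy is irrelevant for the subsequent $d_2$ calculation because every $\gamma$-class in the appropriate tridegree of $E^-_2$ is a $d_r$-cycle for $r\leq 2$. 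On the target side, $\tfrac{\gamma}{\tau^{2}} P e_0$ in tridegree $(25,8,8)$ is $\rho$-divisible and $\rho$-torsion, so it must die in the Bockstein spectral sequence by \cref{colocalVanishes}; a chart check shows it is not hit by any $d_1$ from elsewhere and that $\tfrac{Q}{\rho^{2}} h_1 c_0 d_0$ is the unique candidate source of the required length. The $d_2$ is thereby forced. Part (4) is identical, substituting $c_0 e_0$, $h_1 j = 0$, and forcing $\tfrac{\gamma}{\tau^{2}} d_0^{2}$ in tridegree $(28,8,9)$ to be killed.

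The main obstacle is the two chart-enumeration steps. In (1) and (3) I must verify that $h_0$ acts injectively in the relevant tridegrees of $E^-_1$, and in (2) and (4) I must verify both that $\tfrac{\gamma}{\tau^{2}} P e_0$ and $\tfrac{\gamma}{\tau^{2}} d_0^{2}$ cannot be hit by any earlier differential, and that no other class in the corresponding source tridegree of $E^-_2$ is available. A subtle point in (2) and (4) is that $\rho$-multiplication alone does not force $d_1\!\left(\tfrac{Q}{\rho^{2}} h_1 c_0 d_0\right) = 0$, because $\tfrac{\gamma}{\tau^{2}} P e_0$ is itself $\rho$-annihilated; the vanishing of this $d_1$ relies specifically on the Leibniz computation using (1) and $h_1 i = 0$. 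Once these finite verifications against \cref{fig:Ext} are recorded, the Leibniz-plus-elimination argument above closes out all four differentials.
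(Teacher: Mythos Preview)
Your argument for (1) and (3) is correct and matches the paper's: both use the hidden $h_0$-extensions $Q c_0 d_0 \cdot h_0 = \frac{\gamma}{\tau} i$ and $Q c_0 e_0 \cdot h_0 = \frac{\gamma}{\tau} j$ from \cref{tab:E1-minus-extn} together with the $d_1$ on $\frac{\gamma}{\rho\tau}$.

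For (2) and (4) there is a genuine gap. Your key claim, that $\frac{\gamma}{\tau^2} P e_0$ ``is $\rho$-divisible and $\rho$-torsion, so it must die in the Bockstein spectral sequence by \cref{colocalVanishes}'', is a non sequitur. Every element of $E_1^-$ is infinitely $\rho$-divisible and $\rho$-power torsion; \cref{colocalVanishes} does not force any individual such element to die. The dichotomy it yields is only that either $\frac{\gamma}{\tau^2} P e_0$ is hit, \emph{or} some $\frac{\gamma}{\rho^k \tau^2} P e_0$ supports a differential for $k \geq 1$ (in which case $\frac{\gamma}{\tau^2} P e_0$ survives with bounded $\rho$-divisibility). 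Your chart check only concerns possible sources hitting $\frac{\gamma}{\tau^2} P e_0$; it does not rule out the second alternative, so the $d_2$ is not forced.

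The paper closes this gap differently. For (2) it invokes the $\R$-motivic Bockstein differential $d_5(\tau^8 P d_0) = \rho^5 \tau^6 P h_1 e_0$, which by \cref{prop:tau-periodic-coperiodic-diff} gives $d_5\bigl(\frac{\gamma}{\rho^5 \tau^8} P d_0\bigr) = \frac{\gamma}{\tau^{10}} P h_1 e_0$. Since the source is $\tau^8$-torsion and $\tau^8$ is a $d_5$-cycle, multiplying by $\tau^8$ shows $\frac{\gamma}{\tau^2} P h_1 e_0 = 0$ in $E_5^-$; hence it is hit by some $d_r$ with $r \leq 4$, and inspection leaves $d_2\bigl(\frac{Q}{\rho^2} h_1^2 c_0 d_0\bigr) = \frac{\gamma}{\tau^2} P h_1 e_0$ as the only option, from which (2) follows. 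Formula (4) is handled the same way using $d_5(\tau^8 P e_0) = \rho^5 \tau^6 h_1 d_0^2$. To repair your route you would instead need to check, for each $k\ge 1$, that no target exists for a differential out of $\frac{\gamma}{\rho^k \tau^2} P e_0$ (respectively $\frac{\gamma}{\rho^k \tau^2} d_0^2$); that is a substantially larger enumeration than what you stated.
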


\begin{proof}
Formulas (1) and (3) follow from the hidden $h_0$-extensions
from $Q c_0 d_0$ to $\frac{\gamma}{\tau} i$ and from
$Q c_0 e_0$ to $\frac{\gamma}{\tau} j$ (see \cref{tab:E1-minus-extn}),
together with the differentials
$d_1 \left( \frac{\gamma}{\rho \tau} i \right) = \frac{\gamma}{\tau^2} h_0 i$
and 
$d_1 \left( \frac{\gamma}{\rho \tau} j \right) = \frac{\gamma}{\tau^2} h_0 j$.

For formula (2), the $\R$-motivic Bockstein differential 
$d_5(\tau^8 P d_0) = \rho^5 \tau^6 P h_1 e_0$ implies that
$d_5 \left( \frac{\ga}{\rho^5 \tau^8} P d_0 \right) = \frac{\ga}{\tau^{10}} P h_1 e_0$. 
The source of this differential is killed by $\tau^8$, so $\frac{\ga}{\tau^2} P h_1 e_0$ 
must already be hit before the $E_5^-$-page. The differential 
$d_2 \left( \frac{Q}{\rho^2} h_1^2 c_0 d_0 \right) = \frac{\ga}{\tau^2} P h_1 e_0$ is the only possibility.

The argument for formula (4) is nearly identical to the argument for
formula (2), using the $\R$-motivic Bockstein differential
$d_5(\tau^8 P e_0) = \rho^5 \tau^6 h_1 d_0^2$.
\end{proof}

\section{Hidden extensions in the $\rho$-Bockstein spectral sequence}
\label{sctn:hidden}

Our next goal is to pass from the $C_2$-equivariant $\rho$-Bockstein
$E_\infty$-page to $\Ext_{C_2}$.  As a general rule, we use the same
notation for an element of $\Ext_{C_2}$ and its representative in the
$\rho$-Bockstein $E_\infty$-page.  In principle, this scheme can be 
ambiguous since $E_\infty$-page elements can detect more than 
one element of $\Ext_{C_2}$ (in the presence of elements in higher
$\rho$-Bockstein filtration).  
In practice, these ambiguities rarely matter.

\begin{eg}
Consider $\frac{\gamma}{\tau} P h_0 h_2$
in degree $(11, 6, 3)$.  This Bockstein $E_\infty$-page element
detects two elements of $\Ext_{C_2}$ because of the presence of
$h_1^3 c_0$ in higher filtration.  However, it is easy to distinguish
these two elements because one is a multiple of $h_0$ and the other is not.
\end{eg}

\begin{eg}
A slightly more troublesome example occurs in degree $(29,8,5)$.
We have both $\frac{\gamma}{\rho \tau^6} d_0^2$ and $\frac{\gamma}{\tau^7} h_0 k$
in the Bockstein $E_\infty$-page, so
$\frac{\gamma}{\rho \tau^6} d_0^2$ detects two elements of $\Ext_{C_2}$.
Both elements have the property that they equal
$\frac{\gamma}{\tau^6} d_0^2$ after multiplication by $\rho$.
We will show later in this section that
one of the elements is a multiple of $h_1$.
However, it turns out that 
the product $\frac{\gamma}{\rho \tau^6} \cdot d_0^2$ equals the
other element, i.e., is not a multiple of $h_1$.
\end{eg}

In passing from the 
$C_2$-equivariant $\rho$-Bockstein $E_\infty$-page to 
$\Ext_{C_2}$, there are a large number of hidden extensions to be
resolved.  We do not attempt an exhaustive study of this
algebraic problem.  However, we will 
establish all hidden extensions by $h_0$ and $h_1$ in the range of 
coweight $-2 \leq c \leq 8$ and stem $0 \leq s \leq 30$.

In practice, we observe that most hidden extensions occur in
$\tau$-periodic families in the following sense.
For each hidden extension from
$\frac{\gamma}{\rho^a \tau^b} x$ to
$\frac{\gamma}{\rho^c \tau^d} y$, there exists some $n$ such that
there are also hidden extensions from
$\frac{\gamma}{\rho^a \tau^{b+2^n e}} x$ to
$\frac{\gamma}{\rho^c \tau^{d+2^n e}} y$ for all $e$.
In the range that we study, 
the values of $2^n$ are typically $4$ or $8$, although larger values of $2^n$ do occasionally occur. 
The first four rows of \cref{tbl:hiddenh0extns} give examples
of this phenomenon when $2^n$ equals $4$, $8$, or $16$.
We will not make use of 
infinite $\tau^{2^n}$-divisibility
in hidden extensions
in this manuscript, but it deserves further study.

Not every hidden extension belongs to a $\tau$-periodic family.
For example, there is a hidden $h_0$-extension from $\frac{Q}{\rho^2} h_1^3 e_0$
to $\frac{\gamma}{\tau^3} i$ in coweight 7 and stem 23.

\subsection{Hidden $h_0$-extensions}

\begin{thm}
\label{Ext-h0-extns} 
The charts in \cref{fig:Ext} show all hidden $h_0$-extensions 
in the $\rho$-Bockstein spectral sequence
in stems less than 31 and coweights from $-2$ to $8$.
The charts in \cref{fig:EinfNegCoweight} show all hidden $h_0$-extensions 
in the $\rho$-Bockstein spectral sequence
in stems less than 8 and coweights from $-9$ to $-2$.
\end{thm}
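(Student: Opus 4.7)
The plan is to establish each hidden $h_0$-extension shown in the charts by one of three complementary techniques, and then rule out all other possibilities via a case-by-case degree analysis in the stated range.

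The first technique handles hidden extensions originating from $Q$-classes. By \cref{prop:Q-bracket}, for $y \in \Ext_\C$ annihilated by $\tau^i$ we have $Qy \supseteq \bracket{\frac{\ga}{\tau^i}, \tau^i, y}$. When $h_0 \cdot y = 0$ in $\Ext_\C$, the Massey shuffle
\[
h_0 \cdot \bracket{\frac{\ga}{\tau^i}, \tau^i, y} \subseteq \frac{\ga}{\tau^i} \bracket{\tau^i, y, h_0}
\]
reduces the problem to a $\C$-motivic Massey product that is accessible via the May convergence theorem. This is precisely the mechanism used in \cref{prop:E1-minus-hidden} to establish the extensions of \cref{tab:E1-minus-extn}; those that survive the Bockstein differentials persist to $E_\infty^-$, and iterating the argument with higher powers $\tau^i$ produces any additional hidden $h_0$-extensions that appear at the $E_\infty$-page but not in $E_1^-$.

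The second technique handles hidden $h_0$-extensions whose source is a $\ga$-class $\frac{\ga}{\rho^a \tau^b} x$. Here I transport known $\R$-motivic hidden $h_0$-extensions from \cite{BI} to the negative cone using the $E_1^+$-module structure on $E^-$: a hidden extension from $x$ to $y$ in $\Ext_\R$ yields a corresponding hidden extension from $\frac{\ga}{\rho^a \tau^b} x$ to $\frac{\ga}{\rho^a \tau^b} y$ in $\Ext_{NC}$ whenever the target is nonzero, and the $\tau$-periodic families of hidden extensions observed in this section arise naturally from this transport. A third auxiliary technique applies the Leibniz rule to Bockstein differentials, as illustrated in \cref{lem:d3-QPh1^4}: comparing $d_r(h_0 \cdot x)$ with $h_0 \cdot d_r(x)$ often forces a hidden $h_0$-extension in order to resolve apparent inconsistencies.

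The main obstacle is completeness---verifying that no hidden $h_0$-extension has been missed. For each Bockstein $E_\infty$-class $x$ in degree $(s,f,c)$, one must enumerate every candidate $y$ in degree $(s, f+1, c)$ whose $\rho$-Bockstein filtration strictly exceeds that of $h_0 \cdot x$, and rule out or confirm each $y$ as the true value of the product. In most degrees within the stated range no candidate exists and the claim is immediate. In the remaining degrees, candidates are eliminated by combining the techniques above with compatibility under the forgetful map of \cref{prop:rho-forget} and with $\tau$-periodicity (\cref{subsctn:tau-periodicity}); the exhaustive bookkeeping is tedious but mechanical, and the three techniques above suffice to decide every case.
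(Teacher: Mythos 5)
Your proposal misses the paper's primary tool for establishing these extensions and partially conflates two different kinds of hidden extensions. The paper's workhorse technique is the long exact sequence induced by $NC_\rho \to NC \xrightarrow{\rho} NC$,
\[
(E_{1,\rho}^-)^{s+1,f,c} \to \Ext_{NC}^{s+1,f,c} \xrightarrow{\rho} \Ext_{NC}^{s,f,c} \xrightarrow{\delta} (E_{1,\rho}^-)^{s,f+1,c-1} \to \cdots
\]
(\cref{eq:FiberRho}). Whenever both $x$ and $y$ are not $\rho$-divisible in $\Ext_{NC}$, the connecting homomorphism $\delta$ (computable from Bockstein differentials by \cref{TranslateRhoBssODelta}) carries the question of whether $h_0 x = y$ into $E_{1,\rho}^- \cong \Ext(NC_\rho)$, where $h_0$-structure is visible on the nose. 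This handles the bulk of the extensions in the range — for instance the hidden $h_0$-extension from $\frac{\gamma}{\rho^2\tau} h_1 c_0$ to $\frac{\gamma}{\tau^3} P h_2$ in degree $(11,*,1)$ — and it is also a principal tool for demonstrating that no extension occurs (by computing $\delta$ on both sides and seeing they are not related by $h_0$). None of your three techniques replaces it; your approach would leave a large portion of the verification undone.

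Your first technique also addresses the wrong problem. The bracket $\left\langle \frac{\gamma}{\tau^i}, \tau^i, y\right\rangle$ from \cref{prop:Q-bracket} identifies $Qy$ in the Bockstein $E_1^-$-page, and the shuffle to $\frac{\gamma}{\tau^i}\langle \tau^i, y, h_0\rangle$ produces extensions hidden by the short exact sequence \cref{eq:gamma-Q-SES} (these are the extensions of \cref{tab:E1-minus-extn}, established in \cref{prop:E1-minus-hidden}). Those are \emph{already present in $E_1^-$} and so are not hidden in the $\rho$-Bockstein spectral sequence — they are a different phenomenon from what \cref{Ext-h0-extns} addresses. The Massey products that the paper actually uses for the residual hard cases of \cref{Ext-h0-extns} (in \cref{prop:hiddenh0extns}) have the form $\left\langle \frac{\gamma}{\rho^{r-1}\tau^{2k}}, \rho^{r}, x\right\rangle$ with $\rho^r$ (not $\tau^i$) in the middle slot, reflecting a Bockstein differential $d_r(x) = \rho^r y$ rather than a $\tau$-torsion relation, and the shuffle then isolates $\left\langle h_0, \frac{\gamma}{\rho^{r-1}\tau^{2k}}, \rho^r\right\rangle$. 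Your technique 2 (transport of $\R$-motivic extensions via the $E_1^+$-module structure) and the use of $\rho h_0 = 0$, $h_0 h_1 = 0$ to constrain possibilities are genuinely used by the paper, but only after the long exact sequence has carried the load.
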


The hidden $h_0$-extensions appear in the charts as vertical dashed lines.

\begin{proof}
Many of the extensions are detected by $\Ext_\R$.  We do not discuss
these extensions further since they are considered carefully in \cite{BI}.

We use several distinct approaches to study hidden extensions in $\Ext_{NC}$.
To start, 
many potential extensions are ruled out by considering relations in $\Ext_\R$. For instance, the relation $\rho \cdot h_0=0$ shows that 
in order for there to be an $h_0$-extension from $x$ to $y$, the class $x$ cannot be $\rho$-divisible, and $y$ must be $\rho$-torsion. This greatly constrains where $h_0$-extensions can occur. Similarly, the relation $h_0 \cdot h_1=0$ greatly constrains both $h_0$-extensions and $h_1$-extensions.

A great number of the extensions displayed in the charts are obtained by the following method.
This method applies to all hidden $h_0$-extensions from $x$ to $y$
in which both $x$ and $y$ are not divisible by $\rho$.
A similar method is employed in \cite{BI}*{Section~7} to establish hidden extensions.

The short exact sequence $NC_\rho \to NC \xrtarr{\rho} NC$ induces a long exact sequence
\begin{equation} 
(E_{1,\rho}^-)^{s+1,f,c} \to \Ext_{NC}^{s+1,f,c} \xrtarr{\rho} \Ext_{NC}^{s,f,c} \xrtarr{\delta} (E_{1,\rho}^-)^{s,f+1,c-1} \to \dots
\label{eq:FiberRho}
\end{equation}
that is similar to
the middle column of \cref{DoubleExactDiagram}.
Note that we may identify $E_{1,\rho}$ with $\Ext(NC_\rho)$, as the Bockstein spectral sequence for $\Ext(NC_\rho)$ collapses on the $E_1$-page.
Thus, if $x$ and $y$ are classes in $\Ext_{NC}$ that are not $\rho$-divisible, then an $h_0$-extension from $x$ to $y$ in $\Ext_{NC}$ can be detected in $E_1^-$, at least up to $\rho$-multiples.
Recall from \cref{TranslateRhoBssODelta} that
 the connecting homomorphism $\delta$ is specified as follows. 
Suppose that $x$ in $E^-$ is a permanent cycle in the $\rho$-Bockstein spectral sequence and that $x=\rho \cdot w$, where $w$ supports a Bockstein differential $d(w)=y$. Then $\delta([x]) = y$.

For example, consider the elements $\frac{\gamma}{\rho^2 \tau} h_1 c_0$
and $\frac{\gamma}{\tau^3}{P h_2}$ in coweight $1$ and stem $11$.
The differentials
$d_3 \left(\frac{\gamma}{\rho^3 \tau} h_1 c_0 \right) = \frac{\gamma}{\tau^4} P h_2$ and
$d_1 \left(\frac{\gamma}{\rho \tau^3} P h_2 \right) = \frac{\gamma}{\tau^4} P h_0 h_2$
imply that $\delta$ takes
$\frac{\gamma}{\rho^2 \tau} h_1 c_0$
and $\frac{\gamma}{\tau^3}{P h_2}$ to 
$\frac{\gamma}{\tau^4} P h_2$ and 
$\frac{\gamma}{\tau^4} P h_0 h_2$ respectively.
These latter elements are connected by an $h_0$-extension,
so their pre-images under $\delta$ are also connected by an $h_0$-extension.

The same method also applies to show that certain $h_0$-extensions do not 
occur.  For example, consider the elements
$\frac{\gamma}{\tau^3} h_0 h_2 h_4$ and $\frac{\gamma}{\tau^3} f_0$
in coweight $4$ and stem $18$.
The differentials
$d_1 \left( \frac{\gamma}{\rho \tau^3} h_0 h_2 h_4 \right) = 
\frac{\gamma}{\tau^3} h_1^3 h_4$
and
$d_1 \left( \frac{\gamma}{\rho \tau^3} f_0 \right) =
\frac{\gamma}{\tau^3} h_1 e_0$
imply that $\delta$ takes
$\frac{\gamma}{\tau^3} h_0 h_2 h_4$ and $\frac{\gamma}{\tau^3} f_0$
to 
$\frac{\gamma}{\tau^3} h_1^3 h_4$ and $\frac{\gamma}{\tau^3} h_1 e_0$
respectively.
These latter elements are not connected by an $h_0$-extension,
so their pre-images under $\delta$ are also not connected by an $h_0$-extension.

The sequence \cref{eq:FiberRho} works well for establishing
hidden extensions between elements that
are not $\rho$-divisible.  However, there remain many possible
$h_0$-extensions to consider that involve $\rho$-divisible elements.
Most of these extensions are easily implied by other extensions, together
with the multiplicative structure.

For example, consider the elements
$\frac{\gamma}{\rho \tau} h_1 h_3$ and $\frac{\gamma}{\tau} h_1^2 h_3$
in coweight $1$ and stem $19$.
The first element is the product $\frac{\gamma}{\rho \tau^2} \cdot \tau h_1 h_3$.
We already know that there is an $\R$-motivic hidden $h_0$-extension
from $\tau h_1 h_3$ to $\rho \tau h_1^2 h_3$ \cite{BI}.
Then multiplication by $\frac{\gamma}{\rho \tau^2}$ gives the desired
hidden extension.

On the other hand, we also know from \cref{prop:hiddenh0extns} below that
there is a hidden $h_0$-extension from $\frac{\gamma}{\rho \tau^2}$
to $\frac{\gamma}{\tau^2} h_1$ (see \cref{tbl:hiddenh0extns}).
Multiplication by the $\R$-motivic
element $\tau h_1 h_3$ gives another proof of the desired extension.

In practice, there are many hidden extensions that can be established
from a previously known $\R$-motivic extension by multiplication with
an element of $\Ext_{NC}$.  There are also many hidden extensions that
can be established from a previously known extension in $\Ext_{NC}$
by multiplication with an $\R$-motivic element.  (And there are some
extensions, such as the example in the previous paragraph, that can
be established using both approaches.)

Finally, there are several additional extensions that cannot be
proved with the previously described methods.
These more difficult cases appear in \cref{tbl:hiddenh0extns}.
Their proofs are given in \cref{prop:hiddenh0extns}.
\end{proof}

\begin{rmk}
In the range under consideration in \cref{Ext-h0-extns},
one $h_0$-extension deserves slightly more discussion.
Consider the hidden $h_0$-extension in coweight $0$ and stem $26$
from $\frac{\gamma}{\rho^2 \tau^9} h_1 h_4 c_0$ to
$\frac{\gamma}{\tau^9} h_2^2 g$.
This extension follows immediately from $h_4$ multiplication on the
hidden $h_0$-extension in coweight $-7$ from
$\frac{\gamma}{\rho^2 \tau^9} h_1 c_0$ to $\frac{\gamma}{\tau^{11}} P h_2$.
Here we use the relation $P h_2 h_4 = \tau^2 h_2^2 g$
in $\C$-motivic $\Ext$.
The latter extension is deduced immediately from 
the long exact sequence \eqref{eq:FiberRho}.
This argument is entirely straightforward.  We draw attention to it
because it uses an extension in coweight $-7$ that does not appear on our charts,
but it is analogous to an extension in coweight $1$ and stem $11$ that 
does appear on our charts.
\end{rmk}

\begin{landscape}

\begin{longtable}{LLLll}
\caption{Some hidden $h_0$-extensions in the $\rho$-Bockstein $E^-_\infty$-page}
\label{tbl:hiddenh0extns} \\
\toprule \mbox{}
(s,f,c) & \text{source} & \text{target} &   \text{Massey product} & \text{Bockstein differential} \\
\midrule \endfirsthead
\caption[]{Some hidden $h_0$-extensions in the $\rho$-Bockstein $E^-_\infty$-page} \\
\toprule \mbox{}
(s,f,c) & \text{source} & \text{target} & \text{Massey product} & \text{Bockstein differential} \\
\midrule \endhead
\bottomrule \endfoot
(7,0,-16k -  9) & \frac{\ga}{\rho^7 \tau^{16k+8}} & \frac{\ga}{\tau^{16k+11}} h_3 & $\left\langle \frac{\gamma}{\rho^7 \tau^{16k+16}}, \rho^8, \tau^4 h_3 \right\rangle$ & $d_8(\tau^8) = \rho^8 \tau^4 h_3$ \\
(3,0,-8k - 5) & \frac{\ga}{\rho^3 \tau^{8k+4}} & \frac{\ga}{\tau^{8k+5}} h_2 & $\left\langle \frac{\gamma}{\rho^3 \tau^{8k+8}}, \rho^4, \tau^2 h_2 \right\rangle$ & $d_4(\tau^4) = \rho^4 \tau^2 h_2$ \\
(1,0,-4k - 3) & \frac{\ga}{\rho\tau^{4k+2}} & \frac{\ga}{\tau^{4k+2}} h_1  & $\left\langle \frac{\gamma}{\rho \tau^{4k+4}}, \rho^2, \tau h_1 \right\rangle$ & $d_2(\tau^2) = \rho^2 \tau h_1$\\
(8,2,-4k) & \frac{\ga}{\rho^2\tau^{4k+1}} h_2^2 & \frac{\ga}{\tau^{4k+2}}c_0 & $\left\langle \frac{\gamma}{\rho^2 \tau^{4k+4}}, \rho^3, \tau c_0 \right\rangle$ & $d_3(\tau^3 h_2^2) = \rho^3 \tau c_0$ \\
(16,6,0) & \frac{\ga}{\rho^2 \tau^5} h_0^2 d_0 & \frac{\ga}{\tau^6} P c_0 & $\left\langle \frac{\gamma}{\rho^2 \tau^8}, \rho^3, \tau P c_0 \right\rangle$ & $d_3( \tau^3 h_0^2 d_0 ) = \rho^3 \tau P c_0$ \\
(16,6,4) & \frac{\ga}{\rho^2 \tau} h_0^2 d_0 & \frac{\ga}{\tau^{2}} P c_0 & $\left\langle \frac{\gamma}{\rho^2 \tau^4}, \rho^3, \tau P c_0 \right\rangle$ & $d_3( \tau^3 h_0^2 d_0 ) = \rho^3 \tau P c_0$ \\
(24,10,0) & \frac{\ga}{\rho^2 \tau^9} P h_0^2 d_0 & \frac{\ga}{\tau^{10}} P^2 c_0  & $\left\langle \frac{\gamma}{\rho^2 \tau^{12}}, \rho^3, \tau P^2 c_0 \right\rangle$ & $d_3( \tau^3 P h_0^2 d_0 ) = \rho^3 \tau P^2 c_0$ \\
(24,10,4) & \frac{\ga}{\rho^2 \tau^5} P h_0^2 d_0 & \frac{\ga}{\tau^6} P^2 c_0 &  $\left\langle \frac{\gamma}{\rho^2 \tau^8}, \rho^3, \tau P^2 c_0 \right\rangle$ &  $d_3( \tau^3 P h_0^2 d_0 ) = \rho^3 \tau P^2 c_0$ \\
(24,10,8) & \frac{\ga}{\rho^2 \tau} P h_0^2 d_0 & \frac{\ga}{\tau^{2}} P^2 c_0 & $\left\langle \frac{\gamma}{\rho^2 \tau^4}, \rho^3, \tau P^2 c_0 \right\rangle$ &  $d_3( \tau^3 P h_0^2 d_0 ) = \rho^3 \tau P^2 c_0$ \\
(9,4,1) & \frac{\ga}{\rho^2\tau} h_0^3 h_3 & \frac{\ga}{\tau^2} P h_1 & $\left\langle \frac{\gamma}{\rho^2 \tau^4}, \rho^3, \tau P h_1 \right\rangle$ & $d_3 (\tau^3 h_0^3 h_3) = \rho^3 \tau P h_1$ \\
(17,8,1) & \frac{\ga}{\rho^2\tau^5} h_0^7 h_4 & \frac{\ga}{\tau^6} P^2 h_1 & $\left\langle \frac{\gamma}{\rho^2 \tau^8}, \rho^3, \tau P^2 h_1 \right\rangle$ &  $d_3( \tau^3 h_0^7 h_4 ) = \rho^3 \tau P^2 h_1$ \\
(17,8,5) & \frac{\ga}{\rho^2\tau} h_0^7 h_4 & \frac{\ga}{\tau^2} P^2 h_1 & $\left\langle \frac{\gamma}{\rho^2 \tau^4}, \rho^3, \tau P^2 h_1 \right\rangle$ &  $d_3( \tau^3 h_0^7 h_4 ) = \rho^3 \tau P^2 h_1$ \\
(25,12,1) & \frac{\ga}{\rho^2\tau^9} h_0^5 i & \frac{\ga}{\tau^{10}} P^3 h_1  &  $\left\langle \frac{\gamma}{\rho^2 \tau^{12}}, \rho^3, \tau P^3 h_1 \right\rangle$ &  $d_3 ( \tau^3 h_0^5 i ) = \rho^3 \tau P^3 h_1$ \\
(25,12,5) & \frac{\ga}{\rho^2\tau^5} h_0^5 i & \frac{\ga}{\tau^6} P^3 h_1  & $\left\langle \frac{\gamma}{\rho^2 \tau^8}, \rho^3, \tau P^3 h_1 \right\rangle$ &  $d_3 ( \tau^3 h_0^5 i ) = \rho^3 \tau P^3 h_1$ \\
(19, 2,3) & \frac{\ga}{\rho^5 \tau^2} h_3^2 & \frac{\ga}{\tau^4} c_1 & $\left\langle \frac{\gamma}{\rho^5 \tau^8}, \rho^6, \tau^3 c_1 \right\rangle$ & $d_6( \tau^6 h_3^2 ) = \rho^6 \tau^3 c_1$ \\
(26, 5, 0) & \frac{\gamma}{\rho^2 \tau^9} h_1 h_4 c_0 & \frac{\gamma}{\tau^9} h_2^2 g & $\left\langle \frac{\gamma}{\rho^2 \tau^{12}}, \rho^3, \tau^2 h_2^2 g \right\rangle$ & $d_3(\tau^3 h_1 h_4 c_0) = \rho^3 \tau^2 h_2^2 g$ \\
(26, 5, 8) & \frac{\gamma}{\rho^2 \tau} h_1 h_4 c_0 & \frac{\gamma}{\tau} h_2^2 g & $\left\langle \frac{\gamma}{\rho^2 \tau^4}, \rho^3, \tau^2 h_2^2 g \right\rangle$ & $d_3(\tau^3 h_1 h_4 c_0) = \rho^3 \tau^2 h_2^2 g$
\end{longtable}

\end{landscape}

\begin{prop}
\label{prop:hiddenh0extns}
\cref{tbl:hiddenh0extns} lists some hidden $h_0$-extensions
in the $\rho$-Bockstein spectral sequence.
\end{prop}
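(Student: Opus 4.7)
The plan is to establish each row of \cref{tbl:hiddenh0extns} via a three-fold Massey product argument, with the Bockstein differential in the last column serving as the crucial null-homotopy and the Massey product in the fourth column detecting the source class; the $h_0$-multiplication is then extracted by a standard shuffle. This strategy parallels the proof of \cref{prop:E1-minus-hidden}, transferred from $\Ext_\C$ to $\Ext_{NC}$.

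First, I would verify that each indicated Massey product is defined. The Bockstein differential $d_r(u) = \rho^r z$ in the last column witnesses $\rho^r z = 0$ in $\Ext_\R$ with cobar-level null-homotopy $u$. Combined with the identity $\rho^r \cdot \frac{\gamma}{\rho^{r-1}\tau^n} = \rho \cdot \frac{\gamma}{\tau^n} = 0$ (the latter holding already at the cobar level, since $\frac{\gamma}{\tau^n}$ has $\rho$-filtration $-1$ in $\Ext_{NC}$), this provides both null-homotopies required to define the bracket $\langle \frac{\gamma}{\rho^{r-1}\tau^n}, \rho^r, z \rangle$. The standard cobar recipe for a Massey representative then yields $\frac{\gamma}{\rho^{r-1}\tau^n} \cdot u$, which in each row simplifies to precisely the source class. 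Hence each source lies in the tabulated bracket.

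Next, I would extract the $h_0$-multiple by a Massey product shuffle. When $z \cdot h_0 = 0$ in $\Ext_\R$ (as for rows involving $c_0$, $P h_1$, $P^2 h_1$, or $P^3 h_1$), the identity $\langle a, \rho^r, z \rangle \cdot h_0 = a \cdot \langle \rho^r, z, h_0 \rangle$ reduces the problem to evaluating the inner three-fold bracket, which is a standard $\R$-motivic Massey product computable via the May convergence theorem. For rows where $z h_0 \neq 0$ (notably the first three, where $z$ involves $h_n$ for $n \ge 2$), I would instead use the dual shuffle $h_0 \cdot \langle a, b, c \rangle \subseteq \langle h_0, a, b \rangle \cdot c$; the inner bracket $\langle h_0, \frac{\gamma}{\rho^{r-1}\tau^n}, \rho^r \rangle$ then evaluates via a direct cobar computation (using the formula for $\eta_R$ from \cref{eq:etaR}) to a class of the form $\frac{\gamma}{\tau^{n'}} h_{n'}$, and multiplying by $z$ recovers the tabulated target.

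The principal obstacle will be indeterminacy management. Each bracket carries indeterminacy from $\Ext$-multiples of its outer factors, and the shuffles propagate still more. I plan to rule out competing classes by degree inspection in the sparse region of the Bockstein $E^-_\infty$-page with $-2 \le c \le 8$. The $\tau^{4k}/\tau^{8k}/\tau^{16k}$-periodicity of the first four rows is handled uniformly by proving the $k = 0$ case and then multiplying by the appropriate $\tau$-power, which commutes with both $h_0$-multiplication and the Massey construction. For the final two rows, the $\C$-motivic relation $P h_2 h_4 = \tau^2 h_2^2 g$ provides the dictionary linking the $h_1 h_4 c_0$-involving source to the $h_2^2 g$-involving target, consistent with the listed Bockstein differential $d_3(\tau^3 h_1 h_4 c_0) = \rho^3 \tau^2 h_2^2 g$.
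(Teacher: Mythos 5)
Your overall strategy---detecting the source by a Massey product $\langle \frac{\gamma}{\rho^{r-1}\tau^n}, \rho^r, z\rangle$ supplied by the Bockstein differential $d_r(u)=\rho^r z$, and then shuffling $h_0$ into the bracket---is exactly the paper's strategy. However, two things are off.

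First, the case split on whether $z h_0 = 0$ is unnecessary. The paper uses the single shuffle
\[
h_0 \left\langle \frac{\gamma}{\rho^{r-1}\tau^{2k}}, \rho^r, z\right\rangle = \left\langle h_0, \frac{\gamma}{\rho^{r-1}\tau^{2k}}, \rho^r\right\rangle z
\]
for every row, including those with $z = \tau^a c_0$, $\tau^a P h_1$, etc.\ where $z h_0 = 0$; the right-hand side is a plain product and does not care whether $zh_0$ vanishes. Your alternative shuffle $\langle a, \rho^r, z\rangle h_0 = a\langle \rho^r, z, h_0\rangle$ is also a valid shuffle rule, but it shifts the work onto the $\R$-motivic bracket $\langle \rho^r, z, h_0\rangle$, which requires a case-by-case null-homotopy for $zh_0=0$ coming from the $\C$-motivic Steenrod algebra rather than from the simple structural $d_1$ Bockstein differential $d_1\!\left(\frac{\gamma}{\rho^r \tau^{2k-1}}\right) = \frac{\gamma}{\rho^{r-1}\tau^{2k}} h_0$; this is a genuinely harder route, and you would still need to check that the components of $\langle\rho^r,z,h_0\rangle$ that are not $\rho$-torsion are what survive multiplication by $\frac{\gamma}{\rho^{r-1}\tau^{2k}}$.

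Second, your stated value for the inner bracket is wrong: you write that $\left\langle h_0, \frac{\gamma}{\rho^{r-1}\tau^n}, \rho^r\right\rangle$ ``evaluates to a class of the form $\frac{\gamma}{\tau^{n'}} h_{n'}$.'' In fact the bracket is computed by the May convergence theorem from the $d_1$ Bockstein differential $d_1\!\left(\frac{\gamma}{\rho^r \tau^{n-1}}\right) = \frac{\gamma}{\rho^{r-1}\tau^n} h_0$ (and the fact that $\rho^r \cdot \frac{\gamma}{\rho^{r-1}\tau^n}$ vanishes identically at the cobar level), yielding $\rho^r \cdot \frac{\gamma}{\rho^r \tau^{n-1}} = \frac{\gamma}{\tau^{n-1}}$, with no extra $h_j$-factor. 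You appear to be conflating the $d_1$ differential with the $d_{2^j}$ differentials of \cref{DiffsGamma} that do carry $h_j$-factors, but those are not the ones relevant to this bracket. The $h_3$ in, say, the target $\frac{\gamma}{\tau^{16k+11}} h_3$ of the first row comes only from the subsequent multiplication $\frac{\gamma}{\tau^{16k+15}} \cdot \tau^4 h_3$. With your stated formula for the bracket, multiplying by $z$ would produce a spurious $h_{n'}$-factor and not recover the tabulated target.

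With these corrections---dropping the dichotomy, using the paper's uniform shuffle for every row, and using the $d_1$ differential (rather than the $h_j$-differentials) to evaluate the inner bracket---your argument lines up with the paper's proof.
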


\begin{proof}
All of these extensions can be established with
shuffles involving Massey products of the form
$\left\langle \frac{\gamma}{\rho^{r-1} \tau^{2k}}, \rho^r, x \right\rangle$,
where $x$ is an element of $\Ext_\R$.
We compute these Massey products using the May convergence theorem \cite{MMP}*{Theorem 4.1} and 
$\R$-motivic Bockstein differentials of the form $d_r(x) = \rho^r y$.
Beware that the May convergence theorem has some technical hypotheses
involving crossing differentials, which are satisfied in all cases that we consider.
Next, we shuffle to obtain
\begin{equation}
\label{eq:h0-extn-shuffle}
h_0 \left\langle \frac{\gamma}{\rho^{r-1} \tau^{2k}}, \rho^r, x \right\rangle =
\left\langle h_0, \frac{\gamma}{\rho^{r-1} \tau^{2k}}, \rho^r\right\rangle x.
\end{equation}
Finally, the second Massey product 
in \eqref{eq:h0-extn-shuffle}
can be computed with the May convergence
theorem and the differentials given in \cref{rmk:DiffsGamma}.
For each extension,
the fourth column of \cref{tbl:hiddenh0extns} displays the relevant
Massey product, 
and the fifth column displays the Bockstein differential $d_r(x) = \rho^r y$
that computes it.

For example,
consider the elements $\frac{\gamma}{\rho^2 \tau} h_2^2$ and
$\frac{\gamma}{\tau^2} c_0$ in stem $8$ and coweight $0$.
We have the Massey product
\[
\frac{\gamma}{\rho^2 \tau} h_2^2 = \left\langle \frac{\gamma}{\rho^2 \tau^4}, \rho^3, \tau c_0 \right\rangle,
\]
which follows from the May convergence theorem using the differential $d_3(\tau^3 h_2^2) = \rho^3 \tau c_0$.
The indeterminacy of the Massey product is zero by inspection.
Similarly, we have the Massey product
\[
\frac{\gamma}{\tau^3} = \left\langle h_0, \frac{\gamma}{\rho^2 \tau^4}, \rho^3 \right\rangle,
\]
which follows from the May convergence theorem using the differential
$d_1 \left( \frac{\gamma}{\rho^3 \tau^3} \right) = \frac{\gamma}{\rho^2 \tau^4} h_0$.
By inspection, this Massey product also has no indeterminacy.
Given these Massey product computations,
the hidden $h_0$-extension from 
$\frac{\gamma}{\rho^2 \tau} h_2^2$ to
$\frac{\gamma}{\tau^2} c_0$ is a case of equation \cref{eq:h0-extn-shuffle}.
\end{proof}

\subsection{Hidden $h_1$-extensions}

\begin{thm}
\label{Ext-h1-extns}
The charts in \cref{fig:Ext} show all hidden $h_1$-extensions 
in the $\rho$-Bockstein spectral sequence
in stems less than 31 and coweights from $-2$ to $8$,
except that there are possible hidden $h_1$-extensions
in degrees $(29,4,-2)$ and $(29,4,6)$.
The charts in \cref{fig:EinfNegCoweight} show all hidden $h_1$-extensions 
in the $\rho$-Bockstein spectral sequence
in stems less than 8 and coweights from $-9$ to $-2$.
\end{thm}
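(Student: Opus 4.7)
The plan is to parallel the proof of \cref{Ext-h0-extns}. First I would separate off all hidden $h_1$-extensions that are already accounted for entirely within the $\R$-motivic summand $\Ext_\R$, as these are established in \cite{BI}. What remains is to detect hidden $h_1$-multiplications whose source or target lies in the negative cone summand $\Ext_{NC}$, together with any mixed extensions that cross between the two summands.

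The primary detection tool is the long exact sequence \eqref{eq:FiberRho} induced by $\rho$-multiplication on $NC$. For a hidden extension $h_1 \cdot x = y$ in which neither $x$ nor $y$ is $\rho$-divisible, the connecting map $\delta$ sends $x$ and $y$ to classes in $E^-_{1,\rho}$ that, by the Bockstein calculus of \cref{sec:E1extensions,sec:PCtoNC} and \cref{TranslateRhoBssODelta}, are explicit in terms of $\C$-motivic generators. Since $E_1^-$ is known in our range, the extension can be read off by comparing $h_1 \cdot \delta([x])$ with $\delta([y])$. Many further extensions follow by multiplicativity: an $\R$-motivic hidden $h_1$-extension from \cite{BI}, multiplied by a class $\frac{\ga}{\rho^a\tau^b}$, propagates into $\Ext_{NC}$, and a known negative-cone extension may be multiplied by an $\R$-motivic class to yield another. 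Relations in $\Ext_\R$, in particular $\rho \cdot h_0 = 0$, rule out entire families of candidate targets. In the $\tau$-periodic strip of coweights $-9\le c\le -2$, the extensions can additionally be transported from positive coweights via the periodicity isomorphism of \cref{prop:gapbridging}.

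For the handful of extensions that resist these methods I would argue via Massey products of the form $\bracket{\tfrac{\ga}{\rho^{r-1}\tau^{2k}},\,\rho^r,\,x}$ with $x\in\Ext_\R$, computed by the May convergence theorem applied to the $\R$-motivic Bockstein differentials $d_r(x)=\rho^r y$ that also appear in \cref{prop:hiddenh0extns}. The shuffle
\[
h_1\bracket{\tfrac{\ga}{\rho^{r-1}\tau^{2k}},\,\rho^r,\,x} \;=\; \bracket{h_1,\,\tfrac{\ga}{\rho^{r-1}\tau^{2k}},\,\rho^r}\,x
\]
reduces the problem to identifying an already known $\R$-motivic product, where the inner bracket is evaluated by May convergence using the differentials of \cref{rmk:DiffsGamma}. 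As an independent sanity check, the computation of \cref{subsctn:h1-periodic-Adams} and \cref{tbl:Exth1inv} pins down $h_1$-multiplication after $h_1$-localization, so any claimed extension must be compatible with projection to the $h_1$-periodic picture.

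The main obstacle is the pair of exceptional cases at $(29,4,-2)$ and $(29,4,6)$. These lie near the top edge of the computational range: multiplicative propagation from known $\R$-motivic extensions runs out, and the candidate targets in $E_1^-$ are entangled with nearby Bockstein input that is not fully pinned down (compare \cref{UnknownBockstein}), so no available $\delta$-argument separates them from the higher filtration classes in the same degree. For these two degrees I would document the ambiguity in the statement rather than attempt to resolve it, leaving the possible extensions open.
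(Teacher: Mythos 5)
Your outline reproduces the main architecture of the paper's proof: separating out the $\Ext_\R$ extensions covered by \cite{BI}, detecting extensions in $\Ext_{NC}$ via the long exact sequence \eqref{eq:FiberRho} and its connecting map $\delta$, propagating extensions by the $\Ext_\R$-module structure, and reserving Massey products for the residual cases. However, your Massey product shuffle is not the one the paper uses, and more to the point it typically does not make sense. The shuffle
\[
h_1 \left\langle \tfrac{\ga}{\rho^{r-1}\tau^{2k}},\,\rho^r,\,x\right\rangle
\;=\;
\left\langle h_1,\,\tfrac{\ga}{\rho^{r-1}\tau^{2k}},\,\rho^r\right\rangle x
\]
requires the inner bracket $\left\langle h_1,\,\tfrac{\ga}{\rho^{r-1}\tau^{2k}},\,\rho^r\right\rangle$ to be defined, and hence requires $h_1 \cdot \tfrac{\ga}{\rho^{r-1}\tau^{2k}} = 0$ in $\Ext_{NC}$. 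That condition \emph{does} hold automatically in the $h_0$ case, because $\rho h_0 = 0$ in $\Ext_\R$ forces $h_0 \cdot \tfrac{\ga}{\rho^{r-1}\tau^{2k}} = (\rho h_0)\cdot\tfrac{\ga}{\rho^{r}\tau^{2k}} = 0$. No such relation exists for $h_1$: the products $h_1 \cdot \tfrac{\ga}{\rho^{a}\tau^{b}}$ are generically nonzero in $\Ext_{NC}$ (they appear throughout the charts and as sources of further Bockstein differentials), so the shuffle is not available.

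The argument the paper actually runs, in \cref{prop:h1extntable}, \cref{lem:h1extGAta2h32}, \cref{gata2f0-nonextn}, and \cref{lem:h1extGAta2h1}, extracts $h_1$ from the \emph{third} slot rather than the first. One starts from an $\R$-motivic Bockstein differential whose target already factors as $d_r(x) = \rho^r (h_1 y)$, so that May convergence identifies $\tfrac{\ga}{\tau^k} x$ with a bracket $\left\langle \tfrac{\ga}{\tau^k},\,\rho^r,\,h_1 y\right\rangle$; one then slides the power of $\rho$ between the first and second slots and peels $h_1$ off the end:
\[
\left\langle \tfrac{\ga}{\tau^k},\,\rho^r,\,h_1 y\right\rangle
= \left\langle \tfrac{\ga}{\rho^j \tau^k},\,\rho^{r+j},\,h_1 y\right\rangle
= \left\langle \tfrac{\ga}{\rho^j \tau^k},\,\rho^{r+j},\,y\right\rangle h_1 .
\]
The indeterminacy and crossing-differential hypotheses then have to be checked for this bracket, not yours. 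You should replace your shuffle with this one; as written, the step "reduces the problem to identifying an already known $\R$-motivic product" rests on a bracket that in general has no meaning.

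Two smaller corrections. First, the coweight range $-9 \le c \le -2$ is \emph{not} treated by transporting extensions across the periodicity isomorphism of \cref{prop:gapbridging}: that isomorphism is a statement about homotopy groups, not about $\Ext_{NC}$ or the $\rho$-Bockstein $E_\infty$-page, and there is no corresponding algebraic periodicity for hidden extensions. The paper handles those coweights directly with the same $\delta$-sequence and Massey-product methods (see \cref{lem:h1extGAta2h1}). Second, your diagnosis of the two unresolved degrees $(29,4,-2)$ and $(29,4,6)$ as reflecting Bockstein ambiguity in the sense of \cref{UnknownBockstein} is speculative; the paper simply records them as undetermined without offering that explanation, and you should not assert a reason that the available tools do not establish.
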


The hidden $h_1$-extensions appear in the chart as dashed lines of slope $1$.
Most of the hidden $h_1$-extensions occur in families that are 
related by $\rho$ multiplications.  
It suffices to establish only one extension in each family because
multiplication (and division) by $\rho$ determines all of the rest.
Unknown $h_1$-extensions are indicated by dotted, rather
than dashed, lines of slope 1.

\begin{rmk}
The classes in degrees $(29,4,-2)$ and $(29,4,6)$ mentioned in 
\cref{Ext-h1-extns} are both $\rho$-divisible, so there are also
potential hidden $h_1$-extensions on classes in degree $(30,4,-2)$ 
and $(30,4,6)$.
\end{rmk}

\begin{proof}
Many of the extensions are detected by $\Ext_\R$.  We do not discuss
these extensions further since they are considered carefully in \cite{BI}.

Many of the extensions in $\Ext_{NC}$ can be verified using 
the long exact sequence \eqref{eq:FiberRho}.  
See the proof of \cref{Ext-h0-extns} for more detail.
For example, consider the elements $\frac{\gamma}{\rho \tau} h_2^2$
and $\frac{\gamma}{\tau^2}{c_0}$ in coweight $0$ and stems $7$ and $8$.
The differentials
$d_3 \left(\frac{\gamma}{\rho^3 \tau} h_2^2 \right) = \frac{\gamma}{\tau^3} c_0$ and
$d_2 \left(\frac{\gamma}{\rho^2 \tau^2} c_0 \right) = \frac{\gamma}{\tau^3} h_1 c_0$
imply that $\delta$ takes
$\frac{\gamma}{\rho^2 \tau} h_2^2$
and $\frac{\gamma}{\rho \tau^2}{c_0}$ to 
$\frac{\gamma}{\tau^3} c_0$ and 
$\frac{\gamma}{\tau^3} h_1 c_0$ respectively.
These latter elements are connected by an $h_1$-extension,
so their pre-images under $\delta$ are also connected by an $h_1$-extension.

Most of the remaining $h_1$-extensions
are easily implied by 
previously known $\R$-motivic extensions multiplied with
elements of $\Ext_{NC}$.  
For example, consider the elements
$\frac{\gamma}{\rho^4 \tau^4} h_3^2$ and $\frac{\gamma}{\tau^6} c_1$
in coweight $1$ and stems $18$ and $19$.
The first element is the product $\frac{\gamma}{\rho^4 \tau^8} \cdot \tau^4 h_3^2$.
We already know that there is an $\R$-motivic hidden $h_1$-extension
from $\tau^4 h_3^2$ to $\rho^4 \tau^2 c_1$ \cite{BI}.
Then multiplication by $\frac{\gamma}{\rho^4 \tau^8}$ gives the desired
hidden extension.

Finally, there are several additional extensions that cannot be proved
with the previously described methods.
These more difficult cases appear in \cref{prop:h1extntable},
\cref{lem:h1extGAta2h32}, and \cref{gata2f0-nonextn}.
\end{proof}

\begin{prop}
\label{prop:h1extntable}
\cref{table:h1extn} lists some hidden $h_1$-extensions in
the $\rho$-Bockstein spectral sequence.
\end{prop}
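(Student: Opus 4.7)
The plan is to establish each of the remaining hidden $h_1$-extensions in \cref{table:h1extn} using Massey product shuffles, following the template used in \cref{prop:hiddenh0extns} for the $h_0$ case. Since the extensions coming from $\Ext_\R$, from the long exact sequence \eqref{eq:FiberRho}, and from multiplication of known $\R$-motivic extensions by elements of $\Ext_{NC}$ have already been disposed of in the proof of \cref{Ext-h1-extns}, what remains are precisely the entries of \cref{table:h1extn}: the cases where none of those three methods suffices.

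For each extension from a source $\frac{\gamma}{\rho^a \tau^b} x$ to a target $\frac{\gamma}{\rho^c \tau^d} y$, I would first identify an $\R$-motivic Bockstein differential $d_r(z) = \rho^r w$ whose May convergence theorem interpretation expresses the source as a Massey product
\[
\tfrac{\gamma}{\rho^a \tau^b} x = \left\langle \tfrac{\gamma}{\rho^{r-1}\tau^{k}},\ \rho^r,\ z \right\rangle,
\]
where $\tfrac{\gamma}{\rho^{r-1}\tau^{k}} \cdot w = \tfrac{\gamma}{\rho^a \tau^b} x$ in $E_1^-$. Then I would shuffle
\[
h_1 \left\langle \tfrac{\gamma}{\rho^{r-1}\tau^{k}},\ \rho^r,\ z \right\rangle
= \left\langle h_1,\ \tfrac{\gamma}{\rho^{r-1}\tau^{k}},\ \rho^r \right\rangle z,
\]
and evaluate the right-hand Massey product using the May convergence theorem together with the negative-cone Bockstein differentials on $\frac{\gamma}{\rho^{r-1}\tau^{k}}$ recorded in \cref{DiffsGamma} and \cref{rmk:DiffsGamma}. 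In each case the product $\langle h_1, \tfrac{\gamma}{\rho^{r-1}\tau^{k}}, \rho^r\rangle \cdot z$ will work out to $\tfrac{\gamma}{\rho^c \tau^d} y$, as required. The May convergence theorem applies only when certain crossing-differential hypotheses hold, so this must be checked in each instance (as in \cref{prop:hiddenh0extns}).

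The main technical obstacle is indeterminacy control. Unlike the $h_0$ case, where the Massey product $\langle h_0, \tfrac{\gamma}{\rho^{r-1}\tau^{k}}, \rho^r\rangle$ is pinned down by the short differential $d_1\bigl(\tfrac{\gamma}{\rho^r \tau^{k-1}}\bigr) = \tfrac{\gamma}{\rho^{r-1}\tau^{k}} h_0$ that has small target, the $h_1$ analogue requires invoking the length-$2$ differential governing $h_1$ inside $\frac{\gamma}{\rho^j \tau^\ell}$-towers, which brings in potentially larger indeterminacy from classes in $\rho$-Bockstein filtration $-r$ or deeper. For each row of the table one must inspect $E_1^-$ and $E_1^+$ in the relevant tridegrees and verify either that the indeterminacy of each bracket vanishes or that every coset representative yields the claimed target. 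Where the straightforward choice $k = 2k'$ gives too much indeterminacy, I expect to increase $k$ (using the inclusion $\langle \tfrac{\gamma}{\rho^{r-1}\tau^{k}}, \rho^r, z\rangle \subseteq \langle \tfrac{\gamma}{\rho^{r-1}\tau^{k+2^n}}, \rho^r, z\rangle$ from \cref{rmk:DescriptionQy}), trading a deeper representative for tighter indeterminacy control.

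Finally, a handful of entries involve $Q$-classes rather than pure $\gamma$-classes; for these I would use \cref{prop:Q-bracket} to rewrite the source as a triple Massey product $\langle \tfrac{\gamma}{\tau^i}, \tau^i, y\rangle$ and then shuffle against $h_1$. Extensions that arise in $\tau^{2^n}$-periodic families (as described in the discussion preceding \cref{Ext-h0-extns}) need be verified only at one representative, after which the remaining members follow from $\rho$- and $\tau^{2^n}$-multiplication. The two isolated entries mentioned in the statement of \cref{Ext-h1-extns} as possible-but-unverified extensions in degrees $(29,4,-2)$ and $(29,4,6)$ are excluded from the table and require no argument here.
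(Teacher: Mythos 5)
Your proposal takes a genuinely different route from the paper's, and the difference matters.

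The paper's proof does not express the \emph{source} of the extension as a Massey product and then shuffle $h_1$ into the first slot, as you propose (mimicking the $h_0$-argument of \cref{prop:hiddenh0extns}). Instead, it starts from the observation that every entry of \cref{table:h1extn} is supported by an $\R$-motivic Bockstein differential whose target is already an $h_1$-\emph{multiple}: $d_r(x) = \rho^r h_1 y$. By the May convergence theorem, the \emph{target} of the hidden $h_1$-extension then lies in $\bigl\langle \tfrac{\gamma}{\tau^k}, \rho^r, h_1 y\bigr\rangle$, and the $h_1$ is pulled out of the \emph{third} slot:
\[
\left\langle \tfrac{\gamma}{\tau^k}, \rho^r, h_1 y \right\rangle
= \left\langle \tfrac{\gamma}{\rho^j \tau^k}, \rho^{r+j}, h_1 y \right\rangle
= \left\langle \tfrac{\gamma}{\rho^j \tau^k}, \rho^{r+j}, y \right\rangle h_1.
\]
This exhibits the target as an $h_1$-multiple, and the source is then identified by the observation that there is only one element that could map to it. Note the paper never needs to write down the source as a bracket, and it avoids computing the mixed bracket $\langle h_1, \tfrac{\gamma}{\rho^{r-1}\tau^k}, \rho^r\rangle$ entirely.

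Your approach has two potential failure points that you would need to address. First, to express the source $\tfrac{\gamma}{\rho^{a}\tau^{b}} x$ as $\bigl\langle \tfrac{\gamma}{\rho^{r-1}\tau^{k}}, \rho^r, z\bigr\rangle$, the $\rho$-Bockstein filtration of the source forces $r = a+1$, and you must produce an $\R$-motivic Bockstein $d_{a+1}$ differential on a class of the form $\tau^m x$. For several entries — for instance $(16,8,3)$ with source $\tfrac{\gamma}{\rho\tau^3} h_0^7 h_4$, which would require a $d_2$ on $\tau^m h_0^7 h_4$, or $(22,6,-2)$ with source $\tfrac{\gamma}{\rho^2\tau^9} h_0^2 g$, which would require a $d_3$ on $\tau^m h_0^2 g$ — the naturally occurring differentials involving those classes have the wrong length, so no such decomposition is available. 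Second, even where a decomposition exists, $\langle h_1, \tfrac{\gamma}{\rho^{r-1}\tau^k}, \rho^r\rangle$ is governed by the length-two differential $d_2$ of \cref{DiffsGamma} (rather than the $d_1$ that controls the $h_0$-analogue), and so the bracket is defined only when $h_1\cdot\tfrac{\gamma}{\rho^{r-1}\tau^k}$ actually vanishes in $\Ext_{NC}$, which imposes congruence conditions on $k$ and $r$ that need not hold. You gesture at indeterminacy trouble, but the sharper issue is whether the brackets are defined at all. The paper's maneuver of starting from the target sidesteps both problems, precisely by exploiting the $h_1$-multiple already sitting in the differential $d_r(x) = \rho^r h_1 y$ — a feature your proposal does not use.
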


\renewcommand*{\arraystretch}{1.3}
\begin{longtable}{LLLL}
\caption{Some hidden $h_1$-extensions in the $\rho$-Bockstein spectral sequence}
\label{table:h1extn} \\
\toprule
{(s,f,c)} & \text{source} & \text{target}  & \text{proof} \\
\midrule \endfirsthead
\caption[]{Some hidden $h_1$-extensions in the $\rho$-Bockstein $E^-_\infty$-page} \\
\toprule
{(s,f,c)} & \text{source} & \text{target} & \text{proof}  \\
\midrule \endhead
\bottomrule \endfoot
(6,3,-9) & \frac{\gamma}{\rho^3 \tau^8} h_1^3 & \frac{\gamma}{\tau^{11}} h_0^3 h_3 & d_4(\tau h_0^3 h_3) = \rho^4 h_1^2 c_0 \\
(6,3,-5) & \frac{\gamma}{\rho^3 \tau^4} h_1^3 & \frac{\gamma}{\tau^{7}} h_0^3 h_3 & d_4(\tau h_0^3 h_3) = \rho^4 h_1^2 c_0 \\
(22, 6, -2) & \frac{\gamma}{\rho^2 \tau^9} h_0^2 g & \frac{\gamma}{\tau^{12}} i	& d_4(i) = \rho^4 h_1 c_0 e_0 \\
(29, 13, -2) & \frac{\gamma}{\rho^2 \tau^{14}} P^3 h_2 & \frac{\gamma}{\tau^{15}} P^2 h_0^2 d_0 & d_4(\tau P^2 h_0^2 d_0) = \rho^4 P^2 h_1^3 d_0 \\
(29, 11, 1)	& \frac{\gamma}{\rho^4 \tau^8} P h_1^3 d_0 & \frac{\gamma}{\tau^{12}} P^2 d_0 & d_5(\tau^4 P^2 d_0) = \rho^5 \tau^2 P^2 h_1 e_0 \\
(22,4,2) & \frac{\ga}{\rho^4 \tau^4} h_1^3 h_4 & \frac{\ga}{\tau^6} h_2 g & d_3(\tau^2 h_2 g) = \rho^3 h_1^2 h_4 c_0 \\
(22,6,2) & \frac{\ga}{\rho^2 \tau^5} h_0^2 g & \frac{\ga}{\tau^8} i & d_4(i) = \rho^4 h_1 c_0 e_0 \\
(16,8,3) & \frac{\ga}{\rho \tau^3} h_0^7 h_4 & \frac{\ga}{\tau^4} P^2 h_1 &  d_7(P^2 h_1) = \rho^7 h_1^6 e_0 \\
(22, 11, 3) & \frac{\gamma}{\rho^3 \tau^4} P^2 h_1^3 & \frac{\gamma}{\tau^7} h_0^5 i & d_4(\tau h_0^5 i) = \rho^4 P^2 h_1^2 c_0 \\
(24, 12, 3) & \frac{\gamma}{\rho \tau^7} h_0^5 i & \frac{\gamma}{\tau^8} P^3 h_1 & d_3(P^3 h_1) = \rho^3 P^2 h_1^3 c_0 \\
(29, 13, 6) & \frac{\gamma}{\rho^2 \tau^6} P^3 h_2 & \frac{\gamma}{\tau^7} P^2 h_0^2 d_0 & d_4(\tau P^2 h_0^2 d_0) = \rho^4 P^2 h_1^3 d_0 \\
\end{longtable}
\renewcommand*{\arraystretch}{1.0}

\begin{proof}
Each of the extensions is established with the same style of proof.
Starting from a previously known $\R$-motivic differential
$d_r(x) = \rho^r h_1 y$, the May convergence theorem \cite{MMP}*{Theorem 4.1} implies that
the Massey product $\left\langle \frac{\gamma}{\tau^k}, \rho^r, h_1 y \right\rangle$ contains $\frac{\gamma}{\tau^k} x$.  Then we have
\[
\left\langle \frac{\gamma}{\tau^k}, \rho^r, h_1 y \right\rangle =
\left\langle \frac{\gamma}{\rho^j \tau^k}, \rho^{r+j}, h_1 y \right\rangle =
\left\langle \frac{\gamma}{\rho^j \tau^k}, \rho^{r+j}, y \right\rangle h_1.
\]
This shows that $\frac{\gamma}{\tau^k} x$ is the target of a 
hidden $h_1$-extension, and there is only one possible value for the 
source of the extension.

Several technical hypotheses must be satisfied in order to carry out
this argument.  These hypotheses can be verified manually for each
extension in \cref{table:h1extn}.  The hypotheses are:
\begin{itemize}
\item
The number $j$ must satisfy the properties that $\rho^{r+j} y$ is zero
in $\Ext_\R$ and that $\frac{\gamma}{\rho^j \tau^k}$ survives the
$\rho$-Bockstein spectral sequence.  This ensures that the 
above displayed Massey products are well-defined.
\item
The Massey product
$\left\langle \frac{\gamma}{\rho^j \tau^k}, \rho^{r+j}, h_1 y \right\rangle$
must have no indeterminacy.  This ensures that the three Massey products
displayed above are in fact equal, rather than related by containment.
\item
There must be no crossing differentials for the Massey product
$\left\langle \frac{\gamma}{\tau^k}, \rho^r, h_1 y \right\rangle$.
This ensures that the May convergence theorem applies.
\item
There is only one possible source for the hidden $h_1$-extension.
\end{itemize}

The last column of \cref{table:h1extn} gives the relevant
$\R$-motivic differential for each case.
\end{proof}

\begin{rmk}
The published version of \cite{BI} does not include the
$\R$-motivic Bockstein differentials
$d_4(\tau P^2 h_0^2 d_0)$ and $d_5(\tau^4 P^2 d_0)$.
\end{rmk}

\begin{lem}
\label{lem:h1extGAta2h32} $(18,2,3)$.
There is a hidden $h_1$-extension in $\Ext_{NC}$ from $\frac{\ga}{\rho^4\tau^2} h_3^2$ to $\frac{\ga}{\tau^4} c_1$.
\end{lem}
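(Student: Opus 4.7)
The plan is to follow the Massey-product shuffling technique that underlies \cref{prop:h1extntable}, with input the $\R$-motivic Bockstein differential $d_6(\tau^6 h_3^2) = \rho^6 \tau^3 c_1$ already recorded in \cref{tbl:hiddenh0extns}. The wrinkle here is that the right-hand side of this differential carries no explicit $h_1$ factor, so $h_1$ cannot be shuffled out of a three-fold bracket in the streamlined manner of \cref{prop:h1extntable}; I will need to perform a different juggle, pulling $h_1$ into the first slot of the bracket rather than extracting it from the third.

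First I would apply the May convergence theorem \cite{MMP}*{Theorem~4.1} to this differential to obtain the expression
\[
\frac{\gamma}{\rho^4 \tau^2} h_3^2 \in \left\langle \frac{\gamma}{\rho^4 \tau^8},\ \rho^6,\ \tau^3 c_1 \right\rangle,
\]
which is well-defined because $\rho^6 \cdot \frac{\gamma}{\rho^4 \tau^8} = \rho^2 \cdot \frac{\gamma}{\tau^8} = 0$ in $NC$ (using $\rho \cdot \frac{\gamma}{\tau^k} = 0$, as guaranteed by \cref{prop:E-minus-structure}) and because the differential itself certifies $\rho^6 \tau^3 c_1 = 0$ in $\Ext_\R$. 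Multiplying by $h_1$ and juggling yields
\[
h_1 \cdot \frac{\gamma}{\rho^4 \tau^2} h_3^2 \in \left\langle h_1,\ \frac{\gamma}{\rho^4 \tau^8},\ \rho^6 \right\rangle \cdot \tau^3 c_1,
\]
provided $h_1 \cdot \frac{\gamma}{\rho^4 \tau^8} = 0$ so that the inner bracket is defined. This vanishing I would verify by inspecting $\Ext_{NC}$ in the relevant low-coweight degree, where \cref{fig:Ext} shows that no nonzero candidate class exists.

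The main obstacle is then to evaluate the inner Massey product $\left\langle h_1,\ \frac{\gamma}{\rho^4 \tau^8},\ \rho^6 \right\rangle$ and to match the resulting product with $\tau^3 c_1$ to $\frac{\gamma}{\tau^4} c_1$ on the nose. I would compute this bracket at the cobar level using the right unit formula \eqref{eq:etaR}, in the same spirit as the proof of \cref{DiffsGamma}, so that the bracket is represented by an explicit $\frac{\gamma}{\tau^\ell}$ class in the appropriate degree of $\Ext_{NC}$, uniquely determined by that degree. Finally I would audit the indeterminacies of each of the three Massey products in turn, using \cref{fig:Ext} to confirm that every indeterminacy class either vanishes in the relevant degree or is incompatible with $\frac{\gamma}{\tau^4} c_1$, so that the asserted extension is the value of $h_1 \cdot \frac{\gamma}{\rho^4 \tau^2} h_3^2$ rather than only a possible value modulo indeterminacy. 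This last bookkeeping step is where the argument is most delicate, since the juggle produces several potential correction terms from the $\rho$-cotowers in nearby degrees that must each be ruled out individually.
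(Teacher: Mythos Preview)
Your shuffle breaks down at the very step you flag as requiring care: the bracket $\left\langle h_1,\ \frac{\gamma}{\rho^4\tau^8},\ \rho^6\right\rangle$ is not defined, because $h_1\cdot\frac{\gamma}{\rho^4\tau^8}$ is nonzero in $\Ext_{NC}$. The product $\frac{\gamma}{\rho^4\tau^8}h_1$ lies in degree $(5,1,-9)$; it is a permanent cycle in the Bockstein spectral sequence (its $\rho$-filtration is $-5$, so it can support at most a $d_4$, and it is a $d_4$-cycle since $\frac{\gamma}{\rho^4\tau^8}$ is), and the only class in degree $(6,0,-8)$ that could hit it is $\frac{\gamma}{\rho^6\tau^7}$, which instead supports $d_1\bigl(\frac{\gamma}{\rho^6\tau^7}\bigr)=\frac{\gamma}{\rho^5\tau^8}h_0$. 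So $\frac{\gamma}{\rho^4\tau^8}h_1$ survives nontrivially. Your appeal to \cref{fig:Ext} cannot help here, since those charts cover only coweights $-2$ through $8$; the relevant chart is \cref{fig:EinfNegCoweight}, and there the class is present.

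The paper avoids this obstruction by shuffling $h_1$ in the opposite direction. It starts from the \emph{target} and writes
\[
\frac{\gamma}{\tau^4}c_1 \;=\; \left\langle \frac{\gamma}{\rho^4\tau^8},\ \rho^6,\ \rho^2 h_2\cdot\tau^2 c_1\right\rangle,
\]
computed via May convergence from the Bockstein differential $d_4(\rho^4\tau^4 c_1)=\rho^8\tau^2 h_2 c_1$. The key extra input is the $\Ext_\R$ relation $h_1\cdot\tau^3 c_1 = \rho^2 h_2\cdot\tau^2 c_1$ from \cite{BI}, which lets one rewrite the third entry and then pull $h_1$ out on the right:
\[
\frac{\gamma}{\tau^4}c_1 \;=\; \left\langle \frac{\gamma}{\rho^4\tau^8},\ \rho^6,\ h_1\cdot\tau^3 c_1\right\rangle \;=\; \left\langle \frac{\gamma}{\rho^4\tau^8},\ \rho^6,\ \tau^3 c_1\right\rangle h_1.
\]
This last shuffle only needs $\rho^6\cdot\tau^3 c_1=0$, which holds precisely because of the $d_6$ you started with. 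So your initial Massey product for the source is correct, but the juggle must go right, not left; the missing ingredient is the hidden $\Ext_\R$ relation that makes $h_1\cdot\tau^3 c_1$ computable inside the bracket.
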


\begin{pf}
We have the Massey product
$\frac{\gamma}{\tau^4} c_1 = 
\left\langle \frac{\gamma}{\rho^4 \tau^8}, \rho^6, \rho^2 h_2 \cdot \tau^2 c_1 \right\rangle$.
This bracket can be computed with the May convergence theorem \cite{MMP}*{Theorem 4.1} and the
$\R$-motivic
Bockstein differential $d_4(\rho^4 \tau^4 c_1) = \rho^8 \tau^2 h_2 c_1$.
By inspection, the bracket has no indeterminacy.

We also have a relation $h_1 \cdot \tau^3 c_1 = \rho^2 h_2 \cdot \tau^2 c_1$
in $\Ext_\R$ \cite{BI}.  Therefore,
\[
\frac{\gamma}{\tau^4} c_1 = 
\left\langle \frac{\gamma}{\rho^4 \tau^8}, \rho^6, \rho^2 h_2 \cdot \tau^2 c_1 \right\rangle =
\left\langle \frac{\gamma}{\rho^4 \tau^8}, \rho^6, h_1 \cdot \tau^3 c_1 \right\rangle =
\left\langle \frac{\gamma}{\rho^4 \tau^8}, \rho^6, \tau^3 c_1 \right\rangle h_1.
\]
The last equality holds because there is no indeterminacy.
We conclude that $\frac{\gamma}{\tau^4} c_1$ is the target of a hidden
$h_1$-extension, and there is only one possibility.
\end{pf}

\begin{lem}
\label{gata2f0-nonextn}
$(22,4,5)$.
The element $\frac{\ga}{\rho^4 \tau^2}f_0$ does not support a hidden 
$h_1$-extension.
\end{lem}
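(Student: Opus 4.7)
The goal is to show $h_1 \cdot \frac{\ga}{\rho^4 \tau^2} f_0 = 0$ in $\Ext_{NC}$, not merely in the associated graded $E^-_\infty$-page. The plan is to exploit the $\Ext_\R$-module structure on $\Ext_{NC}$, under which associativity gives
\[
h_1 \cdot \frac{\ga}{\rho^4 \tau^2} f_0 \;=\; (h_1 f_0) \cdot \frac{\ga}{\rho^4 \tau^2},
\]
reducing the question to the behavior of $h_1 f_0$ in $\Ext_\R$.

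Classically $h_1 f_0 = 0$ on the Adams $E_2$-page, and this lifts to $h_1 f_0 = 0$ in $\Ext_\C$. A priori the $\rho$-Bockstein spectral sequence $E^+$ could hide a correction $h_1 f_0 = \rho^k z$ in $\Ext_\R$ at higher Bockstein filtration, but no such correction appears in the relevant tridegree, as can be checked from the $\R$-motivic $\rho$-Bockstein data of \cite{BI}. Given $h_1 f_0 = 0$ in $\Ext_\R$, the module identity above gives the desired vanishing directly.

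If the module-theoretic shortcut proves inadequate (for instance, if $h_1 f_0$ turns out to be a nonzero $\rho$-multiple whose scaling by $\frac{\ga}{\rho^4 \tau^2}$ behaves subtly, or if an unexpected hidden $\R$-motivic Bockstein extension intervenes), the backup is to enumerate candidate targets in degree $(23,5,6)$. The $\rho$-divisibility of $\frac{\ga}{\rho^4 \tau^2} f_0$ forces any such target to itself be $\rho$-divisible, restricting attention to a handful of classes visible on \cref{fig:Ext}. These can be eliminated one by one using the long exact sequence \eqref{eq:FiberRho} together with Massey-product shuffles of the form
\[
h_1 \cdot \left\langle \tfrac{\ga}{\rho^4 \tau^{M}},\, \rho^{M-2},\, \tau^{M-2} f_0 \right\rangle \;=\; \left\langle h_1,\, \tfrac{\ga}{\rho^4 \tau^{M}},\, \rho^{M-2} \right\rangle \cdot \tau^{M-2} f_0,
\]
mirroring the approach of \cref{prop:hiddenh0extns} and \cref{lem:h1extGAta2h32} but used in the negative direction.

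The main obstacle will be the bookkeeping of the Massey-product indeterminacy when shuffling $h_1$ and the verification of the crossing-differential hypothesis of the May convergence theorem \cite{MMP}*{Theorem 4.1} for the $\R$-motivic Bockstein differential that defines the bracket. A secondary hazard is that $(h_1 f_0) \cdot \frac{\ga}{\rho^4 \tau^2}$ could in principle be nonzero via the infinite $\tau$-divisibility of $\frac{\ga}{\rho^4 \tau^2}$ even if $h_1 f_0$ is $\tau$-divisible, so I would also need to check the $\tau$-divisibility profile of any correction term to $h_1 f_0$ in $\Ext_\R$.
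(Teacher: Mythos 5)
Your main argument doesn't go through, and the gap is structural rather than a matter of detail. The identity
\[
h_1 \cdot \tfrac{\ga}{\rho^4 \tau^2} f_0 \;=\; (h_1 f_0) \cdot \tfrac{\ga}{\rho^4 \tau^2}
\]
treats $f_0$ as if it were a scalar in the ring acting on $\Ext_{NC}$. But $f_0$ lives in $\Ext_\C$, and $\Ext_{NC}$ is a module over $\Ext_\R$, not over $\Ext_\C$. The identity you wrote holds on the $\rho$-Bockstein $E_1^-$-page (where the $\Ext_\C[\rho]$-module structure lives), and statements at that level tell you nothing about hidden extensions, which is precisely the question. To promote this to a genuine module-theoretic argument in $\Ext_{NC}$, you would need to factor the class as a product of an element of $\Ext_{NC}$ and an element of $\Ext_\R$, both of which are permanent cycles. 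The natural candidates both fail: $\tfrac{\ga}{\rho^4\tau^4}$ supports the Bockstein $d_4$ of \cref{DiffsGamma}, and $\tau^6 f_0$ supports the Bockstein $d_6$ that underlies the whole calculation. Moreover, even if some factorization were available, the claim that ``$h_1 f_0 = 0$ in $\Ext_\R$'' is exactly the wrong thing to expect: the remark immediately following this lemma in the paper points out that $\tau^2 f_0$ supports a hidden $h_1$-extension to $\rho^2 \tau^2 h_1 g$. That is the ``correction term'' you deferred to a check of \cite{BI}, and it is nonzero.

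Your backup plan is closer in spirit to what actually works, but the Massey product is set up backwards. You place $\tau^{M-2} f_0$ in the third slot, but the controlling Bockstein differential is $d_6(\tau^6 f_0) = \rho^6 \tau^4 h_2 g$, so the May convergence theorem produces a bracket with the \emph{target} $\tau^4 h_2 g$ in the third slot, namely $\bracket{\tfrac{\ga}{\rho^4\tau^8},\, \rho^6,\, h_2 \cdot \tau^4 g}$. With $\tau^{M-2} f_0$ in the third slot, the required vanishing $\rho^{M-2}\cdot \tau^{M-2} f_0 = 0$ is not available, so your bracket is not even defined. Once the bracket is set up correctly, one shuffles $h_1$ into the third slot and evaluates the resulting $\R$-motivic bracket $\bracket{\rho^6, h_2 \cdot \tau^4 g, h_1}$, which lands in a group annihilated by $\tfrac{\ga}{\rho^4\tau^8}$; that is the content of the paper's proof. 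Finally, a small but telling slip: the target of an $h_1$-extension from $(22,4,5)$ lies in $(23,5,5)$, not $(23,5,6)$, since $h_1$ has degree $(1,1,0)$ in the paper's grading.
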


\begin{pf}
We have the Massey product
$\frac{\ga}{\rho^4 \tau^2}f_0 = 
\bracket{\frac{\ga}{\rho^4 \tau^8}, \rho^6, h_2 \cdot \tau^4 g}$.
This bracket can be computed with the May convergence theorem \cite{MMP}*{Theorem 4.1}
and the $\R$-motivic 
Bockstein differential $d_6(\tau^6 f_0) = \rho^6 \tau^4 h_2 g$.
By inspection, the bracket has no indeterminacy.

Now shuffle to obtain
\[ 
\frac{\gamma}{\rho^4 \tau^2} f_0 \cdot h_1 = 
\bracket{\frac{\ga}{\rho^4 \tau^8}, \rho^6, h_2 \cdot \tau^4 g} h_1 = 
\frac{\ga}{\rho^4 \tau^8} \bracket{\rho^6, h_2 \cdot \tau^4 g, h_1}.
\]
By inspection, the latter $\R$-motivic bracket equals 
$\{0, \rho^5 h_1 \cdot \tau^4 h_4 c_0\}$,
which is annihilated by $\frac{\ga}{\rho^4 \tau^8}$. 
\end{pf}

\begin{rmk}
The $\R$-motivic Bockstein differential $d_6(\tau^6 f_0) = \rho^6 \tau^4 h_2 g$
used in the proof of \cref{gata2f0-nonextn} does not appear in the published version of \cite{BI}.
\end{rmk}

\begin{rmk}
Note that $\frac{\gamma}{\rho^2 \tau^2} f_0$ detects the product
$\frac{\gamma}{\rho^2 \tau^4} \cdot \tau^2 f_0$,
and $\tau^2 f_0$ supports a hidden $h_1$-extension to $\rho^2 \tau^2 h_1 g$.
However, this does not imply the existence of a hidden extension
from $\frac{\gamma}{\rho^2 \tau^2} f_0$ to $\frac{\gamma}{\tau^2} h_1 g$.  The presence of the element $\frac{\gamma}{\tau^2} g$ in higher $\rho$-filtration interferes.
In fact, the product 
$\frac{\gamma}{\rho^2 \tau^4} \cdot \tau^2 f_0$ is not divisible by $\rho^2$.
\end{rmk}

We provide one more result about hidden $h_1$-extensions.
These extensions fall outside of the range considered in this article.
Nevertheless, we include them because they lie in very low stems and
are potentially of further interest.

\begin{lem}
\label{lem:h1extGAta2h1} $(2,1,-4k-3)$. 
There is a hidden $h_1$-extension  in $\Ext_{NC}$ from $ \frac{\ga}{\rho\tau^{4k+2}} h_1 $ to $ \frac{\ga}{\tau^{4k+3}} h_0 h_2$.
\end{lem}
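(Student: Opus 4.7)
The plan is to follow the Massey product template established in \cref{prop:h1extntable} and \cref{lem:h1extGAta2h32}. The key algebraic input is the $\C$-motivic relation $h_0^2 h_2 = \tau h_1^3$, which couples with the $\R$-motivic Bockstein differential $d_1(\tau h_1) = \rho h_0^2$ to yield the derived differential
\[
d_1(\tau h_0 h_2) \;=\; \rho\, h_0^2 h_2 \;=\; \rho \tau h_1^3
\]
in the $\R$-motivic $\rho$-Bockstein spectral sequence. By the May convergence theorem, the class $\frac{\ga}{\tau^{4k+4}} \cdot \tau h_0 h_2 = \frac{\ga}{\tau^{4k+3}} h_0 h_2$ is represented by the Massey product $\bigl\langle \frac{\ga}{\tau^{4k+4}}, \rho, \tau h_1^3 \bigr\rangle$. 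Since $\frac{\ga}{\rho\tau^{4k+4}}$ is $\rho^2$-torsion and $\rho^2 \tau h_1^3 = 0$ in $\Ext_\R$ (which follows from $d_2(\tau^2 h_1^2) = \rho^2 \tau h_1^3$), the standard $\rho$-power shuffle used throughout \cref{sctn:hidden} rewrites this bracket as $\bigl\langle \frac{\ga}{\rho\tau^{4k+4}}, \rho^2, \tau h_1^3 \bigr\rangle$.

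Next, I would shuffle $h_1^2$ out of the third entry via the inclusion $\langle a, b, c d\rangle \subseteq \langle a, b, c\rangle\, d$, applied with $c = \tau h_1$ and $d = h_1^2$. The inner bracket $\bigl\langle \frac{\ga}{\rho\tau^{4k+4}}, \rho^2, \tau h_1 \bigr\rangle$ is precisely the Massey product computed for the $h_0$-extension in row~3 of \cref{tbl:hiddenh0extns}, so it equals $\frac{\ga}{\rho\tau^{4k+2}}$. Combining these identifications,
\[
\frac{\ga}{\tau^{4k+3}} h_0 h_2 \;\equiv\; \frac{\ga}{\rho\tau^{4k+2}} \cdot h_1^2 \;=\; h_1 \cdot \Bigl( \frac{\ga}{\rho\tau^{4k+2}}\, h_1 \Bigr)
\]
in $\Ext_{NC}$ modulo the Massey product indeterminacy, which is the asserted hidden $h_1$-extension.

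The main obstacle will be controlling the indeterminacy of the intermediate brackets and justifying the shuffle in the presence of a possible Juggling correction. In particular, the sidekick bracket $\langle \rho^2, \tau h_1, h_1^2\rangle$ is not defined since $\tau h_1^3$ is nonzero in $\Ext_\R$, so the standard Juggling identity cannot be applied in its cleanest form. The hard part will be either verifying that the Juggling correction is absorbed into indeterminacy that does not affect the target filtration of the extension, or replacing the argument with a direct cobar-level computation, or equivalently interpreting the right-hand side as a fourfold Massey product $\bigl\langle \frac{\ga}{\rho\tau^{4k+4}}, \rho^2, \tau h_1, h_1^2 \bigr\rangle$ whose defining relations are all available. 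A careful degree count, using the explicit structure of $\Ext_{NC}$ derived from the earlier Bockstein computations, should be enough to isolate the meaningful contribution.
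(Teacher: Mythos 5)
Your proposal assembles the same raw ingredients as the paper's argument (the May convergence theorem, the $\C$-motivic relation $\tau h_1^3 = h_0^2 h_2$, and the Massey product $\bracket{\frac{\ga}{\rho\tau^{4k+4}},\rho^2,\tau h_1}$ from \cref{tbl:hiddenh0extns}), but runs the argument backward and stumbles on a Massey product inclusion that goes the wrong way. The inclusion you invoke, $\bracket{a,b,cd} \subseteq \bracket{a,b,c}\,d$, is false in general; the correct containment is $\bracket{a,b,c}\,d \subseteq \bracket{a,b,cd}$. So after placing both $\frac{\ga}{\tau^{4k+3}}h_0 h_2$ and $\frac{\ga}{\rho\tau^{4k+2}}h_1^2$ inside $\bracket{\frac{\ga}{\rho\tau^{4k+4}},\rho^2,\tau h_1^3}$, you still owe an indeterminacy computation to conclude they coincide, and your discussion of a ``Juggling correction'' and the undefined sidekick $\bracket{\rho^2,\tau h_1,h_1^2}$ is aimed at the wrong obstacle.

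The paper avoids this entirely by running the computation forward from the source of the extension and by shuffling a \emph{different} factor out of the third slot. One first shows $\frac{\ga}{\rho\tau^{4k+2}}h_1 = \bracket{\frac{\ga}{\rho\tau^{4k+4}},\rho^2,\tau h_1\cdot h_1}$ (via $d_2(\tau^2 h_1)=\rho^2\tau h_1^2$ and May convergence) and $\frac{\ga}{\tau^{4k+3}} = \bracket{\frac{\ga}{\rho\tau^{4k+4}},\rho^2,h_0}$ (via $d_1(\rho\tau)=\rho^2 h_0$), both with zero indeterminacy. Multiplying the first bracket by $h_1$ moves $h_1$ into the third slot, and then the relation $\tau h_1^3 = h_0^2 h_2$ lets one \emph{replace} the third entry by $h_0^2 h_2$; finally one shuffles out $h_0 h_2$ (not $h_1^2$) because $\rho^2 h_0 = 0$ makes $\bracket{\frac{\ga}{\rho\tau^{4k+4}},\rho^2,h_0}$ legitimately defined and already identified as $\frac{\ga}{\tau^{4k+3}}$. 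That is the key maneuver your proposal is missing: you should be factoring $h_0 h_2$ out of $\bracket{\cdots,h_0^2 h_2}$, where the subbracket $\bracket{\frac{\ga}{\rho\tau^{4k+4}},\rho^2,h_0}$ is defined and known, rather than trying to factor $h_1^2$ out of $\bracket{\cdots,\tau h_1^3}$, where the relevant containment goes against you and the sidekick is ill-defined. (Also note your intermediate assertion ``$d_1(\tau h_1)=\rho h_0^2$'' is false: $d_1(\tau h_1)=\rho h_0 h_1=0$; the derived differential $d_1(\tau h_0 h_2)=\rho\tau h_1^3$ that you actually use comes simply from $d_1(\tau)=\rho h_0$ and the relation $h_0^2 h_2 = \tau h_1^3$.)
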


\begin{pf}
We have the Massey product
\[
\frac{\ga}{\rho \tau^{4k+2}} h_1 = \bracket{\frac{\ga}{\rho \tau^{4k+4}},\rho^2, \tau h_1 \cdot h_1}.
\]
This bracket can be computed with the May convergence theorem \cite{MMP}*{Theorem 4.1}
and the $\R$-motivic 
Bockstein differential $d_2(\tau^{2} h_1) = \rho^2 \tau h_1^2$.
By inspection, there is no indeterminacy.

We also have the Massey product
\[
\frac{\gamma}{\tau^{4k+3}} = \bracket{ \frac{\gamma}{\rho \tau^{4k+4}}, \rho^2, h_0 }.
\]
This bracket can be computed with the May convergence theorem
and the $\R$-motivic Bockstein differential $d_1(\rho \tau) = \rho^2 h_0$.
By inspection, there is no indeterminacy.

Now we can compute that
\begin{align*}
\frac{\ga}{\rho \tau^{4k+2}} h_1 \cdot h_1 &= 
\bracket{\frac{\ga}{\rho \tau^{4k+4}},\rho^2, \tau h_1 \cdot h_1} h_1 = 
\bracket{\frac{\ga}{\rho \tau^{4k+4}},\rho^2, \tau h_1 \cdot h_1^2} \\
&= \bracket{\frac{\ga}{\rho \tau^{4k+4}},\rho^2, h_0^2 h_2}
= \bracket{\frac{\ga}{\rho \tau^{4k+4}},\rho^2, h_0} h_0 h_2 =
\frac{\ga}{\tau^{4k+3}} h_0 h_2.
\end{align*}

The first and fifth equalities are the Massey products computed in the previous paragraphs.  
The third equality is the $\R$-motivic relation
$h_0^2 h_2 = \tau h_1 \cdot h_1^2$.
The second and fourth equalities hold because there are no
indeterminacies.  Here we need that
$\frac{\gamma}{\rho \tau^{4k+4}} (\tau h_1)^2$ is zero because
of the Bockstein differential
$d_2 \left( \frac{\gamma}{\rho^2 \tau^{4k+1}} h_1 \right) =
\frac{\gamma}{\tau^{4k+2}} h_1^2$.
\end{pf}

\section{Adams differentials}
\label{sec:Adams}

In \cref{sctn:Bockstein-diff,sctn:hidden},
we computed the $C_2$-equivariant Adams $E_2$-page in a range,
including all extensions by $h_0$ and $h_1$.
Our next task is to compute Adams differentials.

Our Adams charts (in \cref{sec:charts}) are organized by coweight.  Since Adams differentials
decrease the coweight by $1$, we cannot display these differentials
graphically as lines connecting elements.  On the other hand, our charts
do show multiplications by $\rho$, $h_0$, and $h_1$.
Consequently, it is convenient to specify the Adams differentials
on all $\F_2[\rho, h_0, h_1]$-module generators of the 
Adams $E_2$-page.

Betti realization from $\R$-motivic homotopy theory to $C_2$-equivariant
homotopy theory induces a map of Adams spectral sequences.  This map allows
us to deduce information about the $C_2$-equivariant Adams spectral
sequence from information about the $\R$-motivic Adams spectral sequence.
The latter spectral sequence is thoroughly understood in a range \cite{BI}.
In fact, we will need some additional $\R$-motivic Adams differentials
that do not appear in \cite{BI}.  

\begin{prop}
\label{prop:R-Adams-diff}
\cref{tbl:R-Adams-diff} lists some $d_2$ differentials in the
$\R$-motivic Adams spectral sequence.
\end{prop}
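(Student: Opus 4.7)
The plan is to establish each differential in Table \ref{tbl:R-Adams-diff} by combining three standard tools with the information already assembled in this paper: (i) comparison with the $\C$-motivic Adams spectral sequence via the $\rho$-Bockstein filtration, (ii) comparison with the classical Adams spectral sequence via the Betti realization (or the map that inverts $\rho$ and then forgets $\tau$), and (iii) the Leibniz rule applied to known $\R$-motivic multiplicative relations.

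First I would reduce, wherever possible, to known data. For each source $x$ in the table, I would identify its leading $\rho$-Bockstein term, read off what $d_2$ does in $\Ext_\C$ (these $\C$-motivic differentials are recorded in \cite{BI} and the \C-motivic Adams literature such as \cite{IWX23}), and then propagate that differential upward through the $\rho$-Bockstein filtration. The principle is that if $x$ detects a $\C$-motivic class whose $d_2$ is $y$, then $d_2(x)$ equals $y$ plus possible $\rho$-multiples in strictly higher Bockstein filtration; these correction terms are pinned down by degree considerations, $\rho$-divisibility, and in some cases by the fact that the target must be a permanent cycle modulo what has already been killed. Many rows should fall out of this step directly.

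Second, for differentials on classes $x$ that are not visible $\C$-motivically (i.e.\ those whose Bockstein representatives involve $\rho$ non-trivially or involve $\tau$-power-torsion $\C$-motivic classes), I would invoke the classical Adams spectral sequence. Under the composite $\R$-motivic $\to$ $C_2$-equivariant $\to$ classical, the detection of classical differentials (which are completely known in this range) forces a nontrivial $d_2$ on any $\R$-motivic class whose classical image supports one. Combined with the sparsity of the $E_2$-page in the relevant tridegree, this usually determines $d_2(x)$ up to a manageable ambiguity, which one resolves by Bockstein filtration bookkeeping as in the previous step.

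Third, the remaining differentials should follow by Leibniz. Once $d_2$ is known on a generating set, any class of the form $a \cdot b$ with $a, b \in E_2$ has $d_2(ab) = d_2(a) b + a\, d_2(b)$; in particular, $\rho$, $\tau$, $h_0$, $h_1$, and all known $\R$-motivic permanent cycles (such as $c_0$, $d_0$, $e_0$, $P$-multiplications, $\ldots$) propagate $d_2$'s through the $E_2$-page. Where Leibniz is unavailable, I would fall back on Moss's convergence theorem applied to $\R$-motivic Massey products computed by the May convergence theorem \cite{MMP}*{Theorem~4.1}, using the same style of argument as in Section \ref{sctn:hidden}: express the source as a bracket $\langle a, b, c\rangle$ and shuffle to recognize the target.

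The main obstacle I expect is the bookkeeping of Bockstein-filtration corrections: when lifting a $\C$-motivic $d_2$ to $\R$-motivic, the genuine target may differ from the naive one by elements of the form $\rho^k z$ with $z$ in higher Bockstein filtration, and each such possibility must be ruled out (typically by a target-permanent-cycle argument or by comparing against the $C_2$-equivariant or classical spectral sequences). Crossing-differential hypotheses in the May convergence theorem will also need to be verified case by case, exactly as in the proofs of Proposition \ref{prop:h1extntable} and Lemma \ref{lem:h1extGAta2h32}.
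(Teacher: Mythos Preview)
Your plan is correct and matches the paper's approach: comparison with the $\C$-motivic Adams spectral sequence via extension of scalars (your step (i)) handles $\tau^6 i$ and $\tau^6 P e_0$ outright and determines $d_2(k)$ up to the $\rho h_1 d_0^2$ ambiguity you anticipate, and that ambiguity is resolved exactly by the Leibniz rule (your step (iii)) applied to the $\R$-motivic relation $h_1 k = \rho\, d_0 e_0$ together with $d_2(d_0 e_0) = h_1^2 d_0^2$. Your step (ii) and the Massey-product fallback are not needed here; the whole argument is only a few lines.
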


\begin{longtable}{LLL}
\caption{Some $\R$-motivic Adams differentials} \\
\toprule
\mbox{}(s,f,c) & x & d_2(x) \\
\midrule \endhead
\bottomrule \endfoot
\label{tbl:R-Adams-diff}
(29, 7, 13) & k & h_0 d_0^2 + \rho h_1 d_0^2 \\
(23, 7, 17) & \tau^6 i & \tau^6 P h_0 d_0 \\
(25, 8, 17) & \tau^6 P e_0 & \tau^6 P h_1 d_0 \\
\end{longtable}

\begin{proof}
Most of these calculations follow by comparison to the
$\C$-motivic Adams spectral sequence along the extension-of-scalars
functor from $\R$-motivic stable homotopy theory to
$\C$-motivic stable homotopy theory.
However, this comparison only shows that
$d_2(k)$ equals either $h_0 d_0^2$ or $h_0 d_0^2 + \rho h_1 d_0^2$
because $\rho h_1 d_0^2$ maps to zero under extension-of-scalars.
In order to settle this uncertainty, use the
$\R$-motivic relation $h_1 k = \rho d_0 e_0$ together with the
$\R$-motivic Adams differential $d_2(d_0 e_0) = h_1^2 d_0^2$.
\end{proof}

\subsection{Some permanent cycles}
\label{sctn:AdamsPermCycles}

We begin by establishing some permanent cycles.

\begin{prop} 
\label{prop:GArhotauPermCycles}
The classes $\frac{\ga}{\tau}$, $\frac{\ga}{\rho\tau^2}$, $\frac{\ga}{\rho^3\tau^4}$, and $\frac{\ga}{\rho^7\tau^8}$ are permanent cycles in the $C_2$-equivariant Adams spectral sequence.
\end{prop}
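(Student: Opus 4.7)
The plan is to apply \cref{prop:gapbridging} to each class, reducing the question to a positive-coweight homotopy computation. The four classes lie in degrees $(s, 0, c) = (2^n - 1, 0, -2^n - 1)$ for $n = 0, 1, 2, 3$. Using \cref{notn:ps}, one finds $p_s = 2^{n+1}$ in each case, so $c + p_s = s$; since $s \geq \tfrac{s-1}{2}$, \cref{prop:gapbridging} with $N = 1$ gives an isomorphism from $\piC_{s, c}$ to the $\rho$-power torsion subgroup of $\piC_{s, s}$. The strategy is then to exhibit a non-zero $\rho$-torsion class in $\piC_{s, s}$, from which it follows that $\piC_{s, c} \neq 0$, and then to conclude via a uniqueness argument on the $\rho$-Bockstein $E_\infty$-page.

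For the non-vanishing of $\rho$-power torsion in $\piC_{s, s}$, the case $s = 0$ is easiest: the element $\hsf \in \piC_{0, 0}$, which corresponds to the free orbit $C_2/e$, is annihilated by $\rho$ because $\hsf$ factors through $(C_2)_+$ and the composite $(C_2)_+ \to S^{0,0} \xrtarr{\rho} S^{1, 0}$ is null by the cofiber sequence \cref{eq:cofiber rho}. For $s = 1, 3, 7$, the $\rho$-torsion in $\piC_{s, s}$ equals the image of the boundary map $\picl_s \to \piC_{s, s}$ in the long exact sequence of \cref{prop:rho-forget}, which is nontrivial because of the non-vanishing classical Hopf elements $\eta_{cl}, \nu_{cl}, \sigma_{cl}$; an explicit non-zero $\rho$-torsion class in each case can be identified by combining \cref{prop:rho-forget} with the paper's main computations of $\piC_{s, c}$ in the range $-1 \leq c \leq 7$, which are established by arguments independent of the present statement.

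Finally, I would inspect \cref{fig:EinfNegCoweight} to see that in each of the four tridegrees $(s, *, c)$ under consideration, the class $\frac{\ga}{\rho^{2^n-1}\tau^{2^n}}$ is the only Adams $E_2$ class not already visible as a $\rho$-divisible image from elsewhere. Consequently, if it supported a non-trivial Adams differential, the $E_\infty$ position at $(s, *, c)$ would be empty, contradicting $\piC_{s, c} \neq 0$. Therefore each of the four classes is a permanent cycle.

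The main obstacle I anticipate is the second step: producing a concrete non-zero $\rho$-torsion element in $\piC_{s, s}$ for $s = 1, 3, 7$ and ensuring that no higher-filtration class or hidden extension in $\Ext_{C_2}^{s, *, c}$ introduces an extra candidate that could absorb a differential on $\frac{\ga}{\rho^{2^n-1}\tau^{2^n}}$. The first point should follow formally from \cref{prop:rho-forget} once the relevant positive-coweight homotopy is in hand, while the second reduces to careful chart inspection in the negative-coweight range.
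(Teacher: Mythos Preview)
Your approach for $\frac{\gamma}{\rho^3\tau^4}$ and $\frac{\gamma}{\rho^7\tau^8}$ is exactly the paper's: invoke \cref{prop:gapbridging} to identify $\piC_{3,-5}$ and $\piC_{7,-9}$ with the $\rho$-power torsion in $\piC_{3,3}$ and $\piC_{7,7}$, compute the latter via the $\R$-motivic comparison, and then read off that the named class must survive. For the first two classes the paper takes a shorter route: $\frac{\gamma}{\tau}$ has no possible nonzero target for any Adams differential, and $\frac{\gamma}{\rho\tau^2}$ is annihilated by $h_0^2$ while every candidate target in $(0,*,-4)$ supports an $h_0$-multiplication. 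Your uniform periodicity argument should also handle these two, but it is overkill.

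Two cautions. First, mere non-vanishing of $\piC_{s,c}$ is not by itself enough for the final step: at $(3,*,-5)$ and $(7,*,-9)$ there is a whole $h_0$-tower on the class in question (e.g.\ $h_0\cdot\frac{\gamma}{\rho^3\tau^4}=\frac{\gamma}{\tau^5}h_2$, etc.), so you need the full size of the $\rho$-torsion subgroup of $\piC_{s,s}$ from the $\R$-motivic side, not just that it is nonzero, to force the filtration-zero class to survive. The paper is equally terse here, but that is the content of ``the latter group is known.'' Second, when you inspect \cref{fig:EinfNegCoweight} you must read it as the Adams $E_2$-page (which it is, since it comes from the Bockstein computation); reading it as $E_\infty$ would be circular, because \cref{prop:Adams-diff-lowstem} depends on the present proposition.
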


\begin{proof}
The element $\frac{\gamma}{\tau}$ is a permanent cycle because there are no possible non-zero values for differentials.

The element $\frac{\gamma}{\rho \tau^2}$ is a permanent cycle because it is annihilated by $h_0^2$, but all possible values for differentials support $h_0$ multiplications

The elements $\frac{\gamma}{\rho^3 \tau^4}$ and $\frac{\gamma}{\rho^7 \tau^8}$
are more difficult.  
We know from the periodicity isomorphism of 
\cref{prop:gapbridging} that
$\piC_{3,-5}$ is isomorphic to the $\rho$-power
torsion subgroup of $\piC_{3,3}$.
The latter group is known because it is isomorphic to the
corresponding $\R$-motivic stable homotopy group.  It follows that
$\frac{\gamma}{\rho^3 \tau^4}$ is a permanent cycle.

Similarly, the $7$-stem is $\tau^{16}$-periodic (with some exceptions in specific coweights).  In particular, $\piC_{7,-9}$ is isomorphic to the 
$\rho$-power torsion subgroup of  $\piC_{7,7}$ by \cref{prop:gapbridging}.
The latter group is known by comparison to the $\R$-motivic stable homotopy
groups, and 
$\frac{\gamma}{\rho^7 \tau^8}$ is a permanent cycle.
\end{proof}

\begin{rmk}
The proof of \cref{prop:GArhotauPermCycles} lies somewhat outside of the 
spirit of the rest of this manuscript.  We would prefer a more ``algebraic"
proof, such as an argument that uses Toda brackets.  However, such a proof
has eluded us.  The particular elements in the proposition are related to the
Hopf maps.  There are many situations in which the indecomposable nature
of Hopf maps makes them exceptional.
\end{rmk}

\begin{prop}
\label{prop:Adams-diff-lowstem}
In stems less than 8 and coweights between $-9$ and $-2$, all elements
of the $C_2$-equivariant Adams $E_2$-page are permanent cycles.
\end{prop}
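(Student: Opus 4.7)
By the Leibniz rule, it suffices to verify that each $\F_2[\rho, h_0, h_1]$-module generator of the Adams $E_2$-page in this range is a permanent cycle, and the Bockstein $E_\infty^-$-chart (\cref{fig:EinfNegCoweight}) lists the finitely many such generators explicitly. The first strategy is multiplicative. The four classes $\frac{\gamma}{\tau}$, $\frac{\gamma}{\rho\tau^2}$, $\frac{\gamma}{\rho^3\tau^4}$, $\frac{\gamma}{\rho^7\tau^8}$ are permanent cycles by \cref{prop:GArhotauPermCycles}, and every $\R$-motivic class in stems less than $8$ is a permanent cycle of the $\R$-motivic Adams spectral sequence by \cite{BI}. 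For each generator $g$ of the $E_2$-page in our range, I would exhibit $g$ either as a $\rho$- or $\tau$-multiple of one of the four base permanent cycles, or more generally as a product of one of them with an $\R$-motivic permanent cycle in a low stem; the Leibniz rule together with Betti realization then gives that $g$ itself is a $C_2$-equivariant permanent cycle.

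For the handful of generators that resist direct multiplicative treatment, I would fall back on \cref{prop:gapbridging}: its periodicity isomorphism identifies the group detected by any such generator with the $\rho$-power torsion in an $\R$-motivic group in positive coweight, which is already known in this range. Combined with the fact that the $E_2$-page in the range has been completely determined via the Bockstein spectral sequence in \cref{sctn:Bockstein-diff}, matching rank counts in each bidegree rules out the existence of any nontrivial Adams differentials within this strip.

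The main obstacle will be that the periodicity operator $T^{-p_s}$ is a homotopy-level isomorphism rather than a spectral-sequence statement, so there is no entirely mechanical way to transfer permanent-cycle information from the $\R$-motivic side to a specific generator. I expect that for stems less than $8$ and coweights at most $-2$ the list of exceptional generators not handled by the multiplicative argument is sparse enough that each remaining case can be dealt with by direct inspection of potential $d_r$-targets in the neighboring bidegree $(s-1, f+r, c-1)$, possibly combined with ad hoc $\rho$-cotower arguments in the spirit of \cref{prop:GArhotauPermCycles}.
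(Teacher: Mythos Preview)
Your proposal is workable in spirit but far more elaborate than necessary, and the multiplicative step has a gap. Not every generator in this range decomposes as a product of one of the four base classes $\frac{\gamma}{\tau}$, $\frac{\gamma}{\rho\tau^2}$, $\frac{\gamma}{\rho^3\tau^4}$, $\frac{\gamma}{\rho^7\tau^8}$ with an $\R$-motivic class: for instance, $\frac{\gamma}{\tau^3}$ in coweight $-4$ is not a $\tau$-multiple of $\frac{\gamma}{\tau}$ (the divisibility runs the other way), and there is no evident factorization through the listed permanent cycles. So your ``first strategy'' does not dispatch the bulk of the generators, and you would be pushed into the periodicity and rank-count fallback for most of them, with the attendant difficulty you yourself flag.

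The paper's proof bypasses all of this. It simply inspects the $E_2$-page in the relevant range (as recorded in \cref{fig:EinfNegCoweight}) and observes that for degree reasons there are only two bidegrees in which a nontrivial Adams differential could possibly land: $d_2\bigl(\frac{\gamma}{\rho^7\tau^8}\bigr)$ could hit $\frac{\gamma}{\tau^{11}} h_2^2$, and $d_2\bigl(\frac{\gamma}{\rho^3\tau^4}\bigr)$ could hit $\frac{\gamma}{\tau^5} h_1^2$. Both sources are permanent cycles by \cref{prop:GArhotauPermCycles}, so both differentials vanish. That is the entire argument. What you list as a last-resort fallback (``direct inspection of potential $d_r$-targets in the neighboring bidegree'') is in fact the whole proof, and it requires checking only two cases rather than decomposing every generator.
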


\cref{prop:Adams-diff-lowstem} establishes that all of the elements appearing
in \cref{fig:EinfNegCoweight} are permanent cycles.  In other words,
the $C_2$-equivariant Adams $E_2$-page is equal to the
$E_\infty$-page in this range.

\begin{proof}
For degree reasons, the only possible differentials
are:
\begin{itemize}
\item
$d_2 \left( \frac{\gamma}{\rho^7 \tau^8} \right)$ might equal
$\frac{\gamma}{\tau^{11}} h_2^2$.
\item
$d_2 \left( \frac{\gamma}{\rho^3 \tau^4} \right)$ might equal
$\frac{\gamma}{\tau^5} h_1^2$.
\end{itemize}
These possibilities are ruled out by 
\cref{prop:GArhotauPermCycles}.
\end{proof}

\begin{prop}
\label{prop:perm-cycles}
\cref{tbl:perm-cycles} lists some permanent cycles in the
$C_2$-equivariant Adams spectral sequence.
\end{prop}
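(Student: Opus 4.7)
The plan is to verify each entry of \cref{tbl:perm-cycles} by invoking whichever of the available tools is most efficient in that case. In general, there are four sources of permanent cycles to draw on: (i) comparison with the $\R$-motivic Adams spectral sequence along Betti realization, so that any lift of an $\R$-motivic permanent cycle is itself a permanent cycle; (ii) the multiplicative structure, so that products of permanent cycles are permanent cycles, and also divisions by $\rho$ when unique $\rho$-divisibility holds (as in \cref{prop:E-minus-structure3}); (iii) the periodicity isomorphisms of \cref{Landw1,Landw2,prop:gapbridging}, which identify negative-coweight classes with known groups in positive coweight; and (iv) the short exact sequence of \cref{prop:rho-SES}, which forces permanent cycles whenever the $\rho$-adjacent classes in $\piC_{*,*}$ are already known.

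First, I would sort the entries of \cref{tbl:perm-cycles}. Entries whose representatives live on the $\R$-motivic summand of the Bockstein $E_\infty$-page will be dealt with by comparison: each such class is (up to higher Bockstein filtration) the image of an $\R$-motivic class, and the $\R$-motivic permanent cycles in the relevant range are listed in \cite{BI}. The only subtlety is that Betti realization sends an $\R$-motivic differential to a $C_2$-equivariant differential, but extra $C_2$-equivariant differentials (with targets in the negative cone) could still occur in principle; these are excluded by a degree check against \cref{fig:Ext}, since for every such class the only candidate targets sit in degrees that either are not present on the Adams $E_2$-page or are already ruled out by $h_0$- or $h_1$-multiplication arguments as in \cref{Ext-h0-extns,Ext-h1-extns}.

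Next, for the entries in the negative cone, the strategy is to write each class as a $\rho$-, $h_0$-, or $h_1$-divided version (or $\F_2[\rho,h_0,h_1]$-module multiple) of the four generators $\frac{\ga}{\tau}$, $\frac{\ga}{\rho\tau^2}$, $\frac{\ga}{\rho^3\tau^4}$, $\frac{\ga}{\rho^7\tau^8}$ from \cref{prop:GArhotauPermCycles}, or as a product of such a generator with an $\R$-motivic permanent cycle. For classes of $Q$-type, I would use the Massey product description $Q y \supseteq \langle \frac{\gamma}{\tau^i},\tau^i, y\rangle$ from \cref{prop:Q-bracket}: if $y$ is a permanent cycle and $\tau^i$ and $\frac{\gamma}{\tau^i}$ are permanent cycles, then so is $Qy$ by a standard Moss-convergence/Toda-bracket argument applied to the Adams spectral sequence, again modulo a degree check for possible crossing differentials.

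The main obstacle will be the handful of entries for which none of the three routes above closes the case directly: those entries where a Moss-bracket argument produces nontrivial indeterminacy, or where degree considerations admit a possible differential into a negative-cone class that is not ruled out by multiplicative structure. For these, the backup plan is to use \cref{prop:rho-SES} together with \cref{prop:Adams-C2-cohtpy}: a candidate $d_r$-differential on the class would force either a non-permanent cycle in $\picl_{*}[\tau^{\pm 1}]$ (impossible) or a differential upstairs in $\piC_{*,*}$ that is inconsistent with the already-established $\R$-motivic and $h_1$-periodic information from \cref{subsctn:h1-periodic-Adams}. I expect that every remaining entry can be closed by combining one of these forcing arguments with a careful bookkeeping of indeterminacy in the relevant $Q$-class, along the lines of \cref{rmk:DiffsOnQClasses}.
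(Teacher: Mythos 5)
Your proposal has a genuine mismatch with what is actually in \cref{tbl:perm-cycles}. Every entry in the table is a $\gamma$-type negative cone class, i.e., of the form $\frac{\ga}{\rho^a \tau^b}\,x$ with $x \in \Ext_\C$. None of the entries are $\R$-motivic classes (so route (i), Betti realization, never applies), none are in negative coweight (so route (iii), periodicity isomorphisms, never applies), and none are $Q$-type classes (so your invocation of \cref{prop:Q-bracket} and the bracket $\langle \frac{\gamma}{\tau^i},\tau^i,y\rangle$ is aimed at classes that do not occur here). Your proposed decomposition as a $\rho$-, $h_0$-, or $h_1$-divided version, or product, of the four specific generators $\frac{\ga}{\tau}$, $\frac{\ga}{\rho\tau^2}$, $\frac{\ga}{\rho^3\tau^4}$, $\frac{\ga}{\rho^7\tau^8}$ also does not work for these entries: for a class like $\frac{\gamma}{\rho^2 \tau^9} h_1 h_4 c_0$ one cannot factor out one of those four generators and be left with an honest $\R$-motivic element.

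The paper's actual argument, which is uniform across the table, is to express each entry as a three-fold Massey product of the shape $\left\langle \frac{\gamma}{\rho^j \tau^k},\, \rho^r,\, x \right\rangle$ with $x$ an $\R$-motivic permanent cycle; this bracket is computed in the Bockstein $E_1^-$-page using the May convergence theorem \cite{MMP}*{Theorem 4.1} and a specific Bockstein $d_r$-differential (recorded in the fourth column of the table), and then the Moss convergence theorem \cite{Moss,BK} shows that the bracket is realized by a Toda bracket, hence is a permanent cycle. You do gesture at a Massey-product/Moss argument, but with the wrong bracket, and you do not explain how to produce the correct one. You also miss the genuinely nontrivial technical point that arises for the $(19,2,3)$ entry $\frac{\gamma}{\rho^5 \tau^2} h_3^2$: to apply Moss convergence one must rule out a hidden $\rho^6$-extension on $\tau^3 c_1$ in $\R$-motivic homotopy, which the paper settles by comparison to the $v_1$-periodic spectrum $L$ of \cite{BIK}. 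Finally, there is an ambiguity issue the proof must address (handled in \cref{lem:fifteenthreeone}): a Bockstein $E_\infty$-name can detect more than one element of $\Ext_{C_2}$, so one must pin down which element the Massey product detects before concluding anything about later Adams differentials. None of this bookkeeping appears in your outline, and your fallback via \cref{prop:rho-SES} would not by itself recover it.
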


\begin{longtable}{LLLL}
\caption{Some permanent cycles} \\
\toprule
\mbox{}(s,f,c) & \text{element} & \text{Massey product} & \text{Bockstein differential} \\
\midrule \endhead
\bottomrule \endfoot
\label{tbl:perm-cycles}
(26, 5, 0) & \frac{\gamma}{\rho^2 \tau^9} h_1 h_4 c_0 & \left\langle \frac{\gamma}{\rho^2 \tau^{12}}, \rho^3, \tau^2 h_2^2 g \right\rangle & d_3(\tau^3 h_1 h_4 c_0) = \rho^3 \cdot \tau^2 h_2^2 g \\
(15, 3, 1) & \frac{\gamma}{\rho^6 \tau} h_1^2 h_3 & \left\langle \frac{\gamma}{\rho^6 \tau^8}, \rho^7, \rho^2 \tau^2 e_0 \right\rangle & d_9(\tau^7 h_1^2 h_3) = \rho^9 \cdot \tau^2 e_0 \\
(19, 2, 3) & \frac{\gamma}{\rho^5 \tau^2} h_3^2 & \left\langle \frac{\gamma}{\rho^5 \tau^8}, \rho^6, \tau^3 c_1 \right\rangle & d_6(\tau^6 h_3^2) = \rho^6 \cdot \tau^3 c_1 \\
(19, 8, 3) & \frac{\gamma}{\rho^4 \tau^3} h_0^7 h_4 & \left\langle \frac{\gamma}{\rho^4 \tau^8}, \rho^5, \tau^2 P^2 h_2 \right\rangle & d_5(\tau^5 h_0^7 h_4) = \rho^5 \cdot \tau^2 P^2 h_2 \\
(27, 12, 3) & \frac{\gamma}{\rho^4 \tau^7} h_0^5 i & \left\langle \frac{\gamma}{\rho^4 \tau^8}, \rho^4, P^2 h_1^2 c_0 \right\rangle & d_4(\tau h_0^5 i) = \rho^4 \cdot P^2 h_1^2 c_0 \\
(16, 6, 4) & \frac{\gamma}{\rho^2 \tau} h_0^2 d_0 & \left\langle \frac{\gamma}{\rho^2 \tau^4}, \rho^3, \tau P c_0 \right\rangle & d_3(\tau^3 h_0^2 d_0) = \rho^3 \cdot \tau P c_0 \\
(24, 10, 4) & \frac{\gamma}{\rho^2 \tau^5} P h_0^2 d_0 & \left\langle \frac{\gamma}{\rho^2 \tau^8}, \rho^3, \tau P^2 c_0 \right\rangle & d_3(\tau^3 P h_0^2 d_0) = \rho^3 \cdot \tau P^2 c_0 \\
(24, 10, 8) & \frac{\gamma}{\rho^2 \tau} P h_0^2 d_0 & \left\langle \frac{\gamma}{\rho^2 \tau^4}, \rho^3, \tau P^2 c_0 \right\rangle & d_3(\tau^3 P h_0^2 d_0) = \rho^3 \cdot \tau P^2 c_0 \\
(26, 5, 8) & \frac{\gamma}{\rho^2 \tau} h_1 h_4 c_0 & \left\langle \frac{\gamma}{\rho^2 \tau^4}, \rho^3, \tau^2 h_2^2 g \right\rangle & d_3(\tau^3 h_1 h_4 c_0) = \rho^3 \cdot \tau^2 h_2^2 g \\

(18, 1, 2) & \frac{\gamma}{\rho^3 \tau^4} h_4 & \left\langle \frac{\gamma}{\rho^4 \tau^8}, \rho^5, \tau^2 h_2 h_4 \right\rangle & d_4(\rho \tau^4 h_4) = \rho^5 \tau^2 h_2 h_4 \\
(23, 3, 3) & \frac{\gamma}{\rho^2 \tau^5} h_2^2 h_4 & \left\langle \frac{\gamma}{\rho^2 \tau^8}, \rho^3, \tau h_4 c_0 \right\rangle & d_3(\tau^3 h_2^2 h_4) = \rho^3 \cdot \tau h_4 c_0 \\
(20, 3, 4) & \frac{\gamma}{\rho^6 \tau} h_0 h_3^2 & \left\langle \frac{\gamma}{\rho^6 \tau^8}, \rho^7, \tau^4 g \right\rangle & d_7(\tau^7 h_0 h_3^2) = \rho^7 \cdot \tau^4 g \\
(28, 7, 4) & \frac{\gamma}{\rho^5 \tau^6} i & \left\langle \frac{\gamma}{\rho^5 \tau^8}, \rho^6, d_0^2 \right\rangle & d_6(\tau^2 i) = \rho^6 \cdot d_0^2 \\
(23, 3, 7) & \frac{\gamma}{\rho^2 \tau} h_2^2 h_4 & \left\langle \frac{\gamma}{\rho^2 \tau^4}, \rho^3, \tau h_4 c_0 \right\rangle & d_3(\tau^3 h_2^2 h_4) = \rho^3 \cdot \tau h_4 c_0 \\
\end{longtable}

\begin{proof}
The proof for each permanent cycle is essentially the same.
The first step is to establish a Massey product for the element.
These Massey products are listed in the third column of the table.
Each Massey product is an application of the May convergence theorem 
\cite{MMP}*{Theorem 4.1}
to the Bockstein spectral sequence; the relevant Bockstein differential
appears in the fourth column of the table.
In all cases, the technical condition involving crossing differentials
is satisfied, and there is no indeterminacy.

Then we use the Moss convergence theorem \cite{Moss} \cite{BK}
in order to establish that the elements are permanent cycles.
In all cases, the technical condition involving crossing differentials 
is satisfied.

The argument for $\frac{\gamma}{\rho^5 \tau^2} h_3^2$ requires one
additional technical argument.
In order to apply the Moss convergence theorem, we need to
know that $\tau^3 c_1$ does not support a hidden $\rho^6$-extension
in the $\R$-motivic Adams spectral sequence.
This follows by comparison to the spectrum $L$ of \cite{BIK} that
detects $\R$-motivic $v_1$-periodic homotopy.
The unit map $S \rightarrow L$
takes $\tau^3 c_1$ to zero, but it takes the possible values of the
hidden $\rho^6$-extension to non-zero elements in $L$.
\end{proof}

The classes in the second column of \cref{tbl:perm-cycles} are listed 
according to their names in the Bockstein spectral sequence. In
general, this only specifies an element of $\Ext_{C_2}$ up to terms
in higher $\rho$-filtration. For the most part, this causes no difficulties.
For example, the element in degree (19,2,3) detected by the Massey product
in \cref{tbl:perm-cycles} is only specified up to the element 
$\frac{\gamma}{\rho \tau^4} h_2 h_4$ in higher $\rho$-filtration. 
However, the latter is a permanent cycle, so we conclude that both
classes detected by $\frac{\gamma}{\rho^5 \tau^2} h_3^2$ are 
permanent cycles. 

The one exception is the Bockstein $E_\infty$-page element detected by 
$\frac{\gamma}{\rho^6 \tau} h_1^2 h_3$ in degree $(15,3,1)$. 
We show in \cref{thm:Adams-d3} that
the element $\frac{\gamma}{\tau^5} h_0^2 h_4$ in higher $\rho$-filtration
supports an Adams $d_3$ differential. 
Therefore it is important to determine which element of $\Ext_{C_2}$ 
in degree $(15,3,1)$ is detected by the bracket in \cref{tbl:perm-cycles}. 
Of the two relevant elements in $\Ext_{C_2}$, one is
annihilated by $h_0$, while the other supports an $h_0$-extension to 
$\frac{\gamma}{\tau^5} h_0^3 h_4$.
We settle this ambiguity in \cref{lem:fifteenthreeone}.

\begin{lemma}
\label{lem:fifteenthreeone}
$(15, 3, 1)$
In $\Ext_{C_2}$, the Massey product 
$\left\langle 
\frac{\gamma}{\rho^6 \tau^8}, \rho^7, \rho^2 \tau^2 e_0 
\right\rangle$
is annihilated by $h_0$.
\end{lemma}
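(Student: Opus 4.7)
The plan is to use the Massey shuffling identity
\[ h_0 \cdot \left\langle \frac{\gamma}{\rho^6 \tau^8}, \rho^7, \rho^2 \tau^2 e_0 \right\rangle = \left\langle h_0, \frac{\gamma}{\rho^6 \tau^8}, \rho^7 \right\rangle \cdot \rho^2 \tau^2 e_0 \]
and to show that the right-hand side vanishes. Before invoking the shuffle, I would check that all three adjacent two-fold products vanish: $\rho^9\tau^2 e_0 = 0$ in $\Ext_\R$ is exactly the Bockstein differential $d_9(\tau^7 h_1^2 h_3) = \rho^9 \tau^2 e_0$ used to define the outer bracket in the first place; the product $\rho^7 \cdot \frac{\gamma}{\rho^6\tau^8} = \rho \cdot \frac{\gamma}{\tau^8}$ vanishes on the nose in $NC$, because $\frac{\gamma}{\tau^8}$ sits in the $\rho$-torsion ($j=0$) part; and $h_0 \cdot \frac{\gamma}{\rho^6\tau^8} = 0$ in $\Ext_{C_2}$ follows from the Bockstein differential $d_1\!\left(\frac{\gamma}{\rho^7 \tau^7}\right) = \frac{\gamma}{\rho^6\tau^8}\, h_0$, which is obtained from \cref{DiffsGamma} and \cref{rmk:DiffsGamma} after dividing by powers of $\rho$ via the Leibniz rule.

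To evaluate the inner bracket I would work directly in the cobar complex. The right unit formula \eqref{eq:etaR}, applied to $\frac{\gamma}{\rho^7\tau^7}$, simplifies drastically because $\rho^8$ annihilates this element and truncates the geometric series; a short mod-$2$ calculation yields $d\!\left(\frac{\gamma}{\rho^7\tau^7}\right) = \frac{\gamma}{\rho^6\tau^8}\, \tau_0$ with no higher-order terms. Thus $\frac{\gamma}{\rho^7\tau^7}$ is a null-homotopy for $h_0 \cdot \frac{\gamma}{\rho^6\tau^8}$, while $0$ suffices as a null-homotopy for $\rho^7 \cdot \frac{\gamma}{\rho^6\tau^8}$. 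The standard Massey product formula then produces the representative $\frac{\gamma}{\rho^7\tau^7} \cdot \rho^7 = \frac{\gamma}{\tau^7}$, a permanent Bockstein cycle in degree $(0,0,-8)$. Multiplying by $\rho^2\tau^2 e_0$ gives $\rho^2 \cdot \frac{\gamma}{\tau^5} \cdot e_0 = 0$, since $\frac{\gamma}{\tau^5}$ lies in the $\rho$-torsion part of $NC$.

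The hard part is the indeterminacy analysis, since the content of the lemma is precisely to single out one of the two candidate lifts in $\Ext_{C_2}^{15,3,1}$ (which differ by the higher-$\rho$-filtration class $\frac{\gamma}{\tau^5} h_0^2 h_4$, one killed by $h_0$ and one supporting an $h_0$-extension to $\frac{\gamma}{\tau^5} h_0^3 h_4$). The indeterminacy of $\left\langle h_0, \frac{\gamma}{\rho^6\tau^8}, \rho^7\right\rangle$ is contained in $\rho^7 \cdot \Ext_{C_2}^{7,0,-8}$, since the companion term $h_0 \cdot \Ext_{C_2}^{0,-1,-8}$ vanishes for filtration reasons. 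Any element of this form, once multiplied by $\rho^2\tau^2 e_0$, acquires a factor of $\rho^9\tau^2 e_0 = 0$. Together with the shuffle identity, this shows that the entire coset $h_0 \cdot \left\langle \frac{\gamma}{\rho^6\tau^8}, \rho^7, \rho^2\tau^2 e_0 \right\rangle$ equals $\{0\}$, so every representative of the outer Massey product is annihilated by $h_0$, as claimed.
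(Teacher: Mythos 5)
Your proof is correct and arrives at the same conclusion, but takes a genuinely different route through the shuffling relations. The paper shuffles $h_0$ to the right side, obtaining $\frac{\gamma}{\rho^6 \tau^8} \left\langle \rho^7, \rho^2 \tau^2 e_0, h_0 \right\rangle$, and then simply looks up the $\R$-motivic group $\Ext_\R^{9,4,10}$ in \cite{BI} to find that any nonzero element is $\rho^{13}$-divisible and hence annihilated by $\frac{\gamma}{\rho^6 \tau^8}$. You instead shuffle $\rho^2 \tau^2 e_0$ to the right, obtaining $\left\langle h_0, \frac{\gamma}{\rho^6\tau^8}, \rho^7 \right\rangle \cdot \rho^2 \tau^2 e_0$, and evaluate the new inner bracket directly. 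The inner bracket you land on cannot be looked up, so you compute it via the cobar complex: the key point is that the coefficients $\binom{n+6}{6}$ for $2 \le n \le 6$ are all even (by Lucas' theorem, since the relevant binary digits clash with those of $7+n$), so $\eta_R\!\left(\frac{\gamma}{\rho^7\tau^7}\right) - \frac{\gamma}{\rho^7\tau^7}$ reduces mod $2$ to exactly $\frac{\gamma}{\rho^6\tau^8}\tau_0$. This gives $\left\langle h_0, \frac{\gamma}{\rho^6\tau^8}, \rho^7\right\rangle = \frac{\gamma}{\tau^7}$, which dies upon multiplication by $\rho^2\tau^2 e_0$ because $\rho$ annihilates $\frac{\gamma}{\tau^5}$. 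Your indeterminacy bookkeeping is also correct: the $h_0$-companion lives in negative filtration, and the $\rho^7$-companion acquires the vanishing factor $\rho^9\tau^2 e_0$, so $\left\langle h_0, \frac{\gamma}{\rho^6\tau^8}, \rho^7\right\rangle \cdot \rho^2\tau^2 e_0 = \{0\}$ on the nose, and the shuffle (an equality of cosets here, since one checks both sides have identical indeterminacy) forces the conclusion. Your approach trades a table lookup in \cite{BI} for a hands-on right-unit expansion; both are clean, and yours has the minor advantage of being self-contained within the paper's own formulas.
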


\begin{pf}
We multiply the bracket by $h_0$ and shuffle:
\[
\left\langle 
\frac{\gamma}{\rho^6 \tau^8}, \rho^7, \rho^2 \tau^2 e_0 
\right\rangle
h_0 = 
\frac{\gamma}{\rho^6 \tau^8}
\left\langle 
\rho^7, \rho^2 \tau^2 e_0 , h_0
\right\rangle.
\]
The $\R$-motivic bracket on the right is in degree (9,4,10). By \cite{BI},
the only nonzero element in this degree is divisible by $\rho^{13}$. 
Therefore it is annihilated by $\frac{\gamma}{\rho^6 \tau^8}$.
\end{pf}

\subsection{Adams $d_2$ differentials}
\label{sctn:Adamsd2}

Next we study Adams $d_2$ differentials.  We consider higher
differentials in later sections.

\begin{thm}
\label{thm:Adams-d2}
\cref{tbl:Adams-d2} lists some $d_2$ differentials in the
$C_2$-equivariant Adams spectral sequence.
If a $\F_2[\rho, h_0, h_1]$-module generator of the Adams $E_2$-page
in stems less than $31$ and coweights from $-1$ to $8$
does not belong to $\Ext_\R$ and does not appear in the table, then it does not support an Adams $d_2$ differential.
\end{thm}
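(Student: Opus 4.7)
The plan is to proceed generator-by-generator, combining three complementary tools: (i) the Leibniz rule applied to $\R$-motivic Adams differentials pulled back along Betti realization; (ii) the short exact sequence of \cref{prop:rho-SES}, which relates $C_2$-equivariant differentials to the classical Adams spectral sequence for $\picl_*[\tau^{\pm 1}]$; and (iii) the stock of permanent cycles already established in \cref{prop:GArhotauPermCycles} and \cref{prop:perm-cycles}, together with the $h_1$-periodic analysis of \cref{subsctn:h1-periodic-Adams}. Since the Bockstein splitting produces generators of the form $\frac{\gamma}{\rho^j \tau^k} x$ and $\frac{Q}{\rho^j} y$, these two types can largely be handled separately.

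For the positive direction (entries appearing in \cref{tbl:Adams-d2}), I would first dispatch the $\gamma$-classes whose differentials are forced by the Leibniz rule: if $d_2(x) = z$ is a known $\R$-motivic differential and the permanent cycle $\frac{\gamma}{\rho^j \tau^k}$ satisfies $\frac{\gamma}{\rho^j \tau^k} \cdot z \neq 0$, then $d_2\bigl(\frac{\gamma}{\rho^j \tau^k} x\bigr) = \frac{\gamma}{\rho^j \tau^k} z$; the new $\R$-motivic input in \cref{prop:R-Adams-diff} (notably on $k$ and on $\tau^6 i$, $\tau^6 P e_0$) feeds directly into these. The $Q$-classes require a different argument: when a class $\frac{Q}{\rho^j} y$ or its $h_1$-multiple is forced by \cref{rho-extn-h1inv} and \cref{tbl:h1invAdamsDiffs} to hit something in the $h_1$-periodic localization, we lift this back through the localization map $\Ext_{C_2} \to \Ext_{C_2}[h_1^{-1}]$. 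For the remaining stubborn entries, I expect to use \cref{prop:rho-SES}: a $d_2$ on a $C_2$-equivariant class projects to a $d_2$ on its image in $\picl_*[\tau^{\pm 1}]$, so a classical differential (e.g. $d_2(h_4) = h_0 h_3^2$, $d_2(e_0) = h_1^2 d_0$) pulled through the SES forces a corresponding $C_2$-equivariant differential.

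For the negative direction (elements omitted from \cref{tbl:Adams-d2}), I would argue class-by-class. First, the lists in \cref{prop:GArhotauPermCycles} and \cref{prop:perm-cycles} eliminate many generators outright. Second, one rules out possible targets: any class in the correct target degree must itself survive, and in many cases the short $h_0$- and $h_1$-extension data of \cref{Ext-h0-extns} and \cref{Ext-h1-extns} together with the relations $\rho h_0 = h_0 h_1 = 0$ show that no nonzero target is available. Third, the cofiber-of-$\rho$ sequence gives strong constraints: if the underlying class $U x \in \picl_*$ is known to be a nonzero permanent cycle (or to vanish for reasons independent of the differential under consideration), then $d_2(x)$ must be $\rho$-divisible, which typically leaves no room for a target. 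Finally, when all else fails, Massey-product representations of the form $\langle \frac{\gamma}{\rho^{r-1}\tau^k}, \rho^r, z\rangle$ together with the Moss convergence theorem, as in the proof of \cref{prop:perm-cycles}, upgrade surviving Massey witnesses to permanent-cycle witnesses.

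The main obstacle I expect is the handful of degrees where a generator is not $\tau$-periodically related to any $\R$-motivic class, where the $h_1$-periodic information is silent, and where the target degree contains a nonzero class that is not obviously killed or obviously hit. In such cases the proof will hinge on a carefully chosen Massey product plus a shuffle identity against $h_0$, $h_1$, or $\rho$ to pin down the value, analogous to the method used for the hidden extensions in \cref{prop:hiddenh0extns} and \cref{prop:h1extntable}. A secondary pitfall is bookkeeping: many generators are only defined up to indeterminacy in higher $\rho$-filtration (as illustrated by \cref{lem:fifteenthreeone}), so each differential statement must be interpreted carefully to ensure it refers to the intended lift of the Bockstein $E_\infty$-representative.
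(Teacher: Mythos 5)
Your proposal follows essentially the same three-pronged strategy as the paper's proof: Leibniz rule with $\R$-motivic differentials and previously established differentials, the $h_1$-periodic information of \cref{subsctn:h1-periodic-Adams}, ruling out via $\rho$, $h_0$, $h_1$ multiplicative structure, and falling back on Massey products with the Moss convergence theorem (via \cref{prop:perm-cycles}) for the residual hard cases.

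One minor difference in framing: where you invoke the short exact sequence of \cref{prop:rho-SES} to pull classical differentials back into the $C_2$-equivariant Adams spectral sequence, the paper instead uses the long exact sequence of \cref{prop:rho-forget} in the two additional lemmas (\cref{lem:Qh1^2c0-perm} and \cref{lem:d2QPh12c0}). The logic there is slightly different than you suggest: rather than "a $d_2$ on $x$ projects to a $d_2$ on $U(x)$," the argument is that if $U(x) = 0$ (e.g., because the detecting class maps to zero under Betti realization of the $\R$-motivic Adams spectral sequence), then the homotopy elements detected by $x$ are forced by \cref{prop:rho-forget} to be $\rho$-divisible, which then singles out a linear combination that must be a permanent cycle and thereby pins down $d_2$ on the summand of interest. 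Your version (surjectivity or vanishing of $U$ forcing $d_2(x)$ into the $\rho$-divisible part) is correct in spirit, but slightly conflates two distinct modes of using the forgetful map; for these two lemmas the key is the \emph{vanishing} of the underlying class, not its survival. This is a presentational rather than mathematical gap, and the remainder of your plan is consistent with how the paper actually proceeds, case by case.
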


\cref{tbl:Adams-d2} does not include the differentials on elements in $\Ext_\R$
since those differentials are carefully considered in \cite{BI}.

\renewcommand*{\arraystretch}{1.25}
\begin{longtable}{LLLL}
\caption{Some Adams $d_2$ differentials} \\
\toprule
\mbox{}(s,f,c) & \textrm{generator} & d_2 & \textrm{proof} \\
\midrule \endhead
\bottomrule \endfoot
\label{tbl:Adams-d2}
( 15 , 1 , -1 ) &  \frac{\gamma}{\tau^7} h_4  &  \frac{\gamma}{\tau^7} h_0 h_3^2  &  \frac{\gamma}{\tau^7} \cdot h_4  \\
( 23 , 7 , -1 ) &  \frac{\gamma}{\tau^{11}} i  &  \frac{\gamma}{\tau^{11}} P h_0 d_0  &  \frac{\gamma}{\tau^{17}} \cdot \tau^6 i  \\
( 29 , 6 , -1 ) &  \frac{\gamma}{\rho^6 \tau^9} h_0 h_2 g  &  \frac{\gamma}{\tau^{13}} d_0^2  &  h_0 \cdot \frac{\gamma}{\rho^6 \tau^9} h_0 h_2 g = \frac{\gamma}{\tau^{13}} \cdot k  \\
       &    &    &  \rho^2 \cdot \frac{\gamma}{\rho^6 \tau^9} h_2 g = h_1 \cdot \frac{\gamma}{\rho^5 \tau^8} h_1 g \\
( 18 , 3 , 0 ) &  \frac{\gamma}{\rho^4 \tau^5} h_0 h_3^2  &  \frac{\gamma}{\tau^7} h_0 e_0  &  h_0 \cdot \frac{\gamma}{\rho^4 \tau^5} h_0 h_3^2 = \frac{\gamma}{\tau^7} \cdot f_0  \\
( 26 , 7 , 0 ) &  \frac{\gamma}{\tau^{11}} j  &  \frac{\gamma}{\tau^{11}} P h_0 e_0  &  \frac{\gamma}{\tau^{11}} \cdot j  \\
( 30 , 6 , 0 ) &  \frac{\gamma}{\rho^4 \tau^9} h_2^2 g  &  \frac{\gamma}{\rho \tau^{12}} d_0^2  &  h_0 \cdot \frac{\gamma}{\rho^4 \tau^9} h_2^2 g = 0  \\
       &    &    &  \rho h_1 \cdot \frac{\gamma}{\rho^4 \tau^9} h_2^2 g = \frac{\gamma}{\rho \tau^{12}} \cdot k \\
( 15 , 1 , 1 ) &  \frac{\gamma}{\tau^5} h_4  &  \frac{\gamma}{\tau^5} h_0 h_3^2  &  \frac{\gamma}{\tau^5} \cdot h_4  \\
( 17 , 3 , 1 ) &  \frac{\gamma}{\rho^8 \tau} h_1^2 h_3  &  \frac{\gamma}{\rho \tau^5} h_1 d_0  &  h_0 \cdot \frac{\gamma}{\rho^8 \tau} h_1^2 h_3 = \frac{\gamma}{\tau^5} \cdot e_0  \\
( 23 , 6 , 1 ) &  \frac{\gamma}{\rho^7 \tau^4} h_1^2 d_0  &  \frac{\gamma}{\tau^9} P d_0  &  h_0 \cdot \frac{\gamma}{\rho^7 \tau^4} h_1^2 d_0 = \frac{\gamma}{\tau^{15}} \cdot \tau^6 i  \\
( 25 , 7 , 1 ) &  \frac{\gamma}{\rho^8 \tau^4} h_1^3 d_0  &  \frac{\gamma}{\rho \tau^9} P h_1 d_0  &  h_0 \cdot \frac{\gamma}{\rho^8 \tau^4} h_1^3 d_0 = \frac{\gamma}{\tau^{15}} \cdot \tau^6 P e_0  \\
( 29 , 7 , 1 ) &  \frac{\gamma}{\tau^{11}} k  &  \frac{\gamma}{\tau^{11}} h_0 d_0^2  &  \frac{\gamma}{\tau^{11}} \cdot k  \\
( 18 , 3 , 2 ) &  \frac{\gamma}{\rho^4 \tau^3} h_0 h_3^2  &  \frac{\gamma}{\rho^2 \tau^4} h_1 d_0 + \frac{\gamma}{\tau^5} h_0 e_0  &  h_0 \cdot \frac{\gamma}{\rho^4 \tau^3} h_0 h_3^2 = \frac{\gamma}{\tau^5} \cdot f_0  \\
       &    &    &  h_1 \cdot \frac{\gamma}{\rho^4 \tau^3} h_0 h_3^2 = \frac{\gamma}{\rho^2 \tau^4} \cdot e_0 \\ 
( 21 , 4 , 2 ) &  \frac{\gamma}{\rho^4 \tau^4} e_0  &  \frac{\gamma}{\rho^4 \tau^4} h_1^2 d_0  &  \rho^4 \cdot \frac{\gamma}{\rho^4 \tau^4} e_0 = \frac{\gamma}{\tau^4} \cdot e_0  \\
( 23 , 5 , 2 ) &  \frac{\gamma}{\rho^5 \tau^4} h_1 e_0  &  \frac{\gamma}{\rho^5 \tau^4} h_1^3 d_0  &  \rho^5 \cdot \frac{\gamma}{\rho^5 \tau^4} h_1 e_0 = \frac{\gamma}{\tau^4} \cdot h_1 e_0  \\
( 26 , 6 , 2 ) &  \frac{\gamma}{\rho^6 \tau^5} h_0^2 g  &  \frac{\gamma}{\tau^9} P e_0 + \frac{\gamma}{\rho^3 \tau^8} P d_0  &  h_0 \cdot \frac{\gamma}{\rho^6 \tau^5} h_0^2 g = \frac{\gamma}{\tau^9} \cdot j  \\
       &    &    &  \rho^2 \cdot \frac{\gamma}{\rho^6 \tau^5} h_0^2 g = h_1 \cdot \frac{\gamma}{\rho^5 \tau^{4}} h_1 e_0 \\
( 28 , 7 , 2 ) &  \frac{\gamma}{\rho^5 \tau^8} i  &  \frac{\gamma}{\rho^4 \tau^8} P h_1 d_0  &  \rho \cdot \frac{\gamma}{\rho^5 \tau^8} i = h_1 \cdot  \frac{\gamma}{\rho^6 \tau^5} h_0^2 g  \\
( 15 , 1 , 3 ) &  \frac{\gamma}{\tau^3} h_4  &  \frac{\gamma}{\tau^3} h_0 h_3^2  &  \frac{\gamma}{\tau^3} \cdot h_4  \\
( 21 , 4 , 3 ) &  \frac{\gamma}{\rho^3 \tau^4} f_0  &  \frac{\gamma}{\tau^5} h_0^2 g  &  \frac{\gamma}{\rho^3 \tau^4} \cdot f_0  \\
       &    &    &  \frac{\gamma}{\rho^3 \tau^4} \cdot h_0^2 e_0 = \frac{\gamma}{\tau^5} h_0 h_2 \cdot e_0 \\
( 23 , 7 , 3 ) &  \frac{\gamma}{\tau^7} i  &  \frac{\gamma}{\tau^7} P h_0 d_0  &  \frac{\gamma}{\tau^{13}} \cdot \tau^6 i  \\
( 29 , 7 , 3 ) &  \frac{\gamma}{\tau^9} k  &  \frac{\gamma}{\tau^9} h_0 d_0^2  &  \frac{\gamma}{\tau^9} \cdot k  \\
( 15 , 7 , 4 ) &  \frac{Q}{\rho^2} P h_1^4  &  h_1^6 c_0  &  h_1 \text{-periodic}  \\
( 18 , 4 , 4 ) &  \frac{\gamma}{\tau^3} f_0  &  \frac{\gamma}{\tau^3} h_0^2 e_0  &  \frac{\gamma}{\tau^3} \cdot f_0  \\
( 26 , 6 , 4 ) &  \frac{\gamma}{\rho^6 \tau^3} h_0^2 g  &  \frac{\gamma}{\tau^7} P e_0  &  h_0 \cdot \frac{\gamma}{\rho^6 \tau^3} h_0^2 g = \frac{\gamma}{\tau^7} \cdot j  \\
( 15 , 1 , 5 ) &  \frac{\gamma}{\tau} h_4  &  \frac{\gamma}{\tau} h_0 h_3^2  &  \frac{\gamma}{\tau} \cdot h_4  \\
( 17 , 4 , 5 ) &  \frac{\gamma}{\tau} e_0  &  \frac{\gamma}{\tau} h_1^2 d_0  &  \frac{\gamma}{\tau} \cdot e_0  \\
( 23 , 7 , 5 ) &  \frac{\gamma}{\tau^5} i  &  \frac{\gamma}{\tau^5} P h_0 d_0  &  \frac{\gamma}{\tau^{11}} \cdot \tau^6 i  \\
( 25 , 8 , 5 ) &  \frac{\gamma}{\tau^5} P e_0  &  \frac{\gamma}{\tau^5} P h_1^2 d_0  &  \frac{\gamma}{\tau^{11}} \cdot \tau^6 P e_0  \\
( 26 , 5 , 5 ) &  \frac{\gamma}{\rho^5 \tau^2} h_1 g  &  \frac{\gamma}{\rho^2 \tau^6} i  &  \rho h_1^2 \cdot \frac{\gamma}{\rho^5 \tau^2} h_1 g = \frac{\gamma}{\rho \tau^6} \cdot j  \\
( 29 , 6 , 5 ) &  \frac{\gamma}{\rho^6 \tau^3} h_0 h_2 g  &  \frac{\gamma}{\rho^3 \tau^6} P e_0 + \frac{\gamma}{\tau^7} d_0^2  &  h_0 \cdot \frac{\gamma}{\rho^6 \tau^3} h_0 h_2 g = \frac{\gamma}{\tau^7} \cdot k  \\
       &    &    &  \rho^2 \cdot \frac{\gamma}{\rho^6 \tau^3} h_0 h_2 g = h_1 \cdot \frac{\gamma}{\rho^5 \tau^2} h_1 g \\
( 18 , 4 , 6 ) &  \frac{\gamma}{\tau} f_0  &  \frac{\gamma}{\tau} h_0^2 e_0  &  \frac{\gamma}{\tau} \cdot f_0  \\
( 26 , 7 , 6 ) &  \frac{\gamma}{\tau^5} j  &  \frac{\gamma}{\tau^5} P h_0 e_0  &  \frac{\gamma}{\tau^5} \cdot j  \\
( 19 , 8 , 7 ) &  Q P h_1^2 c_0  &  \rho^2 h_1^6 d_0  &  \text{\cref{lem:d2QPh12c0}} \\
( 20 , 5 , 7 ) &  Q h_1^2 e_0  &  Q h_1^4 d_0  &  h_1^2 \cdot Q h_1^2 e_0 =  Q h_1^4 \cdot e_0  \\
( 22 , 9 , 7 ) &  \frac{Q}{\rho^2} P h_1^3 c_0  &  h_1^7 d_0  &  \rho^2 \cdot \frac{Q}{\rho^2} P h_1^3 c_0 =  h_1 \cdot Q P h_1^2 c_0  \\
( 23 , 6 , 7 ) &  \frac{Q}{\rho^2} h_1^3 e_0  &  \frac{\gamma}{\tau^3} P d_0 + \frac{Q}{\rho^2} h_1^5 d_0  &  h_0 \cdot \frac{Q}{\rho^2} h_1^3 e_0 = \frac{\gamma}{\tau^9} \cdot \tau^6 i  \\
       &    &    &  \rho^2 \cdot \frac{Q}{\rho^2} h_1^3 e_0 = h_1 \cdot Q h_1^2 e_0 \\
( 25 , 7 , 7 ) &  \frac{Q}{\rho^3} h_1^4 e_0  &  \frac{Q}{\rho^3} h_1^6 d_0  &  \rho \cdot \frac{Q}{\rho^3} h_1^4 e_0 = h_1 \cdot \frac{Q}{\rho^2} h_1^3 e_0  \\
( 28 , 9 , 7 ) &  \frac{Q}{\rho^4} h_1^6 e_0  &  \frac{\gamma}{\rho^3 \tau^4} P^2 c_0  &  \rho \cdot \frac{Q}{\rho^4} h_1^6 e_0 = h_1^2 \cdot \frac{Q}{\rho^3} h_1^4 e_0  \\
( 29 , 6 , 7 ) &  \frac{\gamma}{\rho^6 \tau} h_0 h_2 g  &  \frac{\gamma}{\tau^5} d_0^2  &  h_0 \cdot \frac{\gamma}{\rho^6 \tau} h_0 h_2 g = \frac{\gamma}{\tau^5} \cdot k  \\
( 30 , 10 , 7 ) &  \frac{Q}{\rho^5} h_1^7 e_0  &  \frac{\gamma}{\rho^4 \tau^4} P^2 h_1 c_0  &  \rho \cdot \frac{Q}{\rho^5} h_1^7 e_0 = h_1 \cdot \frac{Q}{\rho^4} h_1^6 e_0  \\
( 23 , 11 , 8 ) &  \frac{Q}{\rho^2} P^2 h_1^4  &  \rho^4 h_1^9 e_0  &  h_1 \text{-periodic}  \\
( 25 , 12 , 8 ) &  \frac{Q}{\rho^3} P^2 h_1^5  &  \rho^3 h_1^{10} e_0  &  \rho \cdot \frac{Q}{\rho^3} P^2 h_1^5 = h_1 \cdot \frac{Q}{\rho^2} P^2 h_1^4  \\
( 26 , 7 , 8 ) &  \frac{\gamma}{\tau^3} j  &  \frac{\gamma}{\tau^3} P h_0 e_0  &  \frac{\gamma}{\tau^3} \cdot j  \\
( 30 , 6 , 8 ) &  \frac{\gamma}{\rho^4 \tau} h_2^2 g  &  \frac{\gamma}{\rho \tau^4} d_0^2  &  h_0 \cdot \frac{\gamma}{\rho^4 \tau} h_2^2 g = 0  \\
       &    &    &  \rho h_1 \cdot \frac{\gamma}{\rho^4 \tau} h_2^2 g = \frac{\gamma}{\rho \tau^4} \cdot k \\
( 30 , 15 , 8 ) &  \frac{Q}{\rho^5} P^2 h_1^8  &  \rho h_1^3 e_0 + \frac{Q}{\rho} h_1^{11} c_0  &  h_1 \text{-periodic}  \\
\end{longtable}
\renewcommand*{\arraystretch}{1.0}

\begin{proof}
Most of the differentials can be deduced using multiplicative relations
that relate the elements under consideration to elements in the
$\R$-motivic Adams spectral sequence.  The relevant relations appear 
in the last column of the table.
For example, 
consider the element $\frac{\gamma}{\rho^4 \tau^3} h_0 h_3^2$ in 
degree $(18, 3, 2)$.
The relation $h_0 \cdot \frac{\gamma}{\rho^4 \tau^3} h_0 h_3^2 = \frac{\gamma}{\tau^5} \cdot f_0$ implies that
\[
h_0 \cdot d_2 \left( \frac{\gamma}{\rho^4 \tau^3} h_0 h_3^2 \right) = 
\frac{\gamma}{\tau^5} \cdot d_2(f_0) =
\frac{\gamma}{\tau^5} h_0^2 e_0.
\]
Therefore, $d_2 \left( \frac{\gamma}{\rho^4 \tau^3} h_0 h_3^2 \right)$ equals
either $\frac{\gamma}{\tau^5} h_0 e_0$ or 
$\frac{\gamma}{\tau^5} h_0 e_0 + \frac{\gamma}{\rho^2 \tau^4} h_1 d_0$.
On the other hand, the relation
$h_1 \cdot \frac{\gamma}{\rho^4 \tau^3} h_0 h_3^2 = \frac{\gamma}{\rho^2 \tau^4} \cdot e_0$ implies that
\[
h_1 \cdot d_2 \left( \frac{\gamma}{\rho^4 \tau^3} h_0 h_3^2 \right) = 
\frac{\gamma}{\rho^2 \tau^4} \cdot d_2(e_0) = 
\frac{\gamma}{\rho^2 \tau^4} \cdot h_1^2 d_0.
\]
Therefore, 
$d_2 \left( \frac{\gamma}{\rho^4 \tau^3} h_0 h_3^2 \right)$ must equal
$\frac{\gamma}{\tau^5} h_0 e_0 + \frac{\gamma}{\rho^2 \tau^4} h_1 d_0$.

In other cases, we use multiplicative relations to relate
the elements under consideration to other elements whose
$d_2$ differentials have already been established.
For example, consider the element $\frac{Q}{\rho^2} P h_1^3 c_0$ 
in degree $(22,9,7)$.
The relation
$\rho^2 \cdot \frac{Q}{\rho^2} P h_1^3 c_0 = h_1 \cdot Q P h_1^2 c_0$
implies that
\[
\rho^2 \cdot d_2 \left( \frac{Q}{\rho^2} P h_1^3 c_0 \right) = h_1 \cdot 
d_2\left( Q P h_1^2 c_0 \right).
\]
Since 
$d_2\left( Q P h_1^2 c_0 \right)$ is already known to equal $\rho^2 h_1^6 d_0$
by \cref{lem:d2QPh12c0},
it follows that 
$d_2 \left( \frac{Q}{\rho^2} P h_1^3 c_0 \right)$ must equal $h_1^7 d_0$.

Some differentials are determined by the $h_1$-periodic
computations of \cref{subsctn:h1-periodic-Adams};
see especially \cref{tbl:h1invAdamsDiffs}.

Many possibilities are ruled out by considering multiplication
by $\rho$, $h_0$, or $h_1$.  For example,
the element $\frac{\gamma}{\tau^7} e_0$ in degree $(17,4,-1)$
is annihilated by $h_1$, but $\frac{\gamma}{\rho^2 \tau^7} h_0^2 d_0$
supports an $h_1$ multiplication.  Therefore,
$d_2 \left( \frac{\gamma}{\tau^7} e_0 \right)$ cannot equal
$\frac{\gamma}{\rho^2 \tau^7} h_0^2 d_0$.

A few more difficult cases remain.  Most of them are settled in
\cref{prop:perm-cycles}.  See  
\cref{lem:Qh1^2c0-perm} and \cref{lem:d2QPh12c0} below
for two additional cases.
\end{proof}

\begin{lemma} $(11,4,3)$ 
\label{lem:Qh1^2c0-perm}
The element $Q h_1^2 c_0$ is a permanent cycle in the $C_2$-equivariant Adams spectral sequence.
\end{lemma}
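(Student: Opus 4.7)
The plan is to realize $Q h_1^2 c_0$ as a Massey product in $\Ext_{C_2}$ and invoke the Moss convergence theorem, in the style of \cref{prop:perm-cycles}. By \cref{prop:Q-bracket}, the coset $Q h_1^2 c_0$ contains the Massey product $\left\langle \frac{\gamma}{\tau}, \tau, h_1^2 c_0 \right\rangle$. This bracket is defined because $\frac{\gamma}{\tau} \cdot \tau = 0$ already in $\bMC$, and because $\tau \cdot h_1^2 c_0 = 0$ in $\Ext_{C_2}$; the latter uses the $\C$-motivic relation $\tau h_1^2 = h_0^2 h_2$ together with $h_2 c_0 = 0$, with any $\rho$-correction terms arising in $\Ext_\R$ checked by inspection to vanish in degree $(10,5,4)$.

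Each of the three entries is a permanent cycle in the $C_2$-equivariant Adams spectral sequence: $\frac{\gamma}{\tau}$ by \cref{prop:GArhotauPermCycles}, $\tau$ trivially by degree reasons, and $h_1^2 c_0$ by Betti realization from the $\R$-motivic Adams spectral sequence, where it detects $\eta^2 \epsilon$. Write $\alpha, \beta, \gamma$ for $C_2$-equivariant homotopy classes they respectively detect. Both $\alpha \beta$ and $\beta \gamma$ vanish in $\piC_{*,*}$, since their Adams representatives are zero in $\Ext_{C_2}$ and there are no hidden extensions in the relevant degrees (read off directly from the charts in \cref{fig:Ext}). Hence the Toda bracket $\langle \alpha, \beta, \gamma \rangle \subseteq \piC_{11,3}$ is defined.

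The Moss convergence theorem then produces a permanent cycle inside the Massey product $\left\langle \frac{\gamma}{\tau}, \tau, h_1^2 c_0 \right\rangle$ which detects an element of this Toda bracket, provided the crossing-differential hypothesis is satisfied. This hypothesis is verified by inspection of possible Adams differentials that could cross the tridegree of the bracket. Hence some representative of the coset $Q h_1^2 c_0$ is a permanent cycle.

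The main technical obstacle is the crossing-differential check for Moss, and, if $Q h_1^2 c_0$ has nontrivial indeterminacy in the Bockstein $E_\infty$-page, one must additionally verify that every representative of the coset (not merely one) is a permanent cycle, so that the statement of the lemma is unambiguous. Any such indeterminacy lies in strictly higher $\rho$-filtration in degree $(11,4,3)$, and the potential Adams targets of these indeterminacy elements can be ruled out by direct inspection, since they would have to map to classes that are either absent or already known to be hit by other differentials established in \cref{thm:Adams-d2}.
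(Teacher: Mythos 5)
Your approach is genuinely different from the paper's. The paper gives a much shorter argument using the forgetful map: since $h_1^2 c_0$ is an $\R$-motivic permanent cycle detecting a class whose image under the underlying homomorphism $U\colon\piC_{*,*}\to\picl_*$ vanishes (because $h_1^2 c_0 = 0$ in classical $\Ext$), \cref{prop:rho-forget} forces that class to be $\rho$-divisible, and $Q h_1^2 c_0$ is the only possible source of a hidden $\rho$-extension hitting $h_1^2 c_0$; this simultaneously establishes the permanent cycle and the hidden $\rho$-extension recorded in \cref{tbl:hidden-rho}. Your Massey-product-plus-Moss strategy is the method the authors use for \cref{prop:perm-cycles}, and it is plausible here, but it carries more overhead (crossing differentials, indeterminacy of the bracket, and hidden-extension checks in two auxiliary degrees) than the paper's one-step argument.

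There is also a concrete error in your justification that the bracket is defined. You write that $\tau\cdot h_1^2 c_0 = 0$ "uses the $\C$-motivic relation $\tau h_1^2 = h_0^2 h_2$." That relation is false; the correct one is $\tau h_1^3 = h_0^2 h_2$, which only gives $\tau h_1^3 c_0 = 0$, not $\tau h_1^2 c_0 = 0$. The vanishing $\tau h_1^2 c_0 = 0$ is nevertheless true, but it comes from the May differential $d_4(b_{20} h_0(1)) = \tau h_1^2 c_0$ — exactly the differential the paper invokes in the proof of \cref{prop:E1-minus-hidden} when establishing that $Q h_1^2 c_0 = \left\langle \frac{\gamma}{\tau}, \tau, h_1^2 c_0\right\rangle$. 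You should replace the incorrect relation with this May differential, or simply cite \cref{prop:E1-minus-hidden}, which already records that the bracket is defined and equals $Q h_1^2 c_0$ with no indeterminacy. With that fix, the Moss argument goes through; it just does more work than the paper's proof to reach the same conclusion and does not give the accompanying hidden $\rho$-extension for free.
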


\begin{proof}
The element $h_1^2 c_0$ is a permanent cycle in the $\R$-motivic Adams 
$E_\infty$-page that maps to the element of the same name in 
the $\C$-motivic Adams $E_\infty$-page.  In turn,
Betti realization takes $h_1^2 c_0$ to zero.  Therefore, the underlying
homomorphism $U$ 
(see \cref{subsctn:underlying})
takes the $C_2$-equivariant homotopy classes
detected by $h_1^2 c_0$ to zero.
From \cref{prop:rho-forget}, 
it follows that $h_1^2 c_0$ must detect homotopy elements that are 
divisible by $\rho$.
The only possibility is that $Q h_1^2 c_0$ is a permanent cycle
that supports a hidden $\rho$-extension to $h_1^2 c_0$.
\end{proof}

\begin{lemma}
\label{lem:d2QPh12c0}
$(19,8,7)$ 
$d_2 \left( Q P h_1^2 c_0 \right) = \rho^2 h_1^6 d_0$.
\end{lemma}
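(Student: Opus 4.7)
My plan is to deduce the differential from a Massey product representation of $QPh_1^2 c_0$ combined with the Moss convergence theorem, following the pattern used for other $Q$-class differentials in the paper. By Proposition \ref{prop:Q-bracket}, for sufficiently large $N$ we have $QPh_1^2 c_0 = \left\langle \tfrac{\gamma}{\tau^N}, \tau^N, Ph_1^2 c_0 \right\rangle$ in the Bockstein $E_1^-$-page, and this bracket representation lifts (with controlled indeterminacy) to $\Ext_{C_2}$. I would then apply the Moss convergence theorem to compute the Adams $d_2$ on this bracket.

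The first step is to verify that $\tfrac{\gamma}{\tau^N}$ (for the relevant $N$), $\tau^N$, and $Ph_1^2 c_0$ are permanent cycles in the $C_2$-equivariant Adams spectral sequence, and that crossing-differential hypotheses are satisfied. Since $Ph_1^2 c_0$ is an $\R$-motivic permanent cycle, the naive Moss formula $d_2\!\left\langle \tfrac{\gamma}{\tau^N}, \tau^N, Ph_1^2 c_0 \right\rangle = \left\langle \tfrac{\gamma}{\tau^N}, \tau^N, d_2 Ph_1^2 c_0\right\rangle$ vanishes. Thus the nontrivial value of $d_2(QPh_1^2 c_0)$ must arise from an indeterminacy or from a ``hidden'' contribution tracked by the $\rho$-Bockstein filtration.

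The second step, and the technical heart of the argument, is to extract this hidden contribution from the interaction between the $\tau$-torsion of $Ph_1^2 c_0$ in $\Ext_\C$ and its $\rho$-divisibility structure in $\Ext_\R$. Concretely, the vanishing of $\tau^N \cdot Ph_1^2 c_0$ in $\Ext_\C$ promotes in $\Ext_\R$ to a relation of the form $\tau^N \cdot Ph_1^2 c_0 = \rho^2 \cdot h_1^6 d_0 \cdot (\text{unit})$ (up to lower-filtration terms), which follows by comparison with the $\R$-motivic Bockstein and Adams differentials compiled in \cite{BI} and Table \ref{tbl:R-Adams-diff}. Feeding this into the formula for $\eta_R\!\left(\tfrac{\gamma}{\tau^N}\right)$ from \eqref{eq:etaR} and the cobar-level computation underlying Moss, the required $\rho^2 h_1^6 d_0$ appears as the $d_2$-target.

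The main obstacle will be bookkeeping: one must check that the Moss hypotheses genuinely hold in the relevant degrees (no crossing differentials, controlled indeterminacy from $\tfrac{\gamma}{\tau^k}$-multiples), confirm the required $\R$-motivic relation $\tau^N Ph_1^2 c_0 = \rho^2 h_1^6 d_0 + \cdots$, and verify that the alternative possible value $d_2(QPh_1^2 c_0) = 0$ is inconsistent with multiplicative consequences such as $h_1^2 \cdot QPh_1^2 c_0 = \rho^2 \cdot \tfrac{Q}{\rho^2} Ph_1^4 c_0$ together with the $h_1$-periodic differential $d_2\!\left(\tfrac{Q}{\rho^2} v_1^4 v_2\right) = v_2^2$ of Table \ref{tbl:h1invAdamsDiffs}. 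The latter consistency check, which propagates the $h_1$-periodic $d_2$ down to the lowest $h_1$-power in the tower, likely gives the cleanest finishing argument and avoids circular dependence with the $(22,9,7)$ entry of Table \ref{tbl:Adams-d2}.
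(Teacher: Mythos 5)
Your approach is genuinely different from the paper's, and the two main tools you reach for (Moss convergence for a $Q$-class bracket, and the $h_1$-periodic differential) are not what the paper uses. The paper's argument is short and works in the opposite direction: by \cite{BI}*{Table 17}, $\rho^3 h_1^4 e_0$ detects an $\R$-motivic homotopy class mapping to $Ph_1^2 c_0$ in $\C$-motivic homotopy, hence the underlying map $U$ kills the corresponding $C_2$-equivariant class; by \cref{prop:rho-forget} that class is $\rho$-divisible, there are no hidden $\rho$-extensions available, so $QPh_1^2 c_0 + \rho^2 h_1^4 e_0$ must be a permanent cycle, and the known $\R$-motivic $d_2(\rho^2 h_1^4 e_0)=\rho^2 h_1^6 d_0$ immediately forces $d_2(QPh_1^2 c_0)=\rho^2 h_1^6 d_0$. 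This is exactly the $\coker\rho$-style argument via \cref{prop:rho-SES} that the paper leans on repeatedly; you did not locate the crucial input that $\rho^3 h_1^4 e_0 \leftrightarrow Ph_1^2 c_0$ under the $\R$-to-$\C$ comparison.

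The middle of your proposal has a genuine gap. The asserted $\R$-motivic relation ``$\tau^N Ph_1^2 c_0 = \rho^2 h_1^6 d_0\cdot(\text{unit})$ up to lower filtration'' does not typecheck: the left side lives in degree $(19,8,7+N)$ and the right in $(18,10,6)$, so it cannot be a relation in $\Ext_\R$, and it is unclear what you intend by ``feeding this into $\eta_R$''. Moreover, once you observe (correctly) that the naive Moss formula gives zero, you do not have a mechanism by which the bracket $\langle\tfrac{\gamma}{\tau^N},\tau^N,Ph_1^2 c_0\rangle$ acquires a nonzero $d_2$; Moss controls the differential in terms of differentials on the inputs, and an appeal to ``indeterminacy'' or ``hidden contribution'' is not a proof. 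Your closing $h_1$-periodic argument is the viable part: the localization map commutes with $d_2$, the image of $\tfrac{Q}{\rho^2}Ph_1^2 c_0$ supports $d_2([\,\cdot\,])=v_2^2$ by \cref{tbl:h1invAdamsDiffs}, and $\rho^2 v_2^2 \neq 0$ in the localized $E_2$, so $d_2(QPh_1^2 c_0)=\rho^2\,d_2(\tfrac{Q}{\rho^2}Ph_1^2 c_0)$ is nonzero. But you present this only as a ``consistency check''; to make it a proof you still must argue that $\rho^2 h_1^6 d_0$ is the unique possible nonzero value in degree $(18,10,6)$ (the localization only pins down the answer modulo $h_1$-power torsion), which requires a chart inspection that you have not supplied. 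As written, the proposal does not constitute a complete proof, though the $h_1$-periodic idea is salvageable.
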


\begin{pf}
We know from \cite{BI}*{Table 17}
that $\rho^3 h_1^4 e_0$ is a permanent cycle in the 
$\R$-motivic Adams $E_\infty$-page that maps to $P h_1^2 c_0$
in the $\C$-motivic Adams $E_\infty$-page.  In turn, Betti realization
takes $P h_1^2 c_0$ to zero.  Therefore,
the underlying homomorphism $U$ 
(see \cref{subsctn:underlying})
takes
the $C_2$-equivariant homotopy classes detected by $\rho^3 h_1^4 e_0$ to zero.
From \cref{prop:rho-forget}, it follows that $\rho^3 h_1^4 e_0$ must detect homotopy elements
that are divisible by $\rho$.
There are no possible hidden $\rho$-extensions.
The only remaining possibility is that 
$Q P h_1^2 c_0 + \rho^2 h_1^4 e_0$
is a permanent cycle.
We already know from \cite{BI} that
$d_2(\rho^2 h_1^4 e_0) = \rho^2 h_1^6 d_0$, so
$d_2(Q P h_1^2 c_0)$ also equals $\rho^2 h_1^6 d_0$.
\end{pf}

\subsection{Adams $d_3$ differentials}
\label{sctn:Adamsd3}

Having settled all of the Adams $d_2$ differentials in a range,
our next task is to compute Adams $d_3$ differentials.
Adams $E_3$ charts are displayed in \cref{fig:Ethree}.

\begin{thm}
\label{thm:Adams-d3}
\cref{tbl:Adams-d3} lists some $d_3$ differentials in the
$C_2$-equivariant Adams spectral sequence.
If a $\F_2[\rho, h_0, h_1]$-module generator of the Adams $E_3$-page
in stems less than $30$ and coweights from $-1$ to $8$
does not appear in the table, then it does not support an Adams $d_3$ differential.
\end{thm}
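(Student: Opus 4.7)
The plan is to follow the same general strategy used for the $d_2$ differentials in \cref{thm:Adams-d2}, now starting from the $E_3$-page computed in \cref{sctn:Adamsd2}. First, I would import all known $\R$-motivic Adams $d_3$ differentials from \cite{BI} via Betti realization; for each $\R$-motivic generator in the relevant range, the realization map dictates a corresponding $d_3$ differential on the $\R$-motivic summand of the $C_2$-equivariant $E_3$-page. This handles the generators that lie in $\Ext_\R$.

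Next, I would propagate these differentials across the negative cone using the $\Ext_\R$-module structure together with the multiplicative relations catalogued during the Bockstein analysis (\cref{sctn:hidden}) and the hidden $h_0$- and $h_1$-extensions from \cref{Ext-h0-extns} and \cref{Ext-h1-extns}. As in the proof of \cref{thm:Adams-d2}, each such argument takes the form: a relation $a \cdot x = b \cdot y$ in $\Ext_{C_2}$ forces $a \cdot d_3(x) = b \cdot d_3(y)$, which typically determines $d_3(x)$ uniquely. In particular, multiplying by $\rho$ or by the periodic elements $\frac{\gamma}{\rho^j \tau^k}$ that were shown to be permanent cycles in \cref{prop:GArhotauPermCycles} lets us transport differentials from the $\R$-motivic part into the negative cone. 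The $h_1$-periodic computations of \cref{subsctn:h1-periodic-Adams} handle an additional family of $Q$-classes automatically.

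For the cases not covered by direct comparison or multiplicative relations, I would fall back on three tools. First, many purported differentials can be ruled out because their targets have already been proven to be permanent cycles in \cref{prop:perm-cycles}, where Massey product presentations and the Moss convergence theorem are applied. Second, some differentials must hold because $Q$-type permanent cycles detect underlying-trivial classes and hence must be $\rho$-divisible by \cref{prop:rho-forget} and the long exact sequence of \cref{prop:rho-SES}; this is exactly the mechanism used in \cref{lem:Qh1^2c0-perm} and \cref{lem:d2QPh12c0}. Third, possibilities for $d_3$ targets are often pinned down by $h_0$- or $h_1$-multiplications: if a candidate target is annihilated by $h_0$ (resp.\ $h_1$) while the source supports a nonzero $h_0$-multiple (resp.\ $h_1$-multiple) that is also a $d_3$-cycle, the differential is excluded.

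The main obstacle will be the handful of negative-cone generators whose $d_3$ cannot be reduced to $\R$-motivic data by any straightforward multiplicative shuffle — typically classes of the form $\frac{Q}{\rho^k} y$ or $\frac{\gamma}{\rho^a \tau^b} x$ in the middle of the chart where several candidate targets survive to $E_3$ in the same trigrading. For these, I expect to need individually tailored arguments combining the $\rho$-cofiber sequence \eqref{eq:cofiber rho}, the hidden $\rho$-extensions that will be established in \cref{sec:cofibrho}, and occasionally Massey product manipulations as in the proofs of the harder Bockstein differentials (\cref{lem:d3-QPh1^4,lem:d-QPh1^kc0}). Each such case will be stated as a separate lemma, paralleling the structure used for $d_2$, and collected in \cref{tbl:Adams-d3} with a proof pointer in the final column.
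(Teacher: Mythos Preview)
Your proposal is essentially correct and follows the same overall approach as the paper: multiplicative relations carry most of the load, \cref{prop:perm-cycles} rules out many potential targets, $h_0$- and $h_1$-multiplication constraints eliminate further possibilities, and a handful of residual cases require individual lemmas.

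One caution: you list ``the hidden $\rho$-extensions that will be established in \cref{sec:cofibrho}'' among the tools for the hard cases. That section comes after the Adams differentials and depends on knowing the $E_\infty$-page, so invoking it here would be circular. The paper avoids this by using \cref{prop:rho-SES} directly, together with the known orders of the \emph{classical} stable stems, to bound $|(\coker\rho)_{s,c}|$ or $|(\ker\rho)_{s,c}|$ and thereby force a differential (see \cref{lem:d3-g/p^5t^3-h0^2g}, \cref{lem:d3-Q/p-h1^8e0}, \cref{lem:d3-g/t^5-h0k}). This counting mechanism is slightly different from the ``underlying-trivial hence $\rho$-divisible'' argument of \cref{lem:Qh1^2c0-perm} that you cite; it does not require any forward knowledge of hidden extensions. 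Also, the $h_1$-periodic input is not actually needed at the $d_3$ stage---all $h_1$-periodic differentials were already $d_2$'s.
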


\renewcommand*{\arraystretch}{1.25}
\begin{longtable}{LLLL}
\caption{Some Adams $d_3$ differentials} \\
\toprule
\mbox{}(s,f,c) & \textrm{generator} & d_3 & \textrm{proof} \\
\midrule \endhead
\bottomrule \endfoot
\label{tbl:Adams-d3}
(15, 2, -1) & \frac{\gamma}{\tau^7} h_0 h_4 & \frac{\gamma}{\tau^7} h_0 d_0 & \frac{\gamma}{\tau^7} \cdot h_0 h_4 \\
(15, 2, 1) & \frac{\gamma}{\tau^5} h_0 h_4 & \frac{\gamma}{\tau^5} h_0 d_0 & \frac{\gamma}{\tau^5} \cdot h_0 h_4 \\
(15, 2, 3) & \frac{\gamma}{\tau^3} h_0 h_4 & \frac{\gamma}{\tau^3} h_0 d_0 & \frac{\gamma}{\tau^3} \cdot h_0 h_4 \\
(23, 5, 4) & \frac{\gamma}{\rho^5 \tau^2} h_1 e_0 & \frac{\gamma}{\tau^6} P d_0 & h_1 \cdot \frac{\gamma}{\rho^5 \tau^2} h_1 e_0 = \rho \cdot \frac{\gamma}{\rho^5 \tau^3} h_0^2 g \\
(25, 6, 4) & \frac{\gamma}{\rho^5 \tau^3} h_0^2 g &	\frac{\gamma}{\rho \tau^6} P h_1 d_0 & \textrm{\cref{lem:d3-g/p^5t^3-h0^2g}} \\
(15, 2, 5) & \frac{\gamma}{\tau} h_0 h_4 & \frac{\gamma}{\tau} h_0 d_0 & \frac{\gamma}{\tau} \cdot h_0 h_4 \\
(23, 5, 5) & \frac{\gamma}{\rho^2 \tau^2} h_1 g & \frac{\gamma}{\tau^5} P d_0 & h_1 \cdot \frac{\gamma}{\rho^2 \tau^2} h_1 g = \rho \cdot \frac{\gamma}{\rho^2 \tau^3} h_0 h_2 g \\
(25, 6, 5) & \frac{\gamma}{\rho^2 \tau^3} h_0 h_2 g & \frac{\gamma}{\rho \tau^5} P h_1 d_0 & \rho \cdot \frac{\gamma}{\rho^2 \tau^3} h_0 h_2 g = \frac{\gamma}{\rho \tau^6} \cdot \tau^3 h_0 h_2 g \\
(30, 6, 6) & \frac{\gamma}{\tau^7} r & \frac{\gamma}{\tau^6} h_1 d_0^2 & 
\frac{\gamma}{\tau^7} \cdot r \\
(15, 2, 7) & h_0 h_4 & h_0 d_0 + \rho h_1 d_0 & \text{\cite{BI}} \\
(29, 8, 7) & \frac{\gamma}{\tau^5} h_0 k & \frac{\gamma}{\rho^4 \tau^4} P^2 c_0 & \textrm{\cref{lem:d3-g/t^5-h0k}}  
\end{longtable}
\renewcommand*{\arraystretch}{1.0}

\begin{proof}
As in the proof of \cref{thm:Adams-d2}, most of the differentials
can be deduced using multiplicative relations.  The relevant
relations appear in the last column of the table.
For example, consider the element 
$\frac{\gamma}{\rho^2 \tau^3} h_0 h_2 g$ in degree $(25, 6, 5)$.
The relation
$\rho \cdot \frac{\gamma}{\rho^2 \tau^3} h_0 h_2 g = \frac{\gamma}{\rho \tau^6} \cdot \tau^3 h_0 h_2 g$ implies that
\[
\rho \cdot d_3 \left( \frac{\gamma}{\rho^2 \tau^3} h_0 h_2 g \right) =
\frac{\gamma}{\rho \tau^6} \cdot d_3 \left( \tau^3 h_0 h_2 g \right) =
\frac{\gamma}{\rho \tau^6} \cdot \rho \tau P h_1 d_0 =
\frac{\gamma}{\tau^5} P h_1 d_0.
\]
The second equality is an $\R$-motivic differential established in \cite{BI}.

Many possibilities are ruled out by recognizing that 
$\F_2[\rho, h_0, h_1]$-module generators are products of elements
that are already known to be permanent cycles.  For example,
the element $\frac{\gamma}{\rho^5 \tau^8} h_1 g$
in degree $(26,5,-1)$ is the product
$\frac{\gamma}{\rho^5 \tau^{12}} h_1 \cdot \tau^4 g$.

More possibilities are ruled out by considering multiplication
by $\rho$, $h_0$, or $h_1$.  For example,
the element $\frac{\gamma}{\tau} h_3^2$ in degree $(14,2,4)$
is annihilated by $h_1$, but $\frac{Q}{\rho} h_1^3 c_0$
supports an $h_1$ multiplication.  Therefore,
$d_3 \left( \frac{\gamma}{\tau} h_3^2 \right)$ cannot equal
$\frac{Q}{\rho} h_1^3 c_0$.

A few more difficult cases remain.  Most of them are settled in
\cref{prop:perm-cycles}.  See  
\cref{lem:d3-g/p^5t^3-h0^2g}, \cref{lem:d3-Q/p-h1^8e0}, and
\cref{lem:d3-g/t^5-h0k} 
below for three additional cases.
\end{proof}

\begin{lemma}
\label{lem:d3-g/p^5t^3-h0^2g}
$(25, 6, 4)$
$d_3\left( \frac{\gamma}{\rho^5 \tau^3} h_0^2 g \right) = 
\frac{\gamma}{\rho \tau^6} P h_1 d_0$.
\end{lemma}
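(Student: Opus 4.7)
The plan is to apply the Leibniz rule to the multiplicative relation
\[
h_1 \cdot \frac{\gamma}{\rho^5 \tau^2} h_1 e_0 \;=\; \rho \cdot \frac{\gamma}{\rho^5 \tau^3} h_0^2 g
\]
in $\Ext_{C_2}$. This relation arises as a hidden $h_1$-extension in the $\rho$-Bockstein spectral sequence and is the same identification that is already invoked for the $(23,5,4)$ entry of \cref{tbl:Adams-d3}. Applying $d_3$ to both sides and using the preceding value $d_3\!\left(\frac{\gamma}{\rho^5 \tau^2} h_1 e_0\right) = \frac{\gamma}{\tau^6} P d_0$, one obtains
\[
\rho \cdot d_3\!\left(\frac{\gamma}{\rho^5 \tau^3} h_0^2 g\right) \;=\; h_1 \cdot \frac{\gamma}{\tau^6} P d_0 \;=\; \frac{\gamma}{\tau^6} P h_1 d_0.
\]

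Since $\frac{\gamma}{\tau^6} P h_1 d_0 = \rho \cdot \frac{\gamma}{\rho \tau^6} P h_1 d_0$, unique $\rho$-divisibility in the negative cone (\cref{prop:E-minus-structure3}) pins down the value of $d_3\!\left(\frac{\gamma}{\rho^5 \tau^3} h_0^2 g\right)$ modulo possible $\rho$-torsion contributions in the target bidegree $(24,9,3)$ on the Adams $E_3$-page. The remaining step is to verify, by inspection of \cref{fig:Ext} together with the $d_2$ differentials already recorded in \cref{thm:Adams-d2}, that no such $\rho$-torsion survives in this bidegree. This yields the claimed value $\frac{\gamma}{\rho \tau^6} P h_1 d_0$.

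The main technical obstacle is justifying the use of the multiplicative relation at the level of Adams $d_3$, because it is genuinely a hidden $h_1$-extension, not an on-page equality: the two sides are Bockstein $E_\infty^-$ representatives living in different Bockstein filtrations of the same $\Ext_{C_2}$ class. One must therefore invoke the hidden extension results of \cref{sctn:hidden} to confirm that the two expressions do represent one and the same Adams $E_2$-class before differentiating, so that the Leibniz computation above is legitimate.
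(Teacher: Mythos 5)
Your argument is circular, and the circularity is exactly at the step you present as the starting point. The value
\[
d_3\!\left(\tfrac{\gamma}{\rho^5\tau^2} h_1 e_0\right) = \tfrac{\gamma}{\tau^6} P d_0
\]
that you quote as ``the preceding value'' is not independently known at this point of the argument. In the paper's development, the $(23,5,4)$ entry of \cref{tbl:Adams-d3} is \emph{derived from} the present lemma, precisely by applying $d_3$ to the relation $h_1 \cdot \frac{\gamma}{\rho^5\tau^2} h_1 e_0 = \rho \cdot \frac{\gamma}{\rho^5\tau^3} h_0^2 g$ once $d_3\!\left(\frac{\gamma}{\rho^5\tau^3} h_0^2 g\right) = \frac{\gamma}{\rho\tau^6} P h_1 d_0$ is in hand; the ``proof'' column for the $(23,5,4)$ entry records exactly this relation, and the ``proof'' column for the $(25,6,4)$ entry points to \cref{lem:d3-g/p^5t^3-h0^2g}. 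So the implication runs $(25,6,4) \Rightarrow (23,5,4)$. You have simply reversed the arrow without supplying any independent justification of the $(23,5,4)$ differential, and both classes live in the negative cone, so neither is directly supplied by comparison with the $\R$-motivic Adams spectral sequence. (Your invocation of \cref{prop:E-minus-structure3} to handle $\rho$-torsion is also not quite on target — that proposition describes the $\rho$-Bockstein $E_r^-$-pages, not the Adams $E_3$-page — but you did flag that this needs a chart check, and it is a secondary issue compared to the circularity.)

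The paper's proof is genuinely different and supplies the missing independent input. It does not use the Leibniz rule at all. Instead it uses the cofiber-of-$\rho$ short exact sequence of \cref{prop:rho-SES}: the classical $24$-stem has order four, so $\coker(\rho)_{24,3}$ can have order at most four. Three candidate $\F_2$-generators — $\frac{\gamma}{\rho \tau^6} h_4 c_0$, $\frac{\gamma}{\rho \tau^6} P h_1 d_0$, and $\frac{\gamma}{\rho^6 \tau^4} P^2 h_1^2$ — would otherwise survive and contribute, so at least one must be killed. Hidden $\rho$-extensions are ruled out, forcing at least one of them to be hit by an Adams differential, and inspection shows the only possibility is the claimed $d_3$. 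If you want to keep a Leibniz-rule style argument, you must first produce a self-contained proof of the $(23,5,4)$ differential on $\frac{\gamma}{\rho^5\tau^2} h_1 e_0$; absent that, the counting argument via \cref{prop:rho-SES} is what actually breaks the circle.
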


\begin{proof}
The classical 24-stem has order four.  
\cref{prop:rho-SES} then implies that the 
quotient of $\piC_{24,3}$ by the image of $\rho$
has order four at most.
Of the three elements $\frac{\gamma}{\rho \tau^6} h_4 c_0$, $\frac{\gamma}{\rho \tau^6} P h_1 d_0$, and $\frac{\gamma}{\rho^6 \tau^4} P^2 h_1^2$, at least
one must either support a differential or receive a hidden $\rho$-extension.
There are no possible hidden $\rho$-extensions, 
so at least one of these
three elements must support a differential.
By inspection, there is only one possible differential.
\end{proof}

\begin{lemma}
\label{lem:d3-Q/p-h1^8e0}
$(27, 11, 7)$ 
The element $\frac{Q}{\rho} h_1^8 e_0$ is a permanent cycle in the
$C_2$-equivariant Adams spectral sequence.
\end{lemma}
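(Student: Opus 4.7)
\begin{pf}
The plan is to rule out every possible non-zero Adams differential on $\frac{Q}{\rho} h_1^8 e_0$, organized by length.

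For the $d_2$-differential, I write $\frac{Q}{\rho} h_1^8 e_0 = \rho^3 h_1^2 \cdot \frac{Q}{\rho^4} h_1^6 e_0$ and apply the Leibniz rule to the differential $d_2(\frac{Q}{\rho^4} h_1^6 e_0) = \frac{\gamma}{\rho^3 \tau^4} P^2 c_0$ from \cref{tbl:Adams-d2}; this yields $d_2(\frac{Q}{\rho} h_1^8 e_0) = \frac{\gamma}{\tau^4} P^2 h_1^2 c_0$.  I expect this target to vanish already in $\Ext_{NC}$, because it is the value of the Bockstein differential $d_4(\frac{Q}{\rho^4} P h_1^6 c_0) = \frac{\gamma}{\tau^4} P^2 h_1^2 c_0$, obtained by applying Leibniz (multiplying by $P h_1^2$) to the differential $d_4(\frac{Q}{\rho^4} h_1^4 c_0) = \frac{\gamma}{\tau^4} P c_0$ of \cref{tbl:Bockstein-differentials-Qgamma}.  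Hence $\frac{Q}{\rho} h_1^8 e_0$ survives to the $E_3$-page.

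For $d_r$ with $r \geq 3$, the potential targets lie in degree $(26, 11+r, 8)$.  I would inspect the $E_3$-page chart in coweight $8$ (\cref{fig:Ethree}) and eliminate each candidate using previously-established Adams differentials (\cref{thm:Adams-d3}), known permanent cycles (\cref{prop:perm-cycles}), and multiplicative relations.  Two relations are particularly useful: $\rho \cdot \frac{Q}{\rho} h_1^8 e_0 = Q h_1^8 e_0$ constrains any proposed differential upon multiplication by $\rho$, and $h_1 \cdot \frac{Q}{\rho} h_1^8 e_0 = \frac{Q}{\rho} h_1^9 e_0$ provides further constraints.

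The main obstacle is the number of candidate targets in coweight $8$ and the care required in tracking hidden Bockstein extensions (\cref{tbl:hiddenh0extns}, \cref{table:h1extn}) that can alter those multiplicative relations.  A cleaner fallback, patterned on \cref{lem:Qh1^2c0-perm} and \cref{lem:d2QPh12c0}, is the forgetful-map argument: locate an $\R$-motivic Adams permanent cycle in the appropriate bidegree whose image under Betti realization is zero in the $\C$-motivic Adams $E_\infty$-page, then apply \cref{prop:rho-forget} and \cref{prop:rho-SES} to conclude that the corresponding $C_2$-equivariant class is $\rho$-divisible, and observe that $\frac{Q}{\rho} h_1^8 e_0$ is the only possible candidate divisor in degree $(27, 11, 7)$.
\end{pf}
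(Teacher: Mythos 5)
Your proposal has a genuine gap: you address $d_2$ concretely, but for $d_r$ with $r \ge 3$ — the bulk of what must be ruled out — you offer only a plan to scan the $E_3$-page and a "fallback" that is left as a sketch. Even the $d_2$ step is flagged as an expectation rather than a verified claim, and the conclusion that $\rho^3 h_1^2 \cdot \frac{\gamma}{\rho^3\tau^4} P^2 c_0$ vanishes in $\Ext_{C_2}$ requires checking that no hidden Bockstein $h_1$- or $\rho$-extension rescues it once $\frac{\gamma}{\tau^4} P^2 h_1^2 c_0$ is killed as a Bockstein name.

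The paper rules out all differentials simultaneously, and avoids any case analysis of potential targets. The argument is via \cref{prop:rho-SES}: if $\frac{Q}{\rho} h_1^8 e_0$ supported any Adams differential, then in coweight $7$ the classes $Q h_1^7 e_0$ and $Q h_1^8 e_0$ would both detect nonzero elements of $(\coker \rho)_{*,7}$ — there are no possible hidden $\rho$-extensions with those targets — and these two classes are related by an $h_1$-multiplication. Pushing forward under the injection $(\coker \rho)_{*,7} \hookrightarrow \picl_*[\tau^{\pm 1}]$ would then produce an $\eta$-extension in the corresponding classical stable stem that does not exist, a contradiction. Your fallback idea, modeled on \cref{lem:Qh1^2c0-perm}, invokes the same cofiber-of-$\rho$ machinery but in a different variant (a vanishing Betti realization rather than a missing classical $\eta$-extension); you do not identify the needed $\R$-motivic class, so as written neither of your two routes constitutes a proof.
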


\begin{proof}
Suppose that $\frac{Q}{\rho} h_1^8 e_0$ supported a differential.
Then, in coweight 7, the elements
$Q h_1^7 e_0$ and $Q h_1^8 e_0$ would both detect
homotopy elements in the cokernel of $\rho$ since there are no possible
hidden $\rho$-extensions.  
Moreover, these two elements would be related by an $\eta$-extension.
This contradicts \cref{prop:rho-SES} since there is no $\eta$-extension in the classical 24-stem.
\end{proof}

\begin{lemma}
\label{lem:d3-g/t^5-h0k}
$(29, 8, 7)$
$d_3 \left( \frac{\gamma}{\tau^5} h_0 k \right) = \frac{\gamma}{\rho^4 \tau^4} P^2 c_0$.
\end{lemma}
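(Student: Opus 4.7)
The plan is to establish this $d_3$ differential in three steps: confirm that $\frac{\gamma}{\tau^5} h_0 k$ survives $d_2$, show that it cannot be a permanent cycle, and identify the $d_3$ target by elimination. The target $(28,11,6)$ is the correct degree since $d_3$ decreases $s$ and $c$ by $1$ and raises $f$ by $3$.

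First, I would check survival. By the Leibniz rule and the $\R$-motivic differential $d_2(k) = h_0 d_0^2 + \rho h_1 d_0^2$ from \cref{tbl:R-Adams-diff}, we have
\[
d_2\!\left(\tfrac{\gamma}{\tau^5} h_0 k\right) \;=\; \tfrac{\gamma}{\tau^5}\, h_0^2 d_0^2,
\]
since $\rho \cdot \tfrac{\gamma}{\tau^5}=0$ (the class $\tfrac{\gamma}{\tau^5}$ sits at the first stage of the $\rho$-filtration) and $h_0 h_1 = 0$. I would then verify that $\tfrac{\gamma}{\tau^5} h_0^2 d_0^2$ vanishes in $\Ext_{NC}$ by exhibiting it as the target of a Bockstein differential, using the periodic-differentials machinery of \cref{subsctn:periodic-diff} applied to the $\R$-motivic relation expressing $h_0^2 d_0^2$ as a $d_2$-boundary together with \cref{DiffsGamma}.

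Second, I would argue that $\tfrac{\gamma}{\tau^5} h_0 k$ cannot be a permanent cycle. Since $\rho$ annihilates it on $E_2$ and the chart in coweight $6$ shows no possible hidden $\rho$-extension from a class in stem $30$, any homotopy class $\alpha \in \piC_{29,7}$ that it detects would lie in $(\ker \rho)_{29,7}$. By \cref{prop:rho-SES}, $\alpha$ would correspond to a nonzero element of $\picl_{29}$ that, when viewed in Adams filtration, must have filtration at least that of $\tfrac{\gamma}{\tau^5} h_0 k$, namely $8$. An inspection of the classical Adams $E_\infty$-page shows that stem $29$ has no surviving class in Adams filtration $\ge 8$ (the would-be class $h_0 k$ having been hit or supporting a classical differential), producing a contradiction. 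Hence $\tfrac{\gamma}{\tau^5} h_0 k$ must support some nonzero differential, and $d_3$ is the earliest possibility compatible with surviving $d_2$.

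Third, I would pin down the target by degree and structure constraints. The $d_3$ lands in $(28,11,6)$; the image of $d_3$ on a $\rho$-annihilated source is automatically $\rho$-annihilated, and a glance at \cref{fig:Ethree} in coweight $6$ shows that after removing $\rho$-divisible classes and those ruled out by comparison with previously established $d_2$ or $d_3$ differentials in \cref{thm:Adams-d2} and the earlier rows of \cref{tbl:Adams-d3}, the only remaining candidate is $\tfrac{\gamma}{\rho^4 \tau^4} P^2 c_0$.

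The main obstacle will be the first step: rigorously showing that $\tfrac{\gamma}{\tau^5} h_0^2 d_0^2$ is zero in $\Ext_{NC}$, since this class is not obviously a listed Bockstein boundary in \cref{tbl:Bockstein-differentials-Qgamma}. The contingency plan is to instead apply the Leibniz rule to a Bockstein $d_r$ on $\tfrac{\gamma}{\rho^r\tau^{5+m}}$ (for appropriate $r,m$) multiplied by a cobar representative of $h_0 d_0^2$, realizing $\tfrac{\gamma}{\tau^5} h_0^2 d_0^2$ as the $h_0$-multiple of a Bockstein boundary on $\tfrac{\gamma}{\tau^5} h_0 d_0^2$.
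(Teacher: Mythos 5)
Your core strategy matches the paper's: use the short exact sequence of \cref{prop:rho-SES} together with the fact that the classical $29$-stem vanishes to force $\frac{\gamma}{\tau^5} h_0 k$ to die, then identify the claimed $d_3$ by elimination. The paper's proof is exactly your steps 2 and 3, stated more simply. However, there are a few issues worth flagging.

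First, your step 1 (verifying $d_2$-survival via Leibniz) is redundant within the paper's framework: the lemma lives in \cref{sctn:Adamsd3}, after the exhaustive $d_2$ analysis of \cref{thm:Adams-d2} has already established the $E_3$-page. More importantly, the mechanism you propose to show $\frac{\gamma}{\tau^5} h_0^2 d_0^2 = 0$ won't work as stated. That class \emph{is} zero in $\Ext_{NC}$, but the Bockstein differential responsible is the torsion-type differential $d_5\bigl(\frac{Q}{\rho^5} h_1^6 e_0\bigr) = \frac{\gamma}{\tau^5} h_0^2 d_0^2$ of \cref{lem:d-QPh1^ke0}(4), whose source is a $Q$-class. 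Neither the periodic-differentials machinery of \cref{subsctn:periodic-diff} (which produces $\gamma$-to-$\gamma$ differentials via Leibniz on $\frac{\gamma}{\rho^r\tau^{2^n}}$) nor the contingency plan of realizing it as an $h_0$-multiple of a boundary on $\frac{\gamma}{\tau^5}h_0 d_0^2$ will produce this particular $d_5$; you would need the arguments of \cref{sctn:Bockstein-diff} specific to $Q$-classes.

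Second, in step 2 you overcomplicate the classical input: the $2$-primary $\picl_{29}$ is entirely zero, so there is no need to argue about surviving classes in high filtration. Also, the filtration comparison is stated backwards. Since the surjection $\picl_{29} \to (\ker\rho)_{29,7}$ preserves Adams filtration (can only raise it), a hypothetical preimage of a filtration-$8$ class would have filtration $\leq 8$, not $\geq 8$. This doesn't affect the conclusion here because $\picl_{29}=0$, but the reasoning as written is incorrect.
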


\begin{proof}
If $\frac{\gamma}{\tau^5} h_0 k$ were non-zero in the
Adams $E_\infty$-page, then it would
detect an element of $\piC_{29,7}$ that is annihilated by $\rho$.
Since the classical 29-stem is zero, 
\cref{prop:rho-SES} implies that there are no non-zero elements in
$\piC_{29,7}$ that are annihilated by $\rho$.
Therefore, $\frac{\gamma}{\tau^5} h_0 k$ must be hit by an Adams differential
or support an Adams differential.  The only possibility is that 
$d_3 \left( \frac{\gamma}{\tau^5} h_0 k \right)$ equals $\frac{\gamma}{\rho^4 \tau^4} P^2 c_0$.
\end{proof}

\subsection{Higher differentials}
\label{sctn:Adamsdhigher}

Having settled all of the Adams $d_3$ differentials in a range,
our next goal is to study higher Adams differentials in that range.
\cref{thm:Adams-d-higher} establishes that
there are no higher differentials in a large range.
In \cref{lem:d4-g/t^11-Dh2^2} and \cref{lem:d4-g/t^10-Dh2^2},
we also establish some non-zero $d_4$ differentials that lie slightly
outside of the range that we have completely analyzed.
As there are only a handful of remaining differentials in the 
range under consideration, we do not display $E_4$ charts. 
See \cref{fig:Einf} for Adams $E_\infty$ charts.

\begin{thm}
\label{thm:Adams-d-higher}
In stems less than $27$ and coweights from $-1$ to $8$,
the Adams differentials $d_r$ vanish for all $r \geq 4$.
\end{thm}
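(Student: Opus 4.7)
The plan is to verify, for each $\F_2[\rho,h_0,h_1]$-module generator $x$ of the Adams $E_4$-page in the stated range, that $d_r(x)=0$ for all $r\geq 4$. I would organize the argument by separating the generators into three classes: (i) $\R$-motivic classes (those lying in $\Ext_\R$), (ii) negative-cone classes of $\gamma$-type (of the form $\frac{\gamma}{\rho^i\tau^j}x$), and (iii) negative-cone classes of $Q$-type.

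For class (i), Betti realization is a map of spectral sequences from the $\R$-motivic Adams spectral sequence to the $C_2$-equivariant one, and the higher $\R$-motivic differentials in the relevant range are completely determined in \cite{BI}. Most $\R$-motivic generators carry no higher $\R$-motivic $d_r$, and those that do have their targets visible $\R$-motivically; in either case, the $C_2$-equivariant behavior is forced, up to possible correction terms in $\Ext_{NC}$. These correction terms are ruled out one-by-one by multiplying with $\rho$, $h_0$, or $h_1$, or by noting that the putative correction lies in bidegrees where nothing is available on the $E_4$-page.

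For classes (ii) and (iii), the main inputs are the short exact sequence of \cref{prop:rho-SES} (together with \cref{prop:Adams-C2-cohtpy}) and the multiplicative structure. Concretely, for each generator $x$ I would: first, list the candidate targets $y$ on the $E_4$-page, many of which are already eliminated because they were hit by a $d_2$ or $d_3$ from \cref{tbl:Adams-d2} and \cref{tbl:Adams-d3}; second, use relations of the form $\rho x$, $h_0 x$, or $h_1 x$ equals a product involving a known permanent cycle to propagate vanishing (e.g. if $\rho x$ is a permanent cycle and $\rho y \neq 0$, then $d_r(x)=y$ is impossible); third, for the remaining candidates, use \cref{prop:rho-SES} to bound $\piC_{s,c}$ by $\picl_s$ in adjacent coweights, which in the stems $s<27$ and coweights $-1\leq c\leq 8$ forces the differential to vanish for counting reasons. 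For $Q$-type classes I would in addition exploit the $h_1$-periodic computation of \cref{subsctn:h1-periodic-Adams}, which collapses at $E_3$, so any higher differential on a $Q$-class must have an $h_1$-torsion target.

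The main obstacle is the mixing between positive and negative cones: potential differentials of the form $d_r(\tfrac{\gamma}{\rho^i\tau^j}x)=y$ with $y\in\Ext_\R$ (or the reverse) cannot be handled purely within one summand, and each such case requires locating the appropriate multiplicative witness or Toda/Massey relation. Concretely, the delicate spots are near elements such as $\frac{\gamma}{\tau^5}h_0 k$, $\frac{\gamma}{\rho \tau^6}P h_1 d_0$, and the $Q P^2 h_1^k$ family, where the target of a hypothetical $d_r$ is itself produced by a hidden extension. I expect each of these to be dispatched by combining a Massey-product identification as in \cref{prop:perm-cycles} with \cref{prop:rho-SES}, but bookkeeping the indeterminacies coming from higher-filtration $\rho$-Bockstein contributions is where the argument is most likely to require care.
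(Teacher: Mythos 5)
Your overall strategy matches the paper's: rule out possible $d_r$ ($r\geq 4$) on $\F_2[\rho,h_0,h_1]$-module generators by recognizing generators as products of permanent cycles, by propagating constraints from multiplications by $\rho$, $h_0$, $h_1$, and by falling back on bracket methods for the hold-outs. The paper's proof is a terse list of exactly these tools. However, two points of your proposal don't quite line up with what actually happens.

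First, your named ``delicate spots'' are off. The element $\frac{\gamma}{\tau^5}h_0 k$ sits in degree $(29,8,7)$, which is outside the range $s<27$ of the theorem (it is handled by the $d_3$ argument in \cref{lem:d3-g/t^5-h0k}, not here). The element $\frac{\gamma}{\rho\tau^6}P h_1 d_0$ in degree $(25,6,4)$ is the stated \emph{target} of a $d_3$ in \cref{tbl:Adams-d3}, so it is gone by the $E_4$-page and cannot appear as a candidate target for any $d_{\geq 4}$. The $Q P^2 h_1^k$ classes in coweight $8$ all carry $d_2$'s recorded in \cref{tbl:Adams-d2}. The genuinely stubborn generators in the theorem's range turn out to be $\frac{\gamma}{\rho\tau^6}h_4$ and $\frac{\gamma}{\rho\tau^2}h_4$ in stem $16$ and $\frac{\gamma}{\rho^2\tau^4}f_0$ in stem $20$.

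Second, and more substantively, you say you expect those hard cases to be dispatched by ``a Massey-product identification as in \cref{prop:perm-cycles} with \cref{prop:rho-SES}.'' But \cref{prop:perm-cycles} uses May convergence for Massey products on the Adams $E_2$-page coming from $\rho$-Bockstein differentials. The argument the paper actually needs for $\frac{\gamma}{\rho\tau^6}h_4$ and $\frac{\gamma}{\rho^2\tau^4}f_0$ (see \cref{lem:g/pt^6-h4} and \cref{lem:g/p^2t^4-f0}) is of a different flavor: one forms a Massey product on the Adams $E_3$-page whose inputs are Adams $d_2$ differentials, such as $d_2(h_4)=(h_0+\rho h_1)h_3^2$ and $d_2(f_0)=h_0 h_2 d_0$, and then applies the Moss convergence theorem. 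The paper explicitly emphasizes this difference. If you only have the $E_2$-page Bockstein-Massey technology available, you will not be able to close these cases. Adding Moss convergence applied to Adams $E_3$ Massey products to your toolkit repairs the gap.

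As a minor note, \cref{prop:rho-SES} and the $h_1$-periodic computations you invoke are not used in the paper's proof of this particular theorem (they are used for some $d_2$, $d_3$, and $d_4$ differentials elsewhere); relying on them here is a reasonable alternative route but is not what the paper does, and for the stem-16 and stem-20 cases they appear to be insufficient on their own.
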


\begin{proof}
As in the proof of \cref{thm:Adams-d3},
many possibilities are ruled out by recognizing that
$\F_2[\rho, h_0, h_1]$-module generators are products of elements
that are already known to be permanent cycles. 
More possibilities are ruled out by consideration of multiplication
by $\rho$, $h_0$, or $h_1$. 
A few more difficult cases are settled in \cref{prop:perm-cycles}.
See \cref{lem:g/pt^6-h4} and \cref{lem:g/p^2t^4-f0}
below for some slightly different cases.
\end{proof}

\begin{lemma}
\label{lem:g/pt^6-h4}
$(16, 1, 0)$, $(16, 1, 4)$
The elements $\frac{\gamma}{\rho \tau^6} h_4$ 
and $\frac{\gamma}{\rho \tau^2} h_4$ are
permanent cycles.
\end{lemma}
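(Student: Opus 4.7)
The plan is to represent each of the two classes as a $3$-fold Massey product of permanent cycles and then invoke the Moss convergence theorem, following the template established for similar degrees in \cref{prop:perm-cycles}. The key input is the $\R$-motivic $\rho$-Bockstein differential $d_4(\rho\tau^4 h_4) = \rho^5\tau^2 h_2 h_4$, obtained by multiplying $d_4(\tau^4) = \rho^4\tau^2 h_2$ (see \cref{tbl:hiddenh0extns}) by the Bockstein permanent cycle $\rho h_4$. This is precisely the differential that controls the $(18,1,2)$-entry $\frac{\gamma}{\rho^3\tau^4} h_4$ in \cref{prop:perm-cycles}, and the argument here is essentially the same computation performed at different Bockstein filtrations.

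First I would apply the May convergence theorem to this differential to exhibit the identifications
\[
\frac{\gamma}{\rho\tau^2}\,h_4 \;=\; \left\langle \tfrac{\gamma}{\rho^2\tau^6},\ \rho^5,\ \tau^2 h_2 h_4 \right\rangle
\qquad\text{and}\qquad
\frac{\gamma}{\rho\tau^6}\,h_4 \;=\; \left\langle \tfrac{\gamma}{\rho^2\tau^{10}},\ \rho^5,\ \tau^2 h_2 h_4 \right\rangle,
\]
whose tri-degrees balance to $(16,1,4)$ and $(16,1,0)$ respectively.  The brackets are well-defined because $\rho^5 \cdot \tau^2 h_2 h_4$ vanishes in $\Ext_\R$ as a Bockstein boundary, while $\tfrac{\gamma}{\rho^2\tau^k}\cdot\rho^5 = \rho^3\cdot\tfrac{\gamma}{\tau^k}$ vanishes in $\Ext_{NC}$ by \cref{prop:E-minus-structure}(1). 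A direct tri-degree inspection shows that both the Massey indeterminacy and the sets of crossing differentials are empty, exactly as in the template cases of \cref{prop:perm-cycles}.

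The third step is to verify that every entry of the two brackets is a permanent cycle in the $C_2$-equivariant Adams spectral sequence. For $\rho^5$ and $\tau^2 h_2 h_4$ this is straightforward: $\rho$ and $\tau$ are permanent cycles, and $h_2 h_4$ is a $d_2$-cycle because $h_2 \cdot h_0 h_3^2 = 0$ in $\Ext_\C$ (with no possibility of higher differentials in its degree). For $\tfrac{\gamma}{\rho^2\tau^6}$ and $\tfrac{\gamma}{\rho^2\tau^{10}}$, a scan of possible Adams $d_r$-targets in tri-degrees $(1, 2+r, -8)$ and $(1, 2+r, -12)$ shows that the only candidate classes in the negative cone are multiples of $\tfrac{\gamma}{\rho\tau^{2k+1}} h_0^2$, all of which are killed at the Bockstein $E_1^-$-page by the differential $d_1\bigl(\tfrac{\gamma}{\rho\tau^{2k+1}} h_0\bigr) = \tfrac{\gamma}{\tau^{2k+2}} h_0^2$ from \cref{rmk:DiffsGamma}, and there are no contributions from $\Ext_\R$ in these degrees. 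The Moss convergence theorem then produces a permanent-cycle representative of each Massey product, yielding the lemma.

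The main obstacle is the permanent-cycle verification of $\tfrac{\gamma}{\rho^2\tau^6}$ and $\tfrac{\gamma}{\rho^2\tau^{10}}$, which is not covered by \cref{prop:GArhotauPermCycles}. The degree-inspection argument sketched above settles it, but as a fallback one can instead invoke the $\tau$-periodicity isomorphism of \cref{prop:gapbridging} applied in the $2$-stem, transporting the question into a range where $\R$-motivic data apply directly.
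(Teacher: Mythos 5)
The central step of your argument collapses: the elements you want to use as the first entry of each bracket, $\tfrac{\gamma}{\rho^2\tau^6}$ and $\tfrac{\gamma}{\rho^2\tau^{10}}$, are \emph{zero} in $\Ext_{C_2}$, so the Massey products cannot be set up via the May convergence theorem. By \cref{rmk:DiffsGamma} with $n = 1$, there are Bockstein differentials
\[
d_2\!\left(\frac{\gamma}{\rho^2\tau^{4k+2}}\right) = \frac{\gamma}{\tau^{4k+3}}\,h_1,
\]
and setting $k = 1$ and $k = 2$ shows that $\tfrac{\gamma}{\rho^2\tau^6}$ and $\tfrac{\gamma}{\rho^2\tau^{10}}$ support nonzero $\rho$-Bockstein $d_2$'s and hence die before reaching $\Ext_{C_2}$. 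So $\left\langle \tfrac{\gamma}{\rho^2\tau^6}, \rho^5, \tau^2 h_2 h_4 \right\rangle$ has a vanishing first coordinate; it is not a well-formed bracket detecting a nonzero class, and the Moss convergence step has nothing to converge. (Your third step claims these two elements are permanent Adams cycles after a degree scan; they can't be, since they aren't even nonzero on the Adams $E_2$-page.)

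This is not a small fixable gap: it is exactly why the template of \cref{prop:perm-cycles} does not carry over. The $(18,1,2)$ entry $\tfrac{\gamma}{\rho^3\tau^4} h_4 = \left\langle \tfrac{\gamma}{\rho^4\tau^8}, \rho^5, \tau^2 h_2 h_4 \right\rangle$ works because $\tfrac{\gamma}{\rho^4\tau^8}$ is one of the surviving $\tfrac{\gamma}{\rho^{2^n-1}\tau^{2^n}}$-type elements identified in \cref{prop:GArhotauPermCycles}. The Bockstein filtrations forced by your target degrees $(16,1,0)$ and $(16,1,4)$ land instead on $\tfrac{\gamma}{\rho^2\tau^{2m}}$ with $m$ odd, which are precisely the sources of the $d_2(\tfrac{\gamma}{\rho^2\tau^{4k+2}})$ family and so do not survive. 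The phrase ``essentially the same computation performed at different Bockstein filtrations'' is where the proposal goes wrong: those different filtrations kill the needed ingredient.

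The paper's proof therefore takes a genuinely different route, and the text even flags this explicitly in the lead-up to the lemma: one does \emph{not} use Massey products coming from the $\rho$-Bockstein (Adams $E_2$-page) spectral sequence. Instead, one uses the Adams $d_2$-differential $d_2(h_4) = (h_0 + \rho h_1) h_3^2$ to form the Massey product $\left\langle \tfrac{\gamma}{\rho\tau^6}, h_0 + \rho h_1, h_3^2 \right\rangle$ directly in the Adams $E_3$-page, and likewise $\left\langle \tfrac{\gamma}{\rho\tau^2}, h_0 + \rho h_1, h_3^2 \right\rangle$. The middle entry must be $h_0 + \rho h_1$ rather than $h_0$ because $\tfrac{\gamma}{\rho\tau^6}\cdot(h_0 + \rho h_1) = 0$ while $\tfrac{\gamma}{\rho\tau^6}\cdot h_0 \neq 0$ (this follows from the hidden $h_0$-extensions in \cref{tbl:hiddenh0extns}). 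The Moss convergence theorem applied to this $E_3$-page bracket then yields the permanent cycle. Your fallback via \cref{prop:gapbridging} in the $2$-stem does not rescue the argument, since it would be applied to classes that are zero anyway.
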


\begin{proof}
We use the Moss convergence theorem \cite{Moss} \cite{BK}.
However, unlike in the proof of \cref{prop:perm-cycles}, we do not use
Massey products in the $E_2$-page.  Rather, 
the Adams differential $d_2(h_4) = (h_0 + \rho h_1) h_3^2$
implies that 
$\frac{\gamma}{\rho \tau^6} h_4$ equals
the Massey product
$\left\langle \frac{\gamma}{\rho \tau^6}, h_0 + \rho h_1, h_3^2 \right\rangle$
in the Adams $E_3$-page.  The technical hypotheses of the Moss convergence
theorem are satisfied, so the Massey product converges to a Toda bracket,
and $\frac{\gamma}{\rho \tau^6} h_4$
is a permanent cycle.

The argument for $\frac{\gamma}{\rho \tau^6} h_4$ is identical
since it equals the analogous Massey product
$\left\langle \frac{\gamma}{\rho \tau^2}, h_0 + \rho h_1, h_3^2 \right\rangle$
in the $E_3$-page.
\end{proof}

\begin{rmk}
In the proof of \cref{lem:g/pt^6-h4}, we have used the element
$h_0 + \rho h_1$ rather than the seemingly more obvious choice $h_0$.
This is necessary since $\frac{\gamma}{\rho \tau^6} \cdot (h_0 + \rho h_1)$
is zero, while $\frac{\gamma}{\rho \tau^6} \cdot h_0$ is non-zero.
\end{rmk}

\begin{lemma}
\label{lem:d4-g/t^11-Dh2^2}
$(30, 6, 2)$  $d_4 \left( \frac{\gamma}{\tau^{11}} r \right) = \frac{\gamma}{\rho^5 \tau^8} P h_1^2 d_0$.
\end{lemma}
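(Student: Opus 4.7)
The plan is to combine the Leibniz rule with the known $\R$-motivic Adams differential $d_3(r) = \tau h_1 d_0^2$, then use a $\rho$-torsion constraint together with process-of-elimination to identify the $d_4$-target, in the same spirit as the other entries of \cref{tbl:Adams-d2,tbl:Adams-d3}.

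First I would verify that $\frac{\gamma}{\tau^{11}} r$ survives to the $E_4$-page. Since $\rho \cdot \frac{\gamma}{\tau^{11}} = 0$ and this class is annihilated by the relevant $d_2$ targets for degree reasons, $\frac{\gamma}{\tau^{11}}$ is a $d_3$-cycle. Applying Leibniz to the $\R$-motivic differential $d_3(r) = \tau h_1 d_0^2$ therefore gives
\[
d_3\!\left(\frac{\gamma}{\tau^{11}} r\right) = \frac{\gamma}{\tau^{11}} \cdot \tau h_1 d_0^2 = \frac{\gamma}{\tau^{10}} h_1 d_0^2
\]
at the $E_3$-page. The first key step is to confirm that $\frac{\gamma}{\tau^{10}} h_1 d_0^2$ is already zero on the Adams $E_3$-page in bidegree $(29,9,1)$, either because it is killed in the $\rho$-Bockstein spectral sequence (\cref{sctn:Bockstein-diff}) or because it is the target of an earlier Adams $d_2$ from \cref{tbl:Adams-d2}. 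This shows that $\frac{\gamma}{\tau^{11}} r$ is a $d_3$-cycle and so lives on $E_4$.

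Next, I would rule out $\frac{\gamma}{\tau^{11}} r$ being a permanent cycle. Since $\rho \cdot \frac{\gamma}{\tau^{11}} r = 0$, any surviving class it detects lies in $(\ker \rho)_{30,2}$, and the short exact sequence of \cref{prop:rho-SES} together with \cref{prop:Adams-C2-cohtpy} sends such a class into $\picl_{30}[\tau^{\pm 1}]$ in the shifted coweight. The classical Adams differential $d_3(r) = h_1 d_0^2$ shows that $r$ does not detect a class in $\picl_{30}$, and tracking the Adams filtration precludes a surviving compatible image. Hence $\frac{\gamma}{\tau^{11}} r$ must support a nonzero differential on $E_r$ for some $r \geq 4$.

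Finally, I would identify the target in bidegree $(29,10,1)$ by inspection of the $E_4$-page. The source constraint $\rho \cdot \frac{\gamma}{\tau^{11}} r = 0$ forces the $d_4$-target to be annihilated by $\rho$ on $E_4$. Among the candidates in this bidegree, the class $\frac{\gamma}{\rho^5 \tau^8} P h_1^2 d_0$ is the unique one satisfying this property, once one checks that $\rho \cdot \frac{\gamma}{\rho^5 \tau^8} P h_1^2 d_0 = \frac{\gamma}{\rho^4 \tau^8} P h_1^2 d_0$ has already died in $E_4$ (traceable to previously established Adams $d_2$ and $d_3$ differentials).

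The main obstacle is the bookkeeping in the two vanishing verifications: that $\frac{\gamma}{\tau^{10}} h_1 d_0^2$ is zero at $E_3$, and that $\frac{\gamma}{\rho^4 \tau^8} P h_1^2 d_0$ is zero at $E_4$, together with the ruling out of every other class in $(29,10,1)$. All of these are straightforward chart inspections using \cref{fig:Ext} and the differentials already in hand, but they must be done carefully to ensure no Adams-filtration correction enters the Leibniz computation.
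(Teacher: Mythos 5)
The paper proves this lemma by a much cleaner route: it observes that the \emph{target} $\frac{\gamma}{\rho^5 \tau^8} P h_1^2 d_0$ lives in degree $(29,10,1)$, and since the 2-complete $\picl_{29}$ vanishes, \cref{prop:rho-SES} forces both $(\ker\rho)_{29,1}$ and $(\coker\rho)_{29,2}$ to vanish. This shows the target must be killed by some differential, and the same vanishing also eliminates the only other candidate source $\frac{\gamma}{\rho^4\tau^7} h_2^2 g$ (since if it supported the $d_4$, then $\frac{\gamma}{\rho^3\tau^7} h_2^2 g$ would detect a nonzero class in $(\coker\rho)_{29,2}$). Your proposal instead works forward from the source $\frac{\gamma}{\tau^{11}} r$ in stem $30$, and this runs into a genuine difficulty.

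The key gap is in your third paragraph. You want to rule out $\frac{\gamma}{\tau^{11}} r$ being a permanent cycle by observing that any class it detects would lie in $(\ker\rho)_{30,2}$, and then appealing to the classical $d_3(r) = h_1 d_0^2$ plus a filtration argument. But the map in \cref{prop:rho-SES} runs $\picl_{30}[\tau^{\pm1}] \onto (\ker\rho)_{30,2}$ (you have the direction reversed when you say it ``sends such a class into $\picl_{30}[\tau^{\pm1}]$''), and crucially that map only \emph{preserves} Adams filtration in the weak sense of ``may increase but cannot decrease.'' Since the 2-primary $\picl_{30}$ is nontrivial (it is $\Z/2$, generated in low Adams filtration by the class detected by $h_4^2$), a low-filtration class in $\picl_{30}$ is perfectly allowed to map to the filtration-$6$ class potentially detected by $\frac{\gamma}{\tau^{11}} r$. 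The fact that $r$ itself does not survive classically tells you nothing here, because the preimage need not be $r$. So ``tracking the Adams filtration'' does \emph{not} preclude survival, and the argument does not close. This is precisely why the paper works in stem $29$: $\picl_{29} = 0$ gives an unconditional vanishing, with no filtration bookkeeping needed.

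Two smaller points. First, your Leibniz computation in the second paragraph, if it goes through, shows only that $\frac{\gamma}{\tau^{11}} r$ is a $d_3$-cycle; the paper gets this for free because its argument constrains the target, not the source. (You would still need to carefully confirm that $\frac{\gamma}{\tau^{10}} h_1 d_0^2$ dies at $E_3$ -- plausible but not automatic, and note the analogous class $\frac{\gamma}{\tau^6} h_1 d_0^2$ in coweight $5$ is the nonzero target of a $d_3$ in \cref{tbl:Adams-d3}.) Second, your final paragraph asserts uniqueness of the $\rho$-annihilated target in $(29,10,1)$ without verification; the paper sidesteps this entirely by instead enumerating the possible \emph{sources}, of which there are only two. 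I'd recommend recasting the argument around the degree-$(29,*,1)$ constraints from $\picl_{29}=0$, following \cref{lem:d3-g/t^5-h0k} and \cref{lem:d4-g/t^10-Dh2^2} as models.
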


\begin{proof}
If $\frac{\gamma}{\rho^5 \tau^8} P h_1^2 d_0$ were non-zero in the
Adams $E_\infty$-page, then it would
detect an element of $\piC_{29,1}$ that is annihilated by $\rho$.
Since the classical 29-stem is zero, 
\cref{prop:rho-SES} implies that there are no non-zero elements in
$\piC_{29,1}$ that are annihilated by $\rho$.
Therefore, $\frac{\gamma}{\rho^5 \tau^8} P h_1^2 d_0$ must be hit by an Adams differential
or support an Adams differential. 

There are now two possibilities.
Either
$d_4 \left( \frac{\gamma}{\tau^{11}} r \right)$
or $d_4 \left( \frac{\gamma}{\rho^4 \tau^7} h_2^2 g \right)$ equals
the element
$\frac{\gamma}{\rho^5 \tau^8} P h_1^2 d_0$.
In the latter case, 
$\frac{\gamma}{\rho^3 \tau^7} h_2^2 g$ would detect an element
in $\piC_{29,2}$ that is non-zero in the cokernel of $\rho$.
This is also ruled out by \cref{prop:rho-SES} since the classical
29-stem is zero.
\end{proof}

\begin{lemma}
\label{lem:g/p^2t^4-f0}
$(20, 4, 3)$
The element $\frac{\gamma}{\rho^2 \tau^4} f_0$ is a permanent cycle.
\end{lemma}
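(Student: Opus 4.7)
The strategy is to verify that $\frac{\gamma}{\rho^2 \tau^4} f_0$ survives every page of the Adams spectral sequence, combining three arguments: a Leibniz-rule computation at $E_2$, the exhaustiveness of \cref{thm:Adams-d3} at $E_3$, and a Moss-convergence argument (in the style of \cref{prop:perm-cycles}) for all higher pages.

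First I would observe that $\frac{\gamma}{\rho^2 \tau^4} f_0 = \rho \cdot \frac{\gamma}{\rho^3 \tau^4} f_0$ on the Adams $E_2$-page. By \cref{tbl:Adams-d2}, we have $d_2\left( \frac{\gamma}{\rho^3 \tau^4} f_0 \right) = \frac{\gamma}{\tau^5} h_0^2 g$, and multiplying by $\rho$ via the Leibniz rule gives
\[
d_2\left( \frac{\gamma}{\rho^2 \tau^4} f_0 \right) = \rho \cdot \frac{\gamma}{\tau^5} h_0^2 g,
\]
which vanishes because $\frac{\gamma}{\tau^5}$ lies in $\rho$-filtration $-1$ by \cref{prop:E-minus-structure}. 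Hence the element is a $d_2$-cycle. Since \cref{thm:Adams-d3} exhaustively lists the $d_3$ differentials in this range and does not include $(20,4,3)$, the element also survives to the $E_4$-page.

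For $d_r$ with $r \geq 4$, the plan is to apply the Moss convergence theorem. Combining the $\R$-motivic relation $d_2(f_0) = h_0^2 e_0$ from \cite{BI} with the observation that $\frac{\gamma}{\rho^3 \tau^4}$ is a permanent cycle (\cref{prop:GArhotauPermCycles}), one obtains via May's convergence theorem a Massey product representation of $\frac{\gamma}{\rho^2 \tau^4} f_0$ on the $E_3$-page in terms of known permanent cycles (conceptually, $\frac{\gamma}{\rho^2 \tau^4} f_0$ plays a role analogous to $\left\langle \frac{\gamma}{\rho^2 \tau^4}, h_0^2, e_0 \right\rangle$, with $\rho h_1$-correction terms as needed to account for the equivariant form of the $d_2$ differential on $f_0$). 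Moss's theorem then promotes this Massey product to a Toda bracket in $\piC_{*,*}$, forcing $\frac{\gamma}{\rho^2 \tau^4} f_0$ to be a permanent cycle.

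The main obstacle will be the Moss step: locating the correct Massey product representative, verifying that the required products vanish on the $E_3$-page (in particular, that $\frac{\gamma}{\rho^2 \tau^4} \cdot h_0^2$ is killed by a suitable $\rho h_1$-correction analogous to the trick used in \cref{lem:g/pt^6-h4}), confirming the absence of crossing differentials, and bounding the indeterminacy. If this route turns out to be technically awkward, an alternative is case-by-case elimination: for each $r \geq 4$, examine the possible targets in degree $(19, 4+r, 2)$ on the $E_r$-page, and rule each out using multiplicative relations, comparison with the $\R$-motivic Adams spectral sequence, and the long exact sequence of \cref{prop:rho-SES}. This fallback is more tedious but avoids the subtleties of May's theorem applied to a $d_2$ differential involving a mixed $\gamma$/$f_0$ class.
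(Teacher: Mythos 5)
Your overall strategy matches the paper's: represent $\frac{\gamma}{\rho^2 \tau^4} f_0$ by a Massey product in the Adams $E_3$-page built from the permanent cycle $\frac{\gamma}{\rho^2 \tau^4}$ (which inherits this property from $\frac{\gamma}{\rho^3 \tau^4}$ via \cref{prop:GArhotauPermCycles}) and the Adams $d_2$ differential on $f_0$, then invoke the Moss convergence theorem. However, a couple of the details are off. The paper uses $d_2(f_0) = h_0 h_2 d_0$ and the bracket $\left\langle \frac{\gamma}{\rho^2 \tau^4}, h_0, h_2 d_0 \right\rangle$, not $\left\langle \frac{\gamma}{\rho^2 \tau^4}, h_0^2, e_0 \right\rangle$; more importantly, the $\rho h_1$-correction you worry about does not arise here. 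That trick is needed in \cref{lem:g/pt^6-h4} because $\frac{\gamma}{\rho\tau^6} \cdot h_0$ is nonzero (there is a hidden $h_0$-extension), which forces the use of $h_0 + \rho h_1$; in the present degree the product $\frac{\gamma}{\rho^2\tau^4} \cdot h_0$ vanishes outright, and $d_2(f_0) = h_0 h_2 d_0$ carries no $\rho$-term, so the bracket is defined with no adjustment. Your preliminary steps (Leibniz on $d_2$, exhaustiveness of \cref{thm:Adams-d3}) are not wrong but are already subsumed: once the Moss argument goes through, it kills all differentials on the element from the $E_3$-page onward, and $d_2$-survival is already covered by the Leibniz rule as you note. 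One small citation nit: $\rho \cdot \frac{\gamma}{\tau^5} = 0$ is a structural relation in $NC$ (every $\frac{\gamma}{\tau^k}$ is $\rho$-torsion by definition), not a consequence of \cref{prop:E-minus-structure}, which is stated in the opposite direction.
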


\begin{proof}
Similarly to the proof of \cref{lem:g/pt^6-h4},
we use the Moss convergence theorem \cite{Moss} \cite{BK}.
The Adams differential $d_2(f_0) = h_0 h_2 d_0$ implies that 
$\frac{\gamma}{\rho^2 \tau^4} f_0$
equals the Massey product
$\left\langle \frac{\gamma}{\rho^2 \tau^4}, h_0, h_2 d_0 \right\rangle$
in the Adams $E_3$-page.  The technical hypotheses of the Moss convergence
theorem are satisfied, so the Massey product converges to a Toda bracket,
and 
$\frac{\gamma}{\rho^2 \tau^4} f_0$
is a permanent cycle.
\end{proof}

\begin{lemma}
\label{lem:d4-g/t^10-Dh2^2}
$(30, 6, 3)$ $d_4 \left( \frac{\gamma}{\tau^{10}} r \right) = \frac{\gamma}{\rho \tau^9} h_0^2 d_0^2$.
\end{lemma}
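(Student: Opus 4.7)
The goal is to prove $d_4\!\left(\frac{\gamma}{\tau^{10}} r\right) = \frac{\gamma}{\rho \tau^9} h_0^2 d_0^2$, which is a direct analog of the preceding \cref{lem:d4-g/t^11-Dh2^2}. My plan is to adapt that argument, shifting everything up by one coweight. The key degree check: the source $\frac{\gamma}{\tau^{10}} r$ lies in $(30,6,3)$, while the putative target $\frac{\gamma}{\rho\tau^9} h_0^2 d_0^2$ lies in $(29,10,2)$, so a $d_4$ between them is admissible.

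First I would argue that $\frac{\gamma}{\rho \tau^9} h_0^2 d_0^2$ cannot survive to the Adams $E_\infty$-page. The vanishing of the classical $29$-stem together with \cref{prop:rho-SES} forces $(\ker\rho)_{29,c} = 0$ and $(\coker\rho)_{29,c} = 0$ for every coweight $c$. If $\frac{\gamma}{\rho \tau^9} h_0^2 d_0^2$ survived and detected some $\alpha \in \pi^{C_2}_{29,2}$, the same filtration-tracking argument used in \cref{lem:d4-g/t^11-Dh2^2} would produce a class in $\pi^{C_2}_{29,2}$ annihilated by $\rho$, contradicting the injectivity of $\rho$ on $\pi^{C_2}_{29,2}$ given by $(\ker\rho)_{29,1}=0$. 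So the target must be killed by some Adams differential.

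Next I would enumerate the possible sources. Since the target sits in filtration $10$, the differential killing it is a $d_r$ with $r \geq 4$ emanating from $(30,10-r,3)$. Inspection of the coweight-$3$ Adams $E_3$ chart at stem $30$ reveals exactly two plausible candidates in the relevant range: the class $\frac{\gamma}{\tau^{10}} r$ in filtration $6$, and the negative-cone class $\frac{\gamma}{\rho^4 \tau^6} h_2^2 g$, also in filtration $6$ (both supporting potential $d_4$'s); longer differentials and higher-filtration sources are ruled out directly from the chart.

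The main obstacle is eliminating the alternative candidate $\frac{\gamma}{\rho^4 \tau^6} h_2^2 g$. I expect this to work precisely as in \cref{lem:d4-g/t^11-Dh2^2}: if $d_4\!\left(\frac{\gamma}{\rho^4 \tau^6} h_2^2 g\right) = \frac{\gamma}{\rho \tau^9} h_0^2 d_0^2$, then its $\rho$-multiple $\frac{\gamma}{\rho^3 \tau^6} h_2^2 g \in (29,6,3)$ would survive to $E_\infty$ and detect a class nonzero in $(\coker\rho)_{29,3}$, again contradicting $\pi^{\cl}_{29}=0$ via \cref{prop:rho-SES}. With this alternative ruled out, the only possibility is $d_4\!\left(\frac{\gamma}{\tau^{10}} r\right) = \frac{\gamma}{\rho \tau^9} h_0^2 d_0^2$, as claimed.
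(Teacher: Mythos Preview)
Your approach is essentially the paper's: argue via $\picl_{29}=0$ and \cref{prop:rho-SES} that the target cannot survive, then pin down the source. The paper is terser, asserting that $\frac{\gamma}{\tau^{10}} r$ is the \emph{only} possible source on the $E_4$-page in this degree, so your elimination of a putative second candidate $\frac{\gamma}{\rho^4 \tau^6} h_2^2 g$ is either superfluous (if that class is absent from the coweight-$3$ chart) or a harmless extra check; note also a small indexing slip---injectivity of $\rho$ on $\piC_{29,2}$ is $(\ker\rho)_{29,2}=0$, not $(\ker\rho)_{29,1}=0$---though both vanish since $\picl_{29}=0$.
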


\begin{proof}
If $\frac{\gamma}{\rho \tau^9} h_0^2 d_0^2$ were non-zero in the
Adams $E_\infty$-page, then it would
detect an element of $\piC_{29,2}$ that is annihilated by $\rho$.
Since the classical 29-stem is zero, 
\cref{prop:rho-SES} implies that there are no non-zero elements in
$\piC_{29,2}$ that are annihilated by $\rho$.
Therefore, $\frac{\gamma}{\rho \tau^9} h_0^2 d_0^2$ must be hit by an Adams differential or support an Adams differential. 
The only possibility is that 
$d_4 \left( \frac{\gamma}{\tau^{10}} r \right)$ equals $\frac{\gamma}{\rho \tau^9} h_0^2 d_0^2$.
\end{proof}

\begin{rmk}
We have used Massey products and Toda brackets to rule out a number of
possible long differentials.
Another possible approach is to use $v_1$-periodic $C_2$-equivariant 
computations \cite{Balderrama}.
For example, 
consider the element $\frac{\gamma}{\rho^6 \tau} h_0 h_3^2$ in
degree $(20, 3, 4)$.
For degree reasons, there is a possible
$d_8$ differential with value $\frac{\gamma}{\tau^4} P^2 h_1^3$.
However, the latter element cannot be hit by a differential
because it is detected in $v_1$-periodic homotopy.
\end{rmk}


\section{Hidden $\rho$-extensions and the cofiber of $\rho$ sequence}
\label{sec:cofibrho}

Our next goal is to establish products in $\piC_{*,*}$ that
are hidden in the $C_2$-equi\-var\-iant Adams spectral sequence.
We concentrate
exclusively on extensions by the elements $\rho$, 
$\eta$ (detected by $h_1$), and $\hsf=2+\rho\eta$ (detected by $h_0$).
These extensions preserve coweight, 
so they are easy to visualize on our charts that are indexed by
coweight.
There are numerous other hidden extensions in the range under study
that we have not attempted to resolve.  Many such extensions are 
probably easy to obtain, while others would require significant work.
Hidden extensions by $\hsf$ and $\eta$ are handled in \cref{sec:hiddenAdams}.
We refer the careful reader to
\cite{IWX23}*{Section~2.1.2} for the precise meaning of
``hidden extension".

We deal with the hidden $\rho$-extensions in this section, while also 
analyzing the short exact sequence
\begin{equation}
\label{eq:SEScofibrho}
	0 \rightarrow (\coker \rho)_{*,*} \xrightarrow{U} \picl_*[\tau^{\pm 1}] 
\xrightarrow{p} (\ker \rho)_{*,*-1} \rightarrow 0
\end{equation}
of \cref{prop:rho-SES}. 
This analysis provides an abundance of computational information about the
values of the underlying homomorphism.
In principle, one should first determine all hidden $\rho$-extensions and then 
study the sequence \eqref{eq:SEScofibrho}. In practice, these two analyses are
best undertaken simultaneously. 
We are aided significantly by the knowledge that the underlying homomorphism
$(\coker \rho)_{*,*} \xrtarr{U} \picl_*[\tau^{\pm 1}]$ is a ring homomorphism and preserves
the Adams filtration (\cref{prop:rho-SES}).

We describe elements of stable homotopy groups by their names on the 
$E_\infty$-page of the Adams spectral sequence. Thus, homotopy elements 
are only specified up to elements of higher Adams filtration, and we only 
determine the sequence \cref{eq:SEScofibrho} up to such ambiguity.
See \cite{IWX23}*{Section~2.1} for a related discussion of maps between filtered 
abelian groups.

\begin{thm}
\label{thm:cofibrhoseqn}
In stems up to 26 and coweights from 0 to 7,
the values of the exact sequence \cref{eq:SEScofibrho} 
are given in the charts of \cref{fig:cofiberrho}.
\end{thm}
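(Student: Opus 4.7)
The plan is to build the short exact sequence \eqref{eq:SEScofibrho} degree by degree, using the $C_2$-equivariant Adams $E_\infty$-page from \cref{sec:Adams} together with the well-known classical stable homotopy groups $\picl_*$ (e.g., from Isaksen--Wang--Xu) to pin down the underlying homomorphism $U$ and the map $p$ on $\picl_*[\tau^{\pm 1}]$. The three available structural constraints are: (i) $U$ and $p$ both preserve Adams filtration by \cref{prop:rho-SES}, so a class in Adams filtration $f$ can only map to one in filtration $\geq f$; (ii) $U$ is a ring homomorphism, hence its values on products of low-stem generators (e.g., $\hsf$, $\eta$, $\nu$, $\sigma$, and their Toda-bracket companions) are forced; and (iii) on the Adams $E_\infty$-page, $U$ is induced by the filtration-preserving comodule map $\bMC \to \F_2$ sending $\tau\mapsto 1$ and $\rho\mapsto 0$, which on each stem is explicitly the map ``set $\tau=1$, kill the negative cone and all positive powers of $\rho$.''

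First I would handle the coweight~$0$ column, where the Adams $E_\infty$-page is closest to the classical one and the map $U$ on low-filtration classes is immediate from (iii). In each stem I compare the classical $E_\infty$-page (suitably shifted into the $\tau^{\pm 1}$-column) with the $C_2$-equivariant Adams $E_\infty$-page in that coweight; the $E_\infty$-page map induced by $U$ gives an initial guess for $U$ on homotopy, and (i)--(ii) together with surjectivity of $p$ onto the kernel of $\rho$ force any hidden corrections. Having set up coweight~$0$, I would march upward through coweights $1,2,\dots,7$, using multiplication by permanent cycles (including $\rho$-divisible classes such as $\frac{\gamma}{\rho^k\tau^k}$-type generators, and classes like $\eta_4$, $\sigma^2$, $\kappa$, etc.) to propagate information from one coweight to the next.

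Simultaneously, hidden $\rho$-extensions will be extracted from the exact sequence itself: any $E_\infty$-class that is \emph{not} in the image of $U$ but must lie in the kernel of $\rho$ for filtration or exactness reasons forces a source in a hidden $\rho$-extension, and conversely an $E_\infty$-class that is $\rho$-divisible in homotopy but not in $E_\infty$ must receive a hidden $\rho$-extension. Here the Mahowald-invariant language of \cref{subsctn:underlying}, together with the already-computed underlying behavior of the Hopf elements and Toda-bracket generators through the $7$-stem, is essential: for example, the known image-of-$J$ pattern in classical homotopy determines where $h_0$- and $h_1$-multiplications land in $\picl_*[\tau^{\pm 1}]$ and therefore which $\rho$-divisible families in $\piC_{*,*}$ must exist.

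The hard part will be resolving the stems where several $E_\infty$-classes sit in the same $(s,c)$ and the filtration bound on $U$ leaves multiple set-theoretic possibilities, compounded by the fact that elements are only defined up to higher Adams filtration; the cleanest example is a stem like $s=23$ or $s=26$, where hidden $\rho$-extensions, the splitting ambiguities between the $\R$-motivic and negative-cone summands, and the need to identify $U$ of a given class modulo Adams filtration all interact. In those cases I would fall back on multiplicative relations (e.g., $\eta\cdot\sigma$, $\nu^3=\eta^2\sigma+\eta\epsilon$ classically) lifted via $U$, on exactness of \eqref{eq:SEScofibrho} as a sequence of $\piC_{*,*}$-modules, and when necessary on Toda bracket shuffles involving $\rho$, $\hsf$, and $\eta$ that are already established in \cref{sec:Adams}, in order to pin down each entry of the charts of \cref{fig:cofiberrho}.
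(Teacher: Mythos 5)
The proposal is correct and takes essentially the same approach as the paper: read off non-hidden values from the filtration-preserving map of Adams $E_\infty$-pages (\cref{prop:rho-SES}), then constrain the remaining (necessarily filtration-increasing) values by multiplicativity of $U$, by exactness of \eqref{eq:SEScofibrho}, and by the $\hsf$- and $\eta$-multiplications. One small imprecision in your item (iii): the $E_\infty$-level description of $U$ is not ``set $\tau=1$''---the target $\picl_*[\tau^{\pm1}]$ retains $\tau$, and the relevant map on $E_2$-pages is induced by $\bMC\to\bF[\tau^{\pm1}]$ (kill $\rho$, kill the negative cone, invert $\tau$)---but this does not affect the strategy.
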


The charts of \cref{fig:cofiberrho} tabulate the values of $U$ and $p$
in a concise graphical format.  The dots on the chart represent
elements of $\picl_* \cdot \tau^k$.  The blue labels
indicate pre-images of the map $U$, while the orange labels
indicate values of the map $p$.  See also \cref{subsctn:chart-cofiber-rho} for additional information about reading the charts.

\begin{rmk}
\label{rmk:cofiber-rho-ambigious}
Our graphical calculus breaks down in one instance in each coweight of \cref{fig:cofiberrho} in stem 23 and filtration 9.
There are three non-zero elements in $\picl_{23}$ that
are detected in filtration 9.  Exactly one of those elements is a multiple of 
$\eta$, and that element is indicated in the charts by the $h_1$-extensions.
However, the other dot ambiguously represents one of the two
remaining non-zero elements.
The Massey product 
$\langle h_3, h_0^4, h_0^4 h_4 \rangle$ can be used to 
to resolve some, but not all, of the ambiguities.
\end{rmk}

We  typically suppress powers of $\tau$ in the target of $U$ or source of $p$.
For example, the element $c_0$ is in coweight 3 in the $C_2$-equivariant
Adams spectral sequence, but we write $U(c_0) = c_0$ rather than 
$U (c_0) = \tau^3 c_0$.

\begin{pf}
In the range under consideration, many values of the homomorphisms 
$U$ and $p$ are detected by the maps 
of Adams $E_\infty$-pages, i.e., with no filtration shifts.
For example, the 
homomorphism 
$p\colon \picl_* \cdot \tau^0 \to (\ker \rho)_{*,-1}$ is entirely 
detected by Adams $E_\infty$-pages in stems less than 26.

Hidden values of $U$ and $p$  involve a strict increase in Adams filtration. 
An early example of this phenomenon occurs in the homomorphism
$U\colon (\coker \rho)_{7,0} \to \picl_7 \cdot \tau^0$. 
The group $(\coker \rho)_{7,0}$ contains a single element, detected by 
$\frac{Q}{\rho^2} h_1^4$ in filtration 3. This element must map to an element of $\picl_7$ 
of filtration at least 4, and $h_0^3 h_3$ is the only possibility.

Many hidden values are implied by multiplicative structure. 
For instance, consider $U\colon (\coker \rho)_{8,0} \to \picl_8 \cdot \tau^0$. 
Here $\frac{\gamma}{\rho^2 \tau} h_2^2$ in filtration 2 must map to a 
class detected in higher filtration, of which the only possibility is $c_0$. 
It follows that $h_1 \cdot \frac{\gamma}{\rho^2 \tau} h_2^2$ must map to 
$h_1 c_0$. Incidentally, we also know that 
$h_1 \cdot \frac{\gamma}{\rho^2 \tau} h_2^2$ equals 
$ \frac{\gamma}{\rho \tau^2} \cdot c_0$ in the $C_2$-equivariant Adams 
spectral sequence, so this discussion agrees with the facts that 
$U\left(\frac{\gamma}{\rho \tau^2}\right) = h_1$ and $U(c_0) = c_0$.
The value $U\left(\frac{\gamma}{\tau^{2k+1}}\right) = h_0$ also determines many 
other values of the homomorphism $U$ under multiplicative decompositions.

Another use of multiplicative structure occurs in
the homomorphism
\[
U\colon (\coker \rho)_{15,1} \to \picl_{15} \cdot \tau^1.
\]
The element 
$\frac{\gamma}{\rho^6 \tau} h_1^2 h_3$ in filtration 3 is 
annihilated by $h_0$.
It follows that $U \left( \frac{\gamma}{\rho^6 \tau} h_1^2 h_3 \right)$
must be annihilated by $h_0$ in $\picl_{15}$.
The group $\picl_{15}$ contains three non-zero elements that 
are annihilated by $h_0$:
one detected by $h_0^7 h_4$ in 
filtration 8 and the other two detected by $h_1 d_0$ in filtration 5. But 
$U \left( \frac{\gamma}{\tau^5} h_0^6 h_4 \right)$ is  equal to $h_0^7 h_4$, so it follows that 
$U\left( \frac{\gamma}{\rho^6 \tau} h_1^2 h_3 \right)$ must be detected in filtration 5.
\end{pf}

\begin{rmk}
In the language of \cite{IWX23}*{Section~2.1}, 
there are no crossing values for the map $U$
in the range described in \cref{thm:cofibrhoseqn}.
On the other hand, there is exactly one crossing value for $p$
in this range, occurring in the homomorphism
$p\colon \picl_{15} \cdot \tau^5 \to (\ker \rho)_{15,4}$.
The homomorphism takes $h_1 d_0$ in filtration 5 to
$\frac{\gamma}{\tau} h_1 d_0$ in filtration 5.
It also takes $h_0^3 h_4$ in filtration 4 to $\frac{\gamma}{\rho\tau} h_0^2 d_0$ in filtration 6.
\end{rmk}

\begin{rmk}
Our use of homotopy elements up to higher filtration can lead to some
subtle complications.  We illustrate this problem with the
map $p\colon \picl_{21} \cdot \tau^2 \to (\ker \rho)_{21,1}$ and
its value $p(h_1 g)$.
In the 20 stem, the value $p(g) = \frac{\gamma}{\tau^6} g$ is not hidden.  
Then
\[
p(h_1 g) = h_1 \cdot p(g) = h_1 \cdot \frac{\gamma}{\tau^6} g.
\]
This last expression is zero in the $C_2$-equivariant Adams $E_\infty$-page.
Therefore, $p(h_1 g)$ is in fact detected in higher filtration
by $\frac{\gamma}{\rho^5 \tau^4} h_1^2 d_0$.

Note that $\frac{\gamma}{\tau^6} g$ detects two elements in
$\piC_{20,1}$ because of the presence of
$\frac{\gamma}{\rho^5 \tau^4} h_1 d_0$ in higher filtration.
One of these two homotopy elements supports an $\eta$-extension,
and the other does not.
In terms of homotopy elements, the formula
$p(g) = \frac{\gamma}{\tau^6} g$ 
means that $p(g)$ is the homotopy element that is detected by
$\frac{\gamma}{\tau^6} g$ and that also supports an $\eta$-extension.
\end{rmk}

\begin{thm}
\label{thm:hidden-rho}
In stems less than 26 and coweights from -1 to 7, 
\cref{tbl:hidden-rho} lists all hidden $\rho$-extensions in
the $C_2$-equivariant Adams spectral sequence.
\end{thm}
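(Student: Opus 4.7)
The plan is to establish Table \ref{tbl:hidden-rho} in tandem with Theorem \ref{thm:cofibrhoseqn}, since the two computations are inextricably linked: the hidden $\rho$-extensions and the maps $U$ and $p$ of the short exact sequence \eqref{eq:SEScofibrho} determine one another. The key input is the equivalence, from Proposition \ref{prop:rho-forget}, that an element $\alpha \in \piC_{s,c}$ lies in the image of $\rho$ if and only if $U(\alpha) = 0$ in $\picl_s$. This converts the search for hidden $\rho$-extensions into a question about the kernel of the underlying homomorphism, which can be attacked using Proposition \ref{prop:rho-SES}.

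First, for each entry $\rho \cdot \alpha = \beta$ claimed in Table \ref{tbl:hidden-rho}, I would verify it by combining two pieces of information. On the one hand, the charts in Figure \ref{fig:cofiberrho} (which encode Theorem \ref{thm:cofibrhoseqn}) tell us whether $U(\beta) = 0$; if so, then $\beta$ must be hit by some $\rho$-multiplication by exactness of \eqref{eq:SEScofibrho}. On the other hand, the Adams filtration of the source is constrained: if $\beta$ has Adams filtration $f$, then the source $\alpha$ must have filtration strictly less than $f$ (otherwise the extension is not hidden), and among classes not annihilated by $\rho$ on the $E_\infty$-page, there is typically a unique candidate in the appropriate degree.

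Second, many of the entries in the table are consequences of a small number of fundamental extensions together with multiplicative structure. For instance, once a hidden extension $\rho \cdot \alpha = \beta$ is established, multiplying by any permanent cycle $z$ for which $\beta z \neq 0$ on the $E_\infty$-page yields a further hidden extension $\rho \cdot (\alpha z) = \beta z$. I would organize the table so that the multiplicatively minimal extensions are verified first (using the exact sequence \eqref{eq:SEScofibrho} directly), and the remainder follow by the Leibniz rule. Comparison with the $\R$-motivic hidden $\rho$-extensions of \cite{BI} handles all cases where both source and target come from $\Ext_\R$; for entries involving $\Ext_{NC}$, the shuffle $\rho \cdot \tfrac{\gamma}{\rho^j \tau^k} x = \tfrac{\gamma}{\rho^{j-1}\tau^k} x$ together with unique $\rho$-divisibility in $E_1^-$ (\cref{prop:E-minus-structure3}) often reduces the question to an algebraic computation in the Bockstein $E_\infty$-page.

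Third, to prove that the table is exhaustive, I would argue stem-by-stem and coweight-by-coweight. For each $\F_2[\rho,h_0,h_1]$-module generator $\alpha$ in the Adams $E_\infty$-page in the relevant range, I check whether $\rho \alpha$ is already detected (without filtration jump) on the $E_\infty$-page; if so, there is nothing to prove. Otherwise $\rho \alpha$ is either zero or detected in strictly higher filtration, and in the latter case the target must appear in the table. The consistency constraint is that the resulting maps $U$ and $p$ must fit into a short exact sequence with $\picl_*[\tau^{\pm 1}]$ in the middle, and each homomorphism must respect the Adams filtration. The main obstacle I expect is the handful of cases where the target of a potential $\rho$-extension is ambiguous modulo higher filtration (as in Remark \ref{rmk:cofiber-rho-ambigious}); in those cases I would use auxiliary Toda brackets or comparison with the $\R$-motivic and $\C$-motivic Adams spectral sequences to pin down the extension.
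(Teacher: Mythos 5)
Your proposal matches the paper's approach essentially point for point: the authors prove \cref{thm:cofibrhoseqn} and \cref{thm:hidden-rho} in tandem, using the exact sequence \eqref{eq:SEScofibrho} (equivalently \cref{prop:rho-forget}) to detect which $E_\infty$-classes must receive or support hidden $\rho$-extensions, propagating extensions by multiplicative structure, and organizing the exhaustiveness check by stem and coweight. The one remaining hard case in the paper, \cref{lem:rho-g/t^7-h0i}, is settled not by Toda brackets or $\C$-motivic comparison as you suggest but by an order argument — $\picl_{23}$ has no element of order $32$, so the potentially infinite $h_0$-tower on $\frac{\gamma}{\tau^7}h_0 i$ would violate the exact sequence — but this is exactly the kind of ``consistency constraint'' you describe in your third step, so the difference is cosmetic rather than substantive.
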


\renewcommand*{\arraystretch}{1.25}
\begin{longtable}{LLL}
\caption{Some hidden $\rho$-extensions in the
$C_2$-equivariant Adams spectral sequence} \\
\toprule
\mbox{}(s,f,c) & \textrm{source} & \textrm{target} \\
\midrule \endhead
\bottomrule \endfoot
\label{tbl:hidden-rho}
(17, 4, -1) & \frac{\gamma}{\tau^7} e_0 & \frac{\gamma}{\rho \tau^6} h_1 d_0 \\
(25, 8, -1) & \frac{\gamma}{\tau^{11}} P e_0 & \frac{\gamma}{\rho \tau^{10}} P h_1 d_0 \\
(26, 7, -1) & \frac{\gamma}{\tau^{12}} j & \frac{\gamma}{\tau^{11}} P e_0 \\
(k+5, k+3, 0) & Q h_1^{k+4} & h_1^{k+4} \\
(15, 1, 0) & \frac{\gamma}{\tau^6} h_4 & \frac{\gamma}{\tau^5} h_3^2 \\
(15, 3, 0) & \frac{\gamma}{\rho \tau^5} h_0 h_3^2 & \frac{\gamma}{\tau^5} d_0 \\
(29, 7, 0) & \frac{\gamma}{\tau^{12}} k & \frac{\gamma}{\tau^{11}} d_0^2 \\
(30, 6, 0) & \frac{\gamma}{\tau^{13}} r & \frac{\gamma}{\tau^{12}} k \\
(29, 8, 1) & \frac{\gamma}{\tau^{11}} h_0 k & \frac{\gamma}{\rho^5 \tau^8} P h_1 d_0 \\
(30, 6, 1) & \frac{\gamma}{\tau^{12}} r & \frac{\gamma}{\tau^{11}} h_0 k \\
(15, 1, 2) & \frac{\gamma}{\tau^4} h_4 & \frac{\gamma}{\tau^3} h_3^2 \\
(15, 3, 2) & \frac{\gamma}{\rho \tau^3} h_0 h_3^2 & \frac{\gamma}{\tau^3} d_0 \\
(23, 7, 2) & \frac{\gamma}{\tau^8} i & \frac{\gamma}{\tau^7} P d_0 \\
(29, 7, 2) & \frac{\gamma}{\tau^{10}} k & \frac{\gamma}{\tau^9} d_0^2 \\
(11, 4, 3) & Q h_1^2 c_0 & h_1^2 c_0 \\
(12, 5, 3) & Q h_1^3 c_0 & h_1^3 c_0 \\
(13, 6, 3) & Q h_1^4 c_0 & h_1^4 c_0 \\
(14, 7, 3) & Q h_1^5 c_0 & h_1^5 c_0 \\
(15, 3, 3) & \frac{\gamma}{\tau^3} h_0^2 h_4 & \frac{Q}{\rho^2} h_1^2 c_0 \\
(17, 4, 3) & \frac{\gamma}{\tau^3} e_0 & \frac{\gamma}{\rho \tau^2} h_1 d_0 \\
(18, 4, 3) & \frac{\gamma}{\tau^4} f_0 & \frac{\gamma}{\tau^3} h_0 e_0 \\
(23, 8, 3) & \frac{\gamma}{\tau^7} h_0 i & \frac{\gamma}{\rho^5 \tau^4} P^2 h_1 \\
(15, 1, 4) & \frac{\gamma}{\tau^2} h_4 & \frac{\gamma}{\tau} h_3^2 \\
(15, 3, 4) & \frac{\gamma}{\rho \tau} h_0 h_3^2 & \frac{\gamma}{\tau} d_0 \\
(17, 4, 4) & \frac{\gamma}{\tau^2} e_0 & \frac{\gamma}{\rho \tau} h_1 d_0 \\
(18, 5, 4) & \frac{\gamma}{\tau^2} h_1 e_0 & \frac{\gamma}{\rho \tau} h_1^2 d_0 \\
(18, 4, 5) & \frac{\gamma}{\tau^2} f_0 & \frac{\gamma}{\tau} h_0 e_0 \\
(15, 4, 7) & h_0^3 h_4 & \rho^4 h_1 e_0 \\
(17, 5, 7) & h_2 d_0 & \tau h_1^2 d_0 \\
(23, 8, 7) & Q h_1^5 e_0 + \frac{\gamma}{\tau^3} h_0 i & \frac{Q}{\rho^2} P h_1^3 c_0 + h_1^5 e_0 \\
(24, 9, 7) & Q h_1^6 e_0 & \frac{Q}{\rho^2} P h_1^4 c_0 + h_1^6 e_0 \\
(25, 8, 7) & \frac{\gamma}{\tau^3} P e_0 & \frac{\gamma}{\rho \tau^2} P h_1 d_0 \\
(25, 10, 7) & Q h_1^7 e_0 & \frac{Q}{\rho^2} P h_1^5 c_0 + h_1^7 e_0 \\
(26, 7, 7) & \frac{\gamma}{\tau^4} j & \frac{\gamma}{\tau^3} P e_0 \\
(26, 11, 7) & Q h_1^8 e_0 & \frac{Q}{\rho^2} P h_1^6 c_0 + h_1^8 e_0 \\
(27, 12, 7) & Q h_1^9 e_0 & \frac{Q}{\rho^2} P h_1^7 c_0 + h_1^9 e_0 \\
(28, 13, 7) & Q h_1^{10} e_0 & \frac{Q}{\rho^2} P h_1^8 c_0 + h_1^{10} e_0 \\
(29, 14, 7) & Q h_1^{11} e_0 & \frac{Q}{\rho^2} P h_1^9 c_0 + h_1^{11} e_0 \\
(30, 15, 7) & Q h_1^{12} e_0 & \frac{Q}{\rho^2} P h_1^{10} c_0 + h_1^{12} e_0 \\
\end{longtable}
\renewcommand*{\arraystretch}{1.0}

The hidden $\rho$-extensions in \cref{tbl:hidden-rho}
are displayed in \cref{fig:Einf} by dashed lines.

\begin{rmk}
In coweight $7$, the hidden $\rho$-extensions on
$h_0^3 h_4$ and $h_2 d_0$ are detected in $\R$-motivic homotopy
and are studied in \cite{BI}.  We include them
in \cref{tbl:hidden-rho} for completeness.
\end{rmk}

\begin{rmk}
\cref{tbl:hidden-rho} lists some extensions that lie beyond the $25$-stem.
Even though our analysis of the Adams $E_\infty$-page is incomplete
in this range, the uncertainties in Adams differentials do not affect
the specific elements involved in these hidden extensions.
\end{rmk}

\begin{rmk}
In stems less than 8 and coweights between $-9$ and $-2$, there
are no possible hidden $\rho$-extensions.
In other words, there are no hidden $\rho$-extensions
in \cref{fig:EinfNegCoweight}.
\end{rmk}

\begin{proof}
The multiplicative structure rules out many possible hidden $\rho$-extensions.
For example, consider the element $\frac{\gamma}{\tau^7} h_2 h_4$
in stem $18$ and coweight $0$.  This element is the product
$\frac{\gamma}{\tau^7} \cdot h_2 h_4$ of two permanent cycles.
The first factor is annihilated by $\rho$, so the product
cannot support a hidden $\rho$-extension.

Information about \eqref{eq:SEScofibrho},
as stated in \cref{thm:cofibrhoseqn} and displayed in \cref{fig:cofiberrho},
offers a powerful technique for resolving hidden $\rho$-extensions. 
For example, consider the possible hidden $\rho$-extensions 
on $\frac{\gamma}{\tau^6} h_4$ 
and $\frac{\gamma}{\rho \tau^5} h_0 h_3^2$
in stem 15 and coweight 0.  
By inspection of coweight $-1$, the map
$p: \picl_{14}\cdot \tau^0 \xrtarr{} (\ker \rho)_{14,-1}$
is an isomorphism.
Therefore,
$(\coker \rho)_{14,0}$ vanishes, and  both
$\frac{\gamma}{\tau^5} h_3^2$ and $\frac{\gamma}{\tau^5} d_0$
must be the targets of hidden $\rho$-extensions.
The sources of these extensions must be
$\frac{\gamma}{\tau^6} h_4$ 
and $\frac{\gamma}{\rho \tau^5} h_0 h_3^2$
respectively.

In some cases, multiplicative structure is also needed to 
establish hidden $\rho$-extensions.  For example,
consider the element $\frac{\gamma}{\tau^3} e_0$ in degree $(17,4,3)$.
In the short exact sequence
\[
0 \rightarrow (\coker \rho)_{16,3} \rightarrow \picl_{16} \cdot \tau^3 \rightarrow 
(\ker \rho)_{16,2} \rightarrow 0,
\]
the classical group $\picl_{16}$ has order four.
By inspection of coweight $2$,
the group
$(\ker \rho)_{16,2}$ has order two, detected by $\frac{\gamma}{\tau^4} h_1 h_4$.
Therefore, $(\coker \rho)_{16,3}$ has order two, which means that 
either $\frac{\gamma}{\rho \tau^2} h_1 d_0$ or $\frac{Q}{\rho^3} h_1^4 c_0$
is the target of a hidden extension on $\frac{\gamma}{\tau^3} e_0$.
Consideration of $h_1$-extensions rules out the latter possibility,
since $\frac{\gamma}{\tau^3} e_0$ cannot support a hidden $\eta$-extension.

Multiplicative structure can also be used in a more straightforward way
to obtain hidden $\rho$-extensions.
For example, an analysis of the short exact sequence
\[
0 \rightarrow (\coker \rho)_{16,4} \rightarrow \picl_{16} \cdot \tau^4 \rightarrow 
(\ker \rho)_{16,3} \rightarrow 0
\]
shows that there is a hidden $\rho$-extension from $\frac{\gamma}{\tau^2} e_0$
to $\frac{\gamma}{\rho \tau} h_1 d_0$.  Then multiplication by $h_1$
implies that there is a hidden $\rho$-extension from
$\frac{\gamma}{\tau^2} h_1 e_0$ to $\frac{\gamma}{\rho \tau} h_1^2 d_0$.

One slightly more difficult case is established below in
\cref{lem:rho-g/t^7-h0i}.
\end{proof}

\begin{rmk}
There are two ways to use 
the sequence \eqref{eq:SEScofibrho}
to establish a given hidden $\rho$-extension.
One approach is to show that a possible target of an extension
cannot be detected in $\coker \rho$.  This is illustrated in the second paragraph
of the proof of \cref{thm:hidden-rho}.

A second option is to show that a possible source of an extension 
cannot be detected in $\ker \rho$.
For example, consider the possible hidden extension
from $Q h_1^3 c_0$ to $h_1^3 c_0$ in coweight $3$.
In the short exact sequence
\[
0 \rightarrow (\coker \rho)_{12,4} \rightarrow \picl_{12} \cdot \tau^4 \rightarrow 
(\ker \rho)_{12,3} \rightarrow 0,
\]
the middle object $\picl_{12}$ is zero, so
$(\ker \rho)_{12,3}$ must also be zero.  Therefore,
$Q h_1^3 c_0$ cannot detect an element of 
$(\ker \rho)_{12,3}$, and it must support a hidden $\rho$-extension.

In many cases, a given possible hidden $\rho$-extension can be determined
using either $\coker \rho$ or $\ker \rho$.
\end{rmk}

\begin{lemma}
\label{lem:rho-g/t^7-h0i}
$(23,8,3)$
There is a hidden $\rho$-extension from $\frac{\gamma}{\tau^7} h_0 i$
to $\frac{\gamma}{\rho^5 \tau^4} P^2 h_1$.
\end{lemma}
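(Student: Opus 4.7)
The approach is to apply the short exact sequence \eqref{eq:SEScofibrho} at $(s,c) = (22,3)$:
\[
0 \to (\coker \rho)_{22,3} \xrightarrow{U} \picl_{22} \cdot \tau^3 \xrightarrow{p} (\ker \rho)_{22,2} \to 0.
\]
The target class $\frac{\gamma}{\rho^5 \tau^4} P^2 h_1$ lies in Adams filtration $9$. Since $U$ preserves Adams filtration and is injective on $(\coker \rho)_{22,3}$ (\cref{prop:rho-SES}), any homotopy class $\alpha$ detected by $\frac{\gamma}{\rho^5 \tau^4} P^2 h_1$ would, if it survived to a nontrivial element of $(\coker \rho)_{22,3}$, map under $U$ to a class in $\picl_{22} \cdot \tau^3$ of Adams filtration at least $9$.

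The first step is to observe that $\picl_{22}$ is concentrated in Adams filtration at most $8$. The only candidate classes at stem $22$ and Adams filtration $\geq 9$ on the classical Adams $E_2$-page are $Ph_0^k d_0$ for $k \geq 1$, and these are all killed by the classical Adams $d_2$ differentials $d_2(h_0^{k-1} i) = P h_0^k d_0$. Hence $U(\alpha)$ vanishes, so $\alpha = 0$ in $(\coker \rho)_{22,3}$. In other words, the homotopy class detected by $\frac{\gamma}{\rho^5 \tau^4} P^2 h_1$ necessarily lies in the image of $\rho\colon \piC_{23,3} \to \piC_{22,3}$, forcing the existence of a hidden $\rho$-extension whose target is $\frac{\gamma}{\rho^5 \tau^4} P^2 h_1$.

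The second step is to identify the source uniquely as $\frac{\gamma}{\tau^7} h_0 i$. By inspecting the Adams $E_\infty$-page of \cref{fig:Einf} in coweight $3$, I would enumerate the $\F_2[\rho, h_0, h_1]$-module generators at $(23, f, 3)$ for $f \leq 8$, and for each candidate other than $\frac{\gamma}{\tau^7} h_0 i$, verify that its $\rho$-multiple is either zero on $E_\infty$ or already accounted for by a non-hidden or previously-established extension to a class distinct from $\frac{\gamma}{\rho^5 \tau^4} P^2 h_1$. The main technical obstacle is this bookkeeping, but it is routine given the complete information about Adams differentials and $\rho$-multiplications in coweight $3$ established earlier in this section.
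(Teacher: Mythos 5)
Your argument is correct, and it takes a genuinely different route from the one in the paper. The paper works on the \emph{source} side using the short exact sequence at $(s,c)=(23,4)$: if $\frac{\gamma}{\tau^7} h_0 i$ did not support a hidden $\rho$-extension, the $h_0$-tower above it would detect an element of order $32$ in $(\ker\rho)_{23,3}$, which is impossible because $\picl_{23}$ contains no element of order divisible by $32$. You instead work on the \emph{target} side using the sequence at $(s,c)=(22,3)$: the class detected in Adams filtration $9$ cannot survive to a nonzero element of $(\coker\rho)_{22,3}$, since $U$ is injective and filtration-preserving, while $\picl_{22}$ vanishes in Adams filtration $\ge 9$. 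Both strategies are valid, and indeed the paper explicitly flags them as the two ways of exploiting \eqref{eq:SEScofibrho} in the remark immediately following \cref{thm:hidden-rho}. The paper's version leans on the group-theoretic structure of $\picl_{23}$ (maximal cyclic order), whereas yours leans on the filtration structure of $\picl_{22}$; which one is shorter in any given case depends on the local shape of the Adams charts. Your second step (pinning down the source by inspection of \cref{fig:Einf}) is stated a bit loosely, but that part is genuinely routine and the paper also implicitly resolves it by inspection, so this is not a gap.
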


\begin{proof}
From \eqref{eq:SEScofibrho}, we have the
short exact sequence
\[
0 \rightarrow (\coker \rho)_{23,4} \rightarrow \picl_{23} \cdot \tau^4 \rightarrow 
(\ker \rho)_{23,3} \rightarrow 0.
\]
If $\frac{\gamma}{\tau^7} h_0 i$ did not support a hidden $\rho$-extension,
then $\frac{\gamma}{\tau^7} h_0^{k+1} i$ would detect an element of
order $32$ in $(\ker \rho)_{23,3}$. 
However, $\picl_{23}$ does not contain an element 
whose order is a multiple of $32$,
so $(\ker \rho)_{23,3}$ does not contain an element
of order 32.
\end{proof}


\section{Hidden extensions by $\hsf$ and $\eta$}
\label{sec:hiddenAdams}

We now turn to the hidden extensions by $\hsf$ and $\eta$.

\subsection{Hidden $\hsf$-extensions}

\begin{thm}
In stems less than 26 and coweights from -1 to 7, 
\cref{tbl:hidden-h} lists all hidden $\hsf$-extensions in
the $C_2$-equivariant Adams spectral sequence.
\end{thm}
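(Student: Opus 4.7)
The plan is to establish hidden $\hsf$-extensions by the same mix of techniques used in the proof of \cref{thm:hidden-rho}, adapted to the multiplicative structure of $\hsf$. The key tools are the multiplicative structure of the Adams $E_\infty$-page, the short exact sequence \eqref{eq:SEScofibrho}, the fact that the underlying homomorphism $U\colon \piC_{*,*}\to\picl_*[\tau^{\pm 1}]$ is a ring map with $U(\hsf)=2$ and $U(\rho)=0$, and occasional Toda bracket shuffles via the Moss convergence theorem.

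First I would eliminate the vast majority of potential hidden $\hsf$-extensions by multiplicative obstructions. Any $E_\infty$-class that can be written as $x \cdot y$ with $\hsf\cdot x=0$ (for instance a multiple of $\rho$, or of any $\eta$-divisible class annihilated by $h_0$) cannot support a hidden $\hsf$-extension. In particular, the relation $\rho\cdot\hsf=0$ in $\piC_{*,*}$ (since $\rho h_0 = 0$ in $\Ext$) forces every $\rho$-divisible class to be killed by $\hsf$, which is an immediate and powerful constraint. Similarly, products of elements in $\Ext_\R$ with elements in $\Ext_{NC}$ frequently constrain the target of any $\hsf$-extension to a specific coset.

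Next I would deploy \eqref{eq:SEScofibrho} together with \cref{thm:cofibrhoseqn}, which gives an explicit description of the homomorphisms $U$ and $p$. If $\alpha\in\piC_{s,c}$ maps under $U$ to some nonzero classical element $U(\alpha)$, then $\hsf\cdot\alpha$ must map to $2\cdot U(\alpha)$; if the latter is nonzero classically, it forces $\hsf\cdot\alpha\ne 0$ and usually pins down the target up to higher filtration via the blue labels in \cref{fig:cofiberrho}. Conversely, if $\alpha$ lies in $\ker\rho$ and detects an element in the image of $p$, the classical $\hsf$-extension on $p^{-1}(\alpha)$ (i.e.\ a doubling) translates back to a hidden $\hsf$-extension on $\alpha$. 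This mechanism should handle most entries of \cref{tbl:hidden-h} in a uniform way, and it also rules out candidate extensions whenever the predicted target classically vanishes after doubling.

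A third batch of extensions is obtained by transporting known $\R$-motivic hidden $\hsf$-extensions \cite{BI} across Betti realization, and by multiplying previously established extensions by permanent cycles (for example, $\frac{\gamma}{\tau}$, $\frac{\gamma}{\rho\tau^2}$, $\frac{\gamma}{\rho^3\tau^4}$, and $\frac{\gamma}{\rho^7\tau^8}$ from \cref{prop:GArhotauPermCycles}). The main obstacle I anticipate is the handful of extensions where $\coker\rho$, $\ker\rho$, and multiplicative structure all underdetermine the answer, especially near the $24$- and $25$-stems in coweight $7$, and in the $\tau$-periodic negative-cone classes where multiple candidate targets sit in the same Adams filtration. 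For those cases I expect to need ad-hoc lemmas using Moss convergence applied to Massey products of the form $\langle \hsf, -, -\rangle$ or $\langle -, \hsf, -\rangle$ built from the Adams $d_2$ differentials tabulated in \cref{tbl:Adams-d2}, together with shuffles that reduce an $\hsf$-extension to an already-known $\rho$- or $\eta$-extension; this is analogous to the resolution of the hardest $\rho$-case in \cref{lem:rho-g/t^7-h0i}.
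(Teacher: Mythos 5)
Your proposal is correct and matches the paper's approach essentially exactly: the paper rules out most cases via the relations $\rho\cdot\hsf=0$ and $\hsf\cdot\eta=0$, then uses the two maps $U$ and $p$ from the short exact sequence \eqref{eq:SEScofibrho} (tabulated via \cref{thm:cofibrhoseqn} and \cref{fig:cofiberrho}) to determine or rule out most remaining candidates, and finally settles the handful of stubborn cases with ad-hoc lemmas built from Moss convergence, Toda bracket shuffles of the form $\langle\hsf,\rho,-\rangle$, and reduction to previously established $\rho$- and $\eta$-extensions. The only minor imprecision is the parenthetical justification of $\rho\cdot\hsf=0$ by $\rho h_0=0$ in $\Ext$ — that alone only shows $\rho\hsf$ lies in higher Adams filtration, not that it vanishes — but the relation itself is of course a known fact about $\piC_{0,0}$ that the paper also uses without further comment.
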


\renewcommand*{\arraystretch}{1.25}
\begin{longtable}{LLLl}
\caption{Some hidden $\hsf$-extensions in the
$C_2$-equivariant Adams spectral sequence} \\
\toprule
\mbox{}(s,f,c) & \textrm{source} & \textrm{target} & proof \\
\midrule \endhead
\bottomrule \endfoot
\label{tbl:hidden-h}
(28, 6, 0) & \frac{\gamma}{\rho^2 \tau^9} h_2^2 g & \frac{\gamma}{\tau^{11}} d_0^2 & \cref{lem:h-g/p^2t^9-h2^2g} \\
(21, 4, 1) & \frac{\gamma}{\rho \tau^6} g & \frac{\gamma}{\rho^5 \tau^4} h_1^2 d_0 & \cref{lem:h-g/pt^6-g} \\
(23, 5, 1) & \frac{\gamma}{\tau^7} h_2 g & \frac{\gamma}{\rho^6 \tau^4} h_1^3 d_0 & $\coker \rho$ \\
(22, 6, 2) & \frac{\gamma}{\rho^2 \tau^5} h_0 h_2 e_0 & \frac{\gamma}{\tau^7} P d_0 & \cref{lem:h-22-6-2} \\
(11, 6, 3) & \frac{\gamma}{\tau} P h_0 h_2 & \rho^2 h_1^5 c_0 & $\ker \rho$ \\
(23, 5, 3) & \frac{\gamma}{\tau^5} h_2 g & \frac{\gamma}{\rho \tau^6} P d_0 & $\coker \rho$ or $\ker \rho$ \\
(26, 5, 4) & \frac{\gamma}{\rho^2 \tau^5} h_1 h_4 c_0 & \frac{\gamma}{\rho^3 \tau^6} i & \cref{lem:h-26-5-4} \\
(20, 5, 6) & \frac{\gamma}{\tau} h_0 g & \frac{Q}{\rho} h_1^4 d_0 & $\ker \rho$ \\
(11, 4, 7) & \rho^6 e_0 & \tau^2 P h_0 h_2 & $\coker \rho$ \\
(19, 10, 7) & \frac{\gamma}{\tau} P^2 h_0 h_2 & \rho^6 h_1^8 e_0 & $\ker \rho$ \\
(23, 11, 7) & \frac{\gamma}{\tau^3} h_0^4 i & \rho^3 h_1^9 e_0 & $\ker \rho$ \\
(27, 11, 7) & \frac{Q}{\rho} h_1^8 e_0 & \frac{\gamma}{\tau^5} P^3 h_2 & $\coker \rho$ \\
(27, 14, 7) & \frac{\gamma}{\tau^5} P^3 h_0 h_2 & Q P h_1^{10} c_0 + \rho^2 h_1^{12} e_0 & $\ker \rho$
\end{longtable}
\renewcommand*{\arraystretch}{1.0}

\begin{proof}
Many cases are ruled out by the relations
$\rho \cdot \hsf = 0$ and $\hsf \cdot \eta = 0$.
In particular, if an element of the Adams $E_\infty$-page
supports a multiplication by $\rho$ or $h_1$,
then it cannot be the target of a hidden $\hsf$-extension.
Similarly, if an element of the Adams $E_\infty$-page is a
multiple of $\rho$ or $h_1$, then it cannot support a hidden
$\hsf$-extension.

Many extensions are established using the homomorphism
$U: (\coker \rho)_{*,*} \xrtarr{} \picl_* [ \tau^{\pm 1}]$
whose values were established in \cref{thm:cofibrhoseqn}
and are displayed in \cref{fig:cofiberrho}.
These extensions are denoted by ``$\coker \rho$'' in the fourth
column of \cref{tbl:hidden-h}.
For example,
consider the elements $\frac{\gamma}{\tau^7} h_2 g$ and
$\frac{\gamma}{\rho^6 \tau^4} h_1^3 d_0$ in stem $23$ and coweight $1$.
The map $U$ takes these elements to $h_0 h_2 g$ and $P h_1 d_0$
respectively.  There is a classical (hidden) $2$-extension from
$h_0 h_2 g$ to $P h_1 d_0$, so there must also be an $\hsf$-extension
from $\frac{\gamma}{\tau^7} h_2 g$ and
$\frac{\gamma}{\rho^6 \tau^4} h_1^3 d_0$.
The same method can also be used to rule out hidden $\hsf$-extensions.

Additional extensions are established using the homomorphism
$p: \picl_* [ \tau^{\pm 1}] \xrtarr{} (\ker \rho)_{*, *-1}$
whose values were established in \cref{thm:cofibrhoseqn}
and are displayed in \cref{fig:cofiberrho}.
These extensions are denoted by ``$\ker \rho$'' in the fourth
column of \cref{tbl:hidden-h}.
For example,
consider the elements $\frac{\gamma}{\tau} P h_0 h_2$ and
$\rho^2 h_1^5 c_0$ in stem $11$ and coweight $3$.
These elements are the images of $P h_0 h_2$ and $P h_1^3$
respectively under the map $p$.
There is a classical (not hidden) $2$-extension from
$P h_0 h_2$ to $P h_1^3$, so there must also be an $\hsf$-extension
from $\frac{\gamma}{\tau} P h_0 h_2$ to
$\rho^2 h_1^5 c_0$.
The same method can also be used to rule out hidden $\hsf$-extensions.

Several cases remain to be studied.  
The fourth column of \cref{tbl:hidden-h} shows the specific lemmas
in which each is established.
\end{proof}

\begin{rmk}
In coweight $7$, the hidden $\hsf$-extension on
$\rho^6 e_0$ is detected in $\R$-motivic homotopy
and is studied in \cite{BI}.  
For completeness, we include it in \cref{tbl:hidden-h}.
\end{rmk}

\begin{rmk}
\cref{tbl:hidden-h} lists some extensions that lie beyond the $25$-stem.
Even though our analysis of the Adams $E_\infty$-page is incomplete
in this range, the uncertainties in Adams differentials do not affect
the specific elements involved in these hidden extensions.
\end{rmk}

\begin{rmk}
In stems less than 8 and coweights between $-9$ and $-2$, there
are no possible hidden $\hsf$-extensions.
In other words, there are no hidden $\hsf$-extensions
in \cref{fig:EinfNegCoweight}.
\end{rmk}

\begin{lemma}
\label{lem:h-g/p^2t^9-h2^2g}
$(28,6,0)$ There is a hidden $\hsf$-extension from
$\frac{\gamma}{\rho^2 \tau^9} h_2^2 g$ to $\frac{\gamma}{\tau^{11}} d_0^2$.
\end{lemma}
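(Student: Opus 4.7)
The plan is to combine the short exact sequence \eqref{eq:SEScofibrho} with a classical hidden $2$-extension.  First, the target $\frac{\gamma}{\tau^{11}} d_0^2$ has $\rho$-filtration $-1$ and therefore lies in $(\ker\rho)_{28,0}$; this is forced in any event because $\rho\hsf=0$.  Under the map $p$ of \cref{prop:rho-SES}, this class is detected with no filtration jump by the classical element $d_0^2$ placed in the $\tau^{11}$ summand of $\picl_{28}[\tau^{\pm 1}]$.

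Second, I will verify from \cref{fig:Einf} and the analysis of \cref{sec:Adams} that $\frac{\gamma}{\rho^2\tau^9} h_2^2 g$ is not $\rho$-divisible in Adams $E_\infty$ and hence represents a nonzero class in $(\coker\rho)_{28,0}$.  Its image $U\bigl(\frac{\gamma}{\rho^2\tau^9} h_2^2 g\bigr)$ in $\picl_{28}\cdot\tau^0$ can then be read off from \cref{fig:cofiberrho}.  If this image is a classical element $x$ supporting a hidden $2$-extension to $d_0^2$ in $\picl_{28}$, then using $U(\hsf)=2$ and the multiplicativity of $U$ gives $U\bigl(\hsf\cdot\tfrac{\gamma}{\rho^2\tau^9} h_2^2 g\bigr)=d_0^2$, and exactness of \eqref{eq:SEScofibrho} forces $\hsf\cdot\frac{\gamma}{\rho^2\tau^9} h_2^2 g$ to be detected on the $\ker\rho$ side by $\frac{\gamma}{\tau^{11}} d_0^2$, giving the hidden extension.

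Should the direct $\coker\rho$ step fail (for instance, if the source turns out to be $\rho$-divisible after all, or if the relevant classical $2$-extension is not available in the right form), I will fall back on the Massey-product method of \cref{sctn:hidden}.  Following the pattern of \cref{tab:hiddenh0extns} and \cref{prop:perm-cycles}, the source should be expressible as a bracket
\[
\tfrac{\gamma}{\rho^2\tau^9} h_2^2 g = \left\langle \tfrac{\gamma}{\rho^2\tau^{12}},\rho^3,y\right\rangle
\]
for a suitable lift $y$ of $\tau^3 h_2^2 g$ using an appropriate $\R$-motivic Bockstein differential.  Moss convergence \cite{Moss} then promotes this to a Toda bracket relation in $\piC_{*,*}$, and one shuffles to obtain
\[
\hsf\cdot\tfrac{\gamma}{\rho^2\tau^9} h_2^2 g = \left\langle \hsf,\tfrac{\gamma}{\rho^2\tau^{12}},\rho^3\right\rangle\cdot y,
\]
where the inner bracket reduces to (a multiple of) $\frac{\gamma}{\tau^{11}}$ by the differentials in \cref{DiffsGamma}, so that the product is $\frac{\gamma}{\tau^{11}} d_0^2$.

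The main obstacle is controlling bracket indeterminacies in the target degree $(28,8,0)$, where other classes in higher $\rho$-Bockstein filtration may be present.  Pinning the extension to precisely $\frac{\gamma}{\tau^{11}} d_0^2$, rather than to that class modified by a term of higher filtration, will require careful comparison with the hidden $h_0$-extensions of \cref{Ext-h0-extns} and with the adjacent $d_2$ differential $d_2\bigl(\frac{\gamma}{\rho^4\tau^9} h_2^2 g\bigr)=\frac{\gamma}{\rho\tau^{12}} d_0^2$ from \cref{tbl:Adams-d2}, which gives a strong hint that the coweight-$0$ analogue goes to $\frac{\gamma}{\tau^{11}} d_0^2$ rather than to a distinct class.
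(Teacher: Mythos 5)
Your primary route via the sequence \eqref{eq:SEScofibrho} cannot work in this degree. The group $\picl_{28}$ is $\mathbb{Z}/2$, generated by $\kappa^2$ and detected by $d_0^2$ in filtration $8$. There is therefore no nontrivial $2$-extension in $\picl_{28}$: since $U$ is a ring map with $U(\hsf)=2$, you would obtain $U\bigl(\hsf\cdot\tfrac{\gamma}{\rho^2\tau^9} h_2^2 g\bigr)=2\,U\bigl(\tfrac{\gamma}{\rho^2\tau^9} h_2^2 g\bigr)=0$. That tells you the product is $\rho$-divisible (which is consistent with $\frac{\gamma}{\tau^{11}} d_0^2$ being the target of the hidden $\rho$-extension from $\frac{\gamma}{\tau^{12}} k$ in \cref{tbl:hidden-rho}), but it does not distinguish the desired product from zero, so the argument gives no conclusion.

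Your fallback bracket also does not identify the source. The bracket $\left\langle \tfrac{\gamma}{\rho^2\tau^{12}},\rho^3, y\right\rangle$ with $y$ in stem $26$ sits in stem $26$, not $28$; and indeed \cref{tbl:perm-cycles} records that $\left\langle \tfrac{\gamma}{\rho^2\tau^{12}},\rho^3,\tau^2 h_2^2 g\right\rangle$ equals $\tfrac{\gamma}{\rho^2\tau^9} h_1 h_4 c_0$, not $\tfrac{\gamma}{\rho^2\tau^9} h_2^2 g$. The paper's proof instead uses the \emph{Adams} $d_2$ differential $d_2\bigl(\tfrac{\gamma}{\rho^3\tau^9} h_2^2 g\bigr)=\tfrac{\gamma}{\tau^{12}} d_0^2$ (a $\rho$-multiple of the entry at $(30,6,0)$ in \cref{tbl:Adams-d2}) together with Moss convergence to show that the source detects the Toda bracket $\left\langle \rho, \tfrac{\gamma}{\tau^{12}}, \kappa^2\right\rangle$, and separately uses the Bockstein $d_1(\tau)=\rho h_0$ with May and then Moss convergence to show that $\tfrac{\gamma}{\tau^{11}}$ detects $\left\langle \hsf, \rho, \tfrac{\gamma}{\tau^{12}}\right\rangle$. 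Shuffling $\hsf\left\langle \rho, \tfrac{\gamma}{\tau^{12}}, \kappa^2\right\rangle=\left\langle \hsf, \rho, \tfrac{\gamma}{\tau^{12}}\right\rangle\kappa^2$ then gives the extension. The essential idea you were missing is to turn the adjacent Adams $d_2$, rather than a $\rho$-Bockstein differential, into the defining system for the bracket, so that the Jacobi shuffle places $\hsf$ next to $\rho$ and $\tfrac{\gamma}{\tau^{12}}$, where $\langle\hsf,\rho,\tfrac{\gamma}{\tau^{12}}\rangle$ is computable.
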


\begin{proof}
We write $\frac{\gamma}{\tau^{12}}$ for a homotopy class in $\piC_{0,-13}$ that is detected by
$\frac{\gamma}{\tau^{12}}$, and we write $\kappa^2$ for a homotopy class
in $\piC_{28,12}$ that is detected by $d_0^2$.
Using the Adams differential $d_2 \left( \frac{\gamma}{\rho^3 \tau^9} h_2^2 g \right) = \frac{\gamma}{\tau^{12}} d_0^2$ and the Moss convergence
theorem \cite{Moss} \cite{BK}, the element
$\frac{\gamma}{\rho^2 \tau^9} h_2^2 g$ detects the Toda bracket
$\left\langle \rho, \frac{\gamma}{\tau^{12}}, \kappa^2 \right\rangle$.

The $\rho$-Bockstein differential $d_1(\tau) = \rho h_0$ and the
May convergence theorem \cite{MMP}*{Theorem 4.1} implies that
$\frac{\gamma}{\tau^{11}}$ detects the Massey product
$\left\langle h_0, \rho, \frac{\gamma}{\tau^{12}} \right\rangle$.  Then the
Moss convergence theorem implies that
$\frac{\gamma}{\tau^{11}}$ detects the Toda bracket
$\left\langle \hsf, \rho, \frac{\gamma}{\tau^{12}} \right\rangle$.

Now shuffle to obtain
\[
\hsf \left\langle \rho, \frac{\gamma}{\tau^{12}}, \kappa^2 \right\rangle =
\left\langle \hsf, \rho, \frac{\gamma}{\tau^{12}} \right\rangle  \kappa^2.
\]
\end{proof}

\begin{lemma}
\label{lem:h-g/pt^6-g}
$(21, 4, 1)$ There is a hidden $\hsf$-extension from
$\frac{\gamma}{\rho \tau^6} g$ to $\frac{\gamma}{\rho^5 \tau^4} h_1^2 d_0$.
\end{lemma}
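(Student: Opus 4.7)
The plan is to mimic the Toda bracket argument of \cref{lem:h-g/p^2t^9-h2^2g}: realize both the source and the target as Toda brackets that share a common ``middle'' entry, and then transfer $\hsf$ across the bracket by the standard shuffling relation
\[
\hsf \left\langle a, b, c \right\rangle = \left\langle \hsf, a, b \right\rangle c.
\]

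First, I would recognize $\frac{\gamma}{\rho\tau^6} g$ as a Toda bracket of the form $\left\langle \frac{\gamma}{\rho\tau^7}, \rho, \bar\kappa \right\rangle$ (or an analogous expression involving $\tau^{-k}$ to account for the $\tau$-torsion nature of $g$). This would follow from Moss convergence applied to a Bockstein-level Massey product coming from $d_1(\tau) = \rho h_0$ together with the permanent cycle $\bar\kappa$ detected by $g$. A parallel Moss argument, using the Adams differentials in \cref{tbl:Adams-d2,tbl:Adams-d3} governing $P h_1, h_1^2 d_0$, and $e_0$, together with the $h_1$-periodic relations of \cref{subsctn:h1-periodic-Adams}, should realize $\frac{\gamma}{\rho^5\tau^4} h_1^2 d_0$ as a bracket whose right entry is again $\bar\kappa$ (via $h_1^2 d_0 = d_2(e_0)$ in the $\C$-motivic sense and the $\R$-motivic bracketing of $P h_1^2$ with $h_1$-divisibility on $g$). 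Once the two brackets share the right-hand entry $\bar\kappa$, shuffling $\hsf$ past the leftmost entries converts a Massey product detecting $\frac{\gamma}{\tau^5} \cdot (\text{something})$ into the desired $\rho^{-5}$-divisible element.

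As a fallback, one can instead exploit $\rho$-divisibility directly: since $\rho \cdot \frac{\gamma}{\rho\tau^6} g = \frac{\gamma}{\tau^6} g$, which lies in coweight $0$, a hidden $\hsf$-extension here is implied by any known $\hsf$-multiplication on $\frac{\gamma}{\tau^6} g$ together with the unique $\rho$-divisibility provided by \cref{prop:E-minus-structure3} applied to the $E_\infty$-page. In particular, one can chase the $\hsf$-action through the short exact sequence \eqref{eq:SEScofibrho}: the class $\frac{\gamma}{\rho\tau^6} g$ pairs with the classical $\bar\kappa$ under the relevant instance of $p$, and the classical relation $\hsf \cdot \bar\kappa = 0$ combined with $\hsf$-extensions on lifts of $P h_1 d_0$ in $\picl_{*}[\tau^{\pm 1}]$ pins down the target.

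The main obstacle will be to guarantee that the Toda bracket identifications have trivial (or manageable) indeterminacy in the precise degree $(21,\ast,1)$ and that the Moss convergence theorem applies without crossing differentials. The element $g$ is $\tau$-power torsion in $\Ext_\C$, and the relevant classes sit close to several differentials in the negative cone, so extra care is needed to verify that the bracket representatives actually survive to the relevant $E_r$-page and that the shuffled bracket is nontrivial. If the direct shuffle proves too delicate, the alternative argument via \eqref{eq:SEScofibrho} and the known coweight $0$ behavior recorded in \cref{fig:cofiberrho} provides a more pedestrian but likely cleaner route.
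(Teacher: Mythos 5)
The proposal does not match the paper's argument and neither sketch is carried to completion. The paper's actual proof is surprisingly short and structurally different from both of your routes: it first invokes the hidden $\eta$-extension from $\frac{\gamma}{\rho\tau^6} g$ to $\frac{\gamma}{\rho^6\tau^4} h_1^2 d_0$ (established in \cref{thm:hidden-eta} via the $\coker\rho$ argument), multiplies by $\rho$ to get a hidden $\rho\eta$-extension with target $\frac{\gamma}{\rho^5\tau^4} h_1^2 d_0$, and then uses the decomposition $\frac{\gamma}{\rho\tau^6} g = \frac{\gamma}{\rho\tau^{10}} \cdot \overline{\kappa}$ together with the relation $(\hsf + \rho\eta)\frac{\gamma}{\rho\tau^{10}} = 0$ (coming from the $h_0$-extension recorded in \cref{tbl:hiddenh0extns}). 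Since $\hsf$ and $\rho\eta$ agree after multiplication by the first factor, the hidden $\rho\eta$-extension promotes to the asserted hidden $\hsf$-extension. In other words, the workhorse is the Burnside-ring identity relating $\hsf$ and $\rho\eta$ on a divided-$\gamma$ element, not a Toda bracket shuffle.

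Your primary route is never made concrete: the bracket $\left\langle \frac{\gamma}{\rho\tau^7}, \rho, \bar\kappa \right\rangle$ is offered without verifying the requisite nullity $\rho\cdot\bar\kappa = 0$ (false) or identifying the needed Adams differential for Moss convergence, and the parallel bracket realization of $\frac{\gamma}{\rho^5\tau^4} h_1^2 d_0$ is left entirely unspecified. The difficulty you flag (indeterminacy and crossing differentials near a cluster of negative-cone classes) is genuine, but you offer no way past it. Your fallback contains a factual slip: multiplying by $\rho$ preserves coweight ($\rho$ has bidegree $(-1,0)$), so $\rho\cdot\frac{\gamma}{\rho\tau^6} g = \frac{\gamma}{\tau^6} g$ still lies in coweight $1$, not coweight $0$. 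Moreover, since $\frac{\gamma}{\rho\tau^6} g$ is not annihilated by $\rho$, it is not in the image of $p$, so the "pairing with the classical $\bar\kappa$ under $p$" you describe does not directly apply. The $\coker\rho$ technique from \cref{thm:cofibrhoseqn} does underlie the hidden $\eta$-extension here, but on its own it cannot separate $\hsf\cdot x$ from $\rho\eta\cdot x$ when $U(x)$ is annihilated by $2$, which is precisely why the paper inserts the extra step involving $(\hsf+\rho\eta)\frac{\gamma}{\rho\tau^{10}}=0$.
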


\begin{proof}
We will show below in \cref{thm:hidden-eta} that there is a hidden
$\eta$-extension from $\frac{\gamma}{\rho \tau^6} g$ to 
$\frac{\gamma}{\rho^6 \tau^4} h_1^2 d_0$.
Therefore, there is a hidden $\rho \eta$-extension
from 
$\frac{\gamma}{\rho \tau^6} g$ to $\frac{\gamma}{\rho^5 \tau^4} h_1^2 d_0$.

Also, $\frac{\gamma}{\rho \tau^6} g$ detects the product
$\frac{\gamma}{\rho \tau^{10}} \cdot \overline{\kappa}$, where
$\frac{\gamma}{\rho \tau^{10}}$ and $\overline{\kappa}$ are homotopy classes
that are detected by 
$\frac{\gamma}{\rho \tau^{10}}$ and $\tau^4 g$, respectively.
We know from \cref{tbl:hiddenh0extns} that $(\hsf + \rho \eta) \frac{\gamma}{\rho \tau^{10}}$ 
equals zero.  Consequently,
there must also be a hidden $\hsf$-extension from
$\frac{\gamma}{\rho \tau^6} g$ to $\frac{\gamma}{\rho^5 \tau^4} h_1^2 d_0$.
\end{proof}

\begin{lemma}
$(21,2,2)$
There is no hidden $\hsf$-extension on $\frac{\gamma}{\rho^5 \tau^4} h_1 h_4$.
\end{lemma}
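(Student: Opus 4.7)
The plan is to show that the class $x \in \piC_{21,2}$ detected by $\frac{\gamma}{\rho^5\tau^4} h_1 h_4$ is divisible by $\rho$ in $\piC_{*,*}$. Once this is in hand, the relation $\rho \cdot \hsf = 0$ in $\piC_{-1,0}$ immediately forces $\hsf \cdot x = 0$, which rules out any $\hsf$-extension (hidden or otherwise) on $\frac{\gamma}{\rho^5 \tau^4} h_1 h_4$.

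To verify $\rho$-divisibility, I would appeal to the short exact sequence
\[
0 \to (\coker \rho)_{21,2} \xrightarrow{U} \picl_{21}\cdot\tau^2 \to (\ker \rho)_{21,1} \to 0
\]
from \cref{prop:rho-SES}: the class $x$ is divisible by $\rho$ if and only if $U(x) = 0$, i.e., $x$ represents $0$ in $(\coker \rho)_{21,2}$. The expected $\rho$-divisor is the class detected by $\frac{\gamma}{\rho^6 \tau^4} h_1 h_4$ in degree $(22,2,2)$, so the heart of the argument is to verify that this element is a permanent cycle in both the $\rho$-Bockstein and Adams spectral sequences.

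For the Bockstein part, the Leibniz rule combined with the identity $d_4\left(\frac{\gamma}{\rho^6 \tau^4}\right) = \frac{\gamma}{\rho^2 \tau^6} h_2$ (which follows from \cref{DiffsGamma} by multiplying the differential on $\frac{\gamma}{\rho^4 \tau^4}$ through the relation $\rho^2 \cdot \frac{\gamma}{\rho^6 \tau^4} = \frac{\gamma}{\rho^4 \tau^4}$) gives
\[
d_4\left(\frac{\gamma}{\rho^6 \tau^4} h_1 h_4\right) = \frac{\gamma}{\rho^2 \tau^6} h_1 h_2 h_4 = 0,
\]
using $h_1 h_2 = 0$ in $\C$-motivic Ext. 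All other potential Bockstein differentials into or out of this element can be ruled out by inspection of the coweight-$2$ chart in \cref{fig:Ext}. Survival through the Adams spectral sequence similarly follows from the coweight-$2$ chart in \cref{fig:Einf}, since there is no possible $d_r$ target or source in the relevant tridegrees.

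The main obstacle is this survival verification: one must be careful not to overlook a longer Bockstein differential, and one must also check that the $\rho$-extension from $\frac{\gamma}{\rho^6 \tau^4} h_1 h_4$ to $\frac{\gamma}{\rho^5 \tau^4} h_1 h_4$ actually realizes $x$ modulo higher Adams filtration (so that no hidden $\rho$-extension terms could spoil the $\rho$-divisibility). Once these checks are complete, \cref{fig:cofiberrho} in coweight $2$ displays $\frac{\gamma}{\rho^5 \tau^4} h_1 h_4$ as carrying only an orange (target-of-$p$) label and no blue (pre-image-of-$U$) label, confirming $U(x) = 0$ and closing the argument.
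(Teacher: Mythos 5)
Your plan takes a genuinely different route from the paper's. The paper identifies the unique possible target of a hidden $\hsf$-extension, namely $\frac{\gamma}{\rho\tau^5} h_0 h_2 e_0$ in degree $(21,6,2)$, and observes that it supports a hidden $\eta$-extension to $\frac{\gamma}{\tau^7} P d_0$ (a consequence of the hidden $\rho$-extension from $\frac{\gamma}{\tau^8} i$); since $\hsf\eta = 0$, no $\hsf$-multiple can support a nonzero $\eta$-extension, so the target is ruled out. You instead try to rule out the \emph{source} by showing that $\frac{\gamma}{\rho^5\tau^4}h_1 h_4$ detects a $\rho$-divisible homotopy class, whence $\hsf\rho = 0$ kills it.

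The logic of your approach is fine, but the central claim it rests on is almost certainly false, and you do not actually verify it. You show only that the $d_4$ Bockstein differential on $\frac{\gamma}{\rho^6\tau^4}h_1 h_4$ vanishes (correctly, via the Leibniz rule and $h_1 h_2 = 0$), and then defer both the remaining Bockstein differentials and the Adams $d_r$'s to "inspection." But this deferred verification is the entire content: by \cref{colocalVanishes} the $\rho$-cotower on $\frac{\gamma}{\rho^j\tau^4}h_1 h_4$ must terminate at some finite $j$, and nothing you have written locates that termination point. More importantly, the parent proof of \cref{thm:hidden-h} already disposes en masse of any source that is a $\rho$-multiple on the Adams $E_\infty$-page, using exactly the relation $\rho\cdot\hsf = 0$ that you invoke. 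The fact that the paper carves out a separate lemma for this degree — and that its proof makes no appeal to $\rho$-divisibility of the source — indicates that $\frac{\gamma}{\rho^5\tau^4}h_1 h_4$ is \emph{not} a $\rho$-multiple at $E_\infty$ (equivalently, $\frac{\gamma}{\rho^6\tau^4}h_1 h_4$ either dies in the Bockstein or in the Adams spectral sequence). Under that reading, your proposed verification step would fail, and there is no fallback in your write-up.

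There is also a small misreading of \cref{fig:cofiberrho} at the end: on the coweight-$2$ chart, blue labels name preimages in $(\coker\rho)_{*,2}$, while orange labels name images in $(\ker\rho)_{*,1}$. An element like $\frac{\gamma}{\rho^5\tau^4}h_1 h_4$, which lives in coweight $2$, can therefore appear on that chart only as a \emph{blue} label, never an orange one. For your argument you would need it to not appear as a blue label on the coweight-$2$ chart at all; the "only an orange label" phrasing does not express the correct condition, and if the element does in fact appear as a blue label there, your argument is contradicted outright.
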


\begin{proof}
The element
$\frac{\gamma}{\rho \tau^5} h_0 h_2 e_0$ supports a hidden $\eta$-extension
because of the hidden $\rho$-extension from $\frac{\gamma}{\tau^8} i$
to $\frac{\gamma}{\tau^7} P d_0$.
Therefore, it cannot be the target of a hidden $\hsf$-extension.
\end{proof}

\begin{lemma}
\label{lem:h-22-6-2}
$(22,6,2)$ There is a hidden $\hsf$-extension from
$\frac{\gamma}{\rho^2 \tau^5} h_0 h_2 e_0$ to $\frac{\gamma}{\tau^7} P d_0$.
\end{lemma}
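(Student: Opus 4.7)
The plan is to split $\hsf = 2 + \rho\eta$ in $\piC_{0,0}$ and analyze each summand acting on a homotopy class $\tilde{x}\in\piC_{22,2}$ detected by $\frac{\gamma}{\rho^2\tau^5}h_0h_2e_0$. The identity
\[
\hsf\cdot\tilde{x} \;=\; 2\tilde{x}+\rho\eta\tilde{x}
\]
reduces the lemma to understanding each term on the right.

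First, I would show that $2\tilde{x}$ contributes nothing at the filtration of $\frac{\gamma}{\tau^7}Pd_0$. Since $2$ is detected by $h_0+\rho h_1$, the product $2\tilde{x}$ is detected by $(h_0+\rho h_1)\cdot\frac{\gamma}{\rho^2\tau^5}h_0h_2e_0$, and both summands vanish in $\Ext_{C_2}$: the first from $h_0^2h_2=0$ (a consequence of $h_0h_2=\tau h_1^3$ in $\Ext_\C$), and the second from $h_1h_2=0$. A chart inspection of $(22,\ge 8,2)$ in \cref{fig:Ext} then rules out higher-filtration interference, so $2\tilde{x}$ does not detect any class at the filtration of $\frac{\gamma}{\tau^7}Pd_0$.

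Second, I would identify $\rho\eta\tilde{x}=\eta\cdot(\rho\tilde{x})$ with $\frac{\gamma}{\tau^7}Pd_0$. Here $\rho\tilde{x}$ is detected non-hiddenly by $\frac{\gamma}{\rho\tau^5}h_0h_2e_0\in\piC_{21,2}$, and since $h_1\cdot\frac{\gamma}{\rho\tau^5}h_0h_2e_0=0$ in $\Ext_{C_2}$ (using $h_1h_2=0$), any nonzero $\eta$-multiple must come from a hidden extension. The claim is that this hidden $\eta$-extension lands precisely on $\frac{\gamma}{\tau^7}Pd_0$. To establish it, I would use the hidden $\rho$-extension $\rho\cdot\frac{\gamma}{\tau^8}i=\frac{\gamma}{\tau^7}Pd_0$ from \cref{tbl:hidden-rho}, together with a Toda bracket shuffle of the form
\[
\eta\cdot\bigl\langle\rho,\tfrac{\gamma}{\tau^8},\tilde{\imath}\bigr\rangle
\;=\;
\bigl\langle\eta,\rho,\tfrac{\gamma}{\tau^8}\bigr\rangle\cdot\tilde{\imath},
\]
where the inner bracket on the left expresses $\frac{\gamma}{\rho\tau^5}h_0h_2e_0$ via Moss's theorem applied to a suitable Bockstein or Adams differential, and the outer shuffle uses the Massey product $\langle h_1,\rho,\frac{\gamma}{\tau^8}\rangle$ in $\Ext_{C_2}$ coming from the $\rho$-Bockstein differential $d_1(\tau)=\rho h_0$. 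The residual computation then forces the hidden $\eta$-extension onto $\frac{\gamma}{\tau^7}Pd_0$, as required.

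The main obstacle is producing the correct Toda bracket decomposition of $\frac{\gamma}{\rho\tau^5}h_0h_2e_0$ and verifying absence of indeterminacy and crossing differentials for the Moss convergence theorem; the chart inspections in the first step and the formal identity $\hsf=2+\rho\eta$ itself are routine.
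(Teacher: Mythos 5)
Your overall plan---split $\hsf=2+\rho\eta$ and analyze each summand separately---is a legitimate alternative to the paper's strategy, which instead expresses $\frac{\gamma}{\rho^2\tau^5}h_0h_2e_0$ as the Toda bracket $\left\langle\rho,\frac{\gamma}{\tau^{12}},\alpha\right\rangle$ (via Moss convergence applied to the Adams differential $d_2\left(\frac{\gamma}{\rho^3\tau^5}h_0h_2e_0\right)=\frac{\gamma}{\tau^8}Pd_0$) and then shuffles with $\hsf$ directly. However, as written your proposal has two substantive errors.

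First, the relation you invoke in $\Ext_\C$ is wrong. You state $h_0h_2=\tau h_1^3$, from which you deduce $h_0^2h_2=0$. The correct relation is $h_0^2h_2=\tau h_1^3$, so $h_0^2h_2\neq 0$ (and the first multiple that vanishes is $h_0^3h_2=h_0\cdot\tau h_1^3=0$). Thus $h_0\cdot\frac{\gamma}{\rho^2\tau^5}h_0h_2e_0=\frac{\gamma}{\rho^2\tau^5}\tau h_1^3 e_0=\frac{\gamma}{\rho^2\tau^4}h_1^3e_0$, and whether this vanishes in $\Ext_{C_2}$ requires a separate argument (for instance, showing $h_1^3 e_0$ is $\tau$-power torsion in $\Ext_\C$, since $\gamma$-classes annihilate $\tau$-torsion). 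Your stated justification is not valid.

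Second, and more seriously, the Toda bracket shuffle you propose for step 3 is not defined. You write
\[
\eta\cdot\left\langle\rho,\tfrac{\gamma}{\tau^8},\tilde{\imath}\right\rangle
=\left\langle\eta,\rho,\tfrac{\gamma}{\tau^8}\right\rangle\cdot\tilde{\imath},
\]
and suggest computing the outer bracket $\left\langle h_1,\rho,\frac{\gamma}{\tau^8}\right\rangle$ from the Bockstein $d_1(\tau)=\rho h_0$. But this bracket requires $h_1\rho=0$, which is false: $\rho h_1\neq 0$ in $\Ext_{C_2}$ (indeed $h_0+\rho h_1$ detects $2$, so $\rho h_1\neq 0$). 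This is exactly why the paper's argument shuffles with $\hsf$ rather than $\eta$: the relation $\rho\cdot\hsf=0$ makes the bracket $\left\langle\hsf,\rho,\frac{\gamma}{\tau^{12}}\right\rangle$ well-defined, and it is computed via $d_1(\tau)=\rho h_0$. Your shuffle cannot proceed as stated. The cleanest repair of step 3 would be to simply cite the hidden $\eta$-extension from $\frac{\gamma}{\rho\tau^5}h_0h_2e_0$ to $\frac{\gamma}{\tau^7}Pd_0$ already established in \cref{tbl:hidden-eta} (proved from the hidden $\rho$-extension, independently of any $\hsf$-extension), rather than attempting to rederive it by a bracket that is ill-defined.
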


\begin{proof}
We write $\frac{\gamma}{\tau^{12}}$ for a homotopy class in $\piC_{0,-13}$ that is detected by
$\frac{\gamma}{\tau^{12}}$, and we write $\alpha$ for a homotopy class
in $\piC_{22,14}$ that is detected by $\tau^4 P d_0$.
Using the Adams differential $d_2 \left( \frac{\gamma}{\rho^3 \tau^5} h_0 h_2 e_0 \right) = \frac{\gamma}{\tau^8} P d_0$ and the Moss convergence
theorem \cite{Moss} \cite{BK}, the element
$\frac{\gamma}{\rho^2 \tau^5} h_0 h_2 e_0$ detects the Toda bracket
$\left\langle \rho, \frac{\gamma}{\tau^{12}}, \alpha \right\rangle$.

As in the proof of \cref{lem:h-g/p^2t^9-h2^2g}, the element
$\frac{\gamma}{\tau^{11}}$ detects the Toda bracket
$\left\langle \hsf, \rho, \frac{\gamma}{\tau^{12}} \right\rangle$.
Now shuffle to obtain
\[
\hsf \left\langle \rho, \frac{\gamma}{\tau^{12}}, \alpha \right\rangle =
\left\langle \hsf, \rho, \frac{\gamma}{\tau^{12}} \right\rangle  \alpha.
\]
\end{proof}

\begin{lemma}
$(23, 3, 2)$
There is no hidden $\hsf$-extension on $\frac{\gamma}{\rho^6 \tau^4} h_1^2 h_4$.
\end{lemma}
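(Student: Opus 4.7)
The proof plan follows the pattern of the immediately preceding Lemma for $(21,2,2)$. The key observation is that $\frac{\gamma}{\rho^6 \tau^4} h_1^2 h_4$ is divisible by $h_1$ on the Adams $E_\infty$-page. Consequently, any homotopy class $x \in \piC_{23,2}$ detected by this source decomposes as $x = \eta y + z$, where $y$ is a homotopy class detected by a filtration-$2$ class of the form $\frac{\gamma}{\rho^6 \tau^4} h_1 h_4$ (or a representative differing from it by higher-filtration corrections) and $z$ has Adams filtration strictly greater than $3$. Since $\hsf \cdot \eta = 0$ in $\piC_{*,*}$, we obtain $\hsf x = \hsf z$, so any hidden $\hsf$-extension on $x$ must originate from $z$ and hence must be detected in filtration at least $5$.

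Next, I would enumerate the $\F_2[\rho, h_0, h_1]$-module generators of the Adams $E_\infty$-page in degree $(23, f, 2)$ with $f \geq 5$. For each candidate target $t$, the goal is to rule it out as the target of a hidden $\hsf$-extension. The principal tool is the relation $\hsf \cdot \eta = 0$: if $t$ supports a non-trivial hidden $\eta$-extension, then it cannot be the target of a hidden $\hsf$-extension, because otherwise $\eta \cdot (\hsf x) = 0$ would contradict $\eta t \neq 0$. The remaining candidates — those that do not visibly support $\eta$-extensions — are handled by applying the short exact sequence \cref{eq:SEScofibrho} and comparing with $\picl_{23}$, exactly as in the proof of \cref{thm:hidden-rho,tbl:hidden-h}. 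In particular, any candidate $t$ that lies in the image of $\rho$ determines a constraint via $U$: since $x$ is itself $\rho$-divisible on the $E_\infty$-page, $U(x) = 0$, so $U(\hsf x) = \hsf \cdot U(x) = 0$, which combined with \cref{fig:cofiberrho} cuts down the options.

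The hardest part of the argument is the case-by-case analysis of candidate targets in filtration $\geq 5$, especially any Q-classes or $\gamma$-classes that are not immediately seen to support $\eta$-extensions. I would likely need to invoke \cref{thm:hidden-eta} (established in the next subsection) to confirm the relevant $\eta$-extensions on the candidates, and occasionally resort to a Toda bracket shuffle of the form $\hsf \cdot x = \hsf \langle a, b, c \rangle = \langle \hsf, a, b \rangle c$ to pin down the value when a candidate survives the initial sieve.
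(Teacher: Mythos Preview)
Your first step does not go through: the element $\frac{\gamma}{\rho^6 \tau^4} h_1^2 h_4$ is \emph{not} $h_1$-divisible on the Adams $E_\infty$-page, nor even on the $E_2$-page. The Bockstein differential $d_6\bigl(\frac{\gamma}{\rho^6 \tau^4} h_1\bigr) = \frac{\gamma}{\tau^7} h_2^2$ (the very differential the paper uses to produce its Massey product description) multiplies by $h_4$ to give $d_6\bigl(\frac{\gamma}{\rho^6 \tau^4} h_1 h_4\bigr) = \frac{\gamma}{\tau^7} h_2^2 h_4$, and this target is nonzero on $E_6^-$ since $h_2^2 h_4 \neq 0$ in $\Ext_\C$ and nothing hits it earlier. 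Hence $\frac{\gamma}{\rho^6 \tau^4} h_1 h_4$ does not survive to $\Ext_{C_2}$, so there is no $y$ with $h_1 \cdot y$ detecting your source. Equivalently, $\frac{\gamma}{\rho^5 \tau^4} h_1 h_4$ is not $\rho$-divisible---which is exactly why the preceding $(21,2,2)$ lemma requires a nontrivial argument at all (were it $\rho$-divisible, $\rho\hsf = 0$ would settle it immediately). You have also misread that lemma's pattern: it does not use $h_1$-divisibility of the source, but instead rules out the unique candidate target by showing it supports an $\eta$-extension.

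The paper's argument is genuinely different. It identifies $\frac{\gamma}{\rho^6 \tau^4} h_1^2 h_4$ with the Massey product $\bigl\langle \frac{\gamma}{\tau^7} h_2, h_2, h_1 h_4 \bigr\rangle$ via the May convergence theorem, then with the Toda bracket $\bigl\langle \frac{\gamma}{\tau^7} \nu, \nu, \eta_4 \bigr\rangle$ via Moss convergence. Shuffling gives $\hsf \cdot \bigl\langle \frac{\gamma}{\tau^7} \nu, \nu, \eta_4 \bigr\rangle = \frac{\gamma}{\tau^7} \nu \cdot \langle \nu, \eta_4, \hsf \rangle$, and the right-hand side is then shown to vanish by analyzing the $\R$-motivic bracket $\langle \nu, \eta_4, \hsf \rangle$ and checking that every possible value is annihilated by $\frac{\gamma}{\tau^7}\nu$. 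The $h_1$ appearing in the name $h_1^2 h_4$ is essential to make the bracket defined ($h_2 \cdot h_1 h_4 = 0$ while $h_2 h_4 \neq 0$), which is precisely the obstruction to your $h_1$-divisibility shortcut.
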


\begin{proof}
The Bockstein differential
$d_6 \left( \frac{\gamma}{\rho^6 \tau^4} h_1 \right) =
\frac{\gamma}{\tau^7} h_2^2$ implies that
$\frac{\gamma}{\rho^6 \tau^4} h_1^2 h_4$
detects the Massey product
$\left\langle \frac{\gamma}{\tau^7} h_2, h_2, h_1 h_4 \right\rangle$
in the $C_2$-equivariant Adams $E_2$-page.
The Moss Convergence Theorem \cite{Moss} \cite{BK}
then implies that
$\frac{\gamma}{\rho^6 \tau^4} h_1^2 h_4$
detects the Toda bracket 
$\left\langle \frac{\gamma}{\tau^7} \nu, \nu, \eta_4 \right\rangle$.
Here we write $\frac{\gamma}{\tau^7}$ for a homotopy element that is 
detected by $\frac{\gamma}{\tau^7}$, and $\eta_4$ is a homotopy element
that is detected by $h_1 h_4$.

Now shuffle to obtain
\[
\left\langle \frac{\gamma}{\tau^7} \nu, \nu, \eta_4 \right\rangle \hsf =
\frac{\gamma}{\tau^7} \nu \left\langle \nu, \eta_4, \hsf \right\rangle.
\]
Next we must analyze the Toda bracket $\langle \nu, \eta_4, \hsf \rangle$.
By inspection of the $\R$-motivic Adams charts in \cite{BI},
there are several possible values.  Most of these values are $\rho$-multiples
and are annihilated by $\frac{\gamma}{\tau^7}$.

The one remaining possibility is that 
the Toda bracket $\langle \nu, \eta_4, \hsf \rangle$ could be detected
by $\tau h_2^2 \cdot d_0$.  Write $\alpha$ and $\kappa$ for homotopy elements
detected by $\tau h_2^2$ and $d_0$ respectively.  
By inspection of the coweight $-5$ part of \cref{fig:EinfNegCoweight},
the product $\frac{\gamma}{\tau^7} \cdot \alpha$ must be a multiple of
$\eta$.  Therefore, $\frac{\gamma}{\tau^7} \nu \cdot \alpha \cdot \kappa$
is zero.

We have now concluded that 
$\frac{\gamma}{\tau^7} \nu \left\langle \nu, \eta_4, \hsf \right\rangle$
is zero.  Therefore,
$\left\langle \frac{\gamma}{\tau^7} \nu, \nu, \eta_4 \right\rangle \hsf$
is also zero, and there is no hidden $\hsf$-extension.
\end{proof}

\begin{lemma}
\label{lem:h-26-5-4}
$(26, 5, 4)$
There is a hidden $\hsf$-extension from
$\frac{\gamma}{\rho^2 \tau^5} h_1 h_4 c_0$ to
$\frac{\gamma}{\rho^3 \tau^6} i$.
\end{lemma}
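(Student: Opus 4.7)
The plan is to promote the source $\frac{\gamma}{\rho^2 \tau^5} h_1 h_4 c_0$ to a Toda bracket using the Moss convergence theorem and then shuffle after multiplying by $\hsf$, in the spirit of \cref{lem:h-g/p^2t^9-h2^2g} and \cref{lem:h-22-6-2}.

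First, the $\rho$-Bockstein differential $d_3(\tau^3 h_1 h_4 c_0) = \rho^3 \cdot \tau^2 h_2^2 g$ (which already appears in the proof of \cref{prop:perm-cycles} for the analogous permanent cycle in coweight $0$), together with the May convergence theorem \cite{MMP}*{Theorem 4.1}, gives
\[
\frac{\gamma}{\rho^2 \tau^5} h_1 h_4 c_0 = \left\langle \frac{\gamma}{\rho^2 \tau^8},\, \rho^3,\, \tau^2 h_2^2 g \right\rangle
\]
with no indeterminacy by inspection. Applying the Moss convergence theorem (\cite{Moss}, \cite{BK}) upgrades this to a Toda bracket: writing $\gamma_8$ and $\alpha$ for homotopy classes detected by $\frac{\gamma}{\rho^2 \tau^8}$ and $\tau^2 h_2^2 g$ respectively, the element $\frac{\gamma}{\rho^2 \tau^5} h_1 h_4 c_0$ detects $\left\langle \gamma_8,\, \rho^3,\, \alpha \right\rangle$. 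The absence of crossing differentials is straightforward to verify.

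Multiplying by $\hsf$ and applying the standard shuffle for Toda brackets yields
\[
\hsf \cdot \left\langle \gamma_8,\, \rho^3,\, \alpha \right\rangle = \left\langle \hsf,\, \gamma_8,\, \rho^3 \right\rangle \cdot \alpha.
\]
The auxiliary Toda bracket $\left\langle \hsf,\, \gamma_8,\, \rho^3 \right\rangle$, which lies in degree $(0, 0, -8)$, is computed via a second Moss argument using the $\rho$-Bockstein differentials of \cref{rmk:DiffsGamma}, and detects a homotopy class of the form $\frac{\gamma}{\tau^7}$.

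Finally, one must identify the product $\frac{\gamma}{\tau^7} \cdot \alpha$, detected a priori by $\frac{\gamma}{\tau^5} h_2^2 g$ in degree $(26, 6, 4)$, with the target $\frac{\gamma}{\rho^3 \tau^6} i$, which lies in strictly higher Adams filtration $(26, 7, 4)$. The main obstacle is precisely this last identification: one must either show that $\frac{\gamma}{\tau^5} h_2^2 g$ vanishes (or is pushed into strictly higher filtration) in the Adams $E_\infty$-page, so that $\hsf \cdot \frac{\gamma}{\rho^2 \tau^5} h_1 h_4 c_0$ is forced to be detected by $\frac{\gamma}{\rho^3 \tau^6} i$, or else reorganize the shuffle using the $\C$-motivic Massey product $i = \left\langle \tau,\, c_0 d_0,\, h_0 \right\rangle$ from the proof of \cref{prop:E1-minus-hidden} so that the target emerges directly from a rearranged bracket. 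Controlling the interplay between the $\rho$-Bockstein structure and the Adams filtration at this final step is the principal technical difficulty.
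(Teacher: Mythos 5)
Your approach is genuinely different from the paper's. The paper first uses the cofiber-of-$\rho$ sequence to extract a hidden $\nu$-extension from $\frac{\gamma}{\tau^5} h_2 g$ to $\frac{\gamma}{\rho^3 \tau^6} i$ (since $p(h_2^2 g) = \frac{\gamma}{\rho^3\tau^6}i$ and $h_2^2 g$ is a classical $\nu$-multiple), and then invokes the hidden $h_0$-extension from $\frac{\gamma}{\rho^3\tau^4}$ to $\frac{\gamma}{\tau^5} h_2$ in coweight $-5$ (\cref{tbl:hiddenh0extns}), which says $\frac{\gamma}{\tau^5}\cdot\nu$ is $\hsf$-divisible; the conclusion then falls out with no Toda bracket computations. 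You instead write the source as a Toda bracket coming from the Bockstein $d_3$ and shuffle with $\hsf$.

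Your attempt has two genuine gaps. First, the shuffle
\[
\hsf \cdot \bigl\langle \gamma_8,\, \rho^3,\, \alpha \bigr\rangle
= \bigl\langle \hsf,\, \gamma_8,\, \rho^3 \bigr\rangle \cdot \alpha
\]
is only valid if the auxiliary bracket is defined, which requires $\hsf \cdot \gamma_8 = 0$ in $\piC_{*,*}$. You do not verify this. It is not automatic: the relation $h_0 \cdot \frac{\gamma}{\rho^2\tau^8} = 0$ in $\Ext_{C_2}$ (from the Bockstein $d_1$) only tells you that $\hsf \gamma_8$ has Adams filtration $\geq 2$; you still need to rule out a hidden $\hsf$-extension on $\frac{\gamma}{\rho^2\tau^8}$. (Compare: the paper always takes $\rho$ itself as the middle/outer slot in such shuffles, precisely because $\hsf\cdot\rho = 0$ is known; you have $\gamma_8$ in that position, which needs a separate check.)

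Second, and more seriously, the final identification. You correctly observe that $\frac{\gamma}{\tau^7}\cdot\alpha$ sits naively in filtration $6$ while the target $\frac{\gamma}{\rho^3\tau^6} i$ has filtration $7$. In fact, the relevant filtration-$6$ element $\frac{\gamma}{\tau^5} h_2^2 g$ is already killed on the $\rho$-Bockstein $E_4^-$-page (a $d_4$ off $\frac{\gamma}{\rho^4\tau^4}\cdot\tau h_2 g$, using $d_4\bigl(\frac{\gamma}{\rho^4\tau^4}\bigr) = \frac{\gamma}{\tau^6} h_2$); this is precisely why this $\hsf$-extension is hidden in coweight $4$ while the analogous extensions in coweights $0$ and $8$ appear non-hidden in \cref{tbl:hiddenh0extns}. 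So the product does land in filtration $\geq 7$. But that alone does not show the product is nonzero; you would still need an independent argument that it is detected by $\frac{\gamma}{\rho^3\tau^6}i$ rather than lying in even higher filtration or being zero. You flag this as the ``principal technical difficulty'' but do not resolve it, so the proof as written is incomplete. The paper's route through the exact sequence \eqref{eq:SEScofibrho} and the coweight $-5$ $h_0$-extension is tailored exactly to avoid both obstacles.
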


\begin{proof}
\cref{fig:cofiberrho} shows that 
$p: \picl_{26} \cdot \tau^5 \xrtarr{} (\ker \rho)_{26,4}$
takes $h_2^2 g$ to $\frac{\gamma}{\rho^3 \tau^6} i$.
The classical element $h_2^2 g$ detects a $\nu$-multiple,
so $\frac{\gamma}{\rho^3 \tau^6} i$ also detects a $\nu$-multiple.
The only possibility is that there is a hidden $\nu$-extension
from $\frac{\gamma}{\tau^5} h_2 g$ to $\frac{\gamma}{\rho^3 \tau^6} i$.

Let $\alpha$ be a homotopy class in $\piC_{23,9}$ that is 
detected by $h_2 g$, and write $\frac{\gamma}{\tau^5}$ for a 
homotopy class in $\piC_{0,-6}$ that is detected by 
$\frac{\gamma}{\tau^5}$.
Then $\frac{\gamma}{\tau^5} h_2 g$ detects 
$\frac{\gamma}{\tau^5} \cdot \alpha$.
Moreover, the hidden $\nu$-extension discussed in the previous
paragraph implies that
$\frac{\gamma}{\rho^3 \tau^6} i$ detects
$\frac{\gamma}{\tau^5} \cdot \nu \cdot \alpha$.

We know from 
\cref{tbl:hiddenh0extns} (see also \cref{fig:EinfNegCoweight}) that
$\frac{\gamma}{\tau^5} \cdot \nu$ is divisible by $\hsf$.
Therefore,
$\frac{\gamma}{\rho^3 \tau^6} i$ must be the target of a hidden $\hsf$-extension, and there is only one possible source.
\end{proof}

\subsection{Hidden $\eta$-extensions}

\begin{thm} 
\label{thm:hidden-eta}
\cref{tbl:hidden-eta} lists all hidden $\eta$-extensions in
the $C_2$-equivariant Adams spectral sequence whose sources
are in stems less than 25 and coweights from -1 to 7.
\end{thm}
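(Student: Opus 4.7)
The plan is to follow the same overall pattern used to establish the hidden $\rho$- and $\hsf$-extensions in \cref{thm:hidden-rho} and in the proof of the hidden $\hsf$-extension theorem above. First I would use multiplicative relations (especially $\hsf \cdot \eta = 0$, $\rho \eta^2 = 0$ in certain ranges, $\eta^4 = 0$) together with the already-established $\rho$-, $\hsf$-, and $\R$-motivic $\eta$-extensions to eliminate the vast majority of a priori possible hidden $\eta$-extensions and to propagate extensions that are detected $\R$-motivically (which by \cite{BI} are known). Extensions whose source or target supports a hidden $\rho$-extension, or that contradict such an extension via the Leibniz rule, are immediately ruled out. Similarly, many targets can be shown to lie in the image of an extension from a known $\R$-motivic $\eta$-extension multiplied by some $\gamma$-class, and many sources can be shown to factor through permanent cycles from \cref{prop:perm-cycles} times known $\eta$-extensions.

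Next, I would exploit the short exact sequence \eqref{eq:SEScofibrho} exactly as in the proof of \cref{thm:hidden-rho}. Having already analyzed $U\colon (\coker \rho)_{*,*} \to \picl_*[\tau^{\pm 1}]$ and $p\colon \picl_*[\tau^{\pm 1}] \to (\ker \rho)_{*,*-1}$ in \cref{thm:cofibrhoseqn}, one can transport classical hidden $\eta$-extensions into $C_2$-equivariant hidden $\eta$-extensions in either direction: an extension in the classical stable stems that lifts through $U$ forces an extension between classes in $\coker \rho$, and one whose source and target are detected in $\ker \rho$ via $p$ forces an extension in $\piC_{*,*}$ by naturality. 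Conversely, the absence of an appropriate classical $\eta$-extension rules out certain would-be hidden extensions. This approach is especially effective for sources of the form $\frac{\gamma}{\rho^j \tau^k}\, x$ and for classes near $h_1 h_4$, $h_2 g$, $P h_1 d_0$, and the $Q h_1^k c_0$ and $Q h_1^k e_0$ families.

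For the handful of cases that survive these two methods, I would invoke the Moss convergence theorem \cite{Moss}\cite{BK} to express the relevant classes as Toda brackets and then shuffle. The template is the one used in \cref{lem:h-g/p^2t^9-h2^2g} and \cref{lem:h-22-6-2}: take a $d_2$ Adams differential $d_2(a) = b$ to realize the target of a potential $\eta$-extension as a bracket $\langle \eta, \frac{\gamma}{\tau^k}, \alpha\rangle$, use the $\rho$-Bockstein May product $\frac{\gamma}{\tau^{k-1}} \in \langle \hsf, \rho, \frac{\gamma}{\tau^k}\rangle$ (or its $\eta$-analogue) lifted by Moss to a Toda bracket, and shuffle to rewrite the product as a known operation on a known class. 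In the $Q$-class cases we will additionally use the Massey-module structure from \cref{prop:Q-bracket} to express $Qy = \langle \gamma/\tau^N, \tau^N, y\rangle$ and then shuffle against $\eta$.

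The main obstacle will be the cases involving the $Q h_1^k e_0$ and $Q P h_1^k c_0$ families in coweight $7$, where several potential sources and targets appear simultaneously in the same bidegree and only differ by elements in higher $\rho$-filtration, so indeterminacies in both the Toda brackets and the Bockstein $Q$-classes must be tracked carefully. A secondary obstacle will be any extension in a bidegree where $\coker \rho$ and $\ker \rho$ both contribute non-trivially to $\picl$, since then the sequence \eqref{eq:SEScofibrho} alone is not decisive and one must combine it with multiplicative input from already-established $\hsf$- and $\rho$-extensions to pin down the correct source and target.
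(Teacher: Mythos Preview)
Your general framework is correct and matches the paper's approach closely: the paper uses exactly the tools you describe --- the relation $\hsf\cdot\eta=0$, multiplicative decompositions in the $E_\infty$-page, compatibility with $\rho$-multiplications, transport along the already-established hidden $\rho$-extensions, and the maps $U$ and $p$ from the cofiber-of-$\rho$ sequence (the last two are exactly the ``$\coker\rho$'' and ``$\ker\rho$'' entries in \cref{tbl:hidden-eta}).

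However, you have misidentified where the residual difficulties lie. There are no hidden $\eta$-extensions on the $Q h_1^k e_0$ or $Q P h_1^k c_0$ families in coweight~$7$; those classes appear in the hidden $\rho$-extension table, not here. The cases that actually survive the routine methods are three \emph{non}-extensions: ruling out hidden $\eta$-extensions on $\frac{\gamma}{\rho\tau^4}d_0$ in degree $(15,4,1)$, on $\frac{\gamma}{\rho^3\tau^2}c_1$ in degree $(22,3,5)$, and on $\frac{\gamma}{\rho^5\tau^2}f_0$ in degree $(23,4,5)$. The paper handles these not by Moss--Toda shuffling but by more elementary arguments: for $(15,4,1)$ a direct $\eta^2$-vanishing argument on a suitable $\rho$-division; for $(22,3,5)$ an underlying-map computation showing the relevant product is $\rho$-divisible and hence killed by $\rho^2$; and for $(23,4,5)$ the observation that an $h_2$-extension in $\Ext_{C_2}$ forces the class to detect a $\nu$-multiple, which cannot support an $\eta$-extension. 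Your proposed Toda-bracket template, modeled on the $\hsf$-extension lemmas, is not the tool used here and would be awkward for proving non-extensions in these degrees.
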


\renewcommand*{\arraystretch}{1.25}
\begin{longtable}{LLLl}
\caption{Some hidden $\eta$-extensions in the
$C_2$-equivariant Adams spectral sequence} \\
\toprule
\mbox{}(s,f,c) & \textrm{source} & \textrm{target} & proof \\
\midrule \endhead
\bottomrule \endfoot
\label{tbl:hidden-eta}
(21, 5, -1) & \frac{\gamma}{\tau^8} h_1 g & \frac{\gamma}{\tau^{10}} P d_0 & $\ker \rho$ \\
(24, 6, -1) & \frac{\gamma}{\rho \tau^9} h_0 h_2 g & \frac{\gamma}{\tau^{11}} P e_0 & $\rho$-extension \\
(27, 6, 0) & \frac{\gamma}{\rho \tau^9} h_2^2 g & \frac{\gamma}{\tau^{11}} d_0^2 & $\rho$-extension \\
(15, 4, 1) & \frac{\gamma}{\tau^5} h_0^3 h_4 & \frac{\gamma}{\tau^5} P c_0 & $\ker \rho$ \\
(21, 4, 1) & \frac{\gamma}{\rho \tau^6} g & \frac{\gamma}{\rho^6 \tau^4} h_1^2 d_0 & $\coker \rho$ \\
(23, 9, 1) & \frac{\gamma}{\tau^9} h_0^2 i & \frac{\gamma}{\tau^9} P^2 c_0 & $\ker \rho$ \\
(20, 4, 2) & \frac{\gamma}{\tau^5} g & \frac{\gamma}{\rho \tau^5} h_0 h_2 e_0 & $\ker \rho$ \\
(21, 6, 2) & \frac{\gamma}{\rho \tau^5} h_0 h_2 e_0 & \frac{\gamma}{\tau^7} P d_0 & $\rho$-extension \\
(27, 6, 2) & \frac{\gamma}{\rho \tau^7} h_2^2 g & \frac{\gamma}{\tau^9} d_0^2 & $\rho$-extension \\
(15, 3, 3) & \frac{\gamma}{\tau^3} h_0^2 h_4 & \frac{Q}{\rho^3} h_1^4 c_0 & $\coker \rho$ \\
(23, 8, 3) & \frac{\gamma}{\tau^7} h_0 i & \frac{\gamma}{\rho^6 \tau^4} P^2 h_1^2 & $\coker \rho$ \\
(15, 3, 4) & \frac{\gamma}{\rho \tau} h_0 h_3^2 & \frac{\gamma}{\rho \tau} h_1 d_0 & $\rho$-extension \\
(15, 4, 5) & \frac{\gamma}{\tau} h_0^3 h_4 & \frac{\gamma}{\tau} P c_0 & $\ker \rho$ \\
(23, 9, 5) & \frac{\gamma}{\tau^5} h_0^2 i & \frac{\gamma}{\tau^5} P^2 c_0 & $\ker \rho$ \\
(15, 4, 7) & h_0^3 h_4 & \rho^3 h_1^2 e_0 & $\coker \rho$ \\
(21, 5, 7) & \gamma h_1 g & \frac{\gamma}{\tau^2} P d_0 & $\ker \rho$ \\
(24, 6, 7) & \frac{\gamma}{\rho \tau} h_0 h_2 g & \frac{\gamma}{\tau^3} P e_0 & $\rho$-extension \\
\end{longtable}
\renewcommand*{\arraystretch}{1.0}

\begin{rmk}
The hidden $\eta$-extension on $\frac{\gamma}{\rho \tau^6} g$
was used earlier in
\cref{lem:h-g/pt^6-g} to establish a hidden
$\hsf$-extension on $\frac{\gamma}{\rho \tau^6} g$.
The proof of 
the hidden $\eta$-extension on 
$\frac{\gamma}{\rho \tau^6} g$
does not use information about
$\hsf$-extensions, so there is no danger of circularity.
\end{rmk}

\begin{rmk}
We warn the careful reader about a subtlety in $\eta$-multiplications
that arises because of our precise definition of hidden extensions.
Consider the $C_2$-equivariant Adams $E_\infty$-page element
$\frac{\gamma}{\tau^9} h_0 h_2 g$ in degree $(23,6,-1)$.
This $E_\infty$-page element detects more than one
element in homotopy because of the presence of elements in higher filtration.
Some of these elements support $\eta$-extensions, and some of them
are annihilated by $\eta$.  The coweight $-1$ part of \cref{fig:Einf}
suggests that $\frac{\gamma}{\tau^9} h_0 h_2 g$ detects a homotopy
element that is simultaneously a multiple of $\hsf$ and a multiple
of $\rho$.  In fact, this is not the case.  The element
$\frac{\gamma}{\tau^9} h_0 h_2 g$ does detect a multiple of $\hsf$,
and it also detects a multiple of $\rho$, but those two multiples
differ by an element in higher filtration.
This difference can be established by considering the $\rho \eta$-extension
on $\frac{\gamma}{\rho \tau^9} h_0 h_2 g$.

A similar phenomenon occurs for the elements
$\frac{\gamma}{\tau^6} g$ in degree $(20,4,1)$
and $\frac{\gamma}{\tau} h_0 h_2 g$ in degree $(23,6,7)$.
\end{rmk}

\begin{rmk}
In stems less than 8 and coweights between $-9$ and $-2$, there
are no possible hidden $\eta$-extensions.
In other words, there are no hidden $\eta$-extensions
in \cref{fig:EinfNegCoweight}.
\end{rmk}

\begin{proof}
Many cases are ruled out by the relation $\hsf \cdot \eta = 0$.
In particular, if an element of the Adams $E_\infty$-page
supports a (hidden or not hidden) multiplication by $\hsf$, 
then it cannot be the target
of a hidden $\eta$-extension.  Similarly, if an element of the
Adams $E_\infty$-page is a (hidden or not hidden) multiple of $\hsf$, then it cannot support
a hidden $\eta$-extension.

Multiplicative structure in the Adams $E_\infty$-page also
rules out many cases.  For example, consider the element
$\frac{\gamma}{\rho^7 \tau^6} h_3^2$ in degree $(21,2,-1)$.
This element decomposes as a product $\frac{\gamma}{\rho^7 \tau^8} \cdot \tau^2 h_3^2$ of permanent cycles.  The second factor does not support a hidden
$\eta$-extension from the analysis of the $\R$-motivic Adams spectral
sequence \cite{BI}, so $\frac{\gamma}{\rho^7 \tau^6} h_3^2$ also
does not support a hidden $\eta$-extension.

The $\rho$ multiplications in the Adams $E_\infty$-page rule out
additional possibilities.  For example, consider the element
$\frac{\gamma}{\rho \tau^5} h_1^2 h_4$ in degree $(18, 3, 1)$.
It detects elements 
of $\piC_{18,1}$, all of which are annihilated by $\rho^2$.
Therefore, it cannot support a hidden $\eta$-extension to
$\frac{\gamma}{\rho^4 \tau^4} h_1 d_0$, since the latter element
supports a $\rho^2$-extension.

Some hidden $\eta$-extensions can be deduced immediately from
the hidden $\rho$-ex\-ten\-sions that 
were previously analyzed in \cref{sec:cofibrho}.
These extensions are denoted by ``$\rho$-extension'' in the fourth
column of \cref{tbl:hidden-eta}.
For example,
consider the element $\frac{\gamma}{\rho \tau^5} h_0 h_2 e_0$
in degree $(21, 6, 2)$.
There is a hidden $\rho$-extension from $\frac{\gamma}{\tau^8} i$
to $\frac{\gamma}{\tau^7} P d_0$, so there must also be a hidden
$\eta$-extension from $\frac{\gamma}{\rho \tau^5} h_0 h_2 e_0$
to $\frac{\gamma}{\tau^7} P d_0$.

The presence or absence of hidden $\rho$-extensions
can also rule out possible hidden $\eta$-extensions.
For example, consider the element $\frac{\gamma}{\tau^5} d_0$
in degree $(14, 4, 0)$. This element is the target of a hidden
$\rho$-extension, so it detects a multiple of $\rho^3$.
But $\frac{\gamma}{\rho \tau^5} h_0^2 d_0$ detects elements that are
not multiples of $\rho^3$, so there cannot be a hidden
$\eta$-extension from $\frac{\gamma}{\tau^5} d_0$ to
$\frac{\gamma}{\rho \tau^5} h_0^2 d_0$.

Another example concerns
the element $\frac{\gamma}{\tau^7} g$ in degree 
$(20, 4, 0)$. We already know that it does not support a hidden
$\rho$-extension, so it cannot support a hidden $\eta$-extension
to $\frac{Q}{\rho^8} h_1^{12}$.

Many extensions are established using the homomorphism
$U: (\coker \rho)_{*,*} \xrtarr{} \picl_* [ \tau^{\pm 1}]$
whose values were established in \cref{thm:cofibrhoseqn}
and are displayed in \cref{fig:cofiberrho}.
These extensions are denoted by ``$\coker \rho$'' in the fourth
column of \cref{tbl:hidden-eta}.
For example,
consider the elements $\frac{\gamma}{\rho \tau^6} g$ and
$\frac{\gamma}{\rho^6 \tau^4} h_1^2 d_0$ in stems $21$ and $22$ and coweight $1$.
The map $U$ takes these elements to $h_1 g$ and $P d_0$
respectively.  There is a classical (hidden) $\eta$-extension from
$h_1 g$ to $P d_0$, so there must also be an $\eta$-extension
from $\frac{\gamma}{\rho \tau^6} g$ to $\frac{\gamma}{\rho^6 \tau^4} h_1^2 d_0$.
The same method can also be used to rule out hidden $\eta$-extensions.

Additional extensions are established using the homomorphism
$p: \picl_* [ \tau^{\pm 1}] \xrtarr{} (\ker \rho)_{*, *-1}$
whose values were established in \cref{thm:cofibrhoseqn}
and are displayed in \cref{fig:cofiberrho}.
These extensions are denoted by ``$\ker \rho$'' in the fourth
column of \cref{tbl:hidden-eta}.
For example,
consider the elements $\frac{\gamma}{\tau^8} h_1 g$ and
$\frac{\gamma}{\tau^{10}} P d_0$ in stems $21$ and $22$ and coweight $-1$.
These elements are the images of $h_1 g$ and $P d_0$
respectively under the map $p$.
There is a classical (hidden) $\eta$-extension from
$h_1 g$ to $P d_0$, so there must also be an $\eta$-extension
from $\frac{\gamma}{\tau^8} h_1 g$ to $\frac{\gamma}{\tau^{10}} P d_0$.
The same method can also be used to rule out hidden $\eta$-extensions.

\cref{lem:eta-15-4-1}, \cref{lem:eta-22-3-5}, and \cref{lem:eta-23-4-5}
handle three additional cases.
\end{proof}

\begin{lemma}
\label{lem:eta-15-4-1}
$(15, 4, 1)$ There is no hidden $\eta$-extension
on $\frac{\gamma}{\rho \tau^4} d_0$.
\end{lemma}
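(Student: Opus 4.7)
The plan is to compute $\eta \cdot \frac{\gamma}{\rho \tau^4} d_0$ directly via the multiplicative decomposition $\frac{\gamma}{\rho \tau^4} d_0 = \frac{\gamma}{\rho \tau^4} \cdot d_0$. By \cref{prop:GArhotauPermCycles} the first factor is a permanent cycle, and $d_0$ is the standard permanent cycle detecting a homotopy class $\kappa$. Choose a homotopy class $\alpha$ detected by $\frac{\gamma}{\rho \tau^4}$; then $\frac{\gamma}{\rho \tau^4} d_0$ detects the product $\alpha \kappa$, and by multiplicativity
\[
\eta \cdot (\alpha \kappa) = \alpha \cdot (\eta \kappa),
\]
where $\eta \kappa$ is the class detected (visibly, not via a hidden extension) by $h_1 d_0$ in coweight $6$. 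Hence this product is detected, up to higher Adams filtration, by $\frac{\gamma}{\rho \tau^4} \cdot h_1 d_0 = \frac{\gamma}{\rho \tau^4} h_1 d_0$ in degree $(16,5,1)$.

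The key intermediate step is to confirm that $\frac{\gamma}{\rho \tau^4} h_1 d_0$ is a genuine nonzero class on the $E_\infty$-page, so that the resulting $\eta$-multiplication is visible rather than hidden. To check survival to the Bockstein $E_\infty^-$-page, I would apply the Leibniz rule: using $\tau \cdot \frac{\gamma}{\rho \tau^5} = \frac{\gamma}{\rho \tau^4}$ together with $d_1(\tau) = \rho h_0$ and \cref{rmk:DiffsGamma}, the two contributions cancel, giving $d_1 \bigl(\frac{\gamma}{\rho \tau^4}\bigr) = 0$; since $h_1 d_0$ is $\tau$-free in $\Ext_\C$, the product $\frac{\gamma}{\rho \tau^4} h_1 d_0$ is a permanent cycle in the $\rho$-Bockstein spectral sequence. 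It is then a permanent cycle for the $C_2$-equivariant Adams spectral sequence because both factors are permanent cycles, and one can verify by inspection of the charts in \cref{fig:Einf} that it is not the target of any Adams differential. Thus $\eta \cdot \frac{\gamma}{\rho \tau^4} d_0$ is detected by $\frac{\gamma}{\rho \tau^4} h_1 d_0$ at filtration $5$, and no jump to strictly higher filtration occurs.

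The main obstacle is controlling indeterminacy: the $E_\infty$-class $\frac{\gamma}{\rho \tau^4} d_0$ may detect several homotopy elements differing by classes of higher Adams filtration in $(15,4,1)$, and a priori a different representative might support a genuinely hidden $\eta$-extension to some element in $(16,5,1)$ of filtration $> 5$. I would settle this by enumerating all potential higher-filtration targets and applying the short exact sequence of \cref{prop:rho-SES}, using the tabulated values of $U$ and $p$ in \cref{thm:cofibrhoseqn} together with the $\hsf \cdot \eta = 0$ relation to rule out each one. This reduces the remaining ambiguity to the visible extension already accounted for, completing the argument that no hidden $\eta$-extension is present.
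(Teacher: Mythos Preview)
Your argument has a genuine gap at its central step. You claim that $\frac{\gamma}{\rho\tau^4} h_1 d_0$ survives to the Adams $E_\infty$-page, making the $\eta$-multiplication visible. It does not. From \cref{tbl:Adams-d2} at $(18,3,2)$ we have
\[
d_2\!\left(\tfrac{\gamma}{\rho^4\tau^3} h_0 h_3^2\right)=\tfrac{\gamma}{\rho^2\tau^4} h_1 d_0+\tfrac{\gamma}{\tau^5} h_0 e_0,
\]
and multiplying by $\rho$ (using $\rho\cdot\tfrac{\gamma}{\tau^5}=0$) gives
\[
d_2\!\left(\tfrac{\gamma}{\rho^3\tau^3} h_0 h_3^2\right)=\tfrac{\gamma}{\rho\tau^4} h_1 d_0.
\]
So $\frac{\gamma}{\rho\tau^4} h_1 d_0$ is killed on $E_3$, and $h_1\cdot\frac{\gamma}{\rho\tau^4} d_0=0$ on $E_\infty$. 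That vanishing is exactly why the question of a hidden $\eta$-extension arises; your ``visible extension'' strategy therefore cannot decide it. The closing paragraph about enumerating higher-filtration targets and appealing to \cref{prop:rho-SES} is not a repair: the candidate target $\frac{\gamma}{\tau^5} P c_0$ in $(16,7,1)$ really is present on $E_\infty$ and is in fact the target of the hidden $\eta$-extension from the other class $\frac{\gamma}{\tau^5} h_0^3 h_4$ in the same degree $(15,4,1)$, so it cannot be excluded by a simple count.

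The paper avoids this trap by a different idea: write $\frac{\gamma}{\rho\tau^4} d_0=\rho^2\alpha$ with $\alpha$ detected by $\frac{\gamma}{\rho^3\tau^4} d_0$, choose $\alpha$ so that $\eta^2\alpha=0$ (possible because $\frac{\gamma}{\rho^3\tau^4} h_1 d_0$ supports no hidden $\eta$-extension), and observe that any hidden $\eta$-extension on $\rho^2\alpha$ would force $\eta^2\rho^2\alpha\neq 0$, since the only available target supports a further $\eta$-multiplication. That contradicts the choice of $\alpha$.
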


\begin{proof}
Let $\alpha$ be an element of $\piC_{17,1}$ that is detected by
$\frac{\gamma}{\rho^3 \tau^4} d_0$.  
We may choose $\alpha$ such that $\eta^2 \alpha$ is $0$, since
$\frac{\gamma}{\rho^3 \tau^4} h_1 d_0$ does not support a hidden
$\eta$-extension.
Then $\frac{\gamma}{\rho \tau^4} d_0$
detects $\rho^2 \alpha$.

If $\frac{\gamma}{\rho \tau^4} d_0$ supported a hidden
$\eta$-extension, then $\eta^2 \cdot \rho^2 \alpha$ would be non-zero.
This is inconsistent with the choice of $\alpha$ in the previous
paragraph.
\end{proof}

\begin{lemma}
\label{lem:eta-22-3-5}
$(22, 3, 5)$
There is no hidden $\eta$-extension on $\frac{\gamma}{\rho^3 \tau^2} c_1$.
\end{lemma}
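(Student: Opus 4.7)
The plan is to use the multiplicative decomposition $\frac{\gamma}{\rho^3 \tau^2} c_1 = \frac{\gamma}{\rho^3 \tau^5} \cdot \tau^3 c_1$, where $\tau^3 c_1$ is an $\R$-motivic permanent cycle whose $\eta$-multiplication behavior is fully understood from \cite{BI}. Writing $\alpha$ for a homotopy class detected by $\frac{\gamma}{\rho^3 \tau^5}$ and $\beta$ for a class detected by $\tau^3 c_1$, a potential hidden $\eta$-extension on $\alpha \beta$ would satisfy $\eta(\alpha \beta) = \alpha (\eta \beta)$, so it suffices to analyze $\alpha \cdot \eta \beta$ at the level of homotopy and detect it in the $C_2$-equivariant Adams $E_\infty$-page.

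First I would recall the $\R$-motivic relation $h_1 \cdot \tau^3 c_1 = \rho^2 h_2 \cdot \tau^2 c_1$ in $\Ext_\R$, as used in the proof of \cref{lem:h1extGAta2h32}. This identifies $\eta \beta$ in $\piR_{*,*}$ up to higher Adams filtration. Multiplying by $\frac{\gamma}{\rho^3 \tau^5}$ then yields
\[
\eta \cdot \frac{\gamma}{\rho^3 \tau^2} c_1 = \frac{\gamma}{\rho^3 \tau^5} \cdot \rho^2 h_2 \cdot \tau^2 c_1 \,+\, (\text{higher filtration}) = \frac{\gamma}{\rho \tau^3} h_2 c_1 \,+\, (\text{higher filtration}),
\]
and this first term is already equal to $h_1 \cdot \frac{\gamma}{\rho^3 \tau^2} c_1$, which by definition is not a hidden extension. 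The remaining task is to show that the higher filtration correction term is zero, so that no hidden extension arises.

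The key step is therefore an accounting of the possible hidden $\R$-motivic $\eta$-extensions on $\tau^3 c_1$ and a check that each such contribution is annihilated by $\frac{\gamma}{\rho^3 \tau^5}$. This is essentially an inspection problem, using the tables of \cite{BI}; any candidate target must lie in $\piR_{23,10}$ in Adams filtration at least $5$, and each such $\R$-motivic class should be $\rho^3$-divisible or $\tau^5$-torsion (so that the product with $\frac{\gamma}{\rho^3 \tau^5}$ vanishes in $\Ext_{NC}$). If all candidates are ruled out in this way, the proof is complete.

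The main obstacle will be confirming that the $\R$-motivic correction terms do indeed vanish after multiplication by $\frac{\gamma}{\rho^3 \tau^5}$; subtle behavior can occur when an $\R$-motivic correction survives to the $C_2$-equivariant $E_\infty$-page via an unexpected hidden multiplication. If the direct multiplicative argument is inconclusive, a backup approach is to express $\frac{\gamma}{\rho^3 \tau^2} c_1$ as a Massey product $\left\langle \frac{\gamma}{\rho^3 \tau^8}, \rho^6, \tau^3 c_1 \right\rangle$ (arising from the $\R$-motivic Bockstein differential $d_6(\tau^6 h_3^2) = \rho^6 \tau^3 c_1$ used in \cref{lem:h1extGAta2h32}), and shuffle $\eta$ to the other side via the Moss convergence theorem: $\eta \cdot \left\langle \frac{\gamma}{\rho^3 \tau^8}, \rho^6, \tau^3 c_1 \right\rangle \subseteq \frac{\gamma}{\rho^3 \tau^8} \langle \rho^6, \tau^3 c_1, \eta \rangle$, and show the latter bracket, once computed in $\piR_{*,*}$, consists entirely of classes that are annihilated by $\frac{\gamma}{\rho^3 \tau^8}$.
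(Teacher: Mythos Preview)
Your primary decomposition does not work: the element $\frac{\gamma}{\rho^3 \tau^5}$ does not survive the $\rho$-Bockstein spectral sequence. Since $5$ is odd, \cref{rmk:DiffsGamma} gives $d_1\left(\frac{\gamma}{\rho \tau^5}\right) = \frac{\gamma}{\tau^6} h_0$, and by \cref{prop:E-minus-structure4} the entire infinitely $\rho$-divisible family $\frac{\gamma}{\rho^k \tau^5}$ for $k \geq 1$ supports nonzero $d_1$ differentials. Consequently there is no class in $\Ext_{C_2}$, let alone in the Adams $E_\infty$-page, named $\frac{\gamma}{\rho^3 \tau^5}$, and no homotopy element $\alpha$ detected by it. The identity $\frac{\gamma}{\rho^3 \tau^2} c_1 = \frac{\gamma}{\rho^3 \tau^5} \cdot \tau^3 c_1$ holds only on the Bockstein $E_1$-page and carries no information about homotopy. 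The paper instead uses the legitimate decomposition $\frac{\gamma}{\rho^3 \tau^4} \cdot \tau^2 c_1$, where $\frac{\gamma}{\rho^3 \tau^4}$ is an honest permanent cycle (\cref{prop:GArhotauPermCycles}); rather than moving $\eta$ onto the $c_1$ factor, it moves $\eta$ onto $\frac{\gamma}{\rho^3 \tau^4}$, where the low-stem negative-coweight structure is completely known, and finishes via the underlying map $U$ and a $\rho$-divisibility argument.

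Your backup Toda-bracket shuffle also breaks down. For the bracket $\langle \rho^6, \tau^3 c_1, \eta \rangle$ to be defined at the homotopy level you need $\eta$ to annihilate the class detected by $\tau^3 c_1$, but you yourself quote the $\Ext_\R$ relation $h_1 \cdot \tau^3 c_1 = \rho^2 h_2 \cdot \tau^2 c_1$, and this product is nonzero on the $\R$-motivic Adams $E_\infty$-page (it detects $\rho^2 \nu \overline{\sigma}$). Hence the right-hand Toda bracket is not defined and the shuffle is unavailable.
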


\begin{proof}
The element $\frac{\gamma}{\rho^3 \tau^2} c_1$ equals the product
$\frac{\gamma}{\rho^3 \tau^4} \cdot \tau^2 c_1$ on the $C_2$-equivariant
Adams $E_\infty$-page.
Write $\frac{\gamma}{\rho^3 \tau^4}$ for a homotopy element that
is detected by $\frac{\gamma}{\rho^3 \tau^4}$, and write
$\alpha$ for a homotopy element that is detected by $\tau^2 c_1$.
We want to show that $\eta \cdot \frac{\gamma}{\rho^3 \tau^4} \cdot \alpha$
is zero.

According to the coweight $-5$ part of \cref{fig:EinfNegCoweight},
the product 
$\eta \cdot \frac{\gamma}{\rho^3 \tau^4}$ equals $\rho^2 \beta$,
where $\beta$ is detected by $\frac{\gamma}{\rho^5 \tau^4} h_1$.
Therefore, we want to show that $\rho^2 \beta \cdot \alpha$ is zero.

The underlying map takes $\alpha$ and $\beta$ to elements of 
$\picl_*$ that are detected by $c_1$ and $h_2^2$ respectively.
In $\picl_*$, the product of these images is zero since there is no
hidden $\nu$-extension on $h_2 c_1$.
Therefore, the underlying map takes $\alpha \cdot \beta$ to zero.

This means that $\alpha \cdot \beta$ 
in $\piC_{25,5}$ is $\rho$-divisible.
There are several such $\rho$-divisible elements in 
$\piC_{25,5}$, but all 
such classes detected in Adams filtration at least 5
are annihilated by $\rho^2$.
\end{proof}

\begin{lemma}
\label{lem:eta-23-4-5}
$(23, 4, 5)$ 
There is no hidden $\eta$-extension on $\frac{\gamma}{\rho^5 \tau^2} f_0$.
\end{lemma}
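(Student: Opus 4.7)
\begin{pf}[Proof plan]
The plan is to adapt the Massey product argument of \cref{gata2f0-nonextn} from the $\Ext$-level to the homotopy level via the Moss convergence theorem. Specifically, by the May convergence theorem \cite{MMP}*{Theorem 4.1} applied to the $\R$-motivic Bockstein differential $d_6(\tau^6 f_0) = \rho^6 \tau^4 h_2 g$ (the same differential used in \cref{gata2f0-nonextn}), we obtain
\[
\frac{\gamma}{\rho^5 \tau^2} f_0 = \left\langle \frac{\gamma}{\rho^5 \tau^8},\, \rho^6,\, h_2 \cdot \tau^4 g \right\rangle
\]
as a Massey product in $\Ext_{C_2}$ with no indeterminacy. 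The Moss convergence theorem \cite{Moss}, \cite{BK} then lifts this to a Toda bracket identification: after verifying the crossing differential hypothesis, any homotopy class $a \in \piC_{23,5}$ detected by $\frac{\gamma}{\rho^5 \tau^2} f_0$ satisfies $a \in \langle \alpha, \rho^6, \nu \bar{\kappa} \rangle$, where $\alpha \in \piC_{5,-5}$ is detected by the permanent cycle $\frac{\gamma}{\rho^5 \tau^8}$ (permanent by \cref{prop:Adams-diff-lowstem}) and $\bar{\kappa} \in \piC_{20,4}$ is detected by $\tau^4 g$.

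Next I would shuffle. Since $\rho^6 \alpha = 0$ (as $\rho^6 \cdot \frac{\gamma}{\rho^5 \tau^8} = \rho \cdot \frac{\gamma}{\tau^8} = 0$, with no room for a hidden extension in this low-stem range) and $\nu \bar{\kappa} \cdot \eta = 0$ (since classically $\nu \eta = 0$), the standard Toda bracket shuffle yields
\[
\eta \cdot a \in \alpha \cdot \left\langle \rho^6,\, \nu \bar{\kappa},\, \eta \right\rangle.
\]
By the computation carried out in \cref{gata2f0-nonextn}, the underlying $\R$-motivic Massey product $\langle \rho^6, h_2 \tau^4 g, h_1 \rangle$ equals $\{0, \rho^5 h_1 \tau^4 h_4 c_0\}$, so $\eta \cdot a$ is detected by $\alpha$ times a homotopy class whose $\Ext$-representative is one of these two values. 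The $0$ case contributes nothing.

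The main obstacle, and the point where the argument departs from \cref{gata2f0-nonextn}, is handling the nonzero value. Whereas there the factor $\frac{\gamma}{\rho^4 \tau^8}$ annihilates $\rho^5 h_1 \tau^4 h_4 c_0$ for the trivial reason that $\rho \cdot \frac{\gamma}{\tau^k} = 0$, here the analogous $\Ext$-level product is $\frac{\gamma}{\rho^5 \tau^8} \cdot \rho^5 h_1 \tau^4 h_4 c_0 = \frac{\gamma}{\tau^4} h_1 h_4 c_0$, which is potentially nonzero in $E_\infty^-$. The plan to resolve this is to combine the Toda bracket calculation with the cofiber-of-$\rho$ sequence \eqref{eq:SEScofibrho}: since $\rho \cdot \frac{\gamma}{\rho^5 \tau^2} f_0 = \frac{\gamma}{\rho^4 \tau^2} f_0$ admits no hidden $\eta$-extension in $\piC_{*,*}$ (no entry at $(22,4,5)$ in \cref{tbl:hidden-eta}, consistent with the $\Ext$-level \cref{gata2f0-nonextn}), any hypothetical target $y$ for $\eta \cdot \frac{\gamma}{\rho^5 \tau^2} f_0$ must satisfy $\rho y = 0$ and therefore lie in the image of $p \colon \picl_{24}\cdot \tau^6 \onto (\ker \rho)_{24,5}$ recorded in \cref{fig:cofiberrho}. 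Matching the Toda bracket computation of $\alpha \cdot \langle \rho^6, \nu \bar{\kappa}, \eta \rangle$ against the elements actually realized by $p$ in this bidegree, and checking Adams filtrations, should rule out every possible nonzero target. The hard part is executing this matching: specifically, verifying either that $\frac{\gamma}{\tau^4} h_1 h_4 c_0$ is already hit by a Bockstein differential in coweight $8$, or that no classical class in $\picl_{24}$ can simultaneously be detected in the required filtration and be consistent with the Toda bracket formula.
\end{pf}
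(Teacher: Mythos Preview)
Your proposal has a genuine gap: you explicitly flag ``the hard part is executing this matching'' and then do not execute it. The nonzero term $\frac{\gamma}{\tau^4} h_1 h_4 c_0$ that survives your shuffle is precisely the obstruction, and your sketch for disposing of it via the cofiber-of-$\rho$ sequence is not carried out. As written, the argument does not conclude.

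The paper's proof is much shorter and avoids this difficulty entirely. It observes that the $h_0$-extensions from $\frac{\gamma}{\rho^6 \tau} h_0 h_3^2$ to $\frac{\gamma}{\tau^3} g$ and from $\frac{\gamma}{\rho^5 \tau^2} f_0$ to $\frac{\gamma}{\tau^3} h_2 g$ force an $h_2$-extension from $\frac{\gamma}{\rho^6 \tau} h_0 h_3^2$ to $\frac{\gamma}{\rho^5 \tau^2} f_0$ in $\Ext_{C_2}$. Since $\frac{\gamma}{\rho^6 \tau} h_0 h_3^2$ is a permanent cycle (\cref{tbl:perm-cycles}), the element $\frac{\gamma}{\rho^5 \tau^2} f_0$ detects a $\nu$-multiple in homotopy, and $\eta\nu = 0$ finishes the argument immediately.

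It is worth noting that your own Toda bracket already contains the seed of this idea: the third entry is $\nu\bar{\kappa}$, so rather than shuffling $\eta$ in on the left and landing in $\alpha \cdot \langle \rho^6, \nu\bar{\kappa}, \eta\rangle$, you could have tried to shuffle $\nu$ out on the right, writing $\langle \alpha, \rho^6, \nu\bar{\kappa}\rangle \supseteq \langle \alpha, \rho^6, \bar{\kappa}\rangle \cdot \nu$, which (modulo checking that $\rho^6\bar{\kappa}=0$ and controlling indeterminacy) would exhibit the element as a $\nu$-multiple directly. The paper's $h_2$-extension argument achieves the same end without any bracket manipulation.
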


\begin{proof}
\cref{Ext-h0-extns} shows that there are $h_0$-extensions
in $\Ext^{C_2}$
from $\frac{\gamma}{\rho^6 \tau} h_0 h_3^2$ and $\frac{\gamma}{\rho^5 \tau^2} f_0$
to $\frac{\gamma}{\tau^3} g$ and $\frac{\gamma}{\tau^3} h_2 g$ respectively.
This implies that
there is an $h_2$-extension from
$\frac{\gamma}{\rho^6 \tau} h_0 h_3^2$ to $\frac{\gamma}{\rho^5 \tau^2} f_0$
in $\Ext^{C_2}$ and therefore also in the Adams $E_\infty$-page.
In particular,
$\frac{\gamma}{\rho^5 \tau^2} f_0$ detects a multiple of $\nu$,
so it cannot support an $\eta$-extension \cite{I}*{Lemma~4.4}.
\end{proof}


\section{Charts}
\label{sec:charts}

\subsection{Bockstein $E^-_1$-page}
\cref{fig:E1-chart} on page \pageref{E1end} depicts the Bockstein $E^-_1$-page
that converges to $\Ext_{NC}$ in stems less than 31.
This data arises from 
\cref{sec:NCPeriodic} and \cref{sec:E1extensions}.

Here is a key for reading the  charts, which are separated by coweight:
\begin{enumerate}
\item
Solid gray dots indicate copies of $\displaystyle \frac{\F_2[\tau, \rho]}{\tau^\infty, \rho^\infty}$,
i.e., elements that are infinitely divisible by both $\tau$ and by $\rho$.
Beware that dividing by $\rho$ increases the stem, but this degree shift
is not displayed on the chart.  These elements are precisely the subobject
$\gamma E^-_1$ of $E^-_1$ (see \cref{eq:gamma-Q-SES}).
\item
Hollow purple dots indicate copies of $\displaystyle \frac{\F_2[\rho]}{\rho^\infty}$,
i.e., elements that are infinitely divisible by $\rho$.
Beware that dividing by $\rho$ increases the stem, but this degree shift
is not displayed on the chart.  These elements detect the quotient
$Q E^-_1$ of $E^-_1$ (see \cref{eq:gamma-Q-SES}).
\item
Vertical lines indicate $h_0$-multiplications.
\item
Lines of slope 1 indicate $h_1$-multiplications.
\item
Lines of slope $\frac{1}{3}$ indicate $h_2$-multiplications.
\item
Dashed lines indicate extensions that are hidden by
the sequence \cref{eq:gamma-Q-SES}, i.e., that connect elements
in $\gamma E^-_1$ to elements in $Q E^-_1$.
\item
Orange lines indicate extensions whose target is the $\tau$-multiple of the labelled element. 
For example, $h_2 \cdot \frac{\gamma}{\tau} h_0^2$ equals
$\frac{\gamma}{\tau} \cdot \tau h_1^3=0$, while
$h_2 \cdot \frac{\gamma}{\tau^2} h_0^2$ equals
$\frac{\gamma}{\tau} h_1^3$, which is nonzero.
\end{enumerate}

\subsection{Adams $E_2$-charts}
\label{subsctn:E2-key}

\cref{fig:Ext}
on pages \pageref{fig:Ext}--\pageref{E2end}
depicts $\Ext_{C_2}$, i.e., the  $E_2$-page of the $C_2$-equivariant Adams spectral
sequence, in coweights -2 to 8 and stems less than or equal to 30.
Each coweight appears on a separate grid.
The details of this calculation are described in 
 \cref{sctn:Bockstein-diff,sctn:hidden}.

Here is a key for reading the charts:
\begin{enumerate}
\item
Filled dots and hollow dots indicate copies of $\F_2$.
\item
Green dots indicate classes in $\Ext_\R$.
\item
Gray dots represent 
elements of $\Ext_{NC}$ that lie in $\gamma E^-_1$.
See also \cref{eq:gamma-Q-SES}.
\item
Purple hollow dots represent
elements of $\Ext_{NC}$ that are detected by the quotient $QE^-_1$.
See also \cref{eq:gamma-Q-SES}.
\item
Horizontal lines indicate $\rho$-multiplications.
\item
Vertical lines indicate $h_0$-multiplications.
\item
Diagonal lines indicate $h_1$-multiplications.
\item
Horizontal arrows indicate infinite sequences of $\rho$-multiplications,
i.e., $\rho$-periodic elements.
\item
Dashed lines indicate extensions that are hidden in the Bockstein
spectral sequence.
\item 
Dotted lines indicate potential multiplications that have neither been 
established nor ruled out. These uncertainties occur only 
in the 29-stem in coweights -2 and 6.
\item
The hollow square in degree (30,5,6) indicates an element that could possibly be the value of a Bockstein differential.
\end{enumerate}

\subsection{Adams $E_3$-charts}

\cref{fig:Ethree}  on pages \pageref{fig:Ethree}--\pageref{page:E3end}
depicts the $E_3$-page of the $C_2$-equivariant Adams spectral sequence,
in coweights -2 to 8 and stems less than or equal to 30.
Each coweight appears on a separate grid.
The details of this calculation are described in 
 \cref{sctn:Adamsd2}.

The key for the Adams $E_3$-chart is essentially the same as the key
for the Adams $E_2$-chart given in \cref{subsctn:E2-key}, with the 
following exceptions:
\begin{enumerate}
\item
Green dots indicate classes in the image of the $\R$-motivic Adams $E_3$-page, as computed in \cite{BI}.
\item
Gray dots represent classes that are not in the image of the 
$\R$-motivic Adams $E_3$-page.
\item
There is no distinction between extensions that are hidden or not hidden in the
Bockstein spectral sequence.
\end{enumerate}

\subsection{Adams $E_\infty$-charts in negative coweight and low stems}

\cref{fig:EinfNegCoweight} on pages \pageref{fig:EinfNegCoweight}--\pageref{EinfLowCoweightEnd}
depicts the $E_\infty$-page of the $C_2$-equivariant
Adams spectral sequence in stems less than 8 and in 
coweights $-9$ through $-2$.
In this range, the spectral sequence collapses at the $E_2$-page.
The details of this calculation are described in 
\cref{sctn:Bockstein-diff}, \cref{sctn:hidden}, and
\cref{sctn:AdamsPermCycles}.

Here is a key for reading the charts, which are separated by coweight:
\begin{enumerate}
\item
Dots indicate copies of $\F_2$.
\item
Horizontal lines indicate multiplications by $\rho$.
\item
Vertical lines indicate multiplications by $h_0$.
\item
Lines of slope $1$ indicate multiplications by $h_1$.
\item
Dashed lines indicate $h_0$-multiplications and
$h_1$-multiplications that are hidden in the Bockstein spectral
sequence. (There are no extensions that are hidden by the
Adams spectral sequence in this range.)
\end{enumerate}

\subsection{Adams $E_\infty$-charts}
\label{subsctn:Einfty-chart-key}

\cref{fig:Einf} on pages \pageref{fig:Einf}--\pageref{Einfend}
depicts the $E_\infty$-page of the $C_2$-equivariant Adams spectral sequence,
in coweights -1 to 7 and stems less than or equal to 30.
The details of this calculation are described in 
 \cref{sctn:Adamsd3,sctn:Adamsdhigher,}, with some of the multiplicative 
 structure determined in \cref{sec:cofibrho,sec:hiddenAdams}.

Here is a key for reading the Adams $E_\infty$-charts, which are separated by coweight:
\begin{enumerate}
\item
Dots indicate copies of $\F_2$.
\item
Green dots indicate classes in the image of the $\R$-motivic Adams $E_\infty$-page, as computed in \cite{BI}.
\item
Gray dots represent classes that are not in the image of the 
$\R$-motivic Adams $E_\infty$-page.
\item
Horizontal lines indicate multiplications by $\rho$.
\item
Vertical lines indicate multiplications by $h_0$.
\item
Lines of slope $1$ indicate multiplications by $h_1$.
\item
Horizontal arrows indicate infinite sequences of multiplications
by $\rho$.
\item
Dashed lines of negative slope indicate $\rho$-multiplications that
are hidden in the Adams spectral sequence.
\item
Dashed vertical lines show $\hsf$-multiplications that
are hidden in the Adams spectral sequence.
\item
Dashed lines of positive slope indicate $\eta$-multiplications that
are hidden in the Adams spectral sequence.
\item 
The dotted line on the element in degree $(29,4,6)$ indicates a potential $h_1$-multiplication that has neither been established nor ruled out. 
\item
The hollow square in degree (30,5,6) indicates an element that could
possibly be zero in the Adams $E_\infty$-page.
\end{enumerate}

\subsection{Charts for the cofiber of $\rho$ sequence}
\label{subsctn:chart-cofiber-rho}

\cref{fig:cofiberrho} on pages \pageref{fig:cofiberrho}--\pageref{cofibrhoEnd}
depicts the short exact sequence \eqref{eq:SEScofibrho}, in coweights 0 to 7 
and in stems less than or equal to 26.
This calculation is described in \cref{sec:cofibrho}.

Here is a key for reading these charts, which are separated by coweight:
\begin{enumerate}
\item
Ignoring the color and labels, each chart is the $E_\infty$-page of the 
classical Adams spectral sequence for the sphere, in stems up to 26.
In other words, it is an associated graded object for the central
object $\picl_*$ of the short exact sequence.
\item
Dots indicate copies of $\F_2$.
\item
Blue dots indicate classes in the image of $(\coker \rho)_{*,*} \to \picl_*$.
The labels indicate the pre-image in $(\coker \rho)_{*,*}$.
\item
Orange dots represent classes that are not in the image of $(\coker \rho)_{*,*} \to \picl_*$.
The labels indicate their images in $(\ker \rho)_{*,*}$.
\item
Vertical lines indicate $h_0$-multiplications.
\item
Lines of slope 1 indicate $h_1$-multiplications.
\item
Dashed vertical lines indicate $2$-extensions that
are hidden in the Adams spectral sequence.
\item
Dashed lines of positive slope indicate $\eta$-extensions that
are hidden in the Adams spectral sequence.
\end{enumerate}

See \cref{rmk:cofiber-rho-ambigious} for a minor amibiguity in the notation
in these charts.

\newpage

\vfill

\clearpage

\begin{landscape}

\begin{figure}[h]
\caption{The $E_1^-$-page of the $\rho$-Bockstein spectral sequence}
\label{fig:E1-chart}

\includegraphics[height=.92\textwidth]{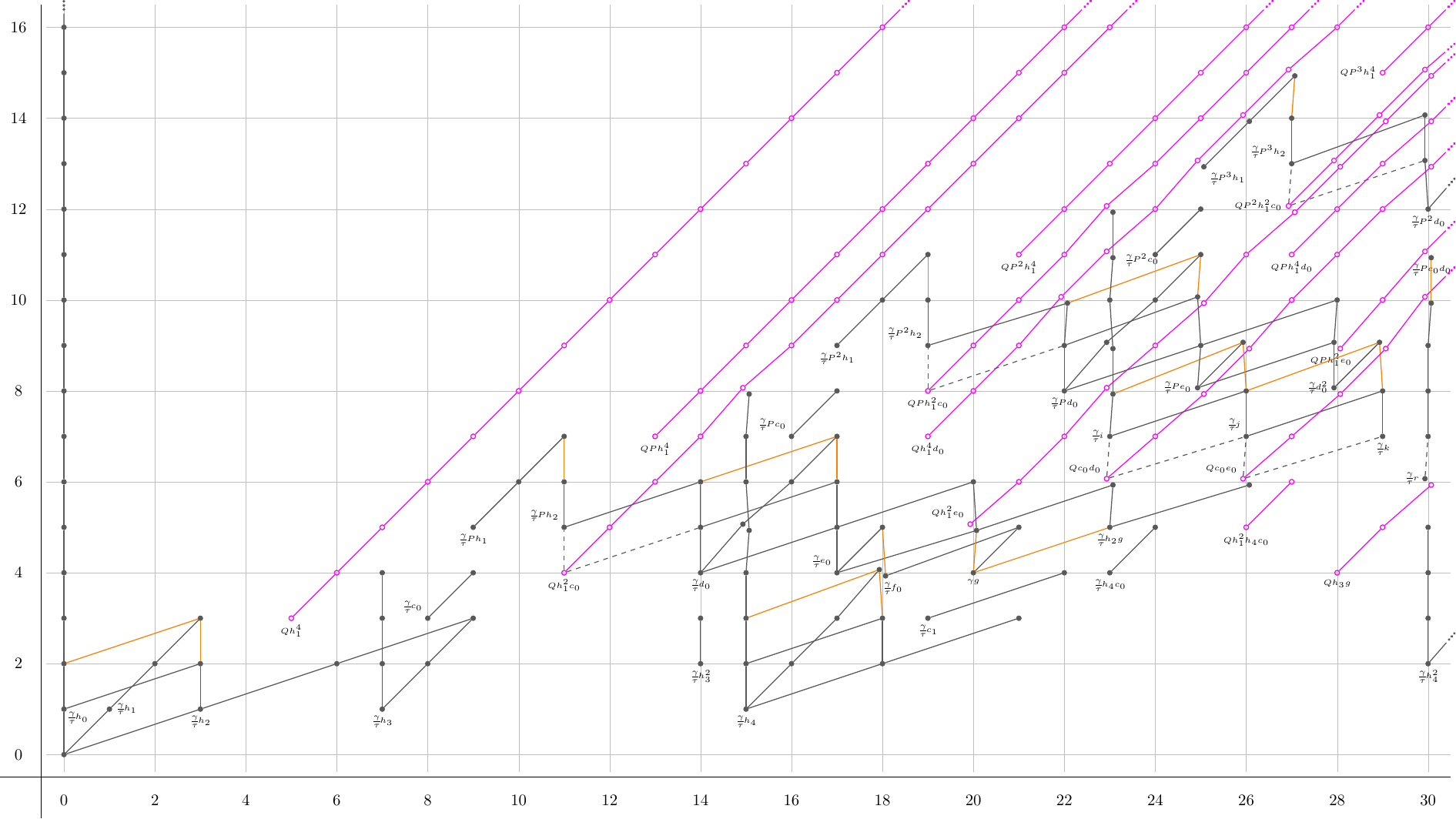}

\end{figure}

\label{E1end}

\label{E2start}

\begin{figure}[h]
\caption{The $E_2$ page of the $C_2$-equivariant Adams spectral sequence}
\label{fig:Ext}
\includegraphics[height=.93\textwidth]{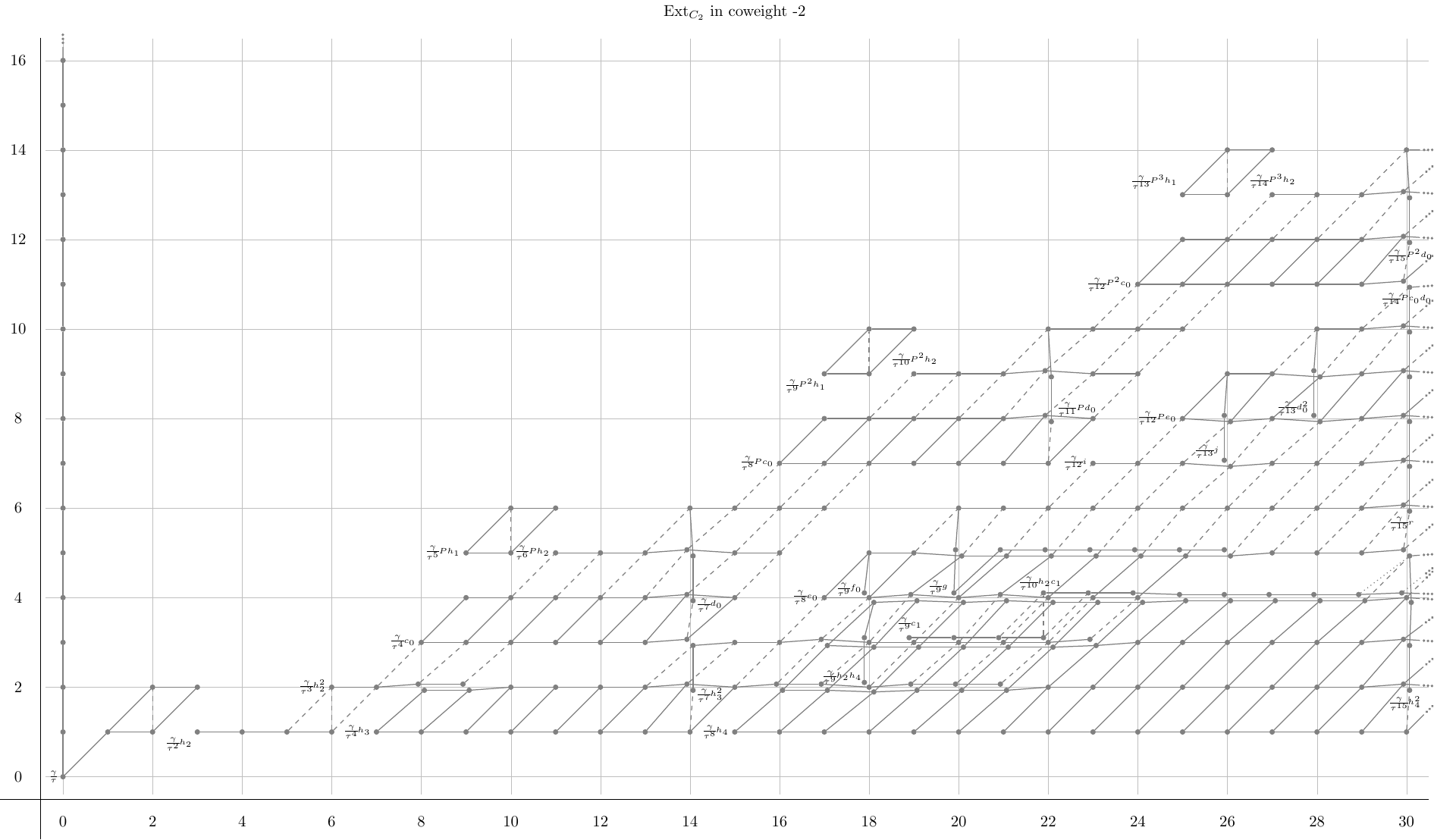}
\end{figure}

\clearpage

\foreach \c in {-1,...,8} {
	\includegraphics[height=.93\textwidth]{AdamsCharts/Ext/C2ExtCharts-coweight\c.pdf}
	
}

\label{E2end}

\clearpage

\label{E3start}

\begin{figure}[h]
\caption{The $E_3$ page of the $C_2$-equivariant Adams spectral sequence}
\label{fig:Ethree}
\includegraphics[height=.93\textwidth]{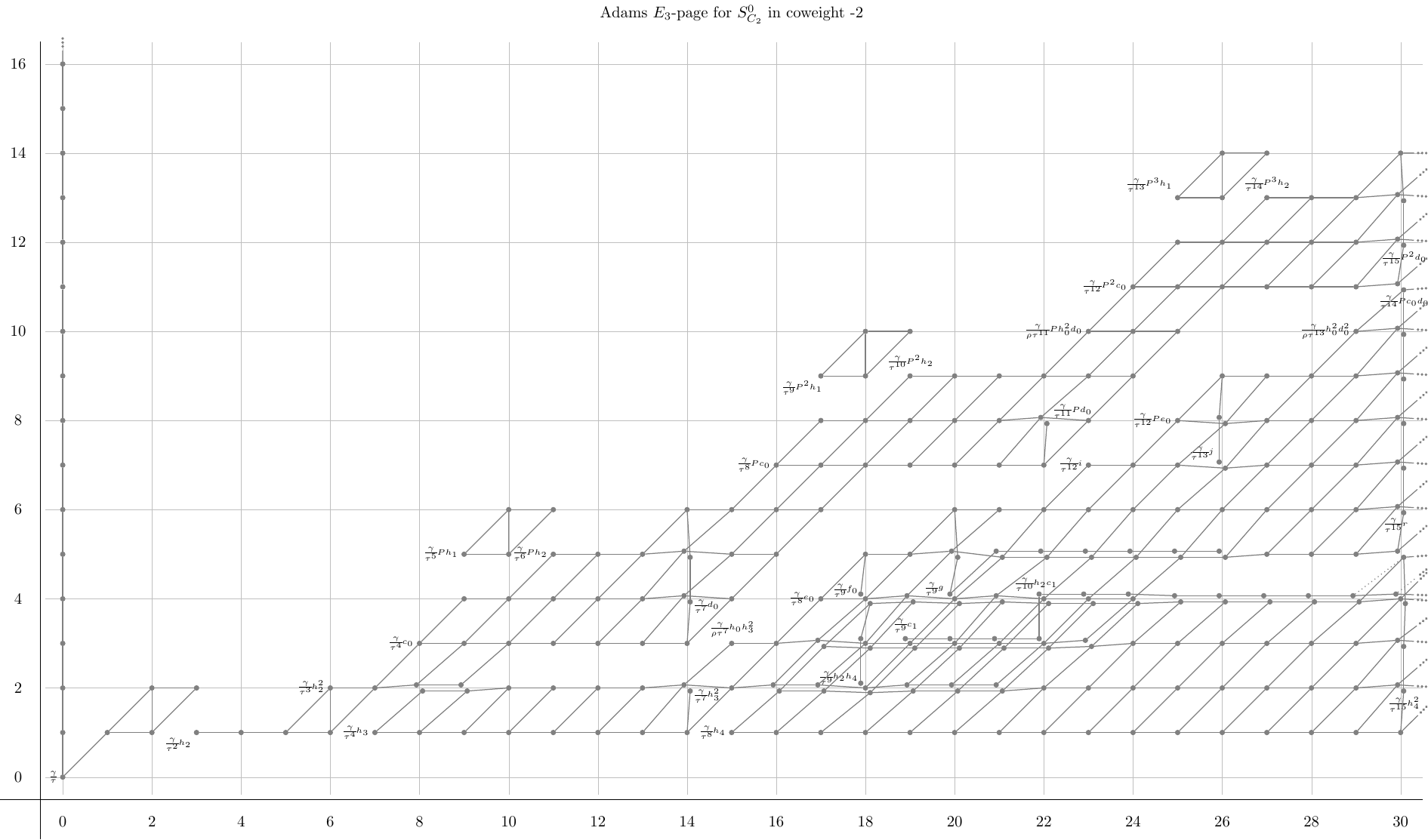}
\end{figure}

\clearpage

\includegraphics[height=.93\textwidth]{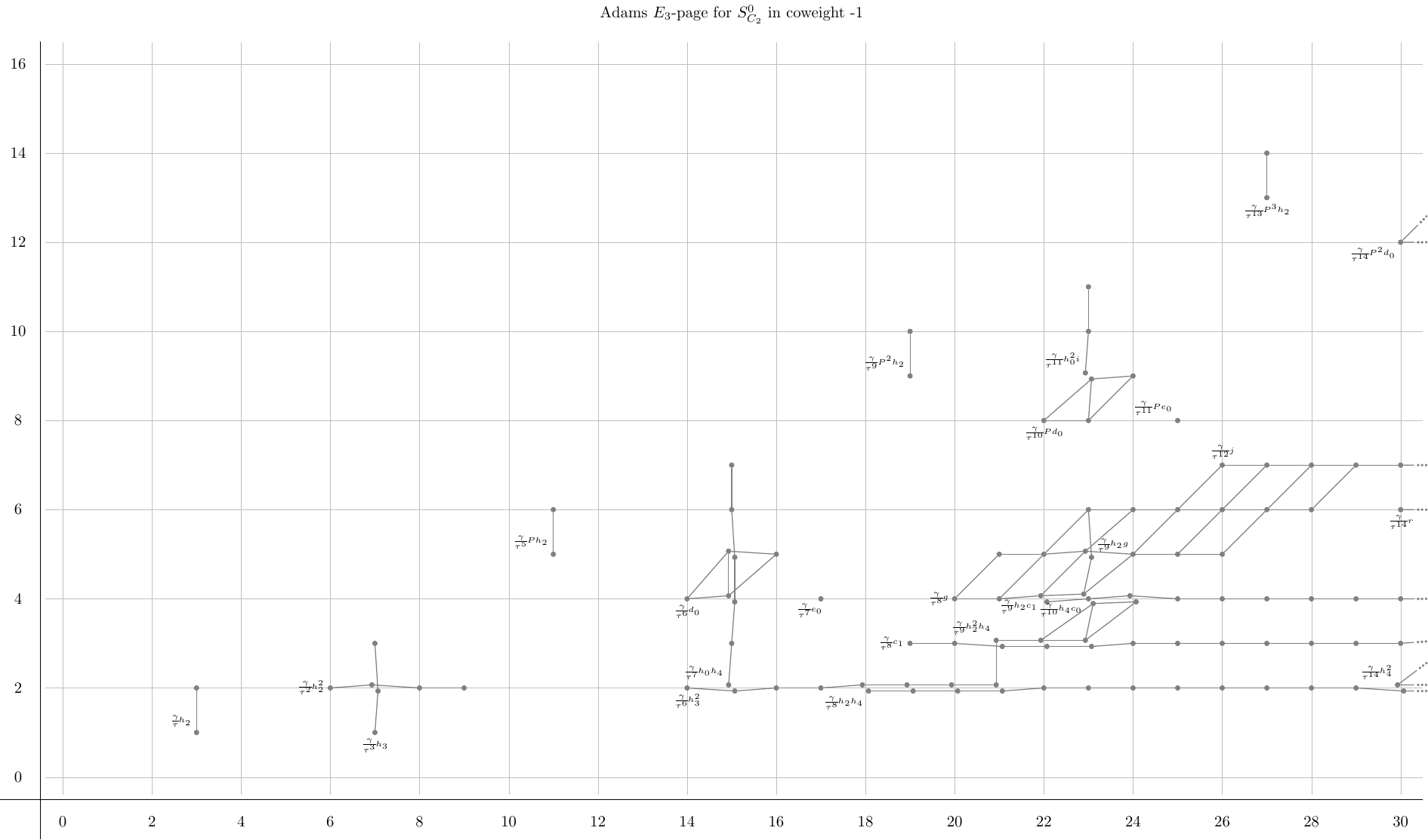}

\includegraphics[height=.93\textwidth]{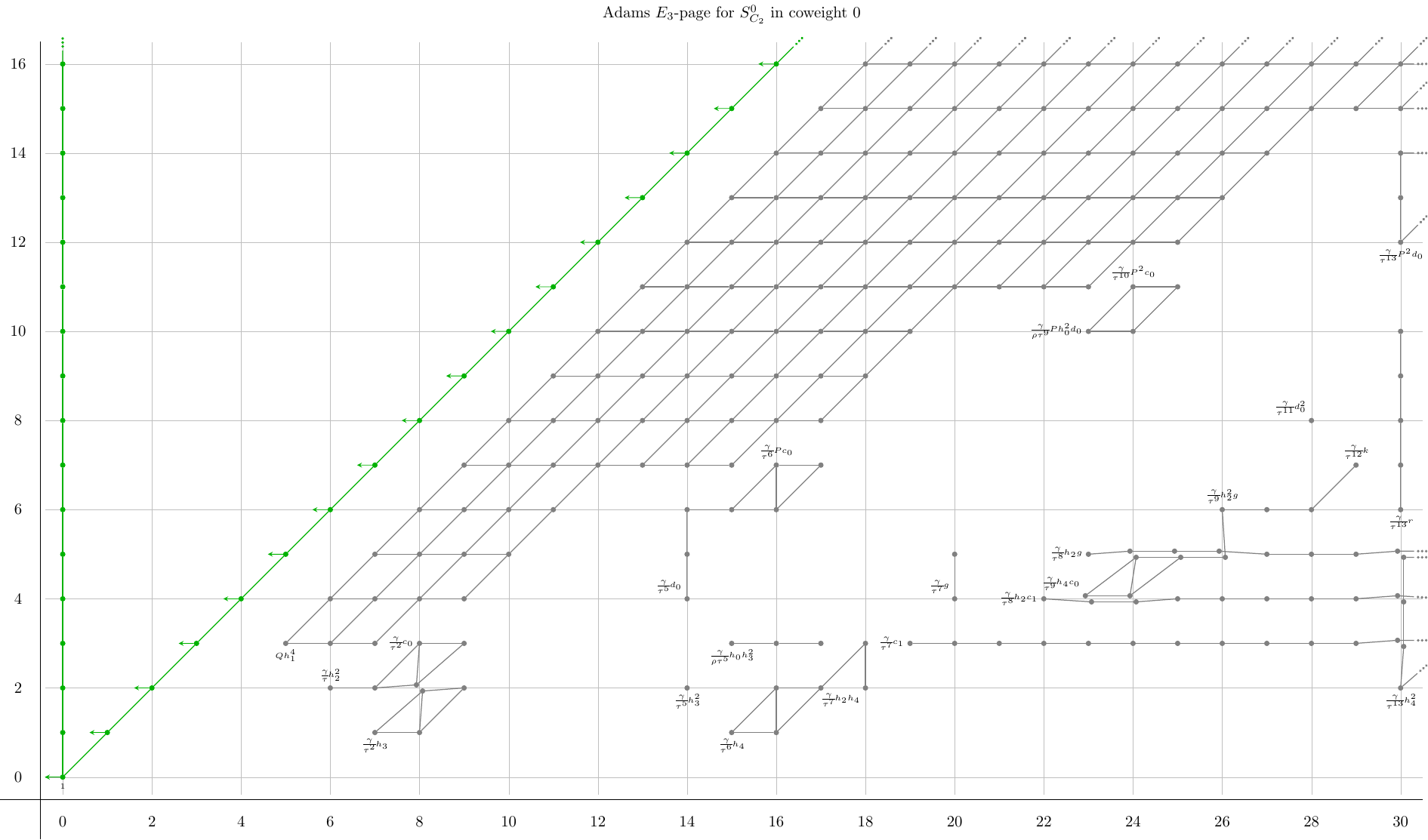}

\includegraphics[height=.93\textwidth]{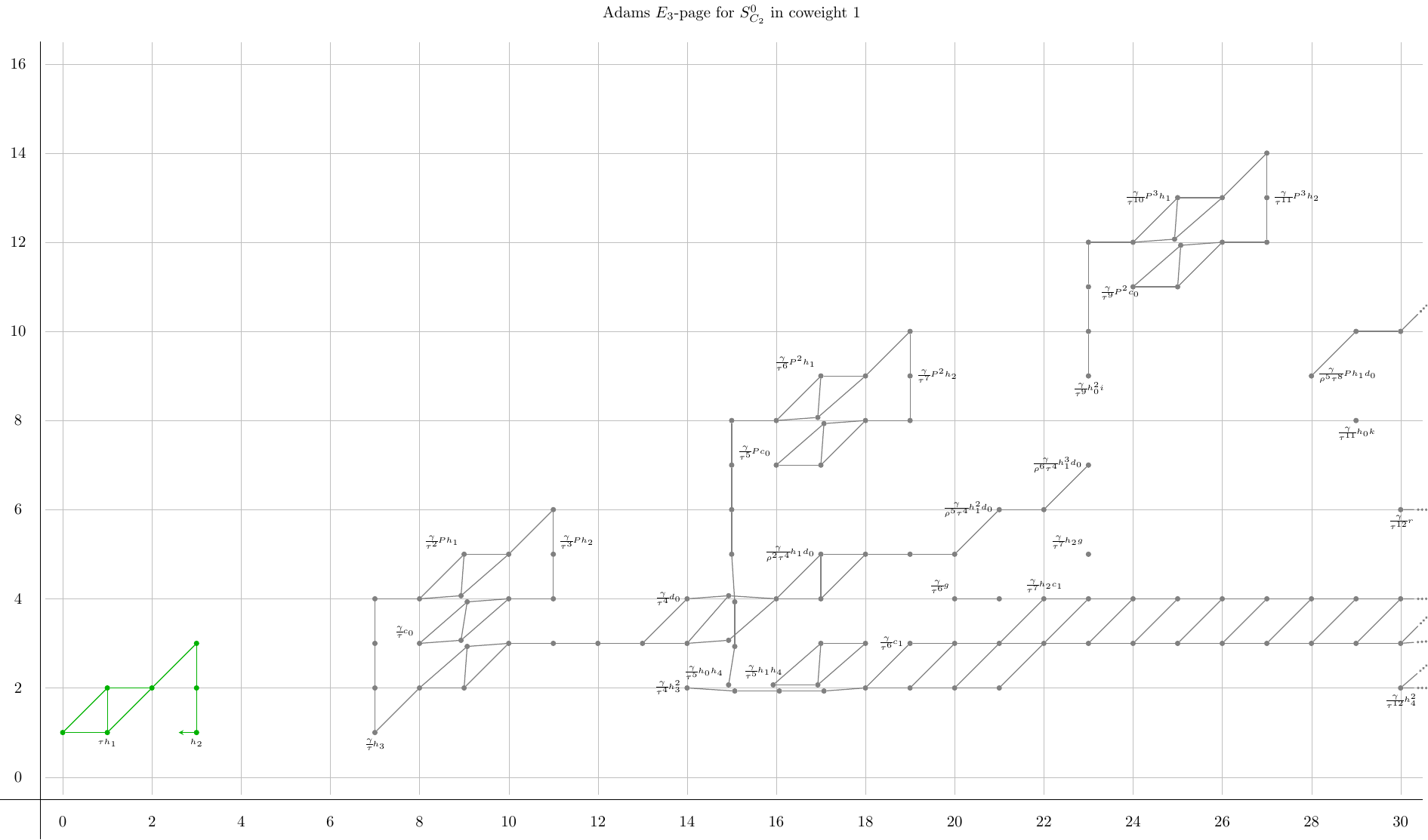}

\includegraphics[height=.93\textwidth]{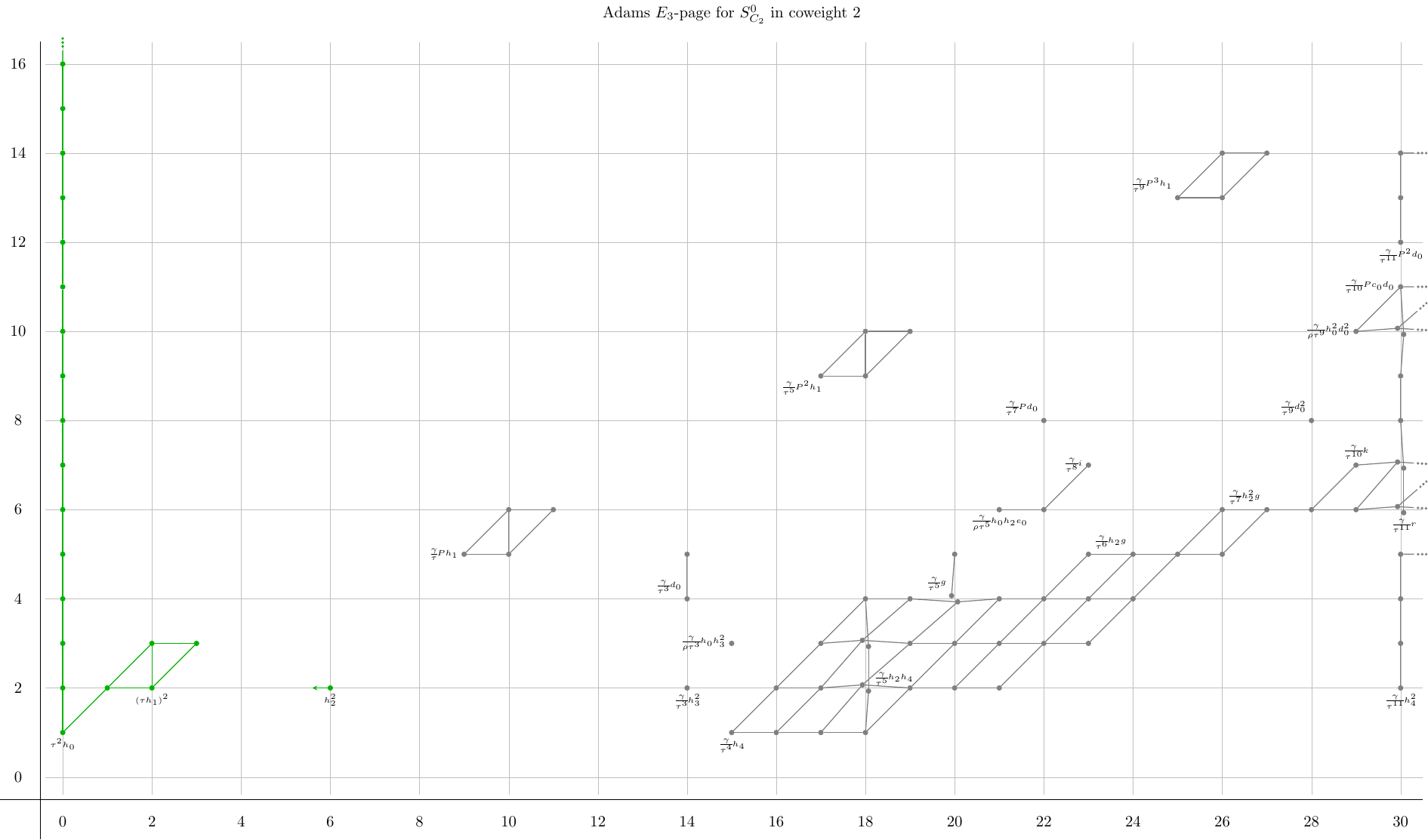}

\includegraphics[height=.93\textwidth]{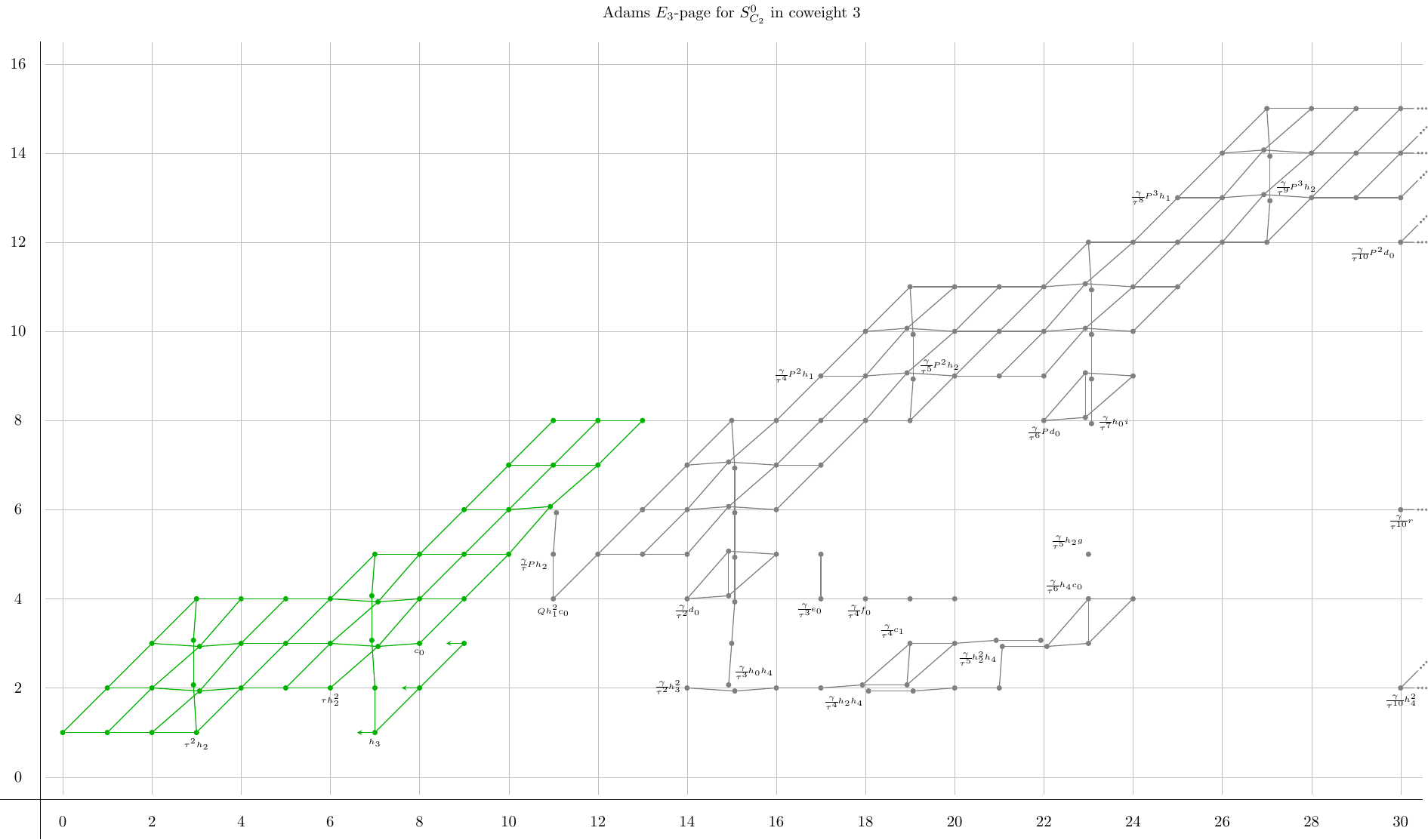}

\includegraphics[height=.93\textwidth]{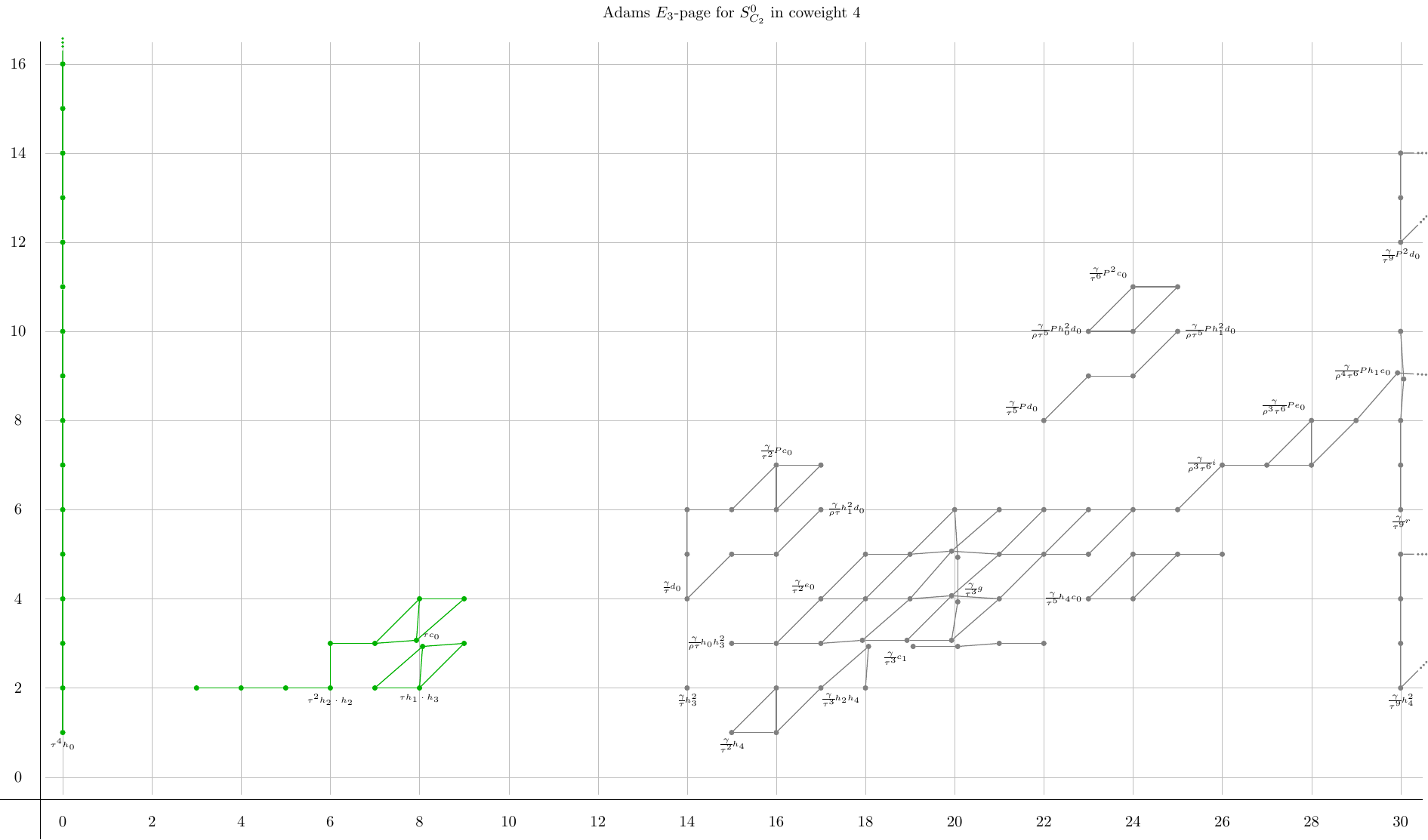}

\includegraphics[height=.93\textwidth]{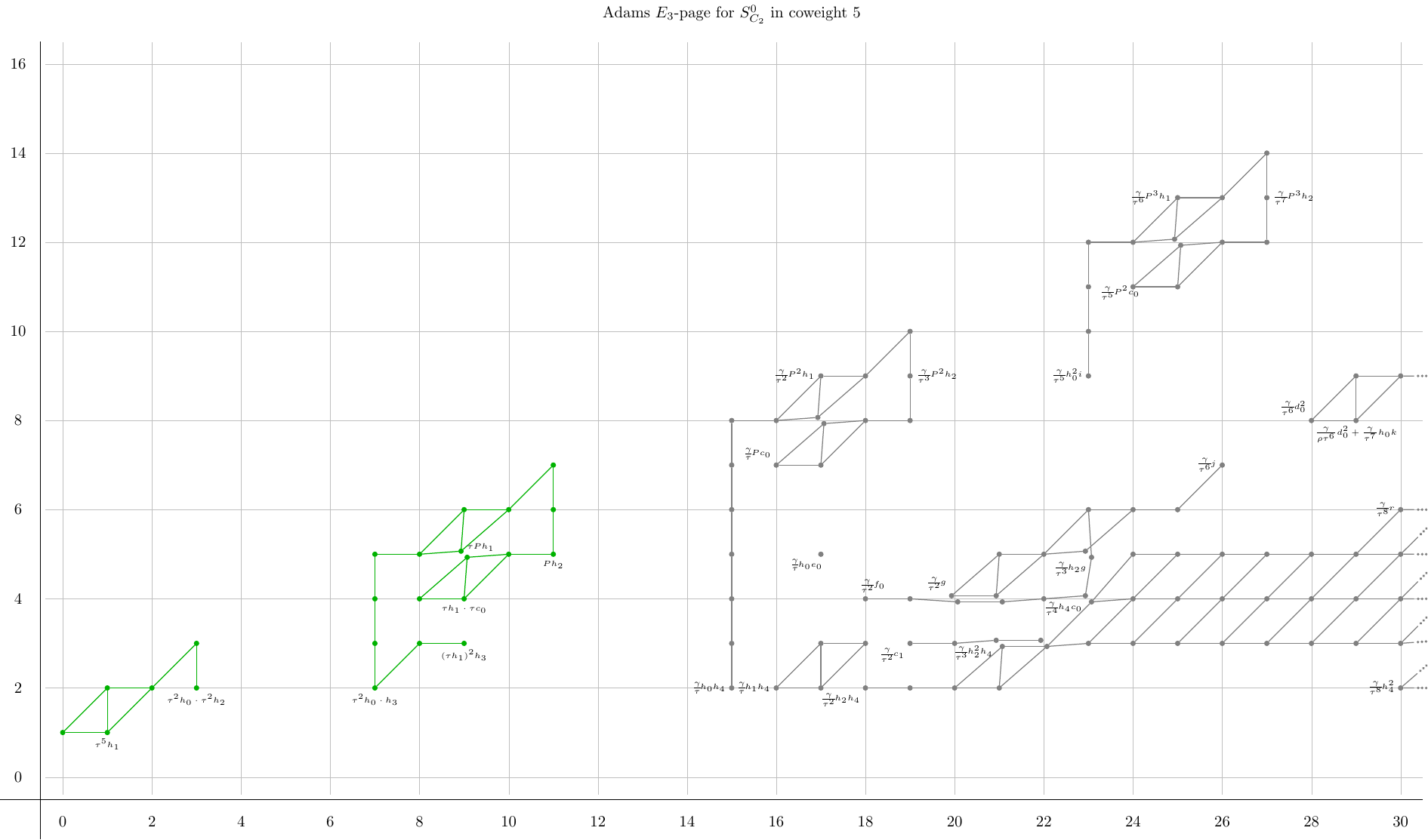}

\includegraphics[height=.93\textwidth]{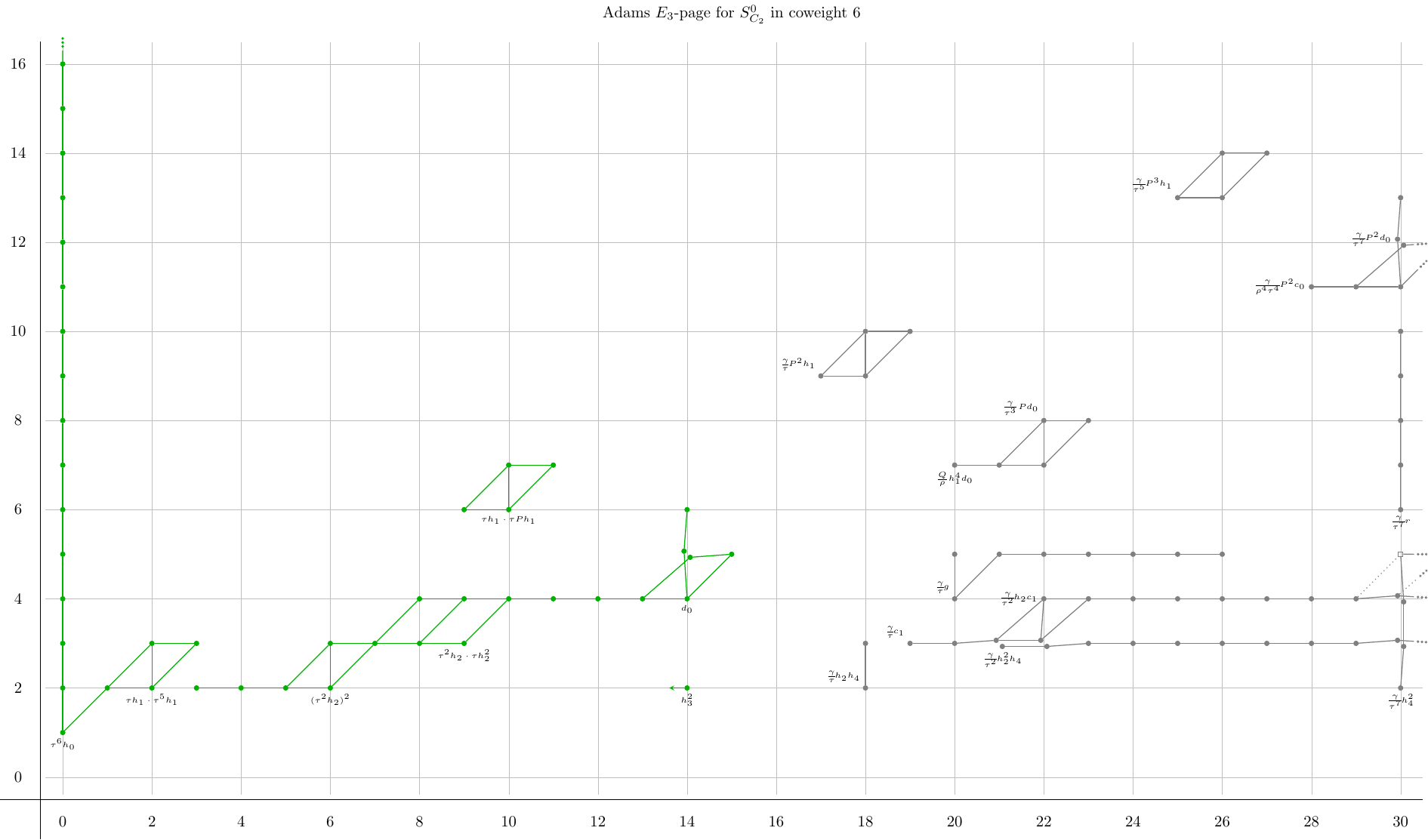}

\includegraphics[height=.93\textwidth]{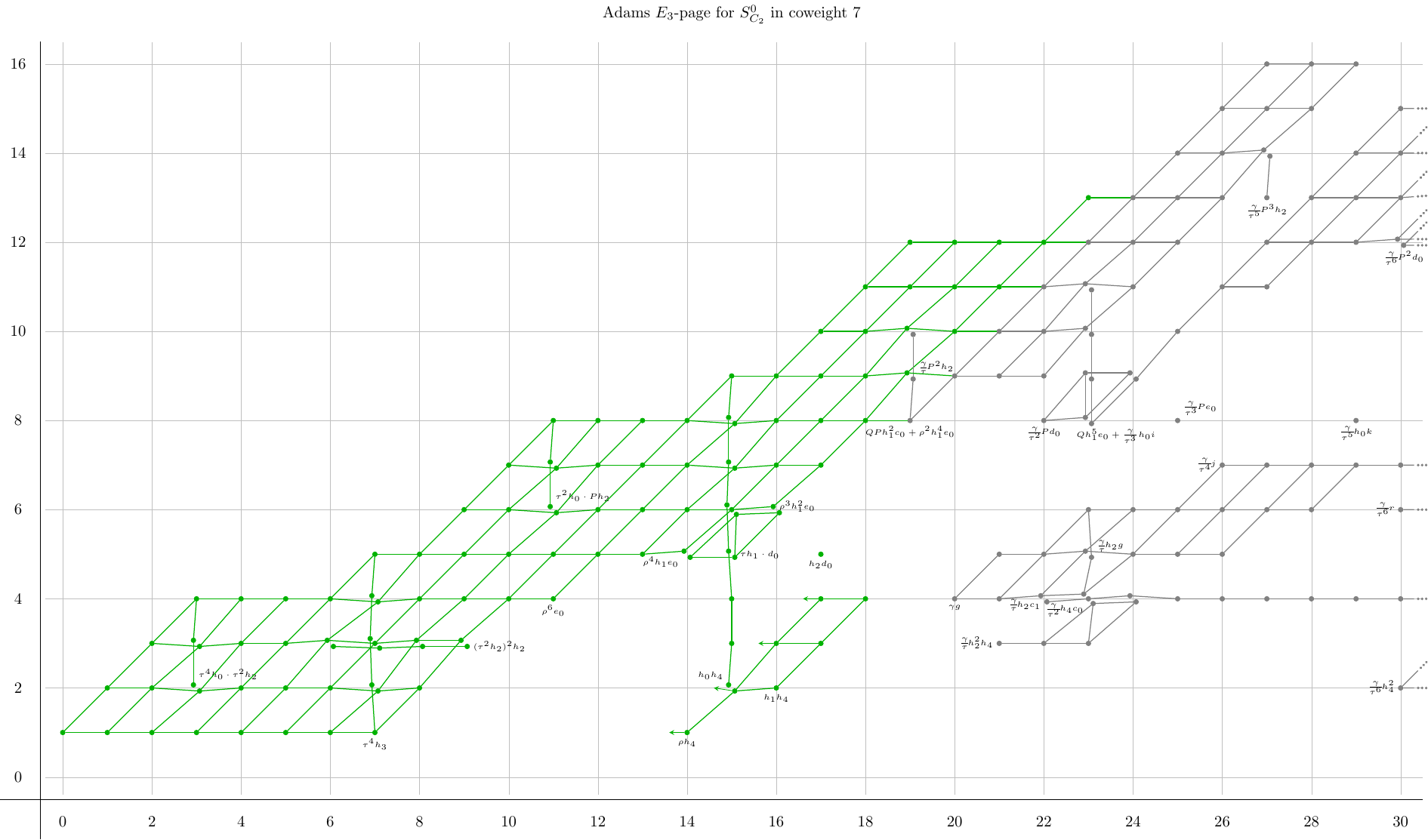}

\includegraphics[height=.93\textwidth]{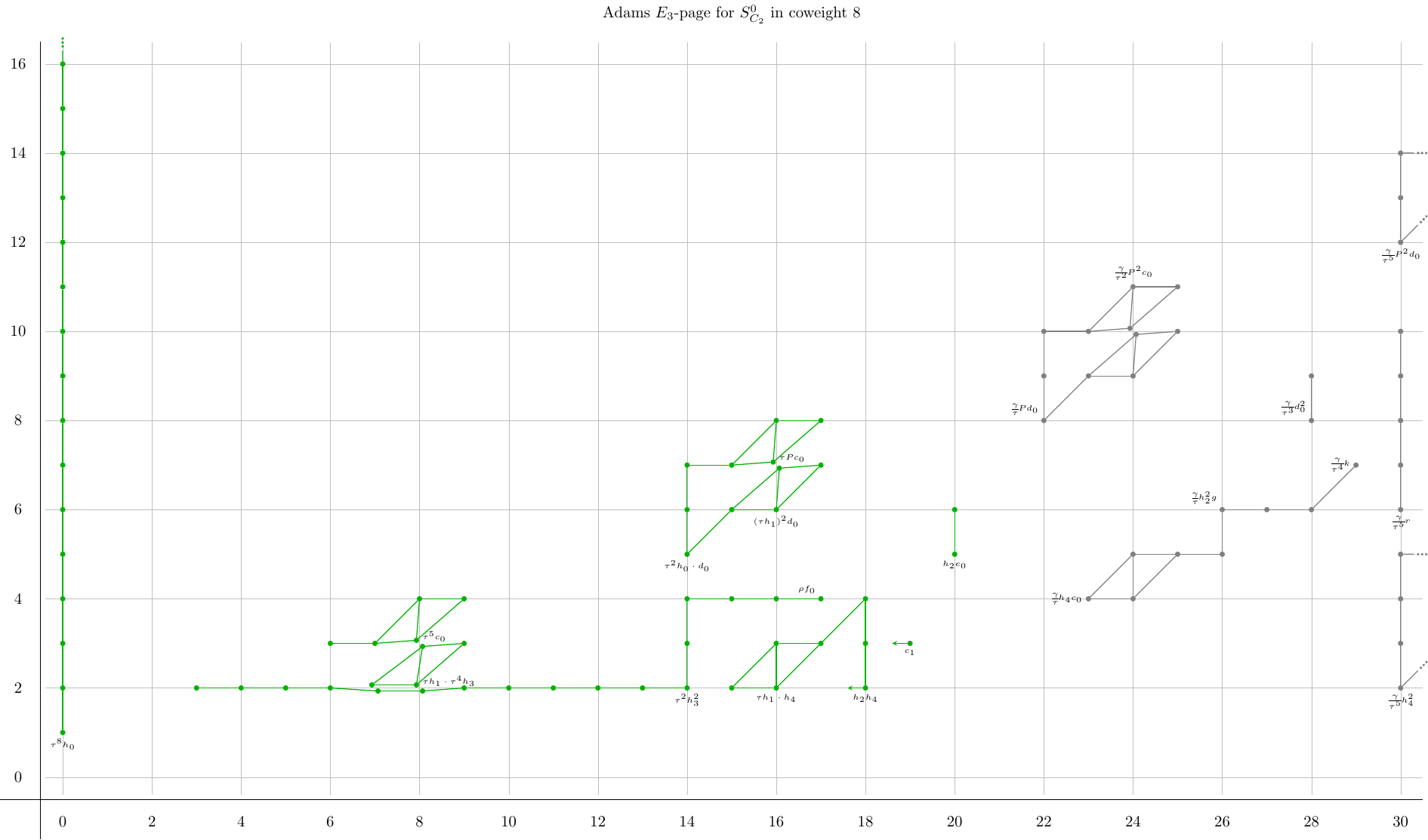}

\label{page:E3end}

\clearpage

\begin{figure}[h]
\caption{The $E_\infty$-page of the $C_2$-equivariant Adams spectral sequence for $S^{0,0}$ in negative coweight and low stems}
\label{fig:EinfNegCoweight}
\end{figure}

\centerline{
\includegraphics[width=.55\textheight]{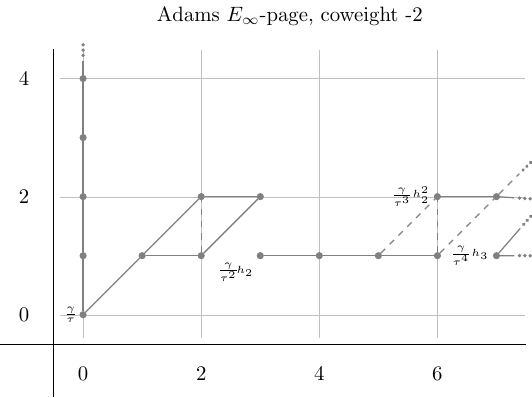}
\qquad\qquad
\includegraphics[width=.55\textheight]{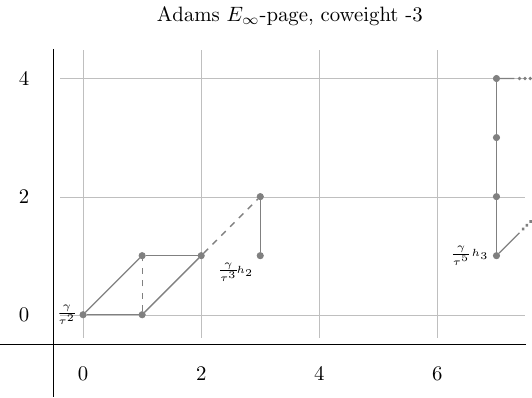}
}

\centerline{
\includegraphics[width=.55\textheight]{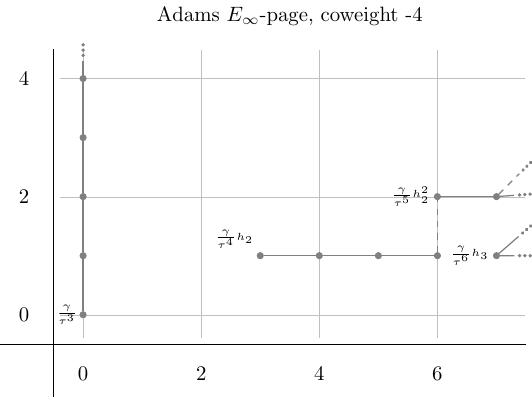}
\qquad\qquad
\includegraphics[width=.55\textheight]{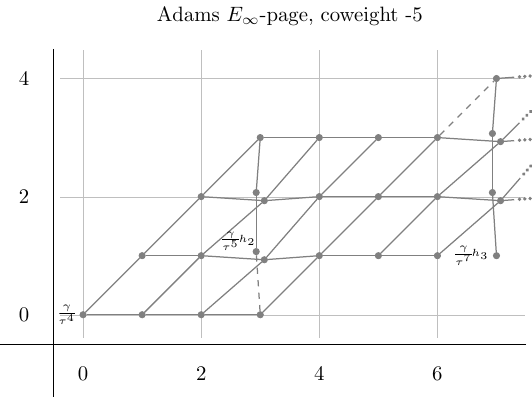}
}

\centerline{
\includegraphics[width=.55\textheight]{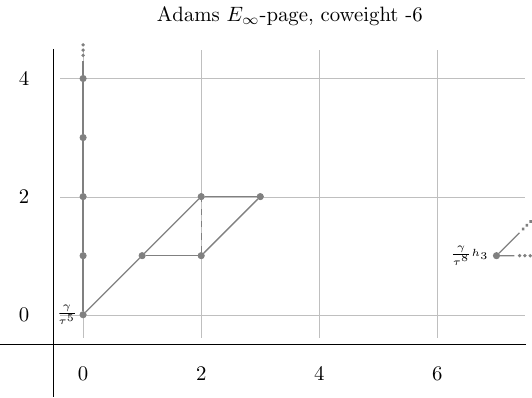}
\qquad\qquad
\includegraphics[width=.55\textheight]{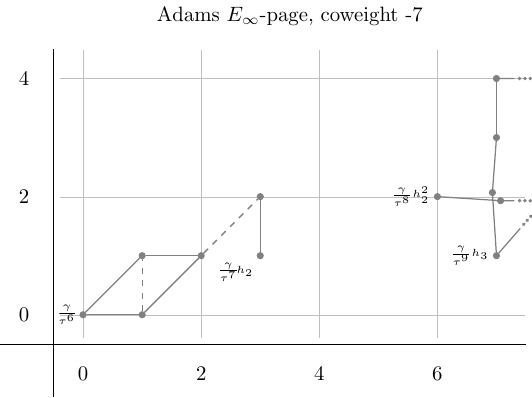}
}

\centerline{
\includegraphics[width=.55\textheight]{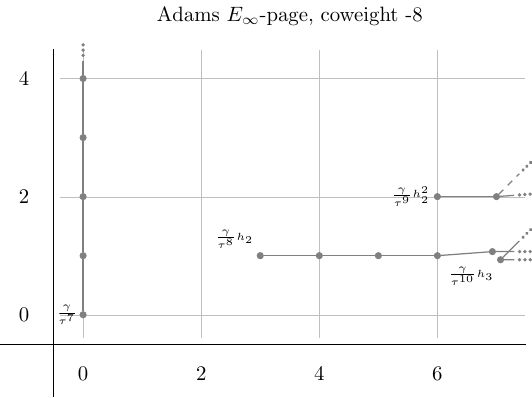}
\qquad\qquad
\includegraphics[width=.55\textheight]{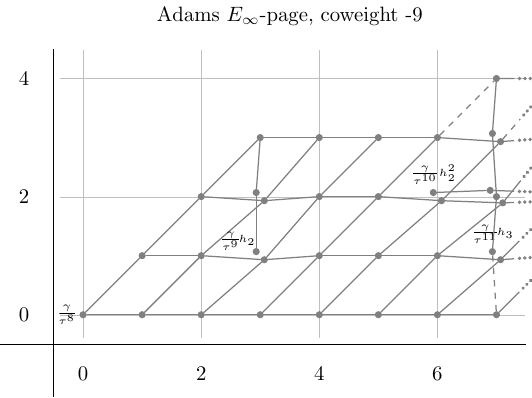}
}

\label{EinfLowCoweightEnd}

\clearpage

\begin{figure}[h]
\caption{The $E_\infty$ page of the $C_2$-equivariant Adams spectral sequence}
\label{fig:Einf}
\includegraphics[height=.93\textwidth]{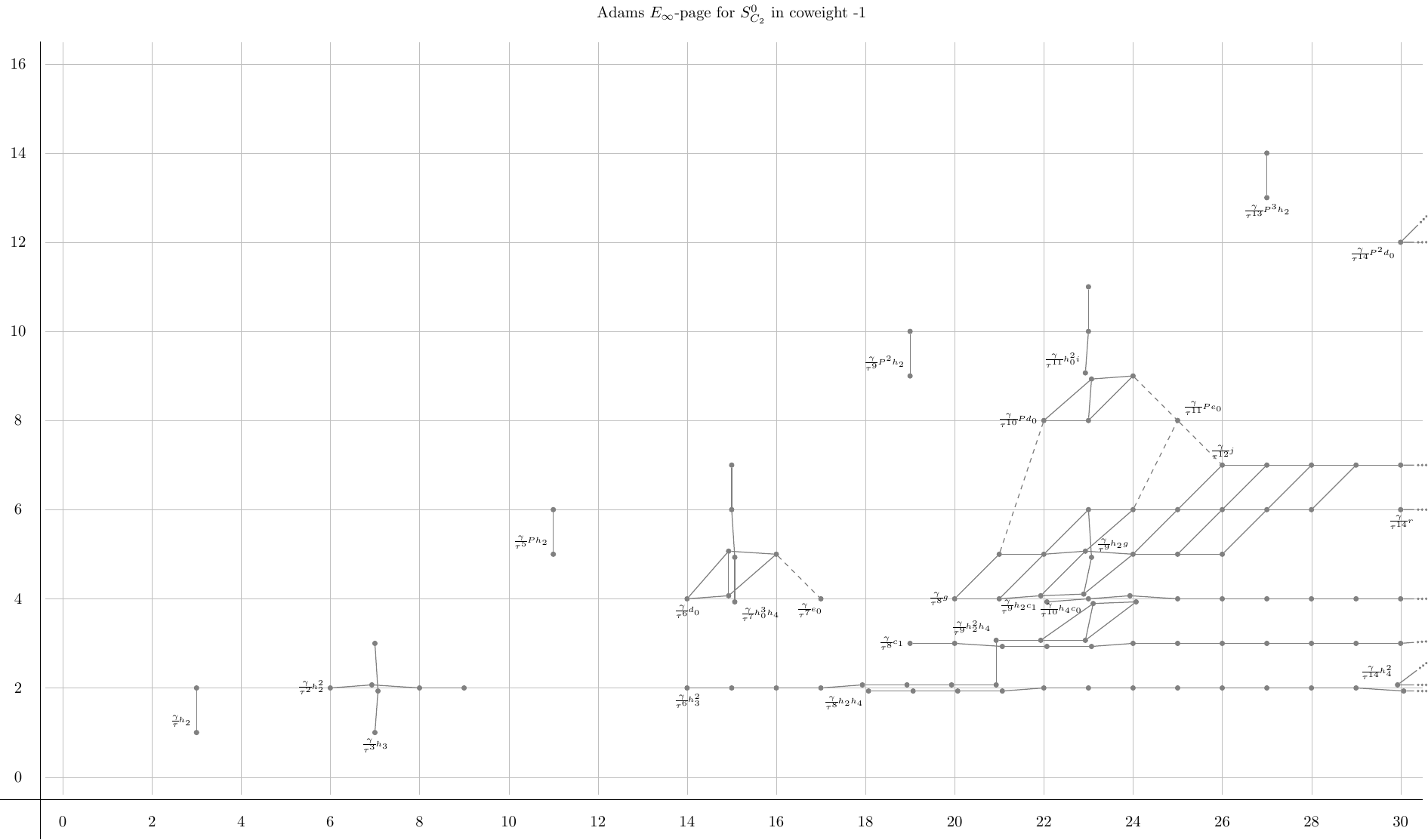}
\end{figure}

\label{Einfstart}

\clearpage

\includegraphics[height=.93\textwidth]{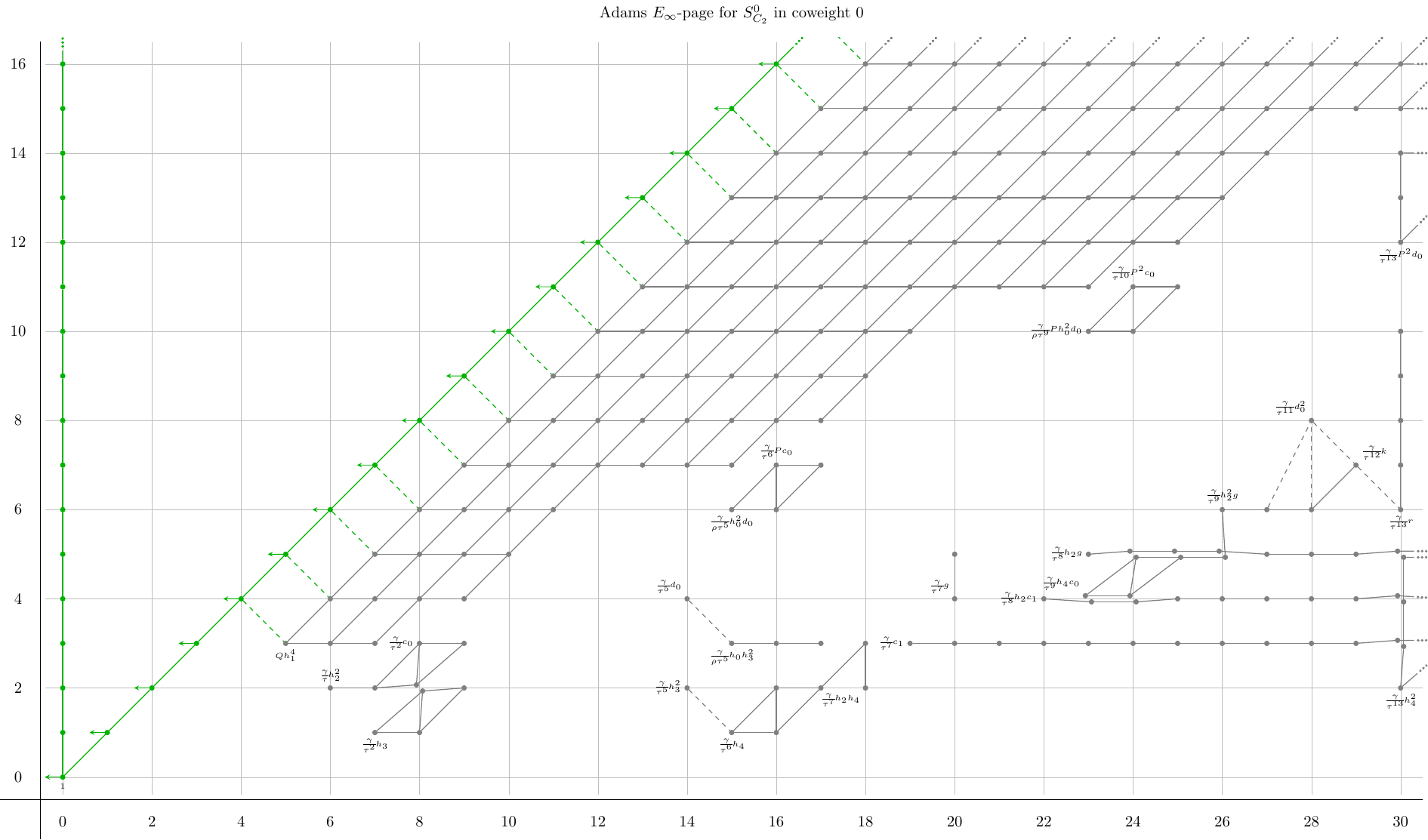}

\includegraphics[height=.93\textwidth]{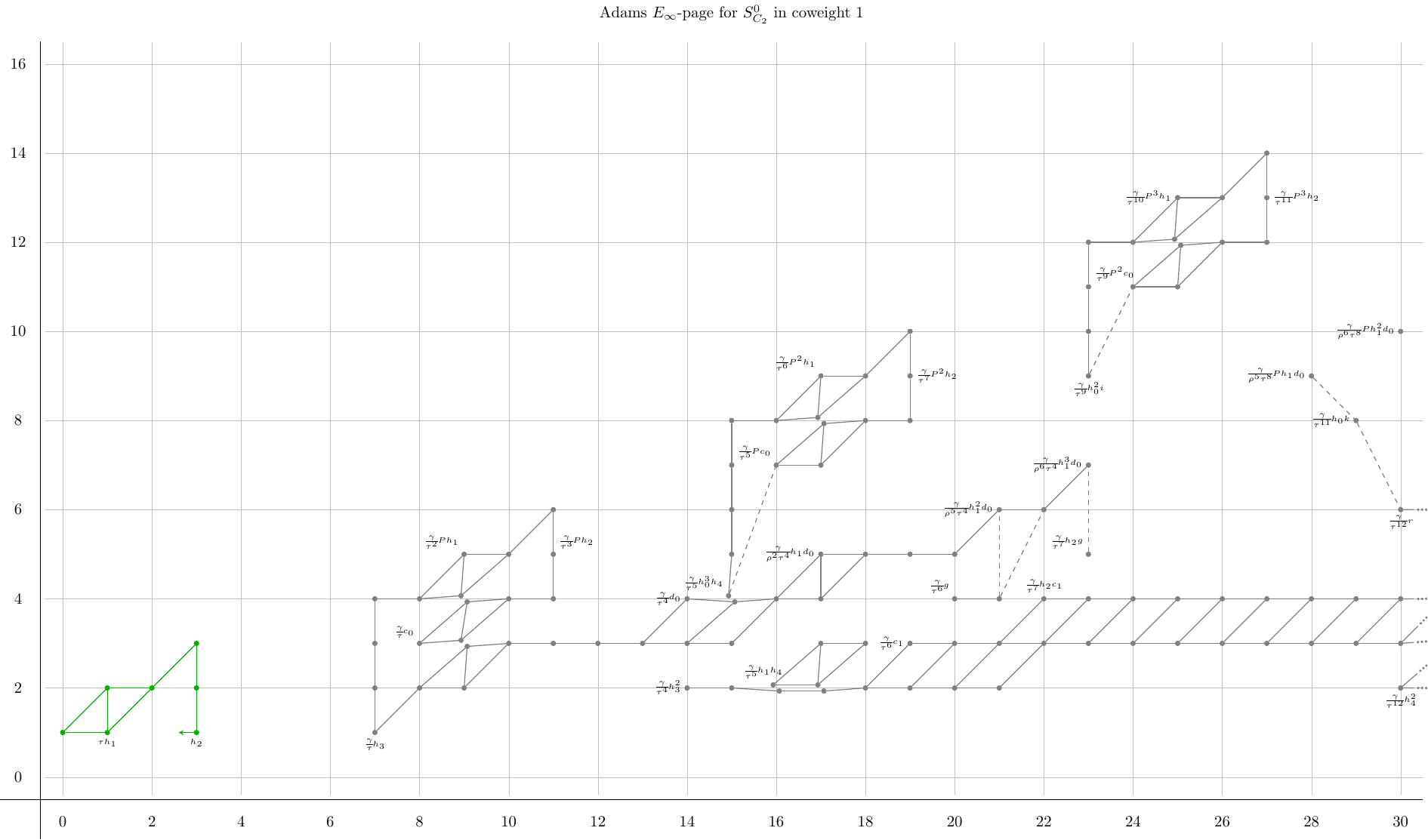}

\includegraphics[height=.93\textwidth]{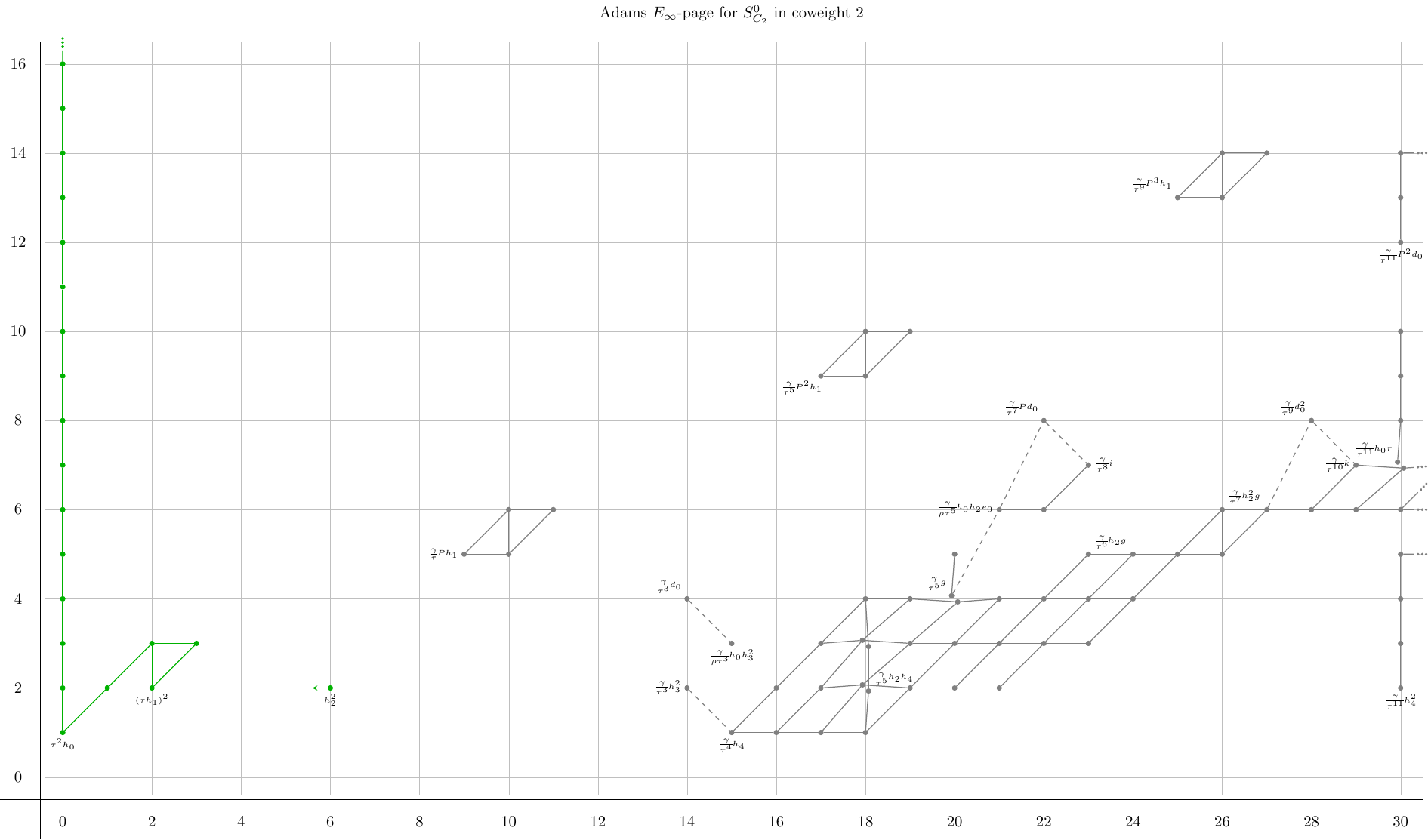}

\includegraphics[height=.93\textwidth]{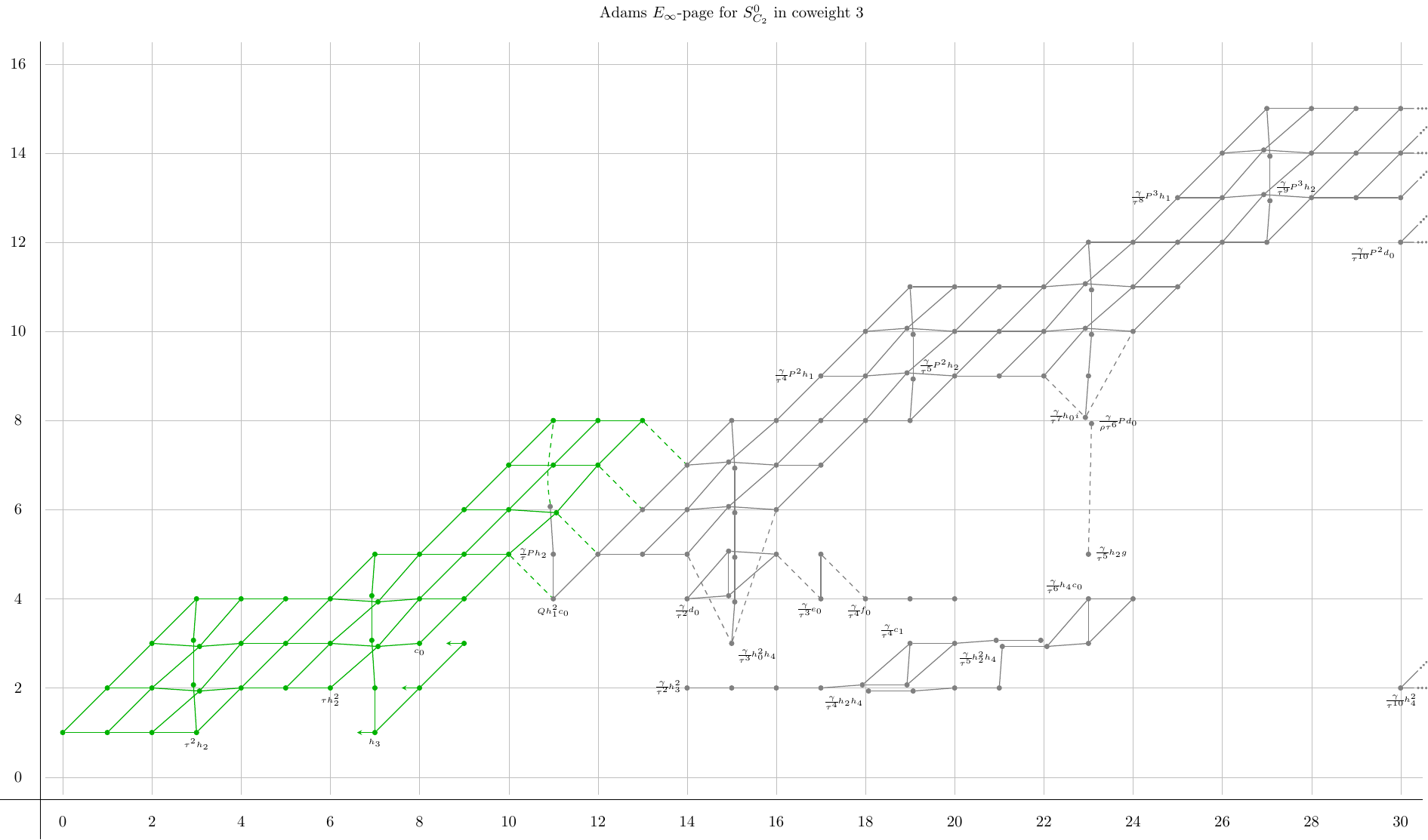}

\includegraphics[height=.93\textwidth]{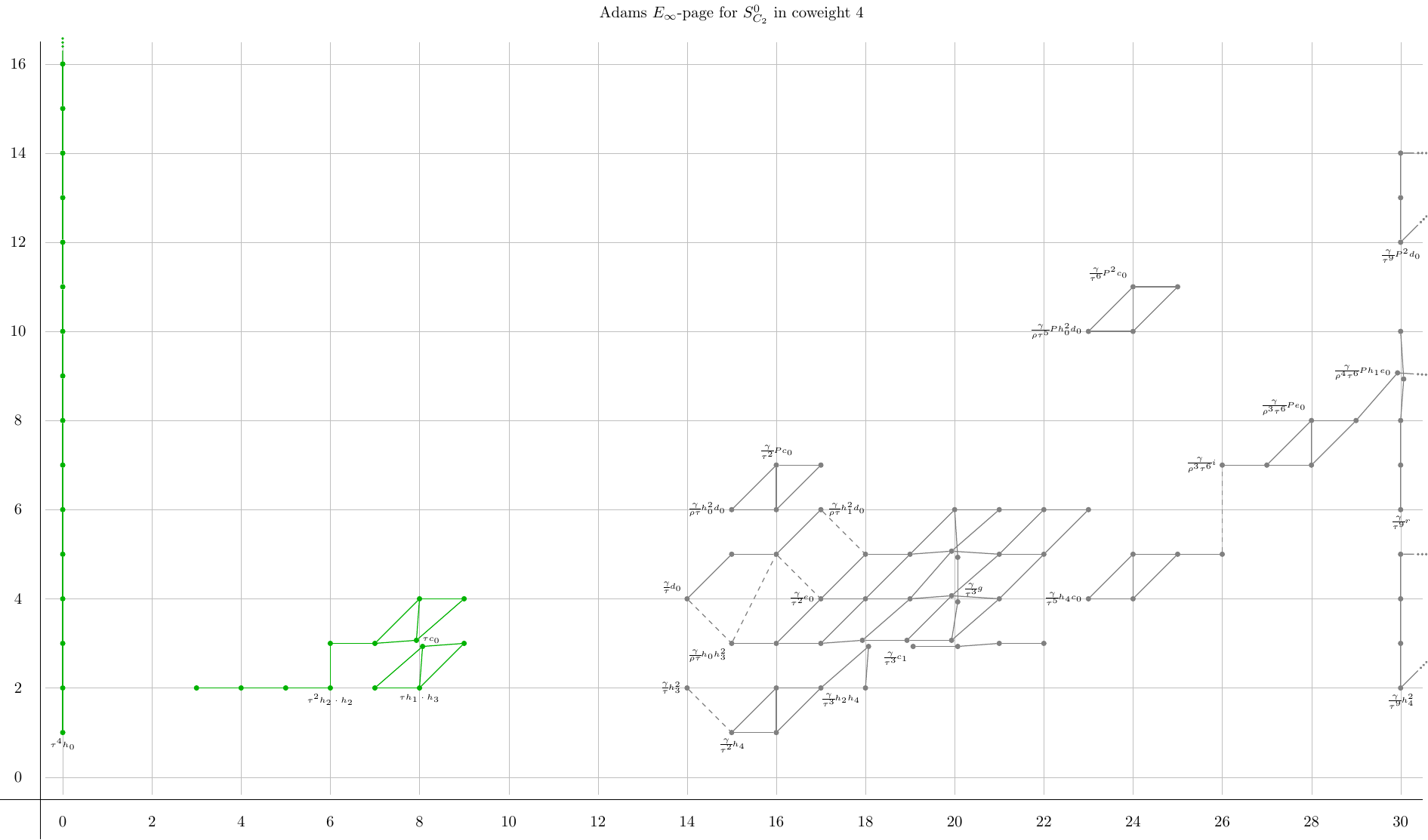}

\includegraphics[height=.93\textwidth]{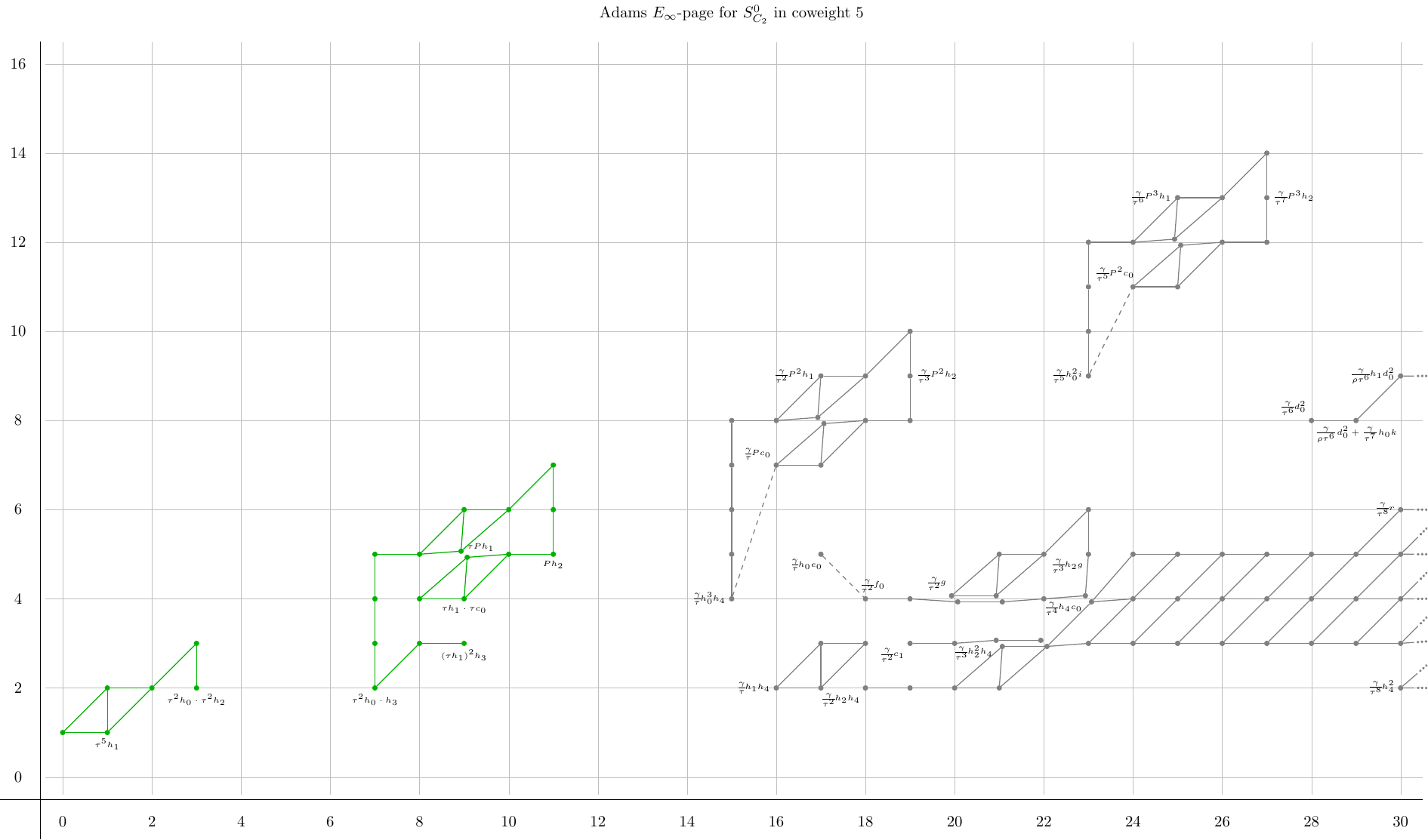}

\includegraphics[height=.93\textwidth]{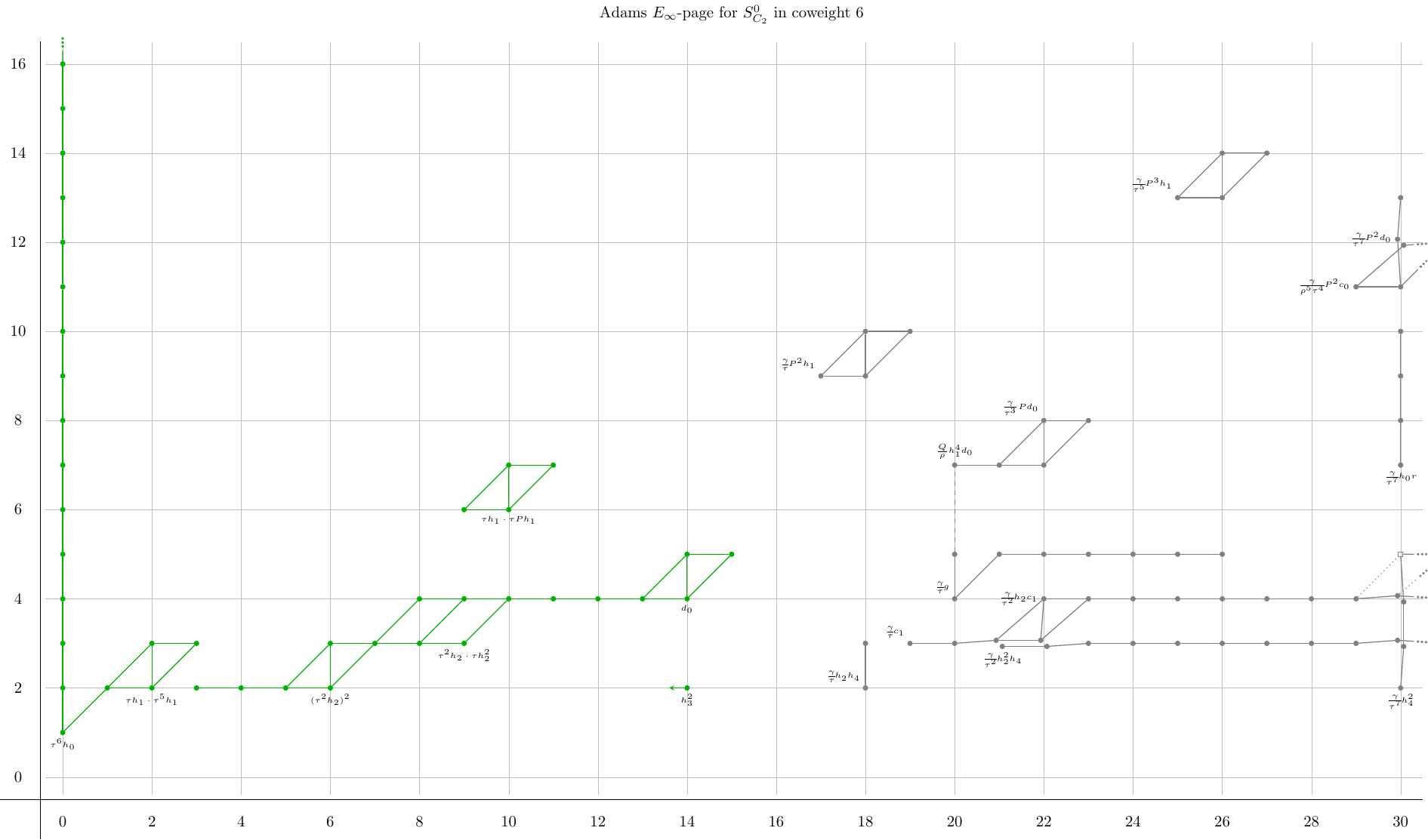}

\includegraphics[height=.93\textwidth]{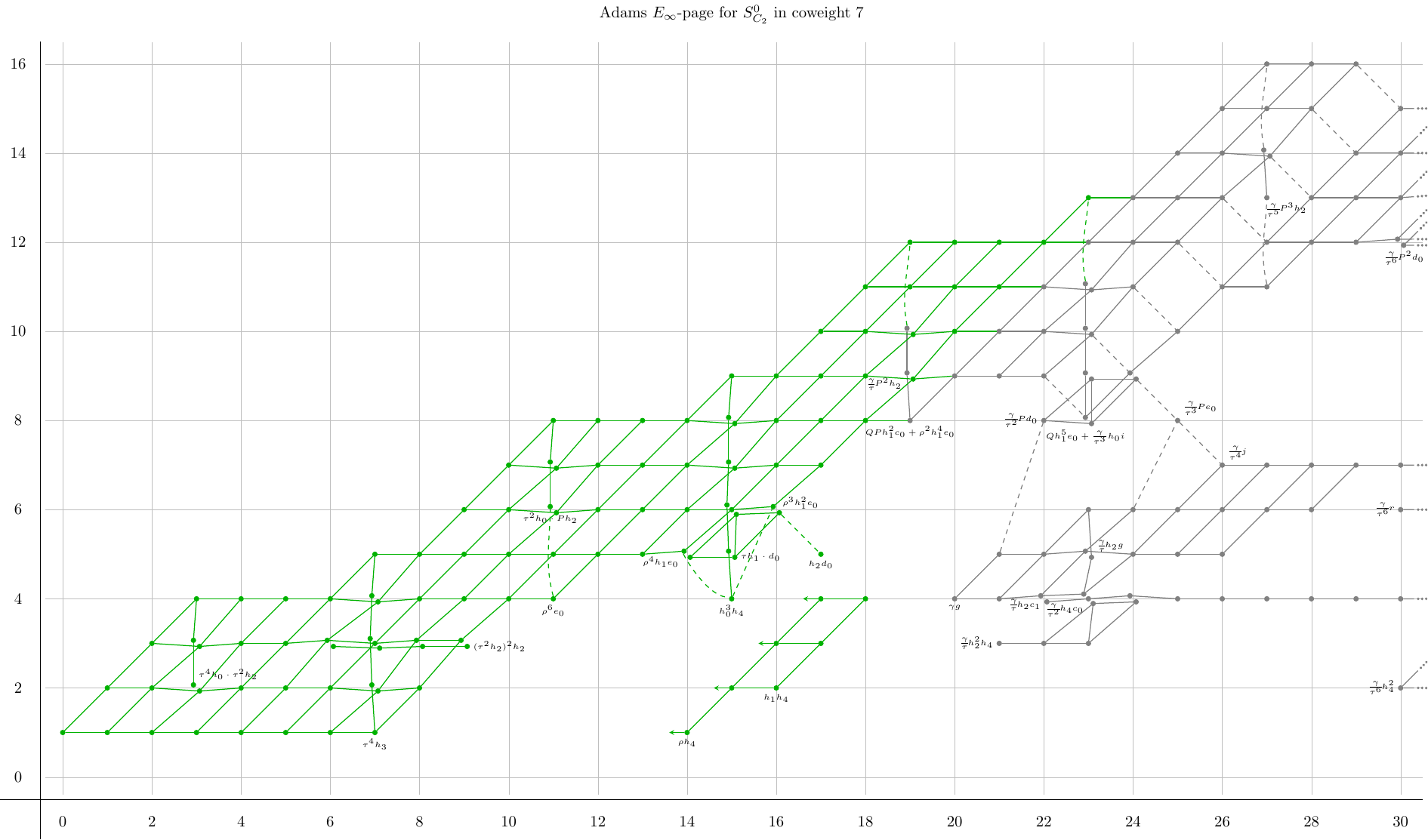}

\label{Einfend}

\begin{figure}[h]
\caption{The exact sequence $\coker \rho \into \picl_* \onto \ker \rho$}
\label{fig:cofiberrho}
\includegraphics[height=\textwidth]{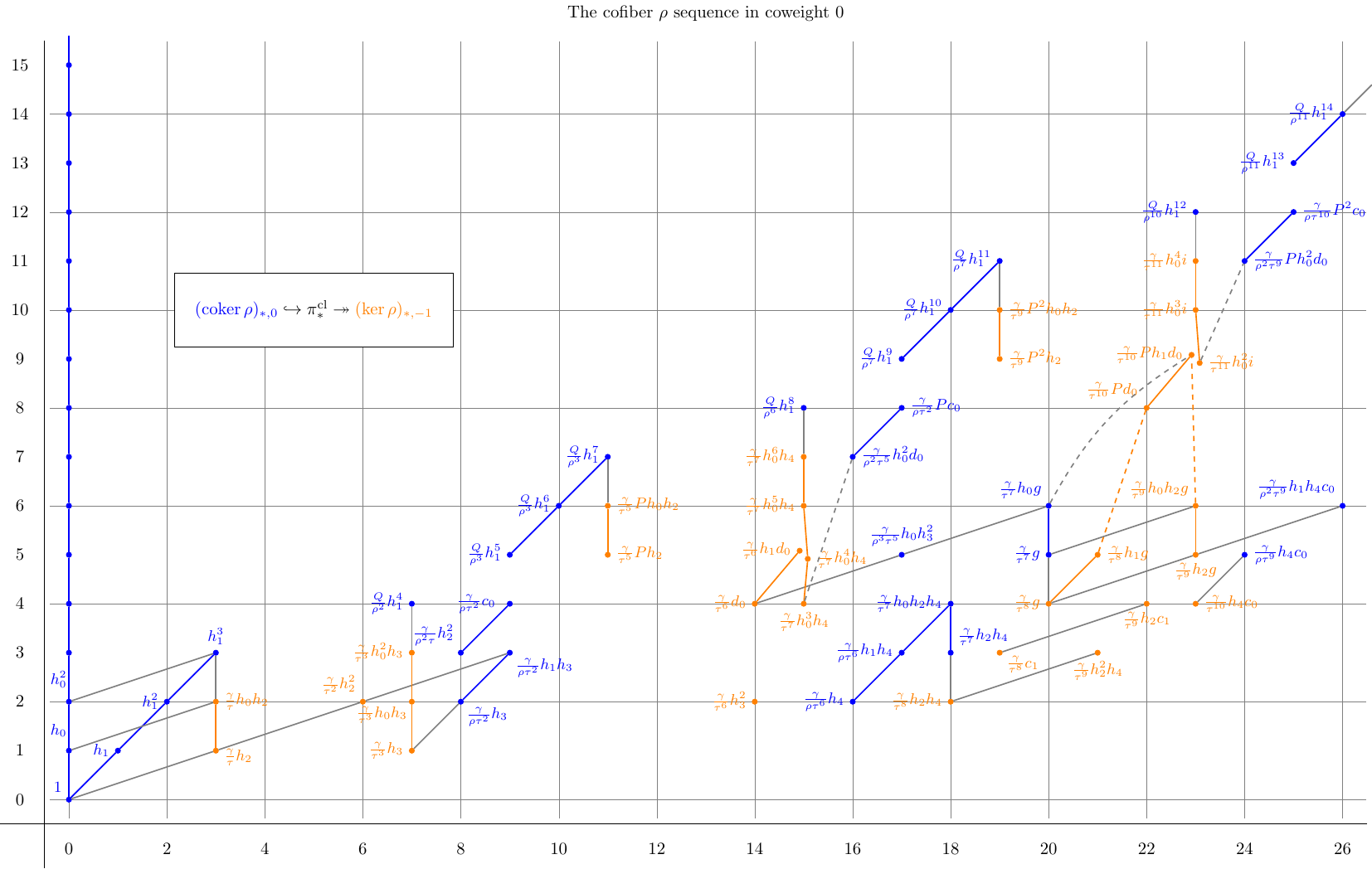}
\end{figure}

\clearpage

\includegraphics[height=\textwidth]{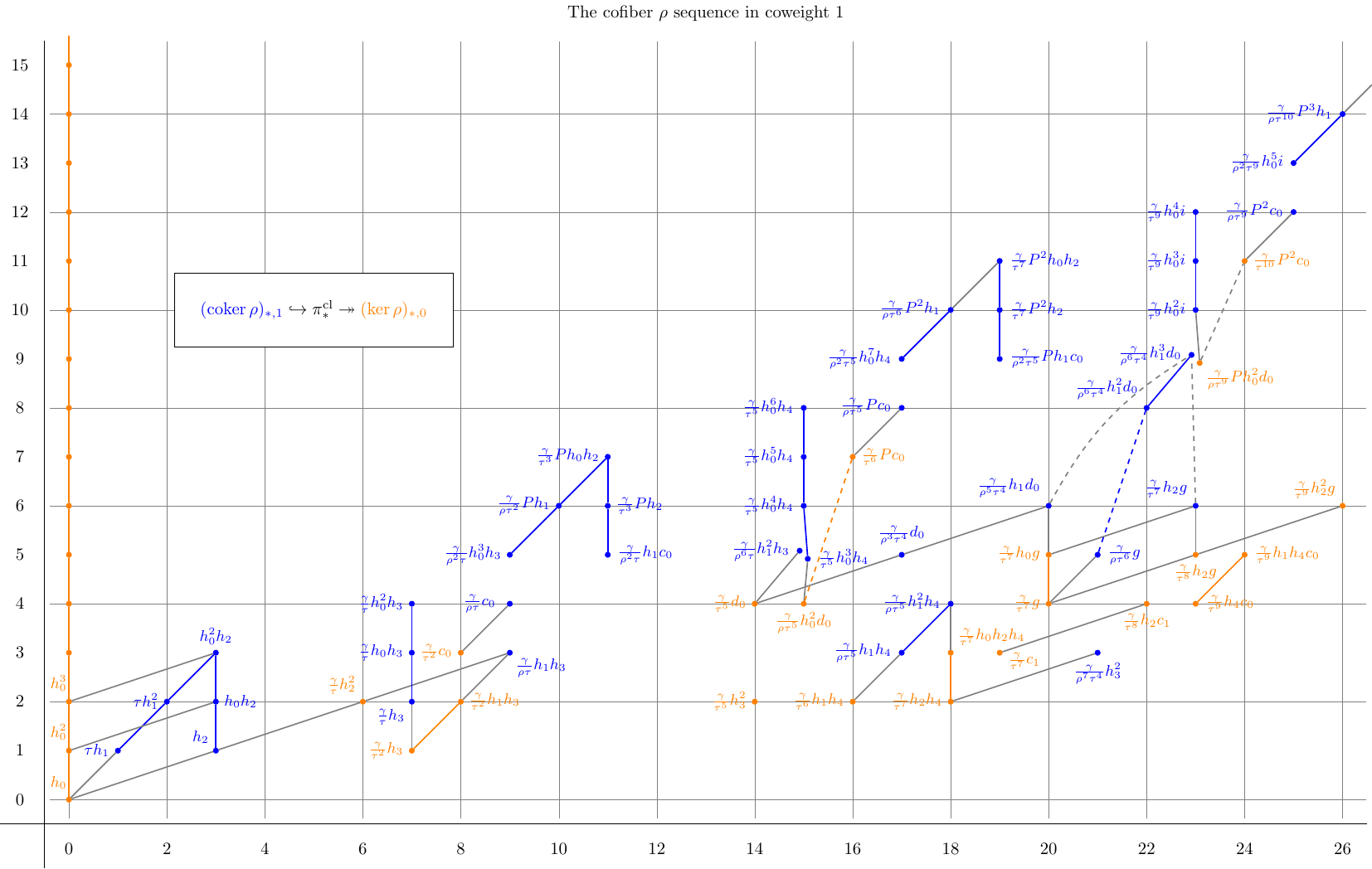}

\includegraphics[height=\textwidth]{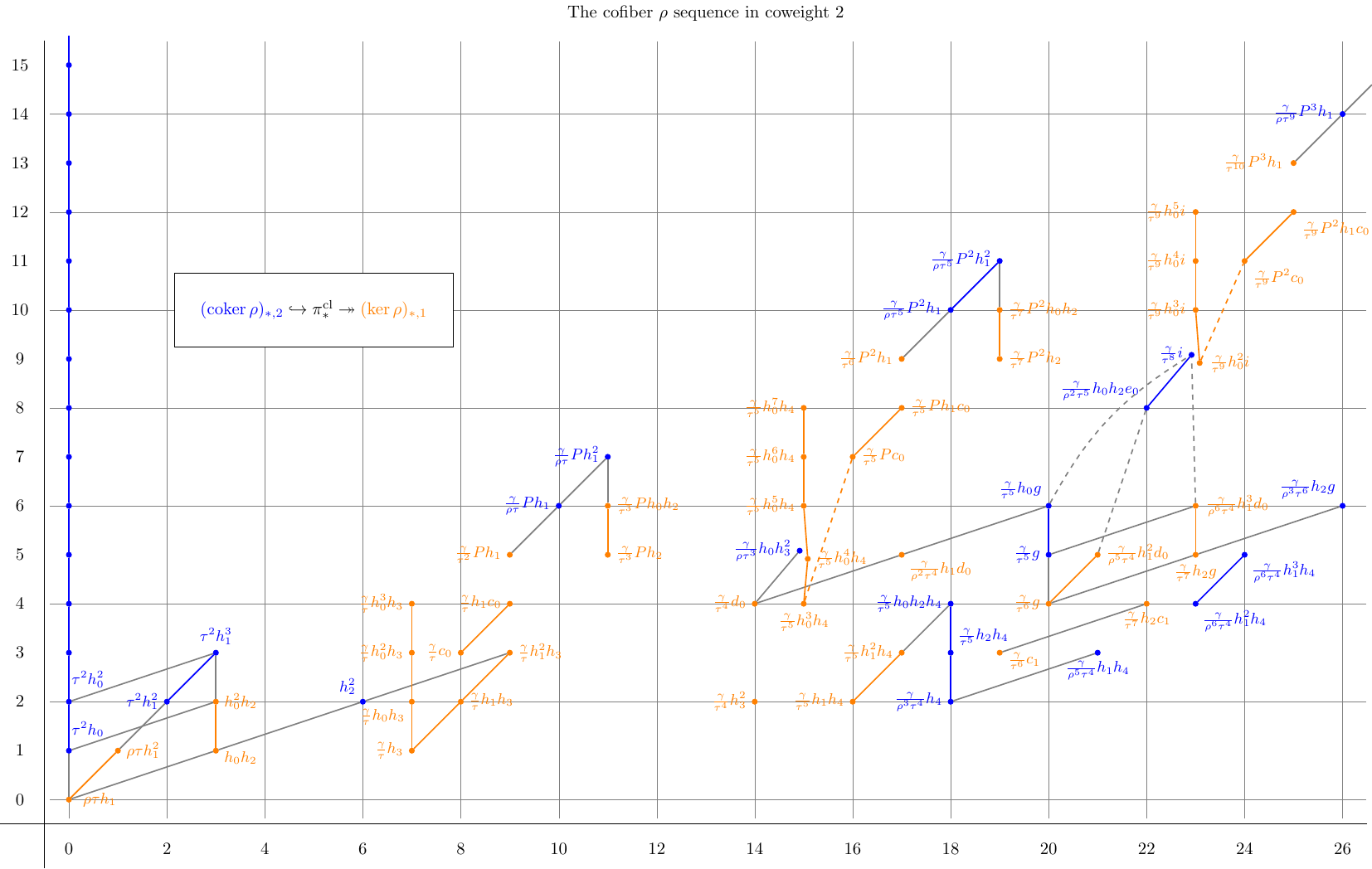}

\includegraphics[height=\textwidth]{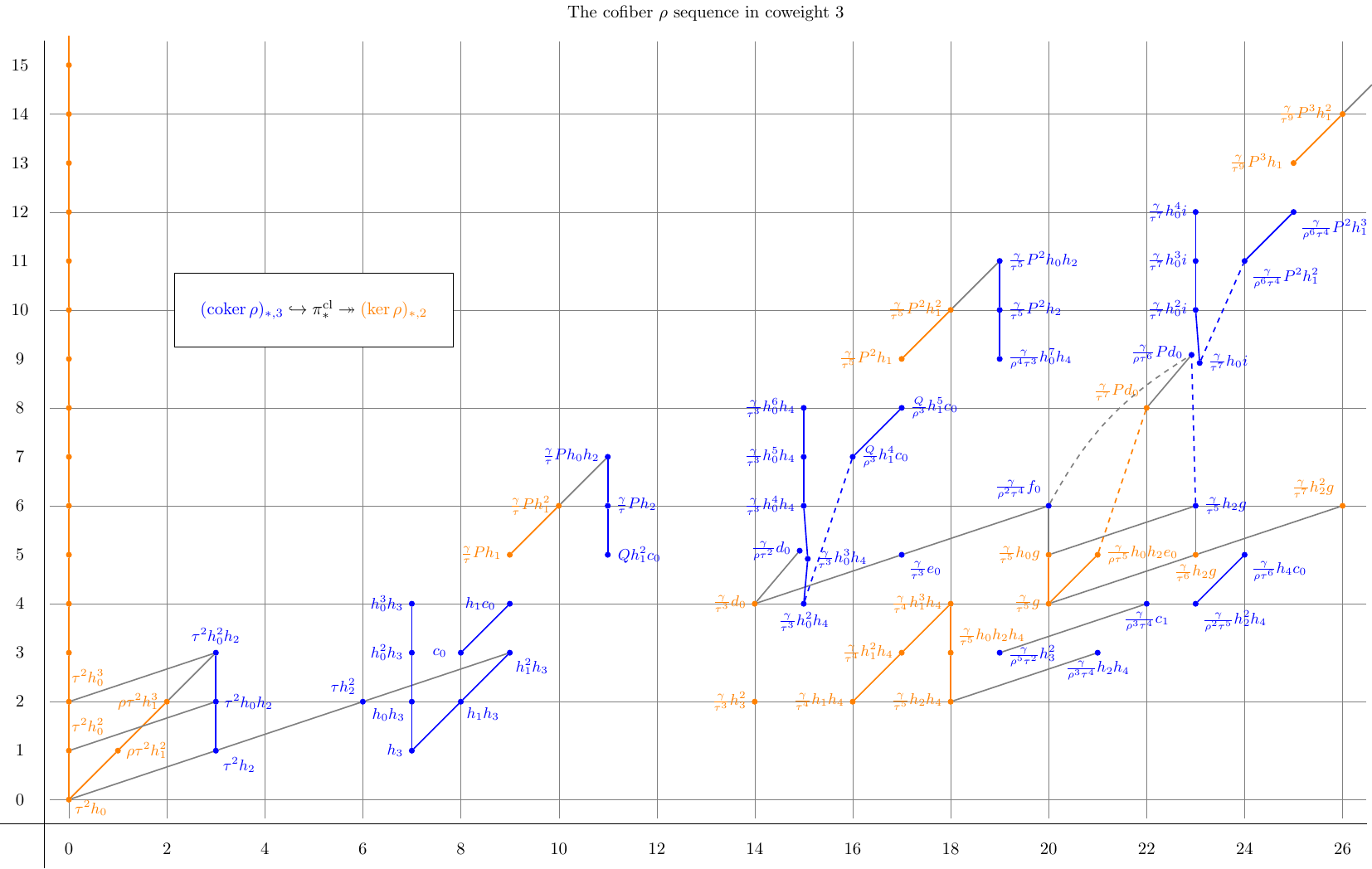}

\includegraphics[height=\textwidth]{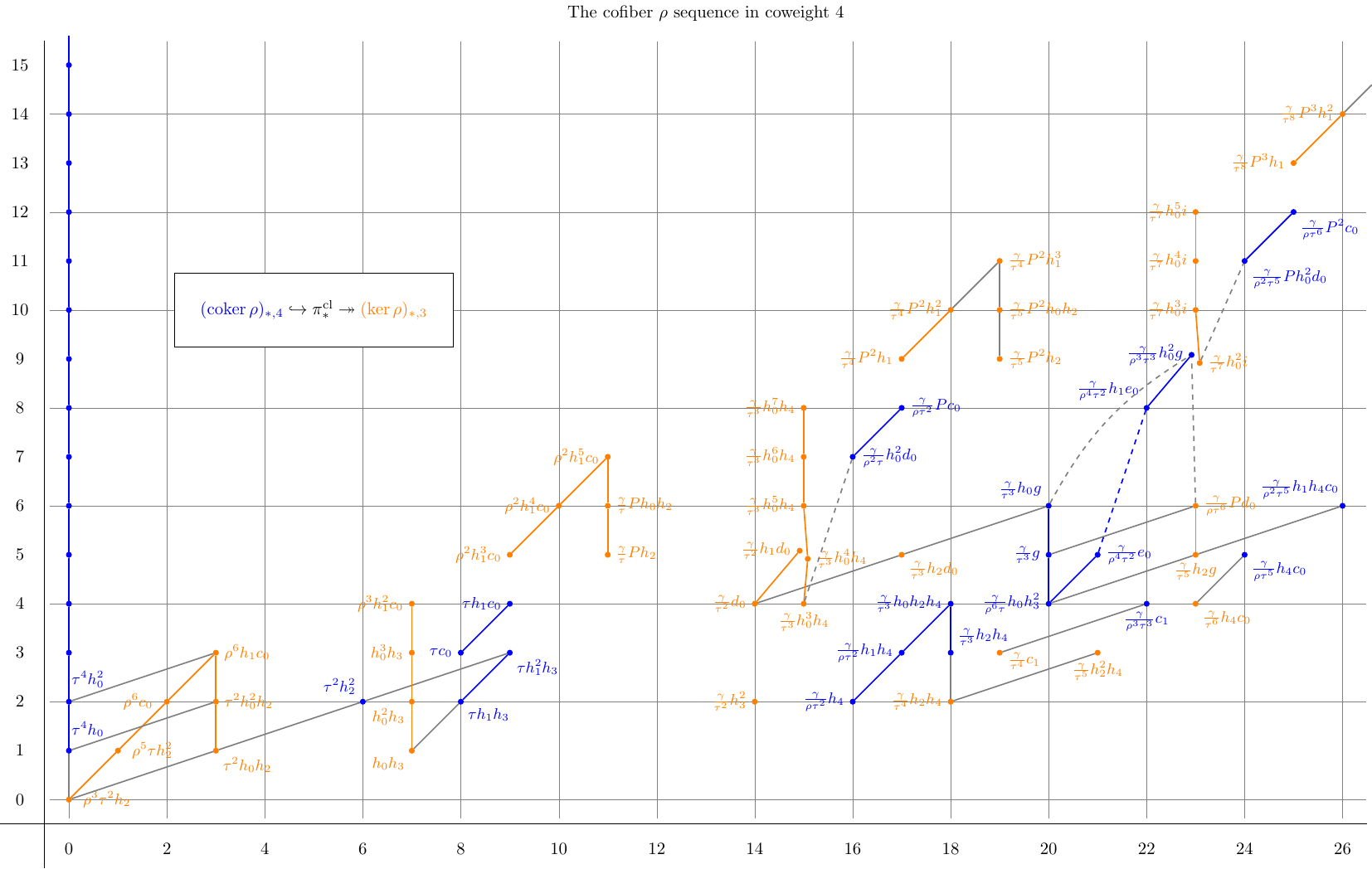}

\includegraphics[height=\textwidth]{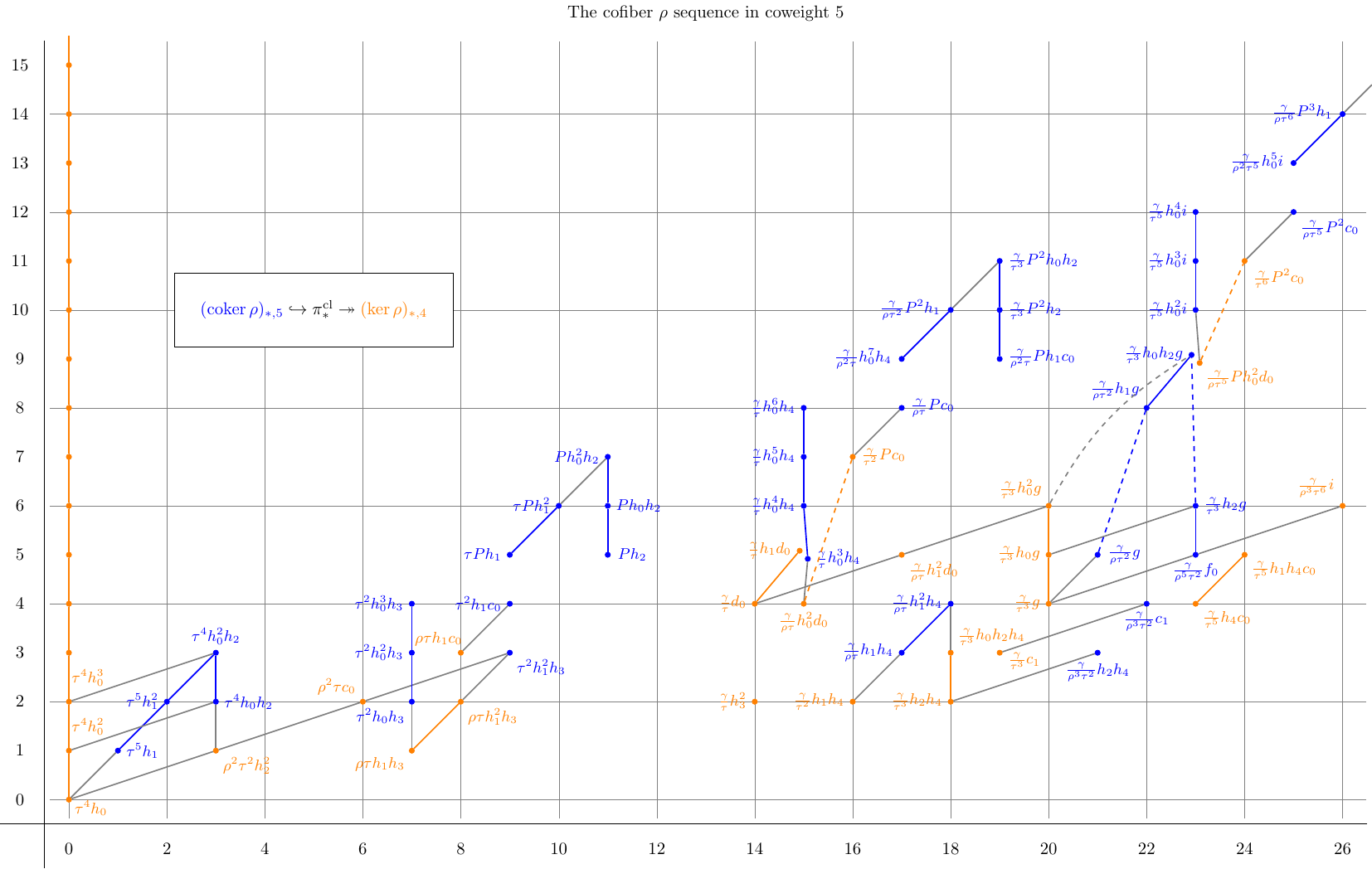}

\includegraphics[height=\textwidth]{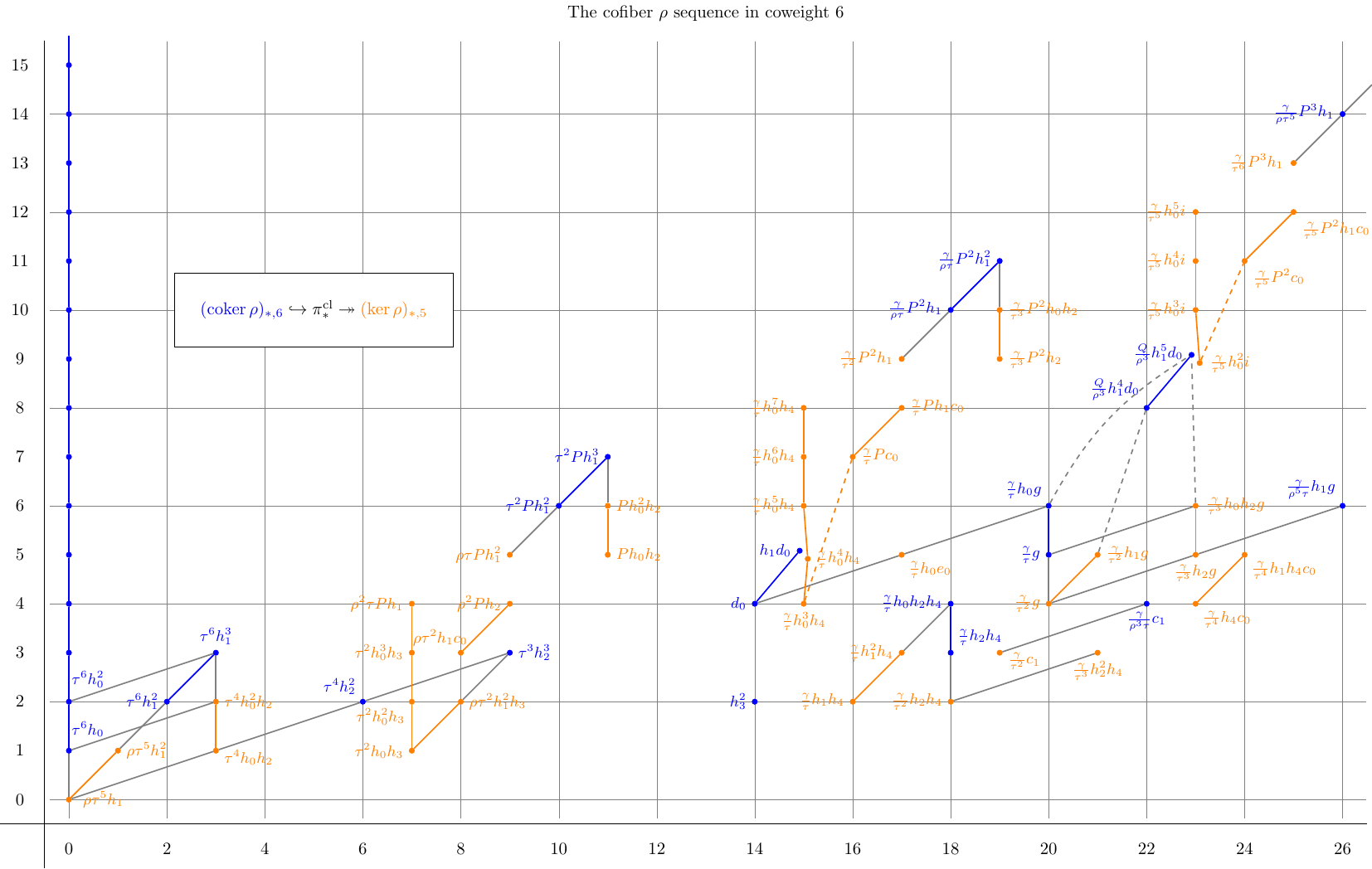}

\includegraphics[height=\textwidth]{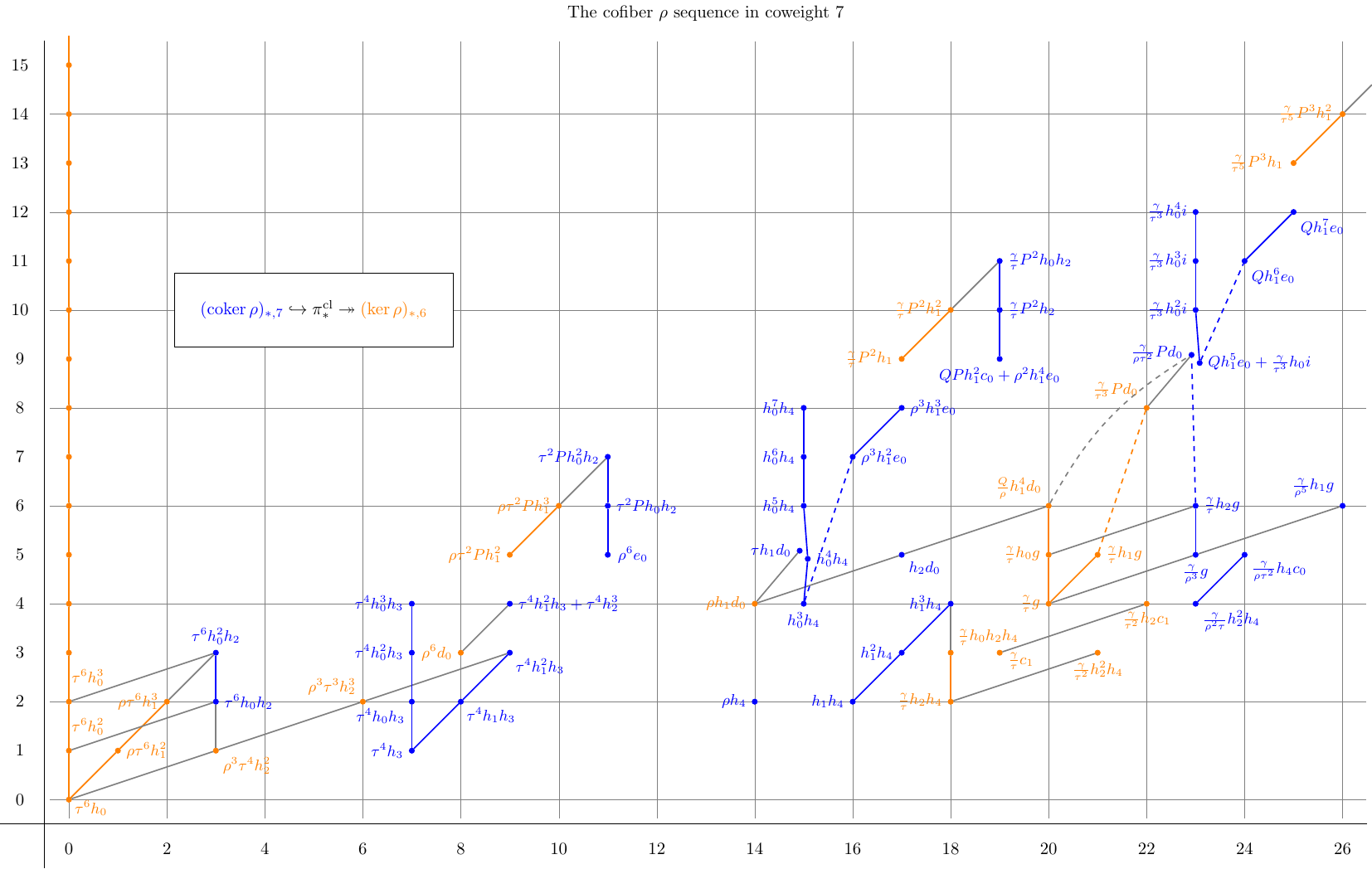}
\label{cofibrhoEnd}
\end{landscape}

\bibliographystyle{amsalpha}

\begin{bibdiv}
\begin{biblist}

\bib{AI}{article}{
   author={Araki, Sh\^{o}r\^{o}},
   author={Iriye, Kouyemon},
   title={Equivariant stable homotopy groups of spheres with involutions. I},
   journal={Osaka Math. J.},
   volume={19},
   date={1982},
   number={1},
   pages={1--55},
   issn={0388-0699},
   review={\MR{656233}},
}

\bib{Balderrama}{article}{
	author={Balderrama, William},
	label={Ba},
	title={The $C_2$-equivariant $K(1)$-local sphere},
    date = {25 March 2021},
    doi = {\href{https://doi.org/10.48550/arXiv.2103.13895}{10.48550/arXiv.2103.13895}},
}

\bib{Behrens07}{article}{
   author={Behrens, Mark},
   title={Some root invariants at the prime 2},
   conference={
      title={Proceedings of the Nishida Fest (Kinosaki 2003)},
   },
   book={
      series={Geom. Topol. Monogr.},
      volume={10},
      publisher={Geom. Topol. Publ., Coventry},
   },
   date={2007},
   pages={1--40},
   review={\MR{2402775}},
   doi={10.2140/gtm.2007.10.1},
}

\bib{BS}{article}{
   author={Behrens, Mark},
   author={Shah, Jay},
   title={$C_2$-equivariant stable homotopy from real motivic stable
   homotopy},
   journal={Ann. K-Theory},
   volume={5},
   date={2020},
   number={3},
   pages={411--464},
   issn={2379-1683},
   review={\MR{4132743}},
   doi={10.2140/akt.2020.5.411},
}

\bib{BGI}{article}{
   author={Belmont, Eva},
   author={Guillou, Bertrand J.},
   author={Isaksen, Daniel C.},
   title={$C_2$-equivariant and $\mathbb{R}$-motivic stable stems II},
   journal={Proc. Amer. Math. Soc.},
   volume={149},
   date={2021},
   number={1},
   pages={53--61},
   issn={0002-9939},
   review={\MR{4172585}},
   doi={10.1090/proc/15167},
}
		
\bib{BI}{article}{
   author={Belmont, Eva},
   author={Isaksen, Daniel C.},
   title={$\Bbb R$-motivic stable stems},
   journal={J. Topol.},
   volume={15},
   date={2022},
   number={4},
   pages={1755--1793},
   issn={1753-8416},
   review={\MR{4461846}},
   doi={10.1112/topo.12256},
}

\bib{BIK}{article}{
	author={Belmont, Eva},
	author={Isaksen, Daniel C.},
	author={Kong, Hana Jia},
	title={$\R$-motivic $v_1$-periodic homotopy},
	year={12 April 2022},
	doi={\href{https://doi.org/10.48550/arXiv.2204.05937}{10.48550/arXiv.2204.05937}},    
}

\bib{BK}{article}{
	author={Belmont, Eva},
	author={Kong, Hana Jia},
	title={A Toda bracket convergence theorem for multiplicative spectral sequences},
	year={16 December 2021}
	doi={\href{https://doi.org/10.48550/arXiv.2112.08689}{10.48550/arXiv.2112.08689}},
}

\bib{B}{article}{
   author={Bredon, Glen E.},
   label={Br1},
   title={Equivariant stable stems},
   journal={Bull. Amer. Math. Soc.},
   volume={73},
   date={1967},
   pages={269--273},
   issn={0002-9904},
   review={\MR{206947}},
   doi={10.1090/S0002-9904-1967-11713-0},
}

\bib{B68}{article}{
   author={Bredon, Glen E.},
   label={Br2},
   title={Equivariant homotopy},
   conference={
      title={Proc. Conf. on Transformation Groups},
      address={New Orleans, La.},
      date={1967},
   },
   book={
      publisher={Springer, New York},
   },
   date={1968},
   pages={281--292},
   review={\MR{0250303}},
}

\bib{BG}{article}{
   author={Bruner, Robert},
   author={Greenlees, John},
   title={The Bredon-L\"{o}ffler conjecture},
   journal={Experiment. Math.},
   volume={4},
   date={1995},
   number={4},
   pages={289--297},
   issn={1058-6458},
   review={\MR{1387694}},
}

\bib{LowMW}{article}{
   author={Dugger, Daniel},
   author={Isaksen, Daniel C.},
   title={Low-dimensional Milnor-Witt stems over $\mathbb{R}$},
   journal={Ann. K-Theory},
   volume={2},
   date={2017},
   number={2},
   pages={175--210},
   issn={2379-1683},
   review={\MR{3590344}},
   doi={10.2140/akt.2017.2.175},
}

\bib{DI-comparison}{article}{
   author={Dugger, Daniel},
   author={Isaksen, Daniel C.},
   title={$\mathbb{Z}/2$-equivariant and $\mathbb{R}$-motivic stable stems},
   journal={Proc. Amer. Math. Soc.},
   volume={145},
   date={2017},
   number={8},
   pages={3617--3627},
   issn={0002-9939},
   review={\MR{3652813}},
   doi={10.1090/proc/13505},
}	

\bib{GHIR}{article}{
   author={Guillou, Bertrand J.},
   author={Hill, Michael A.},
   author={Isaksen, Daniel C.},
   author={Ravenel, Douglas Conner},
   title={The cohomology of $C_2$-equivariant $\mathcal A(1)$ and the homotopy
   of ${\rm ko}_{C_2}$},
   journal={Tunis. J. Math.},
   volume={2},
   date={2020},
   number={3},
   pages={567--632},
   issn={2576-7658},
   review={\MR{4041284}},
   doi={10.2140/tunis.2020.2.567},
}

\bib{etaR}{article}{
   author={Guillou, Bertrand J.},
   author={Isaksen, Daniel C.},
   title={The $\eta$-inverted $\mathbb R$-motivic sphere},
   journal={Algebr. Geom. Topol.},
   volume={16},
   date={2016},
   number={5},
   pages={3005--3027},
   issn={1472-2747},
   review={\MR{3572357}},
   doi={10.2140/agt.2016.16.3005},
}

\bib{C2MW0}{article}{
   author={Guillou, Bertrand J.},
   author={Isaksen, Daniel C.},
   title={The Bredon-Landweber region in $C_2$-equivariant stable homotopy
   groups},
   journal={Doc. Math.},
   volume={25},
   date={2020},
   pages={1865--1880},
   issn={1431-0635},
   review={\MR{4184454}},
}

\bib{HHR}{article}{
   author={Hill, M. A.},
   author={Hopkins, M. J.},
   author={Ravenel, D. C.},
   title={On the nonexistence of elements of Kervaire invariant one},
   journal={Ann. of Math. (2)},
   volume={184},
   date={2016},
   number={1},
   pages={1--262},
   issn={0003-486X},
   review={\MR{3505179}},
   doi={10.4007/annals.2016.184.1.1},
}

\bib{HK}{article}{
   author={Hu, Po},
   author={Kriz, Igor},
   title={Real-oriented homotopy theory and an analogue of the Adams-Novikov
   spectral sequence},
   journal={Topology},
   volume={40},
   date={2001},
   number={2},
   pages={317--399},
   issn={0040-9383},
   review={\MR{1808224}},
   doi={10.1016/S0040-9383(99)00065-8},
}

\bib{HKSZ}{article}{
      author={Hu, Po},
      author={Kriz, Igor},
      author={Somberg, Petr},
      author={Zou, Foling},
      title={The $\mathbb{Z}/p$-equivariant dual Steenrod algebra for an odd prime $p$}, 
      year={26 May 2022},
      doi={\href{https://doi.org/10.48550/arXiv.2205.13427}{10.48550/arXiv.2205.13427}},

}

\bib{Iriye82}{article}{
   author={Iriye, Kouyemon},
   label={Ir},
   title={Equivariant stable homotopy groups of spheres with involutions.
   II},
   journal={Osaka J. Math.},
   volume={19},
   date={1982},
   number={4},
   pages={733--743},
   issn={0030-6126},
   review={\MR{687770}},
}

\bib{I}{article}{
   author={Isaksen, Daniel C.},
   label={Is},
   title={Stable stems},
   journal={Mem. Amer. Math. Soc.},
   volume={262},
   date={2019},
   number={1269},
   pages={viii+159},
   issn={0065-9266},
   isbn={978-1-4704-3788-6},
   isbn={978-1-4704-5511-8},
   review={\MR{4046815}},
   doi={10.1090/memo/1269},
}

\bib{IWX23}{article}{
   author={Isaksen, Daniel C.},
   author={Wang, Guozhen},
   author={Xu, Zhouli},
   title={Stable homotopy groups of spheres: from dimension 0 to 90},
   journal={Publ. Math. Inst. Hautes \'{E}tudes Sci.},
   volume={137},
   date={2023},
   pages={107--243},
   issn={0073-8301},
   review={\MR{4588596}},
   doi={10.1007/s10240-023-00139-1},
}

\bib{L}{article}{
   author={Landweber, Peter S.},
   title={On equivariant maps between spheres with involutions},
   journal={Ann. of Math. (2)},
   volume={89},
   date={1969},
   pages={125--137},
   issn={0003-486X},
   review={\MR{238313}},
   doi={10.2307/1970812},
}

\bib{Ma}{article}{
    author={Ma, Sihao},
    label={Ma},
    title={The Borel and Genuine $C_2$-equivariant Adams Spectral Sequences},
    year={26 August 2022}
    doi={\href{https://doi.org/10.48550/arXiv.2208.12883}{10.48550/arXiv.2208.12883}},
}

\bib{Mahowald-Ravenel93}{article}{
   author={Mahowald, Mark E.},
   author={Ravenel, Douglas C.},
   title={The root invariant in homotopy theory},
   journal={Topology},
   volume={32},
   date={1993},
   number={4},
   pages={865--898},
   issn={0040-9383},
   review={\MR{1241877}},
   doi={10.1016/0040-9383(93)90055-Z},
}

\bib{MM}{article}{
   author={Mandell, M. A.},
   author={May, J. P.},
   title={Equivariant orthogonal spectra and $S$-modules},
   journal={Mem. Amer. Math. Soc.},
   volume={159},
   date={2002},
   number={755},
   pages={x+108},
   issn={0065-9266},
   review={\MR{1922205}},
   doi={10.1090/memo/0755},
}

\bib{CMay}{article}{
   author={May, Clover},
   label={MC},
   title={A structure theorem for $RO(C_2)$-graded Bredon cohomology},
   journal={Algebr. Geom. Topol.},
   volume={20},
   date={2020},
   number={4},
   pages={1691--1728},
   issn={1472-2747},
   review={\MR{4127082}},
   doi={10.2140/agt.2020.20.1691},
}

\bib{MMP}{article}{
   author={May, J. Peter},
   label={MJP},
   title={Matric Massey products},
   journal={J. Algebra},
   volume={12},
   date={1969},
   pages={533--568},
   issn={0021-8693},
   review={\MR{238929}},
   doi={10.1016/0021-8693(69)90027-1},
}

\bib{Morel}{article}{
   author={Morel, Fabien},
   label={Mor},
   title={On the motivic $\pi_0$ of the sphere spectrum},
   conference={
      title={Axiomatic, enriched and motivic homotopy theory},
   },
   book={
      series={NATO Sci. Ser. II Math. Phys. Chem.},
      volume={131},
      publisher={Kluwer Acad. Publ., Dordrecht},
   },
   isbn={1-4020-1834-7},
   date={2004},
   pages={219--260},
   review={\MR{2061856}},
   doi={10.1007/978-94-007-0948-5\_7},
}

\bib{Moss}{article}{
   author={Moss, R. Michael F.},
   label={Mo},
   title={Secondary compositions and the Adams spectral sequence},
   journal={Math. Z.},
   volume={115},
   date={1970},
   pages={283--310},
   issn={0025-5874},
   review={\MR{266216}},
   doi={10.1007/BF01129978},
}

\bib{SW}{article}{
   author={Sankar, Krishanu},
   author={Wilson, Dylan},
   title={On the $C_p$-equivariant dual Steenrod algebra},
   journal={Proc. Amer. Math. Soc.},
   volume={150},
   date={2022},
   number={8},
   pages={3635--3647},
   issn={0002-9939},
   review={\MR{4439482}},
   doi={10.1090/proc/15846},
}

\bib{Serre}{article}{
   author={Serre, Jean-Pierre},
   title={Groupes d'homotopie et classes de groupes ab\'{e}liens},
   language={French},
   journal={Ann. of Math. (2)},
   volume={58},
   date={1953},
   pages={258--294},
   issn={0003-486X},
   review={\MR{59548}},
   doi={10.2307/1969789},
}

\end{biblist}
\end{bibdiv}

\end{document}